   \renewcommand{\footnote}[1]{
\textsuperscript{%ecriture en exposant
\addtocounter{footnote}{1}%incrementation du compteur
(\thefootnote)% impression au format "(compteur)"
}
\footnotetext{#1}% la note de bas de page
}
\numberwithin{equation}{section}
\newtheorem{prop}{Proposition}%[chapter]
\newtheorem{thm}{Theorem}%[chapter]
\newtheorem{lem}[prop]{Lemma}
\newtheorem{cor}[prop]{Corollary}%[chapter]
\theoremstyle{remark}
\newtheorem{remark}[prop]{Remark}%[chapter]
\newtheorem{notation}[prop]{Notation}
\newtheorem{construction}[prop]{Construction}
\def\a{\alpha}
\def\b{\beta}
\def\ve{\varepsilon}
\def\v{\varepsilon}
\def\O{\Omega}
\def\d{\partial}
\def\p{\partial}
\def\w{\omega}
\def\o{\omega}
\def\R{\mathbb{R}}
\def\C{\mathbb{C}}
\def\S{\mathbb{S}}
\def\Div{{\rm  div}}
\def\deg{ {\rm deg}}
\def\n{\nabla}
\def\dist{{\rm dist}}
\def\tr{{\rm tr}}
\def\f{\varphi}
\def\weak{\rightharpoonup}
\def\N{\mathbb{N}}
\def\Z{\mathbb{Z}}
\def\J{\mathcal{J}}
\def\I{\mathcal{I}}
\def\M{\mathcal{M}}
\def\xm{\xi_{\rm meso}}
\def\1{\textrm{1\kern-0.25emI}}
\def\num{P}
\def\Ring{\mathscr{R}}
\def\H{\mathscr{H}}
\def\RHO{S_\rho}
\def\WM{\widehat{\mathcal{M}}}
\title{The Ginzburg-Landau functional with a discontinuous and rapidly oscillating pinning term. Part II: the non-zero degree case}
\author{Mickaël {\sc Dos Santos}\footnote{Université Paris Est-Créteil, 61 avenue du Général de Gaulle, 94010 Créteil Cedex }\\\url{mickael.dos-santos@u-pec.fr}
}
\begin{document}
\maketitle
\begin{abstract}
We consider minimizers of a Ginzburg-Landau energy with a discontinuous and rapidly oscillating pinning term, subject to a  Dirichlet boundary condition of degree $d>0$. The pinning term models an unbounded number of small impurities in the domain. We prove that for strongly type II superconductor with impurities, minimizers have exactly $d$ isolated zeros (vortices). These vortices are of degree $1$ and  pinned by the impurities. As in the standard case studied by Bethuel, Brezis and Hélein, the macroscopic location of vortices is governed by vortex/vortex and vortex/ boundary repelling effects. In some special cases we prove that their macroscopic location tends to minimize the renormalized energy of Bethuel-Brezis-Hélein.  In addition, impurities affect the microscopic location of vortices. Our technics allows us to work with  impurities having different size. In this situation we prove that vortices are pinned by the largest impurities.
\end{abstract}%We underline two  repulsive effects: vortex/vortex and vortex/boundary of the domain. We give the asymptotic of the minimizer and an exact expanding of the energy.
%\minitoc
%\newpage
%\maketitle
\tableofcontents
\section{Introduction}

In this article we let $\O\subset\R^2$ be a smooth simply connected domain and let $a_\v:\O\to\{b,1\},\,b\in(0,1)$ be a measurable function. We associate to $a_\v$ the pinned Ginzburg-Landau energy
\begin{equation}\label{8.PinnedGLFunctional}
E_\v(u)=\frac{1}{2}\int_\O\left\{|\n u(x)|^2+\frac{1}{2\v^2}\left(a_\v(x)^2-|u(x)|^2\right)^2\right\}\,{\rm d}x.
\end{equation}
Here, $u:\O\to\C$ is in the Sobolev space $H^1(\O,\C)$ and $\v>0$ is the inverse of the Ginzburg-Landau parameter. 

Our goal is to consider a discontinuous and rapidly oscillating pinning term (the pinning term is $a_\v:\O\to\{b,1\}$). Our pinning term is periodic with respect to a $\delta\times\delta$-grid with $\delta=\delta(\v)\to0$ as $\v\to0$ (in some cases we drop the periodic hypothesis). 

We are interested in the minimization of \eqref{8.PinnedGLFunctional} in $H^1(\O,\C)$ subject to a Dirichlet boundary condition: we fix $g\in C^\infty(\p\O,\S^1)$ and thus the set of the test functions is
\[
H^1_g:=\{u\in H^1(\O,\C)\,|\,\tr_{\p\O}u=g\}.
\]

The situation where $d=\deg_{\p\O}(g)=0$ was studied in detail in \cite{publi2}. The non zero degree case ($d=\deg_{\p\O}(g)>0$) is the purpose of the present article. Recall that for $\Gamma\subset\R^2$ a Jordan curve and $g\in H^{1/2}(\Gamma,\S^1)$, the degree (winding number) of $g$ is defined as
\[
\deg_{\Gamma}(g):=\frac{1}{2\pi}\int_{\Gamma}g\times\p_\tau g\,{\rm d}\tau.
\]
Here "$\times$'' stands for the vectorial product in $\mathbb{C}$, \emph{i.e.} $z_1 \times z_2= {\rm Im}(\overline{z_1}z_2)$, $z_1,z_2\in\mathbb{C}$, $\tau$ is the direct unit tangent vector of $\Gamma$ ($\tau=\nu^\bot$ where $\nu$ is the outward normal unit vector of ${\rm int}(\Gamma)$, the bounded open set whose boundary is $\Gamma$) and $\p_\tau$ is the tangential derivative on $\Gamma$.

This energy is a simplification of the full Ginzburg-Landau energy (see Eq. \eqref{FullGinzburgLandauFunctionalPinning} below) whose minimizers model the state of a Type II superconductor (the parameter $\v$ corresponds to a material parameter, this parameter is small for Type II superconductor) \cite{Tin1}, \cite{SS1}. The pinning term allows to model a heterogenous superconductor (see \cite{K1} or Introduction of \cite{TheseDosSantos}).

Physical informations which can be obtained with the simplification of the full Ginzburg-Landau energy are quantization and location of zeros of minimizers. Their zeros represent the centers of small areas where the superconductivity is destroyed. These areas are called vorticity defects. Here the superconductor is a cylinder whose cross section is $\O$ and the vorticity defects (under some special conditions) takes the form of small wires parallel to the superconductor \cite{Tin1}, \cite{SS1}.

%There is a huge number of papers considering this kind of functionals. For example, one refers to the references in \cite{DM1} and \cite{degzero}. 

Before going further, let us summarize two previous works in related directions \cite{LM1}, \cite{ASS1}. In these works, the role of the pinning term is identified: its points of {\it minimum}  attract the vorticity defects.

%Let us briefly explain both works.
%\begin{comment}[$\bullet$]
%\item  
In \cite{LM1}, Lassoued and Mironescu considered the case where $a_\v\equiv a$. Here, the pinning term $a=\begin{cases}b&\text{in }\o\\1&\text{in }\O\setminus\o\end{cases}$, $0<b<1$, and $\o$ is a smooth inner domain of $\O$. These authors proved that the vorticity defects are quantified by $\deg_{\p\O}(g)$, localized in $\o$ and that their position is governed by a renormalized energy (in the spirit of \cite{BBH}). %Thus it is a discontinuous function. 

%Here the study is about a Ginzburg-Landau functional which ignores the magnetic field. So, in this situation, the vorticity defects is viewed as  isolated zeros $x_i$ of a map $u$ with a non zero degree around small circle centered in $x_i$. A way to force the presence of vorticity defects is to impose a boundary Dirichlet condition $u=g$ on $\p\O$ with $\deg_{\p\O}(g)=d>0$. It is proved (see \cite{BBH}) that for the classical Ginzburg-Landau functional ($a_\v\equiv1$), for small  $\v$, a minimizer  of $E_\v$ under the boundary condition $u=g$,  has exactly $d$ zeros (with a degree equal to $1$ on small circles centered in the zeros) and their location is governed by a renormalized energy.
%\item 

In \cite{ASS1}, Aftalion, Sandier and Serfaty considered a smooth and $\v$-dependent pinning term $a_\v$. Their study allows to consider the case where the pinning term has fast oscillations: it is a perturbation of a fixed smooth function $\tilde{b}:\O\to[b,1]$ s.t. $a_\v\geq \tilde{b}$. 

In contrast with \cite{LM1}, \cite{ASS1} is dedicated to the study of a full Ginzburg-Landau energy $GL_\v$ with the pinning term $a_\v$

\begin{equation}\label{FullGinzburgLandauFunctionalPinning}
GL_\v(u,A)= \frac{1}{2}\int_{\Omega } \left\{|{\rm curl} A -  h_{\rm ex}|^2+|(\nabla - i   A)  u|^2 + \frac{1}{2\v^2}(a_\v^2-| u|^2)^2\right\}.
\end{equation}

%defined by \eqref{Intro.FullGinzburgLandauFunctional}, page \pageref{Intro.FullGinzburgLandauFunctional}. 

We denoted by $A\in\R^2$ the electromagnetic vector potential of the induced field and by $h_{\rm ex}\gg1$ the intensity of the applied magnetic field (see \cite{SS1} for more details). 

They considered the following hypotheses on $a_\v,\tilde{b}$:
\begin{enumerate}[$\bullet$]
\item $|\n a_\v|\leq C h_{\rm ex}$ 
\item  there is $\sigma_\v\in\R$ s.t. $\sigma_\v=o\left((\ln|\ln\v|)^{-1/2}\right)$ and  for all $x\in\O$, we have
\[
\min_{B(x,\sigma_\v)}\left\{a_\v-\tilde{b}\right\}=0.
\]
\end{enumerate}

In the study of the full Ginzburg-Landau functional without pinning term $GL^{0}_\v$ ($GL^0_\v$ is obtained from \eqref{FullGinzburgLandauFunctionalPinning} by taking $a_\v\equiv1$), the vorticity defects appear for large apply magnetic field. They are characterized by two facts: the presence of isolated zeros $x_i$ of a map $u$ with a non zero degree around small circles centered in $x_i$ and the existence of a magnetic field inside the domain (${\rm curl}(A)\simeq h_{\rm ex}$ inside small discs).  The nature of the superconductivity makes that both facts appear together. Assume that the intensity of the applied field $h_{\rm ex}$ depends on $0<\v<1$ and that $h_{\rm ex}/|\ln\v|\to\Lambda\in\R^*_+$. For the full Ginzburg-Landau energy without pinning term $GL^{0}_\v$, it is well known (see \emph{e.g.} \cite{SS1}) that there is an inner domain $\o_\Lambda$ (non decreasing w.r.t. $\Lambda$) s.t., when $\v\to 0$, the vorticity defects are "uniformly located" by $\o_\Lambda$ (in this situation the number of vortices is unbounded).

In \cite{ASS1} (study of a full Ginzburg-Landau functional with a pinning term), the authors proved the existence of $\o_\Lambda$, an inner set of $\O$, where the penetration of the magnetic field is located. In contrast with the situation without pinning term, the presence of $a_\v$ makes that, in general, the vortices are not uniformly located in $\o_\Lambda$. Although in the proofs of the main results of \cite{ASS1}, the minimal points of $\tilde{b}$ seem play the role of a pinning site, this fact is not proved. They expect that the most favorable pinning sites should be close to the \emph{minima} of $\tilde{b}$ : $\o_\Lambda$ should be located close to the points of \emph{minimum} of $\tilde{b}$.% favorable  which is related with The authors explained that, under the effect of the  pinning term, the limiting vorticity set $\o_\Lambda$ is localized around the points of {\it minimum}  of $\tilde{b}$. 
%\end{comment}
%In both previous works, . 

One of our goals is to prove that the minimum points of a rapidly oscillating and discontinuous pinning term attract the vorticity defects.

Before going further, we construct our (periodic) pinning term $a_\v$. 

\begin{construction}\label{Constru.ConstrucPinnTerm}The periodic pinning term\\
Consider
\begin{enumerate}[$\bullet$]\item $\delta=\delta(\v)\in(0,1),\,\lambda=\lambda(\v)\in(0,1]$;
 \item $\o\subset Y=(-1/2,1/2)^2$ be a smooth bounded and simply connected open set s.t. $(0,0)\in\o$ and $\overline{\o}\subset Y$ (here $Y$ is the unit cell). 
\end{enumerate} For $k,l\in\Z$ we denote 
\begin{center}
\hfill$Y_{k,l}^\delta:=\delta\cdot Y+(\delta k,\delta l)$,\hfill$\O^{\rm incl}_\delta=\bigcup_{Y_{k,l}^\delta\subset\O}\overline{Y_{k,l}^\delta}$,\hfill$\o^\lambda=\lambda\cdot\o$, \hfill\phantom{a}
\end{center}
\[
\o^\lambda_{\rm per}=\bigcup_{(k,l)\in\Z^2}\left\{\w^\lambda+(k,l)\right\}\text{ and }\o_\v=\bigcup_{\substack{(k,l)\in\Z^2\text{ s.t.}\\Y_{k,l}^\delta\subset\O}}\left\{\delta\cdot\w^\lambda+(\delta k,\delta l)\right\}.
\]

For $b\in(0,1)$, we define 
\[
\begin{array}{cccc}
a^\lambda:&\R^2&\to&\{b,1\}\\&x&\mapsto&\begin{cases}b&\text{if }x\in\o^\lambda_{\rm per}\\1&\text{otherwise}\end{cases}
\end{array}
\text{ and }
\begin{array}{cccc}
a_\v:&\R^2&\to&\{b,1\}\\&x&\mapsto&\begin{cases}b&\text{if }x\in\o_\v\\1&\text{otherwise}\end{cases}
\end{array}.
\]
The values of the periodic pinning term are represented Figure \ref{Intro.FigureTermeChevillage}. The connected components of $\{a_\v=b\}=\o_\v$ are called inclusions or impurities.
\end{construction}
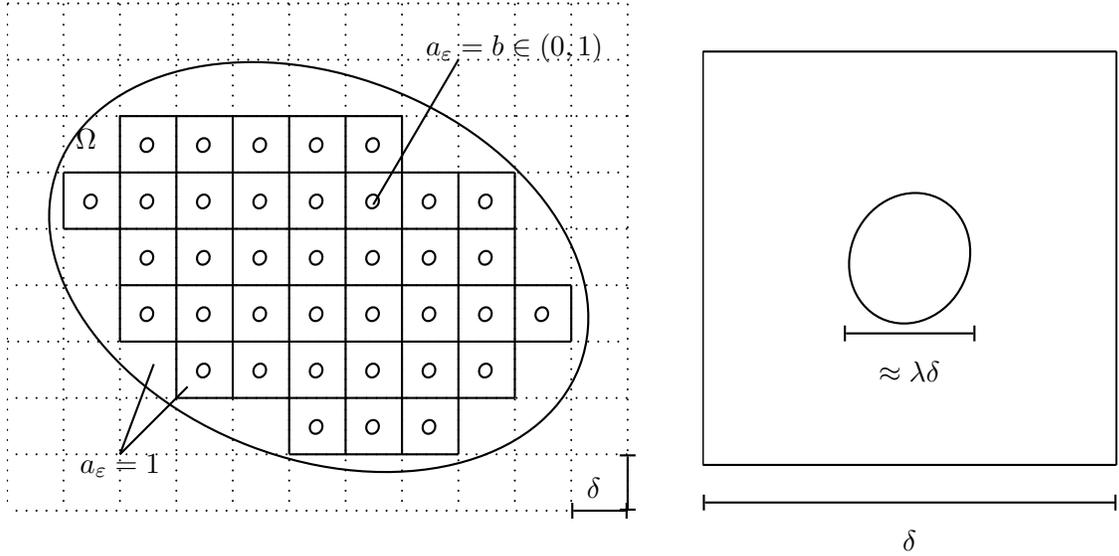
\begin{figure}[h]
% \begin{minipage}{.5\linewidth}\begin{flushleft}
%\centering
\subfigure[The pining term is periodic on a $\delta\times\delta$-grid]{
\psset{xunit=0.15cm,yunit=0.15cm,algebraic=true,dotstyle=*,dotsize=3pt 0,linewidth=0.8pt,arrowsize=3pt 2,arrowinset=0.25}
\begin{pspicture*}(-20,-14.3)(36,40)
\rput{64.54}(2.39,2.43){\psellipse(0,0)(0.72,0.64)}
%%%%%%%%%%%%%%%%%%%%%%%%%
\psline(12.45,17.15)(20,30)
\rput(25,31){$a_\v=b\in(0,1)$}
%\psline(12.45,12.15)(20,30)
%\psline(7.45,12.15)(20,30)
%\psline(7.45,22.15)(20,30)
%%%%%%%%%%%%%%%%%%%%%%%%%%
\psline(-7,3)(-10,-5)
\rput(-10,-6){$a_\v=1$}
\psline(-4,1)(-10,-5)
%\psline(-11,17)(-10,-5)
%\psline(7.45,22.15)(-10,-5)
%%%%%%%%%%%%%%%%%%%%%%%%

\rput{64.54}(7.39,2.43){\psellipse(0,0)(0.72,0.64)}
\rput{64.54}(12.39,2.43){\psellipse(0,0)(0.72,0.64)}
\rput{64.54}(2.39,7.43){\psellipse(0,0)(0.72,0.64)}
\rput{64.54}(2.39,12.43){\psellipse(0,0)(0.72,0.64)}
\rput{64.54}(2.39,17.43){\psellipse(0,0)(0.72,0.64)}
\rput{64.54}(7.39,17.43){\psellipse(0,0)(0.72,0.64)}
\rput{64.54}(12.39,17.43){\psellipse(0,0)(0.72,0.64)}
\rput{64.54}(17.39,17.43){\psellipse(0,0)(0.72,0.64)}
\rput{64.54}(17.39,12.43){\psellipse(0,0)(0.72,0.64)}
\rput{64.54}(7.39,12.43){\psellipse(0,0)(0.72,0.64)}
\rput{64.54}(7.39,7.43){\psellipse(0,0)(0.72,0.64)}
\rput{64.54}(12.39,12.43){\psellipse(0,0)(0.72,0.64)}
\rput{64.54}(12.39,7.43){\psellipse(0,0)(0.72,0.64)}
\rput{64.54}(17.39,7.43){\psellipse(0,0)(0.72,0.64)}
\rput{64.54}(17.39,2.43){\psellipse(0,0)(0.72,0.64)}
\rput{-20.94}(7.62,11.61){\psellipse(0,0)(24.83,17.1)}

\rput{64.54}(-2.61,7.43){\psellipse(0,0)(0.72,0.64)}
\rput{64.54}(-7.61,7.43){\psellipse(0,0)(0.72,0.64)}
\rput{64.54}(-2.61,2.43){\psellipse(0,0)(0.72,0.64)}
\rput{64.54}(-2.61,12.43){\psellipse(0,0)(0.72,0.64)}
\rput{64.54}(-2.61,17.43){\psellipse(0,0)(0.72,0.64)}
\rput{64.54}(-2.61,22.43){\psellipse(0,0)(0.72,0.64)}
\rput{64.54}(-7.61,22.43){\psellipse(0,0)(0.72,0.64)}
\rput{64.54}(-7.61,17.43){\psellipse(0,0)(0.72,0.64)}
\rput{64.54}(-7.61,12.43){\psellipse(0,0)(0.72,0.64)}
\rput{64.54}(-12.61,17.43){\psellipse(0,0)(0.72,0.64)}
\rput{64.54}(2.39,22.43){\psellipse(0,0)(0.72,0.64)}
\rput{64.54}(7.39,22.43){\psellipse(0,0)(0.72,0.64)}
\rput{64.54}(12.39,22.43){\psellipse(0,0)(0.72,0.64)}
\rput{64.54}(22.39,12.43){\psellipse(0,0)(0.72,0.64)}
\rput{64.54}(22.39,7.43){\psellipse(0,0)(0.72,0.64)}
\rput{64.54}(27.39,7.43){\psellipse(0,0)(0.72,0.64)}
\rput{64.54}(22.39,2.43){\psellipse(0,0)(0.72,0.64)}
\rput{64.54}(7.39,-2.57){\psellipse(0,0)(0.72,0.64)}
\rput{64.54}(12.39,-2.57){\psellipse(0,0)(0.72,0.64)}
\rput{64.54}(17.39,-2.57){\psellipse(0,0)(0.72,0.64)}
%\rput{64.54}(-12.61,12.43){\psellipse(0,0)(0.72,0.64)}
%\rput{64.54}(-12.61,7.43){\psellipse(0,0)(0.72,0.64)}
%\rput{64.54}(-7.61,2.43){\psellipse(0,0)(0.72,0.64)}
\psline[linestyle=dotted,dash=18pt 18pt](-20,30)(35,30)
\psline[linestyle=dotted,dash=18pt 18pt](35,25)(-20,25)
\psline[linestyle=dotted,dash=18pt 18pt](-20,20)(35,20)
\psline[linestyle=dotted,dash=18pt 18pt](35,15)(-20,15)
\psline[linestyle=dotted,dash=18pt 18pt](35,10)(-20,10)
\psline[linestyle=dotted,dash=18pt 18pt](35,0)(-20,0)
\psline[linestyle=dotted,dash=18pt 18pt](35,-5)(-20,-5)
\psline[linestyle=dotted,dash=18pt 18pt](35,-10)(-20,-10)
\psline[linestyle=dotted,dash=18pt 18pt](35,35)(35,-10)
\psline[linestyle=dotted,dash=18pt 18pt](30,35)(30,-10)
\psline[linestyle=dotted,dash=18pt 18pt](25,35)(25,-10)
\psline[linestyle=dotted,dash=18pt 18pt](20,35)(20,-10)
\psline[linestyle=dotted,dash=18pt 18pt](15,35)(15,-10)
\psline[linestyle=dotted,dash=18pt 18pt](10,35)(10,-10)
\psline[linestyle=dotted,dash=18pt 18pt](5,35)(5,-10)
\psline[linestyle=dotted,dash=18pt 18pt](0,35)(0,-10)
\psline[linestyle=dotted,dash=18pt 18pt](-5,35)(-5,-10)
\psline[linestyle=dotted,dash=18pt 18pt](-10,35)(-10,-10)
\psline[linestyle=dotted,dash=18pt 18pt](-15,35)(-15,-10)
\psline[linestyle=dotted,dash=18pt 18pt](-20,35)(-20,-10)
\psline[linestyle=dotted,dash=18pt 18pt](-20,35)(35,35)
\psline[linestyle=dotted,dash=18pt 18pt](-20,5)(35,5)
\psline(5,-5)(20,-5)
%\psline(15,-5)(5,-5)
\psline(25,0)(-5,0)
%\psline(-5,0)(0,0)
\psline(-10,5)(30,5)
%\psline(30,5)(25,5)
\psline(-10,10)(30,10)
%\psline(15,15)(20,15)
\psline(25,15)(-15,15)
%\psline(20,20)(25,20)
%\psline(20,20)(15,20)
\psline(-15,20)(25,20)
\psline(-10,25)(15,25)

\psline(-15,20)(-15,15)
\psline(-10,5)(-10,25)
\psline(-5,0)(-5,25)
\psline(0,0)(0,25)
%\psline(0,20)(0,25)
\psline(5,-5)(5,25)
\psline(10,25)(10,-5)
%\psline(10,20)(10,25)
\psline(15,25)(15,-5)
\psline(20,-5)(20,20)
%\psline(20,15)(20,20)
%\psline(20,-5)(20,0)
\psline(25,0)(25,20)
%\psline(25,15)(25,0)
\psline(30,5)(30,10)
%\psline(15,-5)(15,25)

\rput{64.54}(22.39,17.43){\psellipse(0,0)(0.72,0.64)}
\rput(32,-8){$\delta$}
\psline{|-|}(30,-10)(35,-10)
\psline{|-|}(35,-10)(35,-5)
\rput(-13,23){$\O$}
\end{pspicture*}
}\hfill
 \subfigure[The parameter $\lambda$ controls the size of an inclusion in the cell]
 {
\psset{xunit=0.25cm,yunit=0.25cm,algebraic=true,dotstyle=*,dotsize=3pt 0,linewidth=0.8pt,arrowsize=3pt 2,arrowinset=0.25}
\begin{pspicture*}(38,4)(60,35)
\psline(38,31)(60,31)
\psline(38,31)(38,9)
\psline(38,9)(60,9)
\psline(60,9)(60,31)
\rput{64.54}(49,20){\psellipse(0,0)(3.6,3.2)}
\psline{|-|}(38,7)(60,7)
\rput(49,5){$\delta$}
%\rput(49,20){$\o_n^\v$}
%\rput(39,10){$C_n^\v$}
\psline{|-|}(45.5,16)(52.5,16)
%\psline(45.6,16)(45.6,22)
%\psline(52.4,16)(52.4,22)
\rput(49,14){$\approx\lambda\delta$}
\end{pspicture*}
}
\caption{The periodic pinning term}\label{Intro.FigureTermeChevillage}
\end{figure} 

%We deals with boundary Dirichlet data $g\in C^\infty(\p\O,\S^1)$ s.t. $\deg_{\p\O}(g)=d>0$. 

%For $\v>0$, we consider the pinned Ginzburg-Landau functional
%\[
%E^{a_\delta}_\v(u)=\frac{1}{2}\int_{\O}{\left\{|\n u|^2+\frac{1}{2\v^2}(a_\delta^2-|u|^2)^2\right\}},\,u\in H^1_g.
%\]
In the rest of this article $\lambda=\lambda(\v)$ and $\delta=\delta(\v)$ are functions of $\v$. We assume that $\delta\to0$ as $\v\to0$. In addition, we assume that either $\lambda\equiv1$, or $\lambda\to0$ as $\v\to0$. The case $\lambda\to0$ is the \emph{diluted case}.

%Namely we consider $\delta\to0$ as $\v\to0$  and $\lambda\to0$ or $\lambda\equiv1$ when $\v\to0$. When $\lambda\to0$ we are in the dilute case. 

We make the (technical) assumption
\begin{equation}\label{8.MainHyp}
%\delta\to0\text{ and }
\lim_\v\frac{|\ln(\lambda\delta)|^3}{|\ln \v|}=0.
\end{equation}
\begin{remark}
\begin{enumerate}[$\bullet$]
\item This is slightly more restrictive than asking that $\lambda\delta\gg\v^\alpha$ for all $\alpha\in(0,1)$.
\item Hypothesis \eqref{8.MainHyp} is technical, a more natural hypothesis should be $\lambda\delta\gg\v$ or $\lambda\delta\gg\v^\alpha$ for some $\alpha\in(0,1)$.
\item In \cite{ASS1} and in the situation where we have a bounded number of zeros (the applied magnetic field is not too large), the smooth pinning term $a^{0}_\v$ satisfies the condition $|\n a^{0}_\v|\leq C|\ln\v|$. In order to compare this assumption with \eqref{8.MainHyp}, we may consider a regularization of our pinning term by a mollifier $\rho_t(x)=t^{-2}\rho(x/t)$. A suitable scale $t$ to have a complete view of the variations of $a_\v$ is $t=\lambda\delta$. Thus, $|\n (\rho_{\lambda\delta}\ast a_\v)|$ is of order $\dfrac{1}{\lambda\delta}$. Consequently, the condition \eqref{8.MainHyp} allows to consider a more rapidly oscillating  than the condition in \cite{ASS1}. Indeed, we have $\ln|\n a^{0}_\v|\apprle \ln|\ln\v|$ and on the other hand \eqref{8.MainHyp} is equivalent to $\ln|\n (\rho_{\lambda\delta}\ast a_\v)|\apprle|\ln(\lambda\delta)|=o(|\ln\v|^{1/3})$.
\end{enumerate}
\end{remark}

%The dependence on $\lambda,\delta$ will be implicitly given by a subscript $\v$, for example, denoting $E^{a_\delta}_\v=E_\v$ and $a_\delta=a_\v$ (except for $\O^{\rm incl}_\delta$, $\o_\v$, $\o^\lambda_{\rm per}$, $\o^\lambda$ and $a^\lambda$ defined above). 
The goal of this article is to study the minimizers of
\[
E_\v(u)=\frac{1}{2}\int_\O\left\{|\n u|^2+\frac{1}{2\v^2}\left(a_\v^2-|u|^2\right)^2\right\},\,u\in H^1_g
\]
in the asymptotic $\v\to0$. A standard method (initiated in \cite{LM1}) consists in decoupling $E_\v$ into a sum of two functionals.   The key tool in this method is $U_{\v}$ {\bf the} unique global minimizer of $E_\v$ in $H^1_1$ (see \cite{LM1}). Clearly, $U_\v$ satisfies
\begin{equation}\label{8.EquationforU}
\begin{cases}-\Delta U_\v=\displaystyle\frac{1}{\v^2}U_\v(a^2_\v-U_\v^2)&\text{in }\O\\U_\v=1&\text{on }\p\O\end{cases}.
\end{equation}
From the uniqueness of $U_\v$, by construction of a test function, it is easy to get that $b\leq U_\v\leq1$.

This special solution may be seen as a regularization of $a_\v$. For example, one may easily prove that $U_\v$ is exponentially close to $a_\v$ far away from $\p\o_\v$ (a more complete description of $U_\v$ is done Appendix \ref{S8.DescrSpecSol}). Namely, we have 
\begin{prop}\label{P8.UepsCloseToaeps}
There are $C,\alpha>0$ independent of $\v,R>0$ s.t.
\begin{equation}\label{8.UepsCloseToaeps}
|a_\v-U_\v|\leq C{\rm e}^{-\frac{{\alpha}R}{\v}}\text{ in }V_{R}:=\{x\in\O\,|\,\dist(x,\p\o_\v)\geq R\},
\end{equation}
\begin{equation}\label{7.GradUepsCloseToaeps}
|\n U_\v|\leq \frac{C{\rm e}^{-\frac{ \alpha R}{\v}}}{\v}\text{ in }W_{R}:=\{x\in\O\,|\,\dist(x,\p\o_\v),\dist(x,\p\O)\geq R\}.
\end{equation}
\end{prop}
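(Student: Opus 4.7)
My plan exploits the fact that on sets where $a_\v$ is locally constant (equal to $b$ or $1$), $W:=U_\v-a$ satisfies a linear elliptic equation with a strictly positive zero-order coefficient. For $x_0\in V_R$, the assumption $\dist(x_0,\p\o_\v)\geq R$ guarantees that $a_\v\equiv a\in\{b,1\}$ on $D:=B(x_0,R)\cap\O$, and factoring $a^2-U_\v^2=-(U_\v-a)(U_\v+a)$ in \eqref{8.EquationforU} yields
$$-\v^2\Delta W+U_\v(U_\v+a)\,W=0\quad\text{in }D.$$
Since $b\leq U_\v\leq 1$ and $a\geq b$, the coefficient is bounded below by $2b^2$, and $|W|\leq 1-b$ on $\p D$ (trivially on $\p B(x_0,R)\cap\O$; on $\p D\cap\p\O$ one has $U_\v=1$, so $W=1-a\in\{0,1-b\}$).

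For \eqref{8.UepsCloseToaeps}, I introduce the radial barrier
$$\psi(x):=(1-b)\,\frac{\cosh(\alpha|x-x_0|/\v)}{\cosh(\alpha R/\v)},\qquad \alpha:=\tfrac{b}{2}.$$
A direct computation in polar coordinates, combined with the elementary bound $\tanh(s)\leq s$, shows that the extra first-order contribution $(1/r)\p_r\psi$ produced by the 2D Laplacian is absorbed by the strictly positive gap $U_\v(U_\v+a)-\alpha^2\geq 2b^2-b^2/4$. Hence $\psi$ is a supersolution of $-\v^2\Delta+U_\v(U_\v+a)$ on $D$, and $\psi\equiv 1-b$ on $\p D$ dominates $|W|$ there. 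The comparison principle yields $|W(x_0)|\leq\psi(x_0)\leq 2(1-b)\,\mathrm{e}^{-\alpha R/\v}$.

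For \eqref{7.GradUepsCloseToaeps}, I rescale at scale $\v$. For $x_0\in W_R$ the additional assumption $\dist(x_0,\p\O)\geq R$ ensures $B(x_0,R)\subset\O$. The rescaled function $\widetilde W(y):=W(x_0+\v y)$ satisfies the linear equation $-\Delta_y\widetilde W+q(y)\widetilde W=0$ on $B(0,R/\v)$ with bounded coefficients $q\in[2b^2,2]$, and applying the first part with radius $R/2$ gives $\|\widetilde W\|_{L^\infty(B(0,1))}\leq C\,\mathrm{e}^{-\alpha R/(2\v)}$. Standard interior elliptic regularity ($W^{2,p}$ estimates followed by Morrey embedding) yields $|\n_y\widetilde W(0)|\leq C\,\|\widetilde W\|_{L^\infty(B(0,1))}$; unscaling produces $|\n U_\v(x_0)|\leq C\,\mathrm{e}^{-\alpha R/(2\v)}/\v$, which is \eqref{7.GradUepsCloseToaeps} after relabeling $\alpha$.

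The only delicate point is the barrier design in Step~2: the $(1/r)\p_r$ term of the 2D radial Laplacian must be dominated by the zero-order coefficient, which forces $\alpha$ to be strictly below the optimal rate $\sqrt{2}\,b$. The possibility that $x_0\in V_R$ lies close to $\p\O$ in the first estimate causes no real difficulty, since $U_\v=1$ on $\p\O$ automatically makes $W$ small on that portion of $\p D$.
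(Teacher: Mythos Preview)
Your argument is correct and self-contained. The paper itself does not supply a proof of this proposition; it merely records that the same statement was established as Proposition~2 in \cite{publi3} and that the argument carries over verbatim. Your barrier-plus-rescaling route is the standard way to obtain such exponential decay and is presumably what that reference does as well, so there is nothing substantively different to compare.

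Two minor clarifications are worth recording. First, in the barrier computation the inequality $\tanh s\le s$ gives $\sinh(s)/s\le\cosh(s)$, hence $-\v^2\Delta\psi\ge -2\alpha^2\psi$; the effective gap is therefore $2b^2-2\alpha^2=3b^2/2$ when $\alpha=b/2$, not $2b^2-b^2/4$ as you wrote. This is cosmetic, since only the sign matters. Second, your sentence ``$U_\v=1$ on $\partial\Omega$ automatically makes $W$ small'' is slightly misleading: on $\partial D\cap\partial\Omega$ one has $W=1-a$, which vanishes only because $a=1$ there. The reason $a=1$ near $\partial\Omega$ is that in the paper's construction $\overline{\omega_\v}\subset\Omega$, so if $a\equiv b$ on $D$ then $D\subset\omega_\v$ forces $B(x_0,R)\subset\Omega$, and $\partial D\cap\partial\Omega=\emptyset$. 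With this adjustment the boundary comparison goes through in both cases.
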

A similar result was proved in \cite{publi3} (Proposition 2). The above proposition yields by the same arguments.%Note that in Chapter \ref{Struct.Chapt3}, \eqref{7.GradUepsCloseToaeps} is established in $V_R$ instead of $W_R$ but due to the fact that the inclusions can approach $\p\O$, the argument used to get \eqref{7.GradUepsCloseToaeps} in $V_R\setminus W_R$ is no more available.

As in \cite{LM1}, we define
\[
F_\ve(v) = \frac{1}{2} \int_{\O} \left\{U_{\ve}^2 |\n v|^2 + \frac{1}{2 \ve^2} U_{\ve}^4(1-|v|^2)^2\right\}.
\]

Then we have for all $v\in H^1_g$, (see \cite{LM1}) 
\[
E_\v(U_\v v)=E_\v(U_\v)+F_\v(v).
\]
Therefore, $u_\v$ is a minimizer of $E_\v$ if and only if $u_\v=U_\v v_\v$ where $v_\v$ is a minimizer of $F_\v$ in $H^1_g$. Consequently, the study of a minimizer $u_\v=U_\v v_\v$ of $E_\v$ in $H^1_g$ (location of zeros and asymptotics) can be performed by combining the asymptotic of $U_\v$ with one of $v_\v$.% the location of the zeros and the asymptotic of $v_\v$.  

Our main result is the following
\begin{thm}\label{THMMAIN}
Assume that $\lambda,\delta$ satisfy \eqref{8.MainHyp} and that $\lambda\to0$. \vspace{2mm}
\\{\rm Quantization.} There are $\v_0>0$, $c>0$ and $\eta_0>0$ s.t. for $0<\v<\v_0$:
\begin{enumerate}
%\item $F_\v(v_\v)= J_{\v,\v}+ db^2(\pi\ln b^2+\gamma)+o_\v(1)$ with $J_{\v,\v}$ defines in \eqref{8.StatementSecondAuxProblemDir} and $\gamma>0$ is a universal constant defines in \cite{BBH} Lemma IX.1,
\item $v_\v$ has exactly $d$ zeros $x_1^\v,...,x_d^\v$, 
\item $B(x_i^\v,c\lambda\delta)\subset\o_\v$, 
\item for $\rho=\rho(\v)\downarrow0$ s.t. $|\ln\rho|/|\ln\v|\to0$, there is $C>0$ independent of $\v$ satisfying
\[
\text{$\displaystyle|v_\v|\geq1-C\sqrt{\frac{|\ln\rho|}{|\ln\v|}}$ in $\O\setminus\cup_{}\overline{B_{}(x^\v_i,\rho)}$,}
\]
\item for $\v<\v_0$\begin{itemize}\item[$\bullet$] There are two repulsive effects: $|x^\v_i-x^\v_j|\geq\eta_0$ for $i\neq j$ and $\dist(x^\v_i,\p\O)\geq\eta_0$;\item[$\bullet$]$\deg_{\p B(x_i^\v,\delta)}(v_\v)=1$.\end{itemize}
\end{enumerate}
{\rm Location.} \begin{itemize}
\item[$\bullet$]The macroscopic location of the zeros tends to minimize the renormalized energy of Bethuel-Brezis-Hélein $W_g:\{\{x_1,...,x_d\}\subset\O\,|\,x_i\neq x_j\text{ for }i\neq j\}\to\R$ (defined in \cite{BBH}, Chapter I Eq. (47)):
\[
\limsup W_g(x^\v_1,...,x^\v_d)=\min_{\substack{a_1,...,a_d\in\O\\a_i\neq a_j}} W_g(a_1,...,a_d)
\] 
%Here the renormalized energy $W_g$ is considered with all the degrees equal to $1$, thus we do not specify the degrees in its notation.
\item[$\bullet$]The microscopic location of the zeros inside $\o_\v$ tends to depend {\bf only} on $\o$ and $b$: 
\begin{itemize}
\item since $x_i^\v\in\o_\v$, we have $x_i^\v=(k_\v\delta,l_\v\delta)+\lambda\delta y_i^\v$ with $k_\v,l_\v\in\Z$ and $y_i^\v\in \w$; 
\item for $\v_n\downarrow0$ s.t. $y_i^{\v_n}\to \hat{\hat{a}}_i$, we have $\hat{\hat{a}}_i\in\o$ which minimizes a renormalized energy $\tilde{W}_1:\o\to\R$ (given in \cite{publi3} Eq. (90)) which depends only on $\o$ and $b\in(0,1)$.
\end{itemize}
\end{itemize}
\end{thm}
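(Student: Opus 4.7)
\medskip
\noindent\textbf{Proof plan for Theorem \ref{THMMAIN}.}

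The plan is to exploit the Lassoued--Mironescu decoupling so that the problem becomes the study of the minimizer $v_\v$ of $F_\v$ in $H^1_g$, which has weight $U_\v^2$ concentrating the vortex cost: inside $\o_\v$ the weight is essentially $b^2$, outside it is essentially $1$ (by Proposition~\ref{P8.UepsCloseToaeps}). The backbone of the argument is a matching upper/lower bound for $F_\v(v_\v)$. For the upper bound I would construct an explicit test map $v^{\rm test}$: fix $d$ candidate points $a_1^*,\dots,a_d^*\in\O$ minimizing the Bethuel--Brezis--H\'el\'ein renormalized energy $W_g$; snap each $a_i^*$ to the closest cell $Y_{k,l}^\delta$ and place inside this cell a vortex at the microscopic optimum $\hat{\hat a}_i^*$ of $\tilde W_1$; paste between these vortex cores (using a model profile $f(|x|/\v)\,{\rm e}^{i\theta}$), the canonical harmonic map of \cite{BBH} on $\O$ deprived of small discs, and a transition in each impurity. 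A careful energy count should give
\[
F_\v(v^{\rm test})\le \pi d\,b^2|\ln(\lambda\delta)|+\pi d |\ln(\v/(\lambda\delta))|+W_g(a_1^*,\dots,a_d^*)+d\,\tilde W_1(\hat{\hat a}_i^*)+O(1),
\]
with the extra $\pi d(1-b^2)|\ln(\lambda\delta)|$ discount explaining why pinning is favorable.

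For the lower bound I would run a Sandier--Jerrard vortex ball construction for $v_\v$ with respect to the weight $U_\v^2$, using the product estimate to separate the macroscopic logarithmic cost $\pi d|\ln(\v/(\lambda\delta))|$ (outside shrinking balls of radius $\sim \lambda\delta$) from the microscopic cost $\pi d\,b^2 |\ln(\lambda\delta/\v)|$ (inside the impurities containing the vortices), carefully tracking the total winding number $d$ imposed by $g$. Matching with the upper bound up to $O(1)$ forces: (i) the total number of vortex balls to be exactly $d$ each carrying degree $1$; (ii) each vortex ball to lie inside an impurity $\delta\cdot\w^\lambda+(\delta k,\delta l)$, since any vortex in $\{U_\v\ge 1-o(1)\}$ would overspend by $\pi(1-b^2)|\ln(\lambda\delta)|\gg1$; (iii) the macroscopic configuration to be asymptotically $W_g$-minimizing and, after blow-up on the scale $\lambda\delta$ around each $x_i^\v$, the microscopic configuration to be $\tilde W_1$-minimizing. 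The repulsion $|x_i^\v-x_j^\v|\ge\eta_0$ and $\dist(x_i^\v,\p\O)\ge\eta_0$ follows because $W_g\to+\infty$ when two vortices collide or when one hits $\p\O$. The clearing-out estimate for $|v_\v|$ on $\O\setminus\bigcup\overline{B(x_i^\v,\rho)}$ is obtained from the upper bound together with a standard $\eta$-compactness / Pohozaev argument applied to $v_\v$ on annuli, combined with hypothesis \eqref{8.MainHyp} to absorb the error terms.

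The main obstacle I expect is the sharp lower bound: the weight $U_\v^2$ jumps across $\p\o_\v$ and the vortex ball construction must respect this two-scale geometry so that the $b^2|\ln(\lambda\delta)|$ and $|\ln(\v/(\lambda\delta))|$ contributions are separated with only $O(1)$ error. This requires a two-step growing of balls (first up to scale $\lambda\delta$ inside the inclusion containing the vortex, then up to macroscopic scale in $\O$), with degree conservation arguments guaranteeing that balls which merge carry additive degrees. Once this two-scale lower bound is in hand, the quantization, pinning, degree-$1$ property and the macroscopic/microscopic minimizing properties follow by the usual argument: subtract the lower bound from the upper bound and observe that a non-optimal choice of $(x_i^\v)$ would violate the inequality; the hypothesis \eqref{8.MainHyp} serves precisely to ensure that the remainders $|\ln(\lambda\delta)|^3/|\ln\v|$ produced by the ball construction are negligible against the $O(1)$ quantities $W_g$ and $\tilde W_1$.
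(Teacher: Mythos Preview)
Your overall upper/lower strategy is the right shape, but the paper takes a different route and your plan has genuine gaps.

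The paper does not attempt a sharp two-scale ball-growth lower bound. Instead it uses $\eta$-ellipticity (Lemma~\ref{L8.BadDiscsLemma}) to cover $\{|v_\v|<7/8\}$ by $\rho$-bad discs with $\v\ll\rho\ll\lambda\delta$, and then the key reduction is Proposition~\ref{P8.StudyOfS1Part}: $v_\v/|v_\v|$ is an almost minimizer (to $O(1)$) of the auxiliary $\S^1$-valued problem $I_{\rho,\v}$ on the perforated domain. The error there is $\sqrt{|\ln\rho|^3/|\ln\v|}$, and this is precisely where \eqref{8.MainHyp} is consumed. All the structural conclusions---repulsion, $d_i=1$, and $B(x_i,c\lambda\delta)\subset\o_\v$---are then read off from Proposition~\ref{P8.ToMinimizeSecondPbThePointAreFarFromBoundAndHaveDegree1}, which is a statement about the $\S^1$-problem alone, proved by ring-by-ring comparisons exploiting the near-periodicity of $U_\v$ (Lemma~\ref{L8.UIsAlmostPeriodic}, Proposition~\ref{P.LaProp.quifmqshfsqmdjfh}). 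Getting these conclusions to $O(1)$ by a weighted Sandier--Jerrard construction across the jump of $U_\v^2$ is not standard, and your plan gives no mechanism for it.

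Three further gaps. (i) Matching to $O(1)$ only yields almost minimality; forcing $W_g(x^\v)\to\min W_g$ requires the $o(1)$ expansion $F_\v(v_\v)=J_{\v,\v}+db^2(\pi\ln b+\gamma)+o(1)$ (Proposition~\ref{P8.ExactExpanding}, Theorem~\ref{T8.MainThm3}) together with the separate comparison of $J_{\rho,\v}$ with the unweighted problem using the dilution $\lambda\to0$ (Proposition~\ref{P.RenormalizedBBHEnergy}, via Lemma~\ref{L.LargeCircleSmallInclusionLemma}); the $\tilde W_1$ statement is imported from \cite{publi3}. (ii) Vortex balls are not zeros: the paper first bounds $\v^{-2}\int_\O(1-|v_\v|^2)^2$ (Proposition~\ref{P8.BoundAnnulusArroundBadDiscs}), passes to small bad discs of radius $R\v$, and uses blow-up plus Mironescu's classification \cite{M2} and \cite{BCP1} to obtain exactly one zero per disc (Corollary~\ref{C8.AuniqueZeroInsideEachBadBalls}). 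Your plan omits this step. (iii) Your displayed upper bound has the $b^2$ on the wrong logarithm: since $U_\v\approx b$ inside an inclusion and $U_\v\approx1$ outside (diluted case), a pinned vortex pays $\pi b^2\ln(\lambda\delta/\v)$ on the inner scale and $\pi|\ln(\lambda\delta)|$ on the outer, so the pinning discount is $\pi(1-b^2)\ln(\lambda\delta/\v)$, not $\pi(1-b^2)|\ln(\lambda\delta)|$; compare Lemma~\ref{L8.UpperboundAuxPb}.
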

\begin{remark}\begin{enumerate}
\item The renormalized energy defined in \cite{BBH}
\[
W_g:\{\{x_1,...,x_d\}\subset\O\,|\,x_i\neq x_j\text{ for }i\neq j\}\to\R
\]
governs the location of the zeros in the situation where $a_\v\equiv1$ (homogenous case): the zeros tend to minimize $W_g$. In \cite{BBH} (Chapter 1), the authors defined a renormalized energy in a more general setting 
\[
W_g^{\rm BBH}:\left\{\{(x_1,d_1),...,(x_N,d_N)\}\,\left|\,\begin{array}{c}x_i\in\O,\,x_i\neq x_j\text{ for }i\neq j\\d_i\in\Z\text{ is s.t. }\sum_{i=1}^N d_i=d\end{array}\right.\right\}\to\R.\]
Here $W_g(x_1,...,x_d)=W_g^{\rm BBH}(\{(x_1,1),...,(x_d,1)\})$, \emph{i.e.}, in this article we will consider only the renormalized energy with the degrees equal $1$ and thus we do not specify the degrees in its notation.
\item From smoothness of $W_g$ (see \cite{BBH} and \cite{CM1}), \emph{Location part} of Theorem \ref{THMMAIN} implies that up to pass to a subsequence, the zeros converge to a minimizer of $W_g$. 
\item This macroscopic location is strongly correlated with the Dirichlet boundary condition $g\in C^\infty(\p\O,\S^1)$. %Moreover, minimizers of $W_g$ are subject of two repulsive effects: there is $\eta_0>0$ (depending only on $\O$ and $g$) s.t. if $\{a_1,...,a_d\}$ minimizes $W_g$, then $|a_i-a_j|\geq\eta_0$ and $\dist(a_i,\p\O)\geq\eta_0$, $i,j\in\{1,...,d\},i\neq j$.
\item The result about the macroscopic position of the periodic and diluted pinning term may be sum up as: \emph{the macroscopic position of the zeros tends to be the same than in the homogenous case ($a_\v\equiv1$)}.
\item The microscopic location of the zeros (position inside an inclusion) is independent of the boundary condition. For example, in the situation $\o=B(0,r_0)$, \emph{i.e.}, the inclusions are discs, this location should be the center of the inclusion. This fact is not proved yet.
\item In Assertion 4. of  {\it Quantization part}, $\deg_{\p B(x_i^\v,\delta)}(v_\v)=\deg_{\p B(x_i^\v,\delta)}(v_\v/|v_\v|)$.
\end{enumerate}
\end{remark}
\section{Main results}
We present in this section several extensions of the above result dropping either the dilution of the inclusion ($\lambda\equiv1$ instead of $\lambda\to0$) or the periodic structure. The main results of this section are obtained under the condition: $\lambda\delta$ satisfies \eqref{8.MainHyp}. 

Our sharper results are shared into four theorems:
\begin{enumerate}[$\bullet$]
\item The first theorem (Theorem \ref{T8.MainThm1}) gives informations on the zeros of minimizers $u_\v,v_\v$ (quantization and location).
\item The second theorem (Theorem \ref{T8.MainThm2}) establishes the asymptotic behavior of $v_\v$. 
\item The  third theorem (Theorem \ref{T8.Localisationdanslesinclusions}) establishes, under the additional hypothesis $\lambda\to0$, that the microscopic position of the zeros is independent of the boundary condition $g$.
\item The last theorem (Theorem \ref{T8.MainThm3}) gives an expansion of $F_\v(v_\v)$.
\end{enumerate}

The technics developed in this paper allows to consider either the case $\lambda\to0$ or $\lambda\equiv1$. The results in the diluted case are more precise. One may drop the periodic structure for the pinning term and consider impurities (the connected components of $\o_\v=\{a_\v=b\}$) with different sizes (adding the hypothesis $\lambda\to0$). 

More precisely we may consider the pinning term defined as follow: 
\begin{construction}The general diluted pinning term
\begin{enumerate}[$\bullet$]
\item Fix $\num\in\N^*$, $j\in\{1,...,\num\}$ and $1>\v>0$. We consider $M_j^\v\in\N$ and 
\begin{equation*}%\label{SepHyp}
\mathcal{M}_j^\v=\begin{cases}\emptyset&\text{if }M_j^\v=0\\\{1,...,{M_j^\v}\}&\text{if }M_j^\v\in\N^*\end{cases}.
\end{equation*}
\item The sets $\M_j^\v$'s are s.t. (for sufficiently small $\v$) one may fix $y_{i,j}^\v\in\O$ s.t. for $(i,j)\neq(i',j')$, $i\in\M_j^\v,\,i'\in\M_{j'}^\v$ we have
\begin{equation}\label{SepHyp}
|y_{i,j}^\v-y_{i',j'}^\v|\geq \delta^j+\delta^{j'}\text{ and }\dist(y^\v_{i,j},\p\O)\geq\delta^j.
\end{equation}
We denote $\WM^\v_j:=\{y_{i,j}^\v\,|\,i\in\mathcal{M}^\v_j\}$.
 
For sake of simplicity, we assume that there is $\eta>0$ s.t. for small $\v$, we have $M_1^\v\geq d=\deg_{\p\O}(g)$ and 
\begin{equation}\label{Sizehyp}
\min\left\{\min_{i=1,...,d}\dist(y_{i,1}^\v,\p\O),\min_{\substack{i,i'=1,...,d\\i\neq i'}}|y_{i,1}^\v- y_{i',1}^\v|\right\}\geq\eta.
\end{equation}
\item We now define the domain which models the impurities:
\[
\o_\v=\bigcup_{j=1}^\num\bigcup_{i\in\M_j^\v}\left\{y_{i,j}^\v+\delta^j\cdot\o^\lambda\right\},\,\o^\lambda=\lambda\cdot\o.
\]
\end{enumerate}
The pinning term is
\[
\begin{array}{cccc}a_\v:&\R^2&\to&\{b,1\}\\&x&\mapsto&\begin{cases}1&\text{if }x\notin\o_\v\\b&\text{if }x\in\o_\v\end{cases}\end{array}
\]
The values of the pinning term are represented Figure \ref{RepresentationGenTC}.
\end{construction}
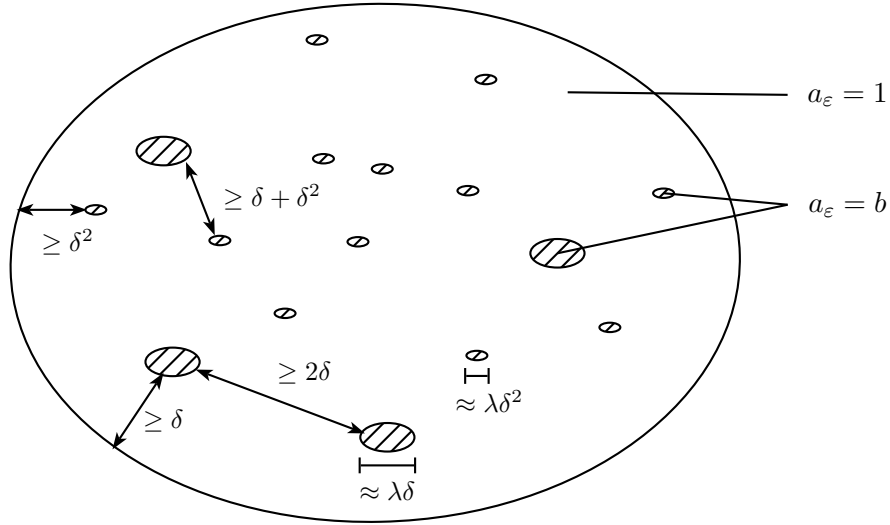
\begin{figure}[h]
\begin{center}
\psset{xunit=.850cm,yunit=.850cm,algebraic=true,dotstyle=o,dotsize=3pt 0,linewidth=0.8pt,arrowsize=3pt 2,arrowinset=0.25}
\begin{pspicture*}(-14.5,-2.2)(-0.225,6.2)
\rput{-178.73}(-8.39,1.97){\psellipse(0,0)(5.72,4.07)}
\rput{0}(-11.7,3.72){\psellipse[fillstyle=hlines](0,0)(0.44,0.24)}
\rput{0}(-11.56,0.42){\psellipse[fillstyle=hlines](0,0)(0.44,0.24)}
\rput{0}(-8.2,-0.76){\psellipse[fillstyle=hlines](0,0)(0.44,0.24)}
\rput{0}(-5.54,2.12){\psellipse[fillstyle=hlines](0,0)(0.44,0.24)}
\rput{0}(-8.66,2.3){\psellipse[fillstyle=hlines](0,0)(0.18,0.09)}
\rput{0}(-6.66,4.84){\psellipse[fillstyle=hlines](0,0)(0.18,0.09)}
\rput{0}(-6.8,0.52){\psellipse[fillstyle=hlines](0,0)(0.18,0.09)}
\rput{0}(-10.82,2.32){\psellipse[fillstyle=hlines](0,0)(0.18,0.09)}
\rput{0}(-12.76,2.8){\psellipse[fillstyle=hlines](0,0)(0.18,0.09)}
\rput{0}(-9.2,3.6){\psellipse[fillstyle=hlines](0,0)(0.18,0.09)}
\rput{0}(-9.3,5.46){\psellipse[fillstyle=hlines](0,0)(0.18,0.09)}
\rput{0}(-6.94,3.1){\psellipse[fillstyle=hlines](0,0)(0.18,0.09)}
\rput{0}(-8.28,3.44){\psellipse[fillstyle=hlines](0,0)(0.18,0.09)}
\rput{0}(-3.88,3.06){\psellipse[fillstyle=hlines](0,0)(0.18,0.09)}
\rput{0}(-4.72,0.96){\psellipse[fillstyle=hlines](0,0)(0.18,0.09)}
\rput{0}(-9.8,1.18){\psellipse[fillstyle=hlines](0,0)(0.18,0.09)}
\psline(-1.94,2.88)(-3.88,3.06)
\psline(-5.54,2.12)(-1.94,2.88)
\psline(-5.38,4.64)(-1.94,4.6)
\rput(-1,2.88){$a_\v=b$}
\rput(-1,4.6){$a_\v=1$}

\psline{|-|}(-7.75,-1.2)(-8.65,-1.2)
\psline{|-|}(-6.6,.2)(-7,.2)

\psline{<->}(-8.55,-.7)(-11.2,0.3)
\rput(-9.5,.25){\small$\geq2\delta$}

\psline{<->}(-10.9,2.4)(-11.35,3.55)
\rput(-10,3){\small$\geq\delta+\delta^2$}

\rput(-8.2,-1.6){\small$\approx\lambda\delta$}
\rput(-6.6,-.2){\small$\approx\lambda\delta^2$}

\psline{<->}(-12.5,-.95)(-11.7,0.25)
\rput(-11.70,-.5){\small$\geq\delta$}
\rput(-1.5,2.8){\psline{<->}(-12.5,0)(-11.4,0)
\rput(-11.70,-.5){\small$\geq\delta^2$}}
\end{pspicture*}
\caption{Representation of the general diluted pinning term with $\num=2$}\label{RepresentationGenTC}
\end{center}
\end{figure}

Our main results are
\begin{thm}\label{T8.MainThm1}
Assume that $\lambda,\delta$ satisfy \eqref{8.MainHyp} and if the pinning term is not periodic (represented Figure \ref{RepresentationGenTC}) then we assume also that $\lambda\to0$. 

There is $\v_0>0$ s.t.:
\begin{enumerate}
%\item $F_\v(v_\v)= J_{\v,\v}+ db^2(\pi\ln b^2+\gamma)+o_\v(1)$ with $J_{\v,\v}$ defines in \eqref{8.StatementSecondAuxProblemDir} and $\gamma>0$ is a universal constant defines in \cite{BBH} Lemma IX.1,
\item for $0<\v<\v_0$, $v_\v$ has exactly $d$ zeros $x_1^\v,...,x_d^\v$, 
\item there are $c>0$ and $\eta_0>0$ s.t. for $\v<\v_0$, $B(x_i^\v,c\lambda\delta)\subset\o_\v$ and 
\[
\min_i\left\{\min_{j\neq i}|x^\v_i-x^\v_j|,\dist(x^\v_i,\p\O)\right\}\geq\eta_0.
\]
In particular, if the pinning term is not periodic, then the zeros are trapped by the largest inclusions (those of size $\lambda\delta$).
\item for $\rho=\rho(\v)\downarrow0$ s.t. $|\ln\rho|/|\ln\v|\to0$, we have for $\v<\v_0$, 
\[\text{$\displaystyle|v_\v|\geq1-C\sqrt{\frac{|\ln\rho|}{|\ln\v|}}$ in $\O\setminus\cup_{}\overline{B_{}(x^\v_i,\rho)}$.}
\]
Here $C$  is independent of $\v$.
\item for $\v<\v_0$, $\deg_{\p B(x_i^\v,\delta)}(v_\v)=1$.
\end{enumerate}
\end{thm}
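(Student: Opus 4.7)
The strategy follows the scheme of Bethuel--Brezis--Hélein adapted via the Lassoued--Mironescu decomposition $E_\v(U_\v v)=E_\v(U_\v)+F_\v(v)$ and a Sandier--Jerrard vortex-ball construction carried out for the weighted functional $F_\v$. The whole argument rests on a matched upper/lower bound of the form $F_\v(v_\v)=\pi d|\ln\v|+O(|\ln(\lambda\delta)|)$, the slack of which is controlled by \eqref{8.MainHyp}.

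\textbf{Step 1 (Upper bound).} Build a test map $v_{\rm test}\in H^1_g$ having exactly $d$ degree-one vortices whose cores are placed in $d$ distinct size-$\lambda\delta$ inclusions, with macroscopic positions close to a minimizer of $W_g$ and microscopic positions close to a minimizer of $\tilde{W}_1$. Near each core use the canonical vortex profile at scale $\v/b$, elsewhere a harmonic phase matching $g$. Using Proposition \ref{P8.UepsCloseToaeps} to replace $U_\v$ by $a_\v$ away from $\p\o_\v$, a direct evaluation yields $F_\v(v_{\rm test})\leq\pi d|\ln\v|+C$.

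\textbf{Step 2 (Bad discs, degrees, and weighted lower bound).} Apply $\eta$-ellipticity for the pinned Ginzburg--Landau equation (obtained in \cite{publi3}) to cover $\{|v_\v|\leq 1/2\}$ by a controlled number of discs of radius $\v^\alpha$. Running the Sandier--Jerrard ball-growth procedure with the weight $U_\v^2\in[b^2,1]$, one gets disjoint balls $B(x_i^\v,r_i)$ containing all the bad discs, satisfying
\[
F_\v(v_\v;B(x_i^\v,r_i))\geq \pi|d_i|\log\frac{r_i}{\v}+O(1),\qquad d_i=\deg\!\bigl(v_\v/|v_\v|,\p B(x_i^\v,r_i)\bigr).
\]
Comparing with Step~1 and using $\sum d_i=d$ forces $d_i\in\{0,1\}$ and exactly $d$ degree-one balls.

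\textbf{Step 3 (Pinning inside largest inclusions).} A vortex sitting outside $\o_\v$ pays an extra core cost $\approx\pi(1-b^2)|\ln(\lambda\delta/\v)|$ compared with one sitting deep inside a size-$\lambda\delta$ inclusion. By \eqref{8.MainHyp} this excess exceeds the $O(|\ln(\lambda\delta)|)$ slack in the matching, so each $x_i^\v$ must lie in $\o_\v$ at distance $\geq c\lambda\delta$ from $\p\o_\v$. In the non-periodic setting with varying inclusion scales $\lambda\delta^j$, the same comparison shows that smaller inclusions ($j\geq 2$) cannot host a vortex because the gain in core cost is only $\pi(1-b^2)|\ln(\lambda\delta^j/\v)|$, still insufficient once compared with the sharp upper bound using the largest inclusions. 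The macroscopic separations $|x_i^\v-x_j^\v|,\dist(x_i^\v,\p\O)\geq\eta_0$ follow from the fact that $(x_1^\v,\ldots,x_d^\v)$ is a quasi-minimizer of $W_g$, which blows up on the diagonal and near $\p\O$.

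\textbf{Step 4 (Modulus estimate and exact count).} Standard Pohozaev/elliptic-regularity arguments applied to $F_\v$ (as in \cite{BBH}, Chapter III, and \cite{LM1}) give
\[
\int_{\O\setminus\cup B(x_i^\v,\rho)}\frac{1}{\v^2}(a_\v^2-|u_\v|^2)^2\leq C|\ln\rho|,
\]
from which the pointwise estimate $|v_\v|\geq 1-C\sqrt{|\ln\rho|/|\ln\v|}$ outside the small discs follows by an $L^\infty$--$L^2$ interpolation. This modulus bound precludes any further zero of $v_\v$ outside the tagged balls, and the classical BBH analysis of a minimizer near a degree-one vortex forces exactly one zero per ball, yielding assertions 1, 3 and 4.

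\textbf{Main obstacle.} The delicate step is Step~3: the Sandier--Jerrard lower bound must be run accurately enough with the discontinuous weight $U_\v^2$ to detect the $\pi(1-b^2)|\ln(\lambda\delta)|$ energy differential between placing a vortex inside versus outside an inclusion (and between size-$\lambda\delta$ and size-$\lambda\delta^j$ inclusions in the non-periodic case). This is precisely where \eqref{8.MainHyp} intervenes, and requires a careful localization of the ball construction on each inclusion compatible with the scale hierarchy $\v\ll\lambda\delta^\num\ll\cdots\ll\lambda\delta\ll 1$.
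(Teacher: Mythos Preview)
Your proposal has the right overall architecture (upper bound, $\eta$-ellipticity, bad discs, lower bound comparison), but it misses the paper's central mechanism and contains a circular step.

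\textbf{The main gap: reduction to the $\S^1$-valued auxiliary problem.} The paper does \emph{not} obtain the degree, separation, and pinning conclusions by sharpening the Sandier--Jerrard ball construction directly with the weight $U_\v^2$. Instead, after isolating $\rho$-bad discs (Proposition~\ref{P8.BadDiscSeparation}), the key step is Proposition~\ref{P8.StudyOfS1Part}: one shows that $w_n=v_{\v_n}/|v_{\v_n}|$, restricted to $\O\setminus\cup\overline{B(x_i,\rho)}$, is an \emph{almost minimal} map for the purely $\S^1$-valued problem $I_{\rho,\v}$ of \eqref{8.StatementSecondAuxProblem}. This is exactly where hypothesis \eqref{8.MainHyp} enters: the error $\int U_\v^2(1-|v|^2)|\nabla w|^2\leq C\sqrt{|\ln\rho|/|\ln\v|}\cdot|\ln\rho|$ is $o(1)$ only because $|\ln\rho|^3/|\ln\v|\to0$. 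Once this reduction is made, the conclusions $N=d$, $d_i=1$, $|x_i-x_j|\geq\eta_0$, $\dist(x_i,\p\O)\geq\eta_0$, and $B(x_i,c\lambda\delta)\subset\o_\v$ are read off from Proposition~\ref{P8.ToMinimizeSecondPbThePointAreFarFromBoundAndHaveDegree1} (periodic case) or Proposition~\ref{P8.CorolSecondAuxPb} (non-periodic case), which analyse $I_{\rho,\v}$ via weighted ring energies $\mu_\v(\Ring(x,R,r),1)$ and Lemmas~\ref{L8.AtomicPartEnergy}--\ref{Lestimatesurdescercle}. Your Step~2 lower bound (stated without the $b^2$ factor, which is incorrect for balls inside inclusions where $U_\v^2\approx b^2$) gives at best $\pi b^2\sum|d_i|\ln(r_i/\v)$; this is not sharp enough on its own to force separation or pinning, because the position information lives in the ring energies outside the bad discs, not in the bad discs themselves.

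\textbf{The circular step.} In Step~3 you deduce the macroscopic separations $|x_i^\v-x_j^\v|,\dist(x_i^\v,\p\O)\geq\eta_0$ from the fact that $(x_1^\v,\ldots,x_d^\v)$ quasi-minimizes $W_g$. But $W_g(x_1,\ldots,x_d)$ is only defined for \emph{distinct} points, and the quasi-minimization of $W_g$ is a consequence (Proposition~\ref{P.RenormalizedBBHEnergy}, proved only in the diluted case) that requires the separation as input. In the paper, separation and distance to $\p\O$ come first, from the auxiliary problem (Corollary~\ref{C8.TheCenterAreInGoodPosition}), and the link with $W_g$ is established afterwards.

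Smaller points: the potential bound in Step~4 is obtained in the paper not by Pohozaev but by combining the already-established structure with Theorem~1 and Proposition~1 of \cite{BOS1} applied inside each $\rho$-bad disc (Proposition~\ref{P8.BoundAnnulusArroundBadDiscs}); and the exact count of zeros (one per small bad disc) uses the blow-up limit and Mironescu's uniqueness result \cite{M2} together with \cite{BCP1} (Corollary~\ref{C8.AuniqueZeroInsideEachBadBalls}), not just a modulus estimate.
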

\begin{remark}
Hypothesis \eqref{Sizehyp} is used to simplify the statements. Without this hypothesis, some of the results are subject to technical considerations on $\delta,\lambda,b...$ For example if we consider the pinning term $a_\v$ defined in $\O=B(0,2)$ by
\[
\begin{array}{cccl}
a_\v:&B(0,2)&\to&\{b,1\}\\&x&\mapsto&\begin{cases}b&\text{if } x\in B(0,\lambda\delta)\cup B(1,\lambda\delta^2)\\1&\text{otherwise}\end{cases}
\end{array},
\]
and $g\in C^\infty(\p\O,\S^1)$ s.t. $\deg_{\p\O}(g)=2$, then Hypothesis \eqref{Sizehyp} is not satisfied. In this situation, we may prove that, for sufficiently small $\v$, $v_\v$ has exactly two zeros and if $2(1-2b^2)|\ln\lambda|+(1-3b^2)|\ln\delta|\to+\infty$ (resp. $-\infty$), then the zeroes are in $B(0,\lambda\delta)$ (resp. there is one zero inside $B(0,\lambda\delta)$ and one zero inside $B(1,\lambda\delta^2)$).
 
\end{remark}
\begin{thm}\label{T8.MainThm2}
Assume that $\lambda,\delta$ satisfy \eqref{8.MainHyp} and if the pinning term is not periodic (represented Figure \ref{RepresentationGenTC}) then we assume also that $\lambda\to0$.

Let $\v_n\downarrow0$, up to a subsequence, we have the existence of $a_1,...,a_d\in\O$, $d$ distinct points s.t. $x_i^{\v_n}\to a_i$ and
\[
|v_{\v_n}|\to1\text{ and }v_{\v_n}\weak v_*\text{ in }H^1_{\rm loc}(\overline{\O}\setminus\{a_1,...,a_d\},\S^1)
\]
where $v_*$ solves
%\begin{enumerate}[{Case} 1: ]\item $\lambda\equiv1$
\begin{equation}\nonumber%\label{8.eqlimitv0epsegdel}
\begin{cases}
-\Div(\mathcal{A}\n v_*)=(\mathcal{A}\n v_*\cdot\n v_*)v_*&\text{in }\O\setminus\{a_1,...,a_d\}
\\
v_*=g&\text{on }\p\O
\end{cases}.
\end{equation}
Here $\mathcal{A}$ is the homogenized matrix of $a^2\left(\displaystyle\frac{\cdot}{\delta}\right){\rm Id}_{\R^2}$ if $\lambda\equiv1$ and $\mathcal{A}={\rm Id}_{\R^2}$ if $\lambda\to0$.

In addition, for each $M>0$, $v'_{\v,i}(\cdot)=v_\v\left(x_i^\v+\displaystyle\frac{\v }{b}\cdot\right)$ converges, up to a subsequence, in $C^1(B(0,M))$ to $f(|x|)\displaystyle\frac{x}{|x|}{\rm e}^{\imath\theta_i}$ where $f:\R^+\to\R^+$ is the universal function defined in \cite{M2} and $\theta_i\in\R$.
%\item $\lambda \to0$
%\begin{equation}\nonumber%\label{8.eqlimitv0epsegdel}
%\begin{cases}
%-\Delta v_*=v_*|\n v_*|^2&\text{in }\O\setminus\{a_1,...,a_d\}
%\\
%v_*=g&\text{on }\p\O
%\end{cases}.
%\end{equation}
\end{thm}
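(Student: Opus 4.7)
\emph{Compactness.} The proof has three stages. By Assertion~2 of Theorem~\ref{T8.MainThm1} the zeros are separated by some $\eta_0>0$ and stay $\eta_0$-away from $\p\O$; diagonal extraction along $\v_n\downarrow 0$ yields $x_i^{\v_n}\to a_i\in\O$ with distinct $a_i$. Fix a compact $K\subset\overline\O\setminus\{a_1,\ldots,a_d\}$. For $n$ large, $K$ avoids the $\eta_0/3$-balls around the $x_i^{\v_n}$, so Assertion~3 of Theorem~\ref{T8.MainThm1} applied with, say, $\rho_n=|\ln\v_n|^{-1}$ gives $\||v_{\v_n}|-1\|_{L^\infty(K)}\to 0$. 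A standard upper bound via an explicit competitor (harmonic map away from the $x_i^\v$ glued to the Mironescu profile near each vortex) provides $F_\v(v_\v;K)\leq C_K$ uniformly in $\v$; hence, up to a subsequence, $v_{\v_n}\weak v_*$ in $H^1(K)$, $|v_*|=1$ a.e., and the Dirichlet datum $g$ passes to the trace.

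\emph{Limit PDE and homogenization.} The Euler--Lagrange equation of $F_\v$, tested against $i\Phi v_\v$ with real scalar $\Phi\in C_c^\infty(\O)$, loses its potential term (since $v_\v\cdot iv_\v\equiv 0$) and yields
\[
\int_\O U_\v^2\, j_\v\cdot\n\Phi\,{\rm d}x = 0,\qquad j_\v:=v_\v\wedge\n v_\v,
\]
i.e., $\Div(U_\v^2 j_\v)=0$ in $\O$. By Proposition~\ref{P8.UepsCloseToaeps}, $U_\v^2$ may be replaced by $a_\v^2$ up to errors that vanish weakly. If $\lambda\to 0$, the volume fraction $|\{a_\v=b\}|=O(\lambda^2)\to 0$, so $U_\v^2\to 1$ strongly in every $L^p_{\rm loc}$ and $\mathcal{A}={\rm Id}$. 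If $\lambda\equiv 1$, then $a_\v^2=a^2(\cdot/\delta)$ is $\delta$-periodic and classical periodic homogenization (two-scale or $H$-convergence) produces a constant positive matrix $\mathcal{A}$ defined via cell problems on $Y$. Since $|v_{\v_n}|\to 1$ uniformly on $K$, one writes locally $v_{\v_n}=e^{i\phi_{\v_n}}$ up to $o(1)$, so $j_{\v_n}\weak\n\phi_*$ in $L^2$, and a div-curl/compensated compactness argument yields $U_\v^2 j_\v\weak \mathcal{A}\n\phi_*$. The limit identity $\Div(\mathcal{A}\n\phi_*)=0$ is equivalent to $-\Div(\mathcal{A}\n v_*)=(\mathcal{A}\n v_*\cdot\n v_*)v_*$.

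\emph{Microscopic profile.} By Assertion~2 of Theorem~\ref{T8.MainThm1}, $B(x_i^\v,c\lambda\delta)\subset\o_\v$, so $a_\v\equiv b$ there and Proposition~\ref{P8.UepsCloseToaeps} makes $|U_\v-b|$ and $|\n U_\v|$ exponentially small on $B(x_i^\v,c\lambda\delta/2)$. Setting $v'_{\v,i}(y)=v_\v(x_i^\v+\v y/b)$, the rescaled EL equation on $B(0,R_\v)$ with $R_\v=cb\lambda\delta/(2\v)\to+\infty$ (by \eqref{8.MainHyp}) becomes
\[
-\Delta v'_{\v,i}=v'_{\v,i}(1-|v'_{\v,i}|^2)+o(1).
\]
Uniform $L^\infty$ and $H^1_{\rm loc}$ bounds together with elliptic regularity yield a $C^1_{\rm loc}(\R^2)$ limit $V$ along a subsequence. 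Since $|v_\v|\to 1$ uniformly on compacts away from the $a_i$, the same holds for $V$ at infinity; combined with total degree $1$ (Assertion~4), Mironescu's classification \cite{M2} gives $V(y)=f(|y|)\frac{y}{|y|}e^{i\theta_i}$.

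\emph{Main obstacle.} The delicate step is the homogenization when $\lambda\equiv 1$, since both $U_\v^2$ and $v_\v$ may oscillate at scale $\delta$. The strategy above concentrates the oscillation in the phase $\phi_{\v_n}$ and uses compensated compactness to commute the weak limit with the product $U_\v^2 j_\v$; controlling the contribution of the tubular neighborhood of $\p\o_\v$ where Proposition~\ref{P8.UepsCloseToaeps} is ineffective, through a local energy estimate combined with \eqref{8.MainHyp}, constitutes the main technical burden.
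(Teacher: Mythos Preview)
Your three-stage structure (compactness, homogenized limit equation, blow-up) is the same as the paper's, but two steps contain genuine gaps. In the compactness step, the sentence ``a standard upper bound via an explicit competitor\ldots provides $F_\v(v_\v;K)\leq C_K$'' is incomplete: the competitor only bounds the \emph{full} energy, and you need in addition a matching lower bound $F_\v(v_\v;\cup_i B(x_i^\v,\eta))\geq \pi d b^2\ln(\eta/\v)-\mathcal{O}(1)$ near the vortices (Proposition~\ref{P8.BadDiscSeparation}.3, via the Sandier--Serfaty ball construction) to subtract and obtain the bound on $K$; the paper packages this as Proposition~\ref{P7.LocalizationOfTheEnergyPart1}. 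More seriously, in the microscopic step the inference ``since $|v_\v|\to 1$ uniformly on compacts away from the $a_i$, the same holds for $V$ at infinity'' is a non sequitur: a fixed radius $|y|=R$ in blow-up coordinates corresponds to $|x-x_i^\v|=R\v/b\to 0$, so the macroscopic modulus estimate tells you nothing about $|V(y)|$ for large $|y|$. The classification of \cite{M2} requires $\int_{\R^2}(1-|V|^2)^2<\infty$, which the paper deduces from the uniform potential bound $\v^{-2}\int_\O(1-|v_\v|^2)^2\leq C$ (Proposition~\ref{P8.BoundAnnulusArroundBadDiscs}, itself relying on the sharp energy expansion and~\cite{BOS1}); you never invoke this bound, and without it the blow-up limit could a priori be any entire Ginzburg--Landau solution. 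Note also that the paper blows up $u_\v/b=U_\v v_\v/b$ rather than $v_\v$, which satisfies the Ginzburg--Landau equation \emph{exactly} inside the inclusion and avoids your $o(1)$ error term.

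For the limit equation you propose $H$-convergence/compensated compactness, while the paper uses the periodic unfolding operator $\mathcal{T}_\delta$ (Proposition~\ref{P8.asymptotiquedegnonzero}, adapted from Sauvageot~\cite{MS1}). The unfolding route sidesteps the interface-layer issue you flag as the ``main technical burden'': all that is needed is $\mathcal{T}_\delta(U_\v^2)\to H_0(y)$ in $L^2(\O_0\times\tilde Y)$, which follows directly from Proposition~\ref{P8.UepsCloseToaeps} because the transition layer has vanishing measure in each cell. Your $H$-convergence approach is plausible but would still have to show that $U_\v^2$ and $a^2(\cdot/\delta)$ share the same $H$-limit, which is essentially the same difficulty in a different language.
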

\begin{thm}\label{T8.Localisationdanslesinclusions}
Assume, in addition to the hypotheses of Theorem \ref{T8.MainThm2}, that $\lambda\to0$. 

Let $[x]=[(x_1,x_2)]=([x_1],[x_2])\in\Z^2$ be the vectorial integer part of the point $x\in\R^2$.

For $x_i^{\v}$ a zero of $v_\v$, let 
\[
y_i^{\v}=\displaystyle \frac{\frac{x_i^{\v}}{\delta}-[\frac{x_i^{\v}}{\delta}]}{\lambda}\in \o.
\]
Then, as $\v\to0$, up to pass to a subsequence, we have $y_i^{\v}\to \hat{\hat{a}}_i\in\o$. Here, $\hat{\hat{a}}_i$ minimizes a renormalized energy $\tilde{W}_1:\o\to\R$ (given in \cite{publi3} Eq. (90)) which depends only on $\o$ and $b$. In particular, $\hat{\hat{a}}_i$ is independent of the boundary condition $g$. 
\end{thm}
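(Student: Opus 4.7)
The strategy is a blow-up analysis at the microscopic scale $\lambda\delta$ combined with the matching lower and upper energy expansions already used to establish Theorems~\ref{T8.MainThm2} and \ref{T8.MainThm3}.

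For each $i$, let $\Xi_i^\v := \delta\,[x_i^\v/\delta]$ denote the corner of the $\delta$-cell containing $x_i^\v$; then the rescaling $z \mapsto \Xi_i^\v + \lambda\delta z$ sends the unique impurity meeting this cell onto $\omega$, and sends $x_i^\v$ to $y_i^\v \in \omega$. By Theorem~\ref{T8.MainThm1} (Assertion~2), each $y_i^\v$ sits at distance $\geq c > 0$ from $\partial\omega$, so $\{y_i^\v\}_\v$ is relatively compact in $\omega$ and, along a subsequence, $y_i^\v \to \hat{\hat{a}}_i \in \omega$. The task is to identify $\hat{\hat{a}}_i$ as a minimizer of $\tilde W_1$.

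The key step is to split $F_\v(v_\v)$ across three concentric scales around each $x_i^\v$: microscopic ($R\lambda\delta$ with $R$ large but fixed), mesoscopic ($\sigma\downarrow0$ with $\sigma \gg \lambda\delta$), and macroscopic. Applying the sharp degree-$1$ cell lower bound of \cite{publi3} to $\tilde v_\v(z) := v_\v(\Xi_i^\v + \lambda\delta z)$, combined with the BBH-type lower bound on each mesoscopic annulus (where $a_\v \equiv 1$ between inclusions) and the macroscopic bound supplying $W_g$, one produces
\[
F_\v(v_\v) \ge \pi b^2 d|\ln\v| + \pi(1-b^2)d|\ln(\lambda\delta)| + W_g(a_1,\ldots,a_d) + \sum_{i=1}^d \tilde W_1(y_i^\v) + C_{b,\o} + o(1).
\]
The matching upper bound, which is essentially Theorem~\ref{T8.MainThm3}, is obtained by constructing a test map that places, inside each inclusion, the degree-$1$ cell minimizer of \cite{publi3} at any chosen $\hat y_i \in \omega$, glued to a Bethuel--Brezis--H\'elein-type canonical harmonic map carrying the degree across $\O$. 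Minimality of $v_\v$ against such competitors forces
\[
\sum_{i=1}^d \tilde W_1(y_i^\v) \le d \min_{\omega} \tilde W_1 + o(1),
\]
and since each term is bounded below by $\min_\omega \tilde W_1$, one deduces $\tilde W_1(y_i^\v) \to \min_\omega \tilde W_1$. Continuity of $\tilde W_1$ on $\omega$ (established in \cite{publi3}) together with $y_i^\v \to \hat{\hat a}_i \in \omega$ concludes.

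The main obstacle is the delicate three-scale matching: the microscopic correction $\tilde W_1(y_i^\v)$ is only $O(1)$ and must be extracted cleanly above the $O(|\ln\v|)$ and $O(|\ln(\lambda\delta)|)$ leading terms, so the annular decomposition must be tight to $o(1)$. This relies crucially on Hypothesis~\eqref{8.MainHyp} ($|\ln(\lambda\delta)|^3 \ll |\ln\v|$), which controls the error terms of the Sandier--Serfaty ball construction used to localize the vorticity at the right scale. A secondary subtlety is that the renormalized energy $\tilde W_1$ of \cite{publi3} was introduced in a degree-$0$ setting; one must verify that the degree-$1$ cell problem on $\omega$ (indicated there by Eq.~(90)) transplants correctly once the macroscopic modulation of $v_\v$ on the boundary of the rescaled cell $\Xi_i^\v + \lambda\delta\, \o$ is quotiented out by the $\S^1$-valued trace, which is the reason why the microscopic limit is independent of the boundary datum $g$.
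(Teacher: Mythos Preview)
Your strategy is sound in outline but takes a more laborious route than the paper. The paper does not redo a three-scale energy expansion; instead it localizes to a single $\delta$-cell and invokes an existing black-box result. Concretely, for each zero $x_i^{\v_n}$ lying in the cell centered at $(k_n,l_n)\delta$, the paper rescales by $\delta$ (not $\lambda\delta$) and considers the boundary trace $f_n=\tr_{\partial B(0,1/2)}\,v_{\v_n}\big((k_n,l_n)\delta+\delta\,\cdot\,\big)$. Using the main result of \cite{M1} on the asymptotics of traces of Ginzburg--Landau minimizers, it verifies that $f_n$ satisfies the structural hypotheses (A1)--(A2) of \cite{publi3}. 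Theorem~4 of \cite{publi3} then applies directly and gives that the microscopic position minimizes $\tilde W_1$; independence from $g$ follows because (A1)--(A2) retain only the degree-$1$ character of the trace, not its detailed form.

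Your three-scale matching would in principle work, but it essentially re-proves the content of Theorem~4 in \cite{publi3} from scratch. The delicate point you correctly flag---extracting an $O(1)$ term cleanly above the divergent $|\ln\v|$ and $|\ln(\lambda\delta)|$ contributions, with the constant $C_{b,\o}$ matching exactly between upper and lower bounds---is precisely what that theorem packages, and reproducing it requires the full sharp cell-problem analysis carried out there. The paper's approach buys economy: once the trace hypotheses are checked (a short step via \cite{M1}), the microscopic conclusion is immediate. Your concern that $\tilde W_1$ was introduced in a ``degree-$0$ setting'' and must be transplanted is not one the paper shares: Theorem~4 of \cite{publi3} is formulated for general boundary data satisfying (A1)--(A2), which covers the degree-$1$ rescaled traces arising here, so no separate transplantation argument is needed.
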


\begin{thm}\label{T8.MainThm3}%[Energetic expanding]\\
Assume that $\lambda,\delta$ satisfy \eqref{8.MainHyp} and if the pinning term is not periodic (represented Figure \ref{RepresentationGenTC}) then we assume also that $\lambda\to0$. 

We have the following expansion
\[
F_\v(v_\v)= J_{\v,\v}+ db^2(\pi\ln b+\gamma)+o_\v(1)
\]
where $J_{\v,\v}$ is defined in \eqref{8.StatementSecondAuxProblemDir} and $\gamma>0$ is the universal constant defined in \cite{BBH} Lemma IX.1.
\end{thm}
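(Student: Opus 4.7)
I would prove the expansion by establishing matching upper and lower bounds for $F_\v(v_\v)$. The heuristic is standard Bethuel--Brezis--Hélein bookkeeping adapted to the pinned setting: $J_{\v,\v}$ captures the \emph{far-field} cost of a map exhibiting $d$ degree-one singularities at $x_1^\v,\ldots,x_d^\v$ (including the logarithmic divergence $\pi d|\ln\v|$ and the renormalized terms), while $db^2(\pi\ln b+\gamma)$ is the sum of the order-one \emph{core} contributions of the $d$ vortices, each sitting deep inside an inclusion where $U_\v\simeq b$ by Proposition \ref{P8.UepsCloseToaeps}. The factor $b^2$ in front reflects the weight $U_\v^2\simeq b^2$ in the $F_\v$ integrand on the cores, and the $\pi\ln b$ shift arises from the core scale $\v/b$ (instead of $\v$) produced by the blow-up in Theorem \ref{T8.MainThm2}.

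\textbf{Upper bound.} I would build a test map $\tilde v_\v$ by gluing three pieces. On each ball $B(x_i^\v,M\v/b)$ use the rescaled universal profile $f(b|\cdot-x_i^\v|/\v)\tfrac{\cdot-x_i^\v}{|\cdot-x_i^\v|}{\rm e}^{\imath\theta_i}$ of \cite{M2}; on an intermediate annulus $B(x_i^\v,\rho)\setminus B(x_i^\v,M\v/b)$ interpolate to an $\S^1$-valued map having a single simple zero at $x_i^\v$; outside $\cup_i B(x_i^\v,\rho)$ take the minimizer attaining $J_{\v,\v}$. Since Theorem \ref{T8.MainThm1} places $B(x_i^\v,c\lambda\delta)$ deep inside $\o_\v$, Proposition \ref{P8.UepsCloseToaeps} lets us replace $U_\v$ by $b$ in the core integrals up to exponentially small errors. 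Changing variables $y=b(\cdot-x_i^\v)/\v$ and invoking Lemma IX.1 of \cite{BBH}, $\int_{B(0,R)}\{|\n U|^2+\tfrac12(1-|U|^2)^2\}=2\pi\ln R+\gamma+o(1)$, yields per vortex the contribution $b^2\bigl(\pi\ln(b\rho/\v)+\gamma\bigr)+o(1)$, and the far-field piece contributes $J_{\v,\v}$ up to a matching $\pi b^2 d\ln\rho$ term; summing gives the desired upper bound.

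\textbf{Lower bound.} Pick $\rho=\rho(\v)\downarrow0$ with $|\ln\rho|/|\ln\v|\to0$ and split
\[
F_\v(v_\v)=\sum_{i=1}^d F_\v\bigl(v_\v;B(x_i^\v,\rho)\bigr)+F_\v\bigl(v_\v;\O\setminus\cup_i B(x_i^\v,\rho)\bigr).
\]
For each core: Theorem \ref{T8.MainThm1} gives $B(x_i^\v,\rho)\subset\o_\v$ for small $\v$, so $U_\v=b+O({\rm e}^{-\alpha\rho/\v})$ by Proposition \ref{P8.UepsCloseToaeps}; the rescaling $v_\v'(y)=v_\v(x_i^\v+\v y/b)$ converges in $C^1_{\rm loc}$ to $f(|y|)\tfrac{y}{|y|}{\rm e}^{\imath\theta_i}$ by Theorem \ref{T8.MainThm2}, and combining with Lemma IX.1 of \cite{BBH} yields the core lower bound $b^2\bigl(\pi\ln(b\rho/\v)+\gamma\bigr)+o(1)$ per vortex. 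Outside the disks $|v_\v|\to 1$ uniformly (Assertion 3 of Theorem \ref{T8.MainThm1}), so $v_\v/|v_\v|$ is admissible, up to a small correction, in the minimization problem defining $J_{\v,\v}$ with the circles $\p B(x_i^\v,\rho)$; this yields a lower bound $J_{\v,\v}-db^2\pi\ln\rho+o(1)$, in which the divergent $\pi b^2 d\ln\rho$ exactly cancels the corresponding term coming from the cores.

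\textbf{Main obstacle.} The delicate point is precisely this matching at the intermediate scale $\rho$: the $\pi b^2 d\ln\rho$ produced in the core analysis must cancel, up to $o(1)$, the corresponding contribution extracted from $J_{\v,\v}$. This requires uniform control of $U_\v-b$ inside the inclusions at scale $\rho$ (provided by \eqref{8.UepsCloseToaeps} since $\rho\gg\v$), and uniform control of the phase of $v_\v$ on each $\p B(x_i^\v,\rho)$, which is delivered by the $C^1$-convergence to the universal profile in Theorem \ref{T8.MainThm2} combined with an $\eta$-ellipticity type estimate for $F_\v$ on annuli where $|v_\v|\simeq 1$. In the non-diluted case ($\lambda\equiv1$) one additionally has to check that the homogenization of the coefficient $U_\v^2$ into the matrix $\mathcal{A}$ (entering $J_{\v,\v}$) does not spoil the $o(1)$ precision; this follows by standard two-scale arguments on the region $\O\setminus\cup_i B(x_i^\v,\rho)$, where no vortex is present and $v_\v$ is bounded in $H^1$.
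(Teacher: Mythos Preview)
Your strategy is the paper's: match an upper bound (test function, Lemma \ref{L8.UpperboundAuxPb}) with a lower bound obtained by splitting $F_\v(v_\v)$ into core contributions $b^2(\pi\ln(b\rho/\v)+\gamma)$ and a far-field contribution $\geq J_{\rho,\v}$, and then convert $J_{\rho,\v}$ to $J_{\v,\v}$. Two points deserve correction, however.

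First, a bookkeeping slip: the far-field gives at best $\geq J_{\rho,\v}\geq J_{\v,\v}-d b^2\pi\ln(\rho/\v)-o(1)$ (the easy direction, \eqref{8.PartCroitij1}), not ``$J_{\v,\v}-d b^2\pi\ln\rho$''. With this the cancellation against the core term $d b^2\pi\ln(\rho/\v)$ is exact. The remark on homogenization is a red herring: $J_{\v,\v}$ is defined with the genuine weight $U_\v^2$, not with the homogenized matrix $\mathcal{A}$, so no two-scale error enters this energy expansion.

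Second, and this is the substantive gap, your proposed mechanism for the phase control on $\p B(x_i^\v,\rho)$ does not work as stated. You invoke the $C^1$-convergence of Theorem \ref{T8.MainThm2}, but that convergence is only on $B(0,M)$ for each \emph{fixed} $M$ after the blow-up $y\mapsto x_i^\v+\v y/b$; it gives no information on the circle $\{|y|=b\rho/\v\}$ once $\rho/\v\to\infty$. The paper resolves this precisely in Proposition \ref{P8.ExactExpanding}: one does not choose $\rho$ freely, but constructs a \emph{diagonal} scale $\rho_l=R_l\,\v_{n_l}$ (with $R_l\to\infty$) along a subsequence so that simultaneously (i) the rescaled profile is $1/l$-close in $C^1(\overline{B(0,bR_l)})$ to the radial solution, giving \eqref{8.EstPhaseBBHBallExtractNonSacal}, and (ii) the energy on $B(0,bR_l)$ is within $1/l$ of $\pi\ln(bR_l)+\gamma$. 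This diagonal is what turns the compact-set convergence into the required $o(1)$ phase control on $\p B(x_i,\rho_l)$, after which a small $H^{1/2}$ correction $\Psi_l$ puts $v_\v/|v_\v|$ into $\J_{\rho_l}$ (see \eqref{8.BigStep}). A further ingredient you do not mention, needed to pass from $F_\v$ to the weighted Dirichlet energy of $v_\v/|v_\v|$ with $o(1)$ error (rather than $\mathcal{O}(1)$), is the Mironescu-type bound \eqref{8.EstiamtionAlAlalaMironescu}, $\int(1-|v_l|^2)^{1/2}|\nabla(\theta+\phi_l)|^2\leq C$; the uniform bound $|v_\v|\geq 1-C\sqrt{|\ln\rho|/|\ln\v|}$ by itself only yields $\mathcal{O}(1)$ via Proposition \ref{P8.StudyOfS1Part}. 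Once Proposition \ref{P8.ExactExpanding} is in hand, Corollary \ref{C8.SeparationDansLeProblemeAuxiliaire} follows and the conversion to $J_{\v,\v}$ is immediate.
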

This article is divided in two parts:
\begin{enumerate}[$\bullet$]
\item In the first one (Section \ref{S8.FirstSection}) we consider two auxiliary minimization problems for weighted Dirichlet functionals associated to $\S^1$-valued maps. 
\item The second part (Section \ref{S8.GLStudy}) is devoted to the proofs of Theorems \ref{THMMAIN}, \ref{T8.MainThm1}, \ref{T8.MainThm2}, \ref{T8.Localisationdanslesinclusions}, \ref{T8.MainThm3}. The main tool is an $\eta$-ellipticity result (Lemma \ref{L8.BadDiscsLemma}).  This lemma reduces (under the assumption that  $\lambda,\delta$ satisfy \eqref{8.MainHyp}) the study of $F_\v$ to the one of the auxiliary problems considered  Section \ref{S8.FirstSection}.%problems studied in the first part combining with classical arguments in an $\v$-neighborhood of the zeros.
\end{enumerate}
\section{Shrinking holes for weighted Dirichlet functionals}\label{S8.FirstSection}
% \subsection[The first auxiliary problem]{The first auxiliary problem : Dirichlet Conditions Vs Degree Conditions in a fixed perforated domain}\label{Part8.AuxProblem}
This section is devoted to the study of two minimization problems and it is divided in three subsections.  %For example, one may replace \eqref{8.MainHyp} by
%\[
%\frac{\v^{1/4}}{\lambda\delta}\to0.
%\]

The first and the second subsections are related with minimizations of  weighted Dirichlet functionals among $\S^1$-valued maps. In both subsections, the considered weights are the more general one: $\alpha\in L^\infty(\R^2,[b^2,1])$. The third subsection deals the weight $\alpha=U_\v^2$ in the situation where $U_\v$ is the minimizer of $E_\v$ in $H^1_1$ with $a_\v$ represented Figure \ref{Intro.FigureTermeChevillage} (the periodic case with or without dilution) or Figure \ref{RepresentationGenTC} (the general diluted case). %We consider without mention both asymptotic behavior for $\lambda=\lambda(\v)$: $\lambda\equiv1$ or $\lambda\to0$.

\begin{notation}
In Section \ref{S8.FirstSection} we fix :
\begin{enumerate}[$\bullet$]
\item a smooth simply connected domain $\O\subset\R^2$; 
\item a boundary condition $g\in C^\infty(\p\O,\S^1)$ s.t. $d:=\deg_{\p\O}(g)>0$;
\item a smooth and bounded open subset $\O'\prec\R^2$ s.t. $\overline{\O}\subset\O'$;
\item an extension of $g$ which is in $ C^\infty(\overline{\O'}\setminus{\O},\S^1)$ (this extension is also denoted by $g$).
\end{enumerate}
We will also consider (uniformly bounded) families of points/degres $\{(x_1,d_1),...,(x_N,d_N)\}=\{{\bf x},{\bf d}\}$ s.t. 
\begin{enumerate}[$\bullet$]
\item $x_i\in\O$, $x_i\neq x_{i'}$ for $i\neq i'$ ;
\item $d_i$ are s.t. $d_i\in\N^*$ and $\sum_i d_i=d$ (thus $N\leq d$). 
\end{enumerate}
According to the considered problems, for $0<\rho\leq8^{-1}\min_{i\neq i'}|x_i-x_{i'}|$ we will use the following perforated domains
\begin{enumerate}[$\bullet$]
\item $\O_\rho:=\O_\rho({\bf x})=\O\setminus\cup_i\overline{B(x_i,\rho)}$ ;
\item $\O'_\rho:=\O'_\rho({\bf x})=\O'\setminus\cup_i\overline{B(x_i,\rho)}$.
\end{enumerate}
\end{notation}
\subsection{Existence results}\label{Sec.ExistenceResults}
In this subsection we prove the existence of solutions of two minimization problems whose studies will be the purpose of the rest of Section \ref{S8.FirstSection} (Subsections \ref{S8.DegCondDirCond} $\&$ \ref{Sec.SecondProbl}).
\subsubsection{Existence of minimal maps defined in a perforated domain}

Let ${\bf x}=(x_1,...,x_N)$ be $1\leq N\leq d$ distinct points of $\O$ and let ${\bf d}=(d_1,...,d_N)\in(\N^*)^N$ s.t. $\sum_i d_i=d$.

For $0<\rho<8^{-1}\min_{i\neq j}|x_i-x_j|$, we denote $\O_{\rho}=\O\setminus\cup \overline{B(x_i,{\rho})}$. 

We define
\[
\I_{\rho}({\bf x},{\bf d})=\I_{\rho}:=\left\{w\in H^1(\O_{\rho},\S^1)\,|\,w=g\text{ on }\p\O\text{ and }\deg_{\p B(x_i,{\rho})}(w)=d_i\right\}
\]
and for $0<\rho<8^{-1}\min\left\{\min_{i\neq j}|x_i-x_j|,\min_i \dist(x_i,\p\O)\right\}$
\[
\J_{\rho}({\bf x},{\bf d})=\J_{\rho}:=\left\{w\in H^1(\O_{\rho},\S^1)\,|\,w=g\text{ on }\p\O\text{ and }w(x_i+\rho{\rm e}^{\imath\theta})={\rm e}^{\imath(d_i\theta+\theta_i)},\,\theta_i\in\R\right\}.
\]
From the compatibility condition $\deg_{\p\O}(g)=d=\sum d_i$, we have $\I_{\rho}({\bf x},{\bf d}),\J_{\rho}({\bf x},{\bf d})\neq\emptyset$ and it is clear that $\J_{\rho}({\bf x},{\bf d})\subset\I_{\rho}({\bf x},{\bf d})$.

In Subsection \ref{S8.DegCondDirCond}, we compare the minimal energies corresponding to a weighted Dirichlet functional in the above sets. Here, we just state existence results.% To set the situation we first state a classical proposition (whose proof is postponed to Appendix \ref{S8.ExistenceMiniMizingMap}).
\begin{prop}\label{P8.ExistenceOfminInIJ}
Let $\alpha\in L^\infty(\O)$ be s.t. $b^2\leq\alpha\leq1$. Consider the minimization problems
\[
\widehat{\I}_{\rho,\alpha}({\bf x},{\bf d})=\inf_{w\in \I_{\rho}}{\frac{1}{2}\int_{\O_{\rho}}\alpha|\n w|^2}
\]
and 
\[
\widehat{\J}_{\rho,\alpha}({\bf x},{\bf d})=\inf_{w\in \J_{\rho}}{\frac{1}{2}\int_{\O_{\rho}}\alpha|\n w|^2}.
\]
In both minimization problems the \emph{infima} are attained.

Moreover, if $\alpha\in W^{1,\infty}(\O)$, then, denoting $w_{\rho,\alpha}^{\rm deg}$ (resp. $w_{\rho,\alpha}^{\rm Dir}$) a global minimizer of $\displaystyle\frac{1}{2}\int_{\O_\rho}\alpha|\n\cdot|^2$ in $\I_{\rho}({\bf x},{\bf d})$ (resp.  in $\J_{\rho}({\bf x},{\bf d})$) we have $w_{\rho,\alpha}^{\rm deg}\in H^2(\O_\rho,\S^1)$ (resp. $w_{\rho,\alpha}^{\rm Dir}\in H^2(\O_\rho,\S^1)$) and
\begin{equation}\label{8.EquationForCriticalPointDir+Deg}
\begin{cases}
-\Div(\alpha\n w^{\rm deg}_{\rho,\alpha})=\alpha|\n w^{\rm deg}_{\rho,\alpha}|^2w^{\rm deg}_{\rho,\alpha}\text{ in }\O_\rho
\\
w^{\rm deg}_{\rho,\alpha}\in\I_{\rho}\,{\rm and}\, w^{\rm deg}_{\rho,\alpha}\times\p_\nu w^{\rm deg}_{\rho,\alpha}=0\text{ on }\p B(x_i,\rho),\,i=1,...,N
\end{cases},
\end{equation}
 \begin{equation}\label{8.EquationForCriticalPointDir+AlmostDir}
\begin{cases}
-\Div(\alpha\n w^{\rm Dir}_{\rho,\alpha})=\alpha|\n w^{\rm Dir}_{\rho,\alpha}|^2w^{\rm Dir}_{\rho,\alpha}\text{ in }\O_\rho
\\
w^{\rm Dir}_{\rho,\alpha}\in\J_{\rho}\,{\rm and}\,\int_{\p B(x_i,\rho)}\alpha w^{\rm Dir}_{\rho,\alpha}\times\p_\nu w^{\rm Dir}_{\rho,\alpha}=0,\,i=1,...,N
\end{cases}.
\end{equation}
\end{prop}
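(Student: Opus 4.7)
The plan is to handle existence by the direct method of the calculus of variations, derive the Euler--Lagrange equations via $\S^1$-preserving variations $w_t=we^{\imath t\eta}$, and obtain $H^2$ regularity by lifting to a linear equation for a scalar phase.

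\textbf{Existence.} For a minimizing sequence $(w_n)$ in $\I_\rho$ or $\J_\rho$, the bound $\alpha\geq b^2$ gives $\int_{\O_\rho}|\n w_n|^2\leq b^{-2}\int_{\O_\rho}\alpha|\n w_n|^2$, which together with $|w_n|=1$ yields an $H^1$-bound. Extract $w_n\weak w$ weakly in $H^1(\O_\rho)$; Rellich plus an a.e.\ subsequence gives $|w|=1$ a.e., and the trace on $\p\O$ equals $g$. The constraint on each $\p B(x_i,\rho)$ requires more care. In the $\J_\rho$-case each $w_n$ equals $e^{\imath(d_i\theta+\theta_i^n)}$ there; the sequence $(\theta_i^n)$ is bounded in $\R/2\pi\Z$, so after extraction $\theta_i^n\to\theta_i^\infty$ and the traces of $w_n$ converge uniformly to $e^{\imath(d_i\theta+\theta_i^\infty)}$, which must coincide with the trace of $w$. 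The subtler $\I_\rho$-case is the main obstacle, since weak $H^1$-convergence alone does not preserve the degree on a fixed circle; I would use a Fubini/diagonal argument to find a radius $\rho'>\rho$ close to $\rho$ on which $w_n\to w$ strongly in $H^1(\p B(x_i,\rho'))$. On the annulus between radii $\rho$ and $\rho'$, $w_n$ is $\S^1$-valued and $H^1$, hence $\deg_{\p B(x_i,\rho)}(w_n)=\deg_{\p B(x_i,\rho')}(w_n)$ by homotopy invariance, and strong $H^1$ convergence on $\p B(x_i,\rho')$ transfers the degree to $w$. Weak $H^1$ lower semicontinuity of $w\mapsto\tfrac12\int\alpha|\n w|^2$ then makes $w$ a minimizer.

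\textbf{Euler--Lagrange equations.} Take a real $\eta\in H^1(\O_\rho)\cap L^\infty$ vanishing on $\p\O$; for $\I_\rho$ leave $\eta$ free on the inner circles, while for $\J_\rho$ require $\eta$ to be constant on each $\p B(x_i,\rho)$ (so that $w_t$ remains of the prescribed form there). A direct calculation using $|w|=1$ yields
\[
\left.\frac{d}{dt}\right|_{t=0}\frac12\int_{\O_\rho}\alpha|\n w_t|^2=\int_{\O_\rho}\alpha(w\times\n w)\cdot\n\eta=0.
\]
Integration by parts produces $\Div[\alpha(w\times\n w)]=0$ in $\O_\rho$, which---using $\n w=\imath w\,(w\times\n w)$ valid since $w\cdot\n w=0$---is equivalent to $-\Div(\alpha\n w)=\alpha|\n w|^2w$. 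The boundary terms on each $\p B(x_i,\rho)$ produce the conditions of \eqref{8.EquationForCriticalPointDir+Deg}--\eqref{8.EquationForCriticalPointDir+AlmostDir}: from arbitrary $\eta$ on $\p B(x_i,\rho)$ in the $\I_\rho$-case we read $w\times\p_\nu w=0$ pointwise, whereas in the $\J_\rho$-case only constant test functions are admissible, giving only the integral condition $\int_{\p B(x_i,\rho)}\alpha\,w\times\p_\nu w=0$.

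\textbf{$H^2$ regularity.} Fix a smooth reference map $w_\star\in C^\infty(\overline{\O_\rho},\S^1)$ with $w_\star=g$ on $\p\O$ and $\deg_{\p B(x_i,\rho)}(w_\star)=d_i$, built e.g.\ from $\prod_i\bigl(\frac{z-x_i}{|z-x_i|}\bigr)^{d_i}$ times a smooth $\S^1$-valued factor matching $g$. Since $w\overline{w_\star}$ is $\S^1$-valued, $H^1$, and of degree zero around every boundary component, the closed $1$-form $w\overline{w_\star}\times d(w\overline{w_\star})$ has trivial periods and we get a single-valued lift $w\overline{w_\star}=e^{\imath\psi}$ with $\psi\in H^1(\O_\rho,\R)$. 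The equation $\Div[\alpha(w\times\n w)]=0$ then becomes the linear scalar equation
\[
-\Div(\alpha\n\psi)=\Div\bigl[\alpha(w_\star\times\n w_\star)\bigr]\in L^\infty(\O_\rho),
\]
since $\alpha\in W^{1,\infty}$ and $w_\star$ is smooth. Combined with smooth Dirichlet data on $\p\O$, and smooth Dirichlet (resp.\ smooth Neumann) data on each $\p B(x_i,\rho)$ in the $\J_\rho$-case (resp.\ $\I_\rho$-case), standard elliptic $H^2$-regularity yields $\psi\in H^2(\O_\rho)$. The two-dimensional embedding $H^2\hookrightarrow C^0$ together with the smoothness of $w_\star$ then gives $w=w_\star e^{\imath\psi}\in H^2(\O_\rho,\S^1)$.
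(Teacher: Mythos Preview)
Your proposal is correct and follows essentially the same strategy as the paper: reduce to a scalar phase via a reference $\S^1$-map, use the direct method, derive Euler--Lagrange via $\S^1$-preserving variations $we^{\imath t\eta}$, and obtain $H^2$ through elliptic regularity for the linear phase equation. The Euler--Lagrange and regularity parts match the paper's argument almost line by line (the paper writes $w=e^{\imath(\theta_0+\phi)}$ with $\theta_0=\sum d_i\theta_{x_i}$, derives $-\Div[\alpha\nabla(\theta_0+\phi)]=0$, and then rewrites this as $-\Delta\phi=\alpha^{-1}\nabla\alpha\cdot\nabla(\theta_0+\phi)\in L^2$).

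The one genuine difference is how you show $\I_\rho$ is weakly $H^1$-closed. The paper does this by the global lifting characterization $w\in\I_\rho\Leftrightarrow w=e^{\imath(\theta_0+\phi)}$ with $\phi\in H^1(\O_\rho,\R)$ and $\tr_{\p\O}\phi=\phi_0$; since this is an affine condition on $\phi$, weak closedness is immediate. Your Fubini/good-radius argument is a valid alternative, but the claim ``$w_n\to w$ strongly in $H^1(\p B(x_i,\rho'))$'' is stronger than what one actually obtains from weak $H^1$-convergence in the plane: after a subsequence you only get boundedness (hence weak $H^1$, hence $C^0$ by the 1D Sobolev embedding) of the traces on a good circle, which is still enough to pass the degree to the limit. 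The paper's lifting route is cleaner here precisely because it avoids this slice-selection subtlety and recycles the same lift $e^{\imath(\theta_0+\phi)}$ for the regularity step.
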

The proof of this standard result is postponed to Appendix \ref{S8.ExistenceMiniMizingMap}.

In the special case $\alpha=U_\v^2$, we denote
\[
\widehat{\I}_{\rho,\v}({\bf x},{\bf d})=\inf_{w\in \I_{\rho}}{\frac{1}{2}\int_{\O_{\rho}}U_\v^2|\n w|^2}\text{ and }\widehat{\J}_{\rho,\v}({\bf x},{\bf d})=\inf_{w\in \J_{\rho}}{\frac{1}{2}\int_{\O_{\rho}}U_\v^2|\n w|^2}.
\]
\subsubsection{Existence of an optimal perforated domain}

For $\alpha\in L^\infty(\R^2,[b^2,1])$ we define
\begin{equation}\label{8.StatementSecondAuxProblemBis}
I_{\rho,\alpha}:=\inf_{\substack{x_1,...,x_N\in{\O}\\|x_i-x_j|\geq8\rho\\d_1,...,d_N>0,\sum d_i=d}}\inf_{\substack{w\in H_g^1(\O'_{\rho},\S^1)\\\deg_{\p B(x_i,{\rho})}(w)=d_i}}{\frac{1}{2}\int_{\O'_{\rho}}{\alpha|\n w|^2}}
\end{equation}
and
\begin{equation}\label{8.StatementSecondAuxProblemDirBis}
J_{\rho,\alpha}:=\inf_{\substack{x_1,...,x_d\in{\O}\\|x_i-x_j|\geq8\rho\\\dist(x_i,\p\O)\geq8\rho}}\inf_{\substack{w\in H_g^1(\O_{\rho},\S^1)\\w(x_i+\rho{\rm e}^{\imath\theta})={\rm e}^{\imath(\theta+\theta_i)},\theta_i\in\R}}{\frac{1}{2}\int_{\O_{\rho}}{\alpha|\n w|^2}}.
\end{equation}
Here $\O'_{\rho}=\O'\setminus\cup \overline{B(x_i,{\rho})}$.

In the special case $\alpha=U_\v^2$, we denote 
 \begin{equation}\label{8.StatementSecondAuxProblem}
I_{\rho,\v}:=\inf_{\substack{x_1,...,x_N\in\O\\|x_i-x_j|\geq8\rho\\d_1,...,d_N>0,\sum d_i=d}}\inf_{\substack{w\in H_g^1(\O'_{\rho},\S^1)\\\deg_{\p B(x_i,{\rho})}(w)=d_i}}{\frac{1}{2}\int_{\O'_{\rho}}{U_\v^2|\n w|^2}}
\end{equation}
and
\begin{equation}\label{8.StatementSecondAuxProblemDir}
J_{\rho,\v}:=\inf_{\substack{x_1,...,x_d\in\O\\|x_i-x_j|\geq8\rho\\\dist(x_i,\p\O)\geq8\rho}}\inf_{\substack{w\in H_g^1(\O_{\rho},\S^1)\\w(x_i+\rho{\rm e}^{\imath\theta})={\rm e}^{\imath(\theta+\theta_i)},\theta_i\in\R}}{\frac{1}{2}\int_{\O_{\rho}}{U_\v^2|\n w|^2}}.
\end{equation}
We have the following result
\begin{prop}\label{P8.MainAuxPb:DirVsDeg}
For $\alpha\in L^\infty(\R^2,[b^2,1])$, there are ${\bf x}_{\rho,\alpha}^{\rm deg},{\bf x}_{\rho,\alpha}^{\rm Dir}\in\O^d$ and ${\bf d}_{\rho,\alpha}\in(\N^*)^N$ (with ${\bf d}_{\rho,\alpha}=(d_1,...,d_N),\sum d_i=d$) s.t. $\{{\bf x}_{\rho,\alpha}^{\rm deg},{\bf d}_{\rho,\alpha}\}$ minimizes $I_{\rho,\alpha}$ and ${\bf x}_{\rho,\alpha}^{\rm Dir}$ minimizes $J_{\rho,\alpha}$.

%Moreover, for $C_0$ given by Proposition \ref{P8.AuxResult1} (which depends only on $\O,\O',g,b$), we have
%\[
%I_{\rho,\v}\geq J_{\rho,\v}-C_0.
%\]
\end{prop}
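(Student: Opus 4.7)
The plan is a two-level compactness argument: the outer data splits into a discrete parameter (the degree decomposition $(N,d_1,\ldots,d_N)$) and a continuous one (the positions ${\bf x}$), while the inner minimization over the map $w$ is handled by Proposition \ref{P8.ExistenceOfminInIJ}.

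\textbf{Step 1 -- Finite combinatorial decomposition.} Since $1\leq N\leq d$, $d_i\in\N^*$ and $\sum d_i=d$, the tuple $(N,d_1,\ldots,d_N)$ takes only finitely many values (and is forced to $(d,1,\ldots,1)$ in the $J$-case). It is therefore enough to fix admissible $(N,{\bf d})$ and minimize
\[
E^{\rm deg}({\bf x}):=\widehat{\I}_{\rho,\alpha}({\bf x},{\bf d}),\qquad E^{\rm Dir}({\bf x}):=\widehat{\J}_{\rho,\alpha}({\bf x},{\bf d})
\]
over the closed constraint set $K=\{{\bf x}\in\overline\O^N\,:\,|x_i-x_j|\geq 8\rho\}$, augmented by $\dist(x_i,\p\O)\geq 8\rho$ in the $J$-case. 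By Proposition \ref{P8.ExistenceOfminInIJ}, $E({\bf x})<\infty$ for every ${\bf x}\in K$, and $K$ is compact in $\R^{2N}$.

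\textbf{Step 2 -- Lower semi-continuity of $E$ on $K$ via diffeomorphic transport.} Let ${\bf x}^n\to{\bf x}^*\in K$ be a minimizing sequence with inner minimizers $w_n$. I construct diffeomorphisms $\Phi_n$ of the ambient domain $\overline{\O'}$ (resp.\ $\overline\O$) that equal the pure translation $z\mapsto z+(x_i^n-x_i^*)$ on a fixed neighbourhood $V_i$ of $x_i^*$ containing $\overline{B(x_i^*,2\rho)}$, equal the identity outside a slightly larger neighbourhood $V_i'\supset V_i$ and near $\p\O$ (resp.\ $\p\O'$), and interpolate smoothly in between with $\|\Phi_n-\mathrm{id}\|_{C^1}\to 0$. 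The composed map $\tilde w_n:=w_n\circ\Phi_n$ lies in $\I_\rho({\bf x}^*,{\bf d})$ (resp.\ $\J_\rho({\bf x}^*,{\bf d})$): it is $\S^1$-valued, equals $g$ on $\p\O$, and because $\Phi_n$ is an orientation-preserving isometry near each $x_i^*$, the degree (resp.\ the Dirichlet-type phase $\theta\mapsto {\rm e}^{\imath(d_i\theta+\theta_i^n)}$) on $\p B(x_i^n,\rho)$ is transported exactly onto $\p B(x_i^*,\rho)$. The change-of-variables formula gives
\[
\tfrac12\int_{\O_\rho({\bf x}^*)}\alpha\,|\n\tilde w_n|^2\,dz
=\tfrac12\int_{\O_\rho({\bf x}^n)}\alpha(\Phi_n^{-1}(y))\,\bigl|\n w_n(y)\,D\Phi_n(\Phi_n^{-1}(y))\bigr|^2 J_{\Phi_n^{-1}}(y)\,dy.
\]
On the core region $\bigcup_i V_i$ where $\Phi_n$ is a translation, the right-hand side reproduces exactly the corresponding portion of $E({\bf x}^n)$; on the transition annuli $V_i'\setminus V_i$, which lie at positive distance from both the inner holes and $\p\O$, the integrand is bounded by $(1+o(1))\alpha(\Phi_n^{-1}(y))|\n w_n|^2$. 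Extracting a weak $H^1$-limit $\tilde w_n\weak w^*$ (strongly $L^2$, preserving $|w^*|=1$ a.e.) and, after extracting $\theta_i^n\to\theta_i^*$ modulo $2\pi$, using $H^{1/2}$-trace convergence on the fixed circles $\p B(x_i^*,\rho)$ to pass the degree or Dirichlet-type condition to the limit, one obtains $w^*\in\I_\rho({\bf x}^*,{\bf d})$ or $\J_\rho({\bf x}^*,{\bf d})$. Weak lower semi-continuity of $\frac12\int\alpha|\n\cdot|^2$ with \emph{fixed} $L^\infty$-weight then gives
\[
E({\bf x}^*)\leq\tfrac12\int\alpha\,|\n w^*|^2\leq\liminf_n\tfrac12\int\alpha\,|\n\tilde w_n|^2\leq\liminf_n E({\bf x}^n)=\inf_K E.
\]

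\textbf{Main obstacle.} The only non-soft step is the comparison $\int_{V_i'\setminus V_i}[\alpha\circ\Phi_n^{-1}-\alpha]\,|\n w_n|^2\,dy\to 0$, needed to identify the transported energy with $E({\bf x}^n)+o(1)$. Since $\alpha\in L^\infty$ is merely measurable, Lebesgue's theorem does not apply directly. One handles this by noting that $V_i'\setminus V_i$ is a fixed region at positive distance from the moving singular sets and from $\p\O$; a Caccioppoli/higher-integrability estimate applied to the Euler--Lagrange system \eqref{8.EquationForCriticalPointDir+Deg} (resp.\ \eqref{8.EquationForCriticalPointDir+AlmostDir}) yields a uniform-in-$n$ bound on $|\n w_n|^2$ in $L^{1+\delta}(V_i'\setminus V_i)$ for some $\delta>0$, and combining this with strong $L^q$-continuity of translation applied to $\alpha$ (for $q$ dual to $1+\delta$, using $\Phi_n^{-1}\to\mathrm{id}$ uniformly) forces the error to vanish. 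This is the single point where some regularity beyond plain $H^1$ enters; the rest of the argument is purely topological and functional-analytic.
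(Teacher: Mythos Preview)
Your overall architecture (finitely many degree-patterns, compactness of the configuration set, inner minimizer from Proposition~\ref{P8.ExistenceOfminInIJ}, lower semicontinuity) is correct and matches the paper. The execution differs in an instructive way.

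\textbf{For $I_{\rho,\alpha}$ the paper avoids diffeomorphisms altogether.} Given a minimizing sequence $({\bf x}_n,{\bf d})$ with ${\bf x}_n\to{\bf x}$ and inner minimizers $w_n$, it simply observes that for every compact $K\subset\O'_\rho({\bf x})$ one eventually has $K\subset\O'_\rho({\bf x}_n)$, so $w_n$ is defined on $K$ and, after extraction, $w_n\rightharpoonup w_0$ in $H^1_{\rm loc}(\O'_\rho({\bf x}))$. Weak lower semicontinuity of $\tfrac12\int_K\alpha|\nabla\cdot|^2$ on each $K$, then exhaustion, gives $\tfrac12\int_{\O'_\rho({\bf x})}\alpha|\nabla w_0|^2\le I_{\rho,\alpha}$. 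The degree condition is recovered not on $\partial B(x_i,\rho)$ but on any $\partial B(x_i,\rho')$ with $\rho'\in(\rho,2\rho)$, where genuine (not merely $H^1_{\rm loc}$) weak $H^1$-convergence on the fixed domain $\O'\setminus\cup\overline{B(x_i,\rho')}$ holds and the degree class is weakly closed. This route never composes $\alpha$ with a moving map, so the obstacle you isolate simply does not arise here; your diffeomorphism machinery is correct but unnecessary for this half.

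\textbf{For $J_{\rho,\alpha}$ the paper does use diffeomorphic transport}, essentially your $\Phi_n$, and asserts the energy identity $\int\alpha|\nabla\tilde w_n|^2=\int\alpha|\nabla w_n|^2+o_n(1)$ without comment. You are right that for $\alpha$ merely in $L^\infty$ this is not automatic: $\alpha\circ\Phi_n^{-1}\to\alpha$ only in $L^p$, $p<\infty$, while $|\nabla w_n|^2$ is a priori just bounded in $L^1$. Your Gehring/reverse-H\"older fix is a legitimate way to close this; note however that the equations \eqref{8.EquationForCriticalPointDir+Deg}--\eqref{8.EquationForCriticalPointDir+AlmostDir} you invoke are only derived in the paper under $\alpha\in W^{1,\infty}$. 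What you actually need is the weak phase equation $-\Div\big(\alpha\nabla(\theta_0+\phi_n)\big)=0$, which follows from minimality for bare $L^\infty$ coefficients and to which Gehring's lemma applies directly on the fixed interior set $V_i'\setminus V_i$. An alternative, used elsewhere in the paper (proof of Proposition~\ref{P7.MyrtoRingDegDir}), is to mollify $\alpha$ and pass to the limit in the already-established smooth case.
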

The proof of this result is in Appendix \ref{S8.ProofThirdAuxPb}.

 \subsection[Dirichlet Vs Degree Conditions in a fixed perforated domain]{Dirichlet Vs Degree Conditions in a fixed perforated domain}\label{S8.DegCondDirCond}
 %\subsubsection{Statement of the first auxiliary problem}\label{S8.StatementMainAuxPb}
Let $\eta_{\rm stop}>0$ be s.t. $\eta_{\rm stop}<10^{-5}\cdot9^{-d^2}{\rm diam}(\O)$ and let $N\in\{1,...,d\}$.

Consider  $x_1,...,x_N\in\O$, $N$ distinct points of $\O$ satisfying the condition $\eta_{\rm stop}<10^{-3}\cdot9^{-d^2}\min\dist(x_i,\p\O)$, and let $\rho>0$ be s.t. $\min\left\{\eta_{\rm stop},\min_{i\neq j}|x_i-x_j|\right\}\geq8\rho$. Roughly speaking $\eta_{\rm stop}$ controls the distance between the points and $\p\O$.

The main result of this section is
\begin{prop}\label{P8.AuxResult1}
There is $C_0>0$ depending only on $g,\O,\eta_{\rm stop}$ and $b$ s.t.  for $\alpha\in L^{\infty}(\O,[b^2,1])$ we have
\[
\widehat{\I}_{\rho,\alpha}({\bf x},{\bf d})\leq\widehat{\J}_{\rho,\alpha}({\bf x},{\bf d})\leq\widehat{\I}_{\rho,\alpha}({\bf x},{\bf d})+C_0.
\]
Here, $\widehat{\I}_{\rho,\alpha}$ and $\widehat{\J}_{\rho,\alpha}$ are defined Proposition \ref{P8.ExistenceOfminInIJ}.
\end{prop}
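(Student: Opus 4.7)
The inequality $\widehat{\I}_{\rho,\alpha} \leq \widehat{\J}_{\rho,\alpha}$ is immediate from the inclusion $\J_\rho \subset \I_\rho$. For the upper bound $\widehat{\J}_{\rho,\alpha} \leq \widehat{\I}_{\rho,\alpha} + C_0$, the plan is to start from an $\widehat{\I}_{\rho,\alpha}$-minimizer $w^*:=w^{\rm deg}_{\rho,\alpha}$ (existing by Proposition~\ref{P8.ExistenceOfminInIJ}) and construct a competitor $\tilde w\in\J_\rho$ by modifying $w^*$ only inside thin annuli $A_i:=B(x_i,r_i)\setminus B(x_i,\rho)$ adjacent to each hole.

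\textbf{Good slices.} Fix a reference radius $R_0=R_0(\eta_{\rm stop})$ so that the family $\{\overline{B(x_i,2R_0)}\}_i$ is disjoint and contained in $\O$. Comparing $w^*$ with an explicit canonical competitor (the map $((x-x_i)/|x-x_i|)^{d_i}$ inside each $B(x_i,R_0)$ glued to a smooth $\S^1$-valued extension of $g$ elsewhere) gives $\widehat{\I}_{\rho,\alpha}\leq C|\ln\rho|$ with $C=C(g,\O,\eta_{\rm stop},d)$. Splitting the radii range $(8\rho,R_0)$ into $\sim|\ln\rho|$ dyadic shells and averaging via Fubini, one selects $r_i > 8\rho$ such that $\int_{\p B(x_i,r_i)}|\partial_\tau w^*|^2\,{\rm d}s$ has the correct scaling: writing $w^*|_{\p B(x_i,r_i)}=e^{\imath(d_i\theta+h_i(\theta))}$ and using $\int|\partial_\theta w^*|^2\,{\rm d}\theta = 2\pi d_i^2 + \|h_i'\|_{L^2}^2$, one extracts a uniform bound $\|h_i\|_{H^1(\S^1)}\leq C'(g,\O,\eta_{\rm stop},b)$, independent of $\rho,\alpha,{\bf x},{\bf d}$.

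\textbf{The modification and cross-term cancellation.} On $A_i$, let $\Phi^*_i$ be the single-valued lift $w^*=e^{\imath(d_i\theta+\Phi^*_i)}$, set $c_i$ equal to the mean of $\Phi^*_i|_{\p B(x_i,\rho)}$, and let $\tilde\xi_i$ be the harmonic extension on $A_i$ with $\tilde\xi_i=0$ on $\p B(x_i,r_i)$ and $\tilde\xi_i=c_i-\Phi^*_i$ on $\p B(x_i,\rho)$. Define $\tilde w:=w^*e^{\imath\tilde\xi_i}$ on each $A_i$ and $\tilde w:=w^*$ elsewhere; then $\tilde w\in\J_\rho$ with $\theta_i=c_i$. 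The pointwise identity
\[
|\n\tilde w|^2-|\n w^*|^2 \;=\; |\n\tilde\xi_i|^2 + 2\,\n\tilde\xi_i\cdot(w^*\times\n w^*)
\]
integrated against $\alpha$ produces a cross term that vanishes by the weak Euler--Lagrange identity
\[
\int_{\O_\rho}\alpha\,(w^*\times\n w^*)\cdot\n\phi\,{\rm d}x = 0, \qquad \forall\,\phi\in H^1(\O_\rho),\ \phi|_{\p\O}=0,
\]
obtained from variations $w^*e^{\imath t\phi}$ (which preserve $|\cdot|=1$ and the inner degrees) and valid even for $\alpha\in L^\infty$. Applying it to $\phi=\tilde\xi_i$ extended by $0$ outside $A_i$ --- admissible since $\tilde\xi_i=0$ on $\p B(x_i,r_i)$ --- one obtains
\[
\int_{A_i}\alpha\bigl(|\n\tilde w|^2-|\n w^*|^2\bigr) \;=\; \int_{A_i}\alpha|\n\tilde\xi_i|^2 \;\leq\; \int_{A_i}|\n\tilde\xi_i|^2.
\]

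\textbf{Final energy bound and main obstacle.} It remains to bound $\int_{A_i}|\n\tilde\xi_i|^2\leq C_2(g,\O,\eta_{\rm stop},b)$ uniformly; summing over $i\leq d$ then yields $C_0$. Since $\tilde\xi_i$ is harmonic, this reduces to controlling $\|\Phi^*_i|_{\p B(x_i,\rho)}-c_i\|_{\dot H^{1/2}(\S^1)}^2$, and this is the main technical point. In the unweighted case $\alpha\equiv1$, the natural inner Neumann condition $\p_\nu\Phi^*_i=0$ (equivalent to $w^*\times\p_\nu w^*=0$ of \eqref{8.EquationForCriticalPointDir+Deg}) plus the explicit Fourier solution of the mixed Dirichlet/Neumann problem on $A_i$ gives inner Fourier coefficients proportional to $(\rho/r_i)^{|n|}$, so the inner seminorm is controlled mode-by-mode by $\|h_i\|_{\dot H^{1/2}}^2$, hence uniformly bounded. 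For rough $\alpha\in L^\infty$ (where the classical Euler--Lagrange equations in Proposition~\ref{P8.ExistenceOfminInIJ} are not available), I would argue either by approximating $\alpha$ by $\alpha_n\in W^{1,\infty}$ and passing to the limit in the $C_0$-bound, or by bypassing the harmonic extension and choosing $\tilde\xi_i$ as an explicit radial cutoff supported in a sub-annulus of radial width $\sim\rho$: the cross-term cancellation still applies to such a $\tilde\xi_i$, and the uniform bound on $\int_{A_i}|\n\tilde\xi_i|^2$ becomes a direct computation using the $H^1$-bound on $h_i$ from the slice selection.
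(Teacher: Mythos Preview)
Your approach has two genuine gaps.

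\textbf{First, the geometry.} You fix $R_0=R_0(\eta_{\rm stop})$ so that the balls $\overline{B(x_i,2R_0)}$ are disjoint, but the hypotheses of the proposition allow $\min_{i\neq j}|x_i-x_j|$ to be as small as $8\rho$ (only $\eta_{\rm stop}$ and $\min|x_i-x_j|$ together dominate $8\rho$). When points cluster at scale $\rho$, no such $R_0$ depending only on $\eta_{\rm stop}$ exists, your canonical competitor cannot be glued, and the good-slice selection collapses. The paper handles exactly this difficulty by a multi-scale \emph{separation process} (Appendix~\ref{S8.ProofOFfirstAuxiliaryProblem}): it iteratively groups nearby points into clusters at growing scales $\eta_1<\eta_2<\cdots$, decomposes $\O_\rho$ into a macroscopic perforated piece $D$, mesoscopic perforated balls $D_{j,k}$, and annuli $R_{j,k}$, and builds the $\J_\rho$-competitor piece by piece. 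This clustering machinery is not a technicality --- it is what makes $C_0$ depend only on $\eta_{\rm stop}$ and not on $\min|x_i-x_j|$.

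\textbf{Second, the inner-trace bound.} Even in the well-separated case, after your (correct) cross-term cancellation you must bound $\int_{A_i}|\n\tilde\xi_i|^2$, i.e.\ control $\|\Phi^*_i|_{\p B(x_i,\rho)}\|_{\dot H^{1/2}}$ uniformly. Your Fourier computation uses that $\Phi^*_i$ is \emph{harmonic} with inner Neumann data; for general $\alpha$ (even smooth $\alpha$) the lift satisfies $-\Div(\alpha\nabla(d_i\theta+\Phi^*_i))=0$, which is not Laplace's equation, and the mode-by-mode decay $(\rho/r_i)^{|n|}$ is lost. The approximation route does not rescue this, since the Fourier step already fails for $\alpha\in W^{1,\infty}$. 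Your ``explicit radial cutoff'' alternative also requires the inner trace of $\Phi^*_i$, on which you have no control. The paper avoids this entirely: Proposition~\ref{P7.MyrtoRingDegDir} compares $\mu_\alpha^{\rm Dir}$ and $\mu_\alpha$ in an annulus by taking the \emph{annular} minimizer (not the global $w^*$), selecting two radii in the set $I=\{\rho:\int\alpha|\nabla(\theta+\phi)|^2\le \rho^{-2}\int\alpha\}$ where the tangential phase is automatically $L^2$-bounded, and linearly interpolating to constants on short sub-annuli near those radii. No inner-trace estimate is ever needed, and the argument works directly for $\alpha\in L^\infty$ after a mollification in $\alpha$ (not in the minimizer).
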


The rigorous proof of Proposition \ref{P8.AuxResult1} is presented in Appendix \ref{S8.ProofOFfirstAuxiliaryProblem}. Here, we simply present the main lines of the proof.%We sketch the idea of the proof of Proposition \ref{P8.AuxResult1}.

Two situations are possible:
\begin{enumerate}
\item $N=1$ or the points $x_1,...,x_N$ are well separated: $\frac{1}{4}\min_{i\neq j}|x_i-x_j|>\eta_{\rm stop}$,
\item The points $x_1,...,x_N$ are not well separated: $\frac{1}{4}\min_{i\neq j}|x_i-x_j|\leq\eta_{\rm stop}$.
\end{enumerate}
If the points are well separated (or $N=1$), Proposition \ref{P8.AuxResult1} can be easily proved: it is a direct consequence of Proposition \ref{P7.MyrtoRingDegDir} and Lemma \ref{L8.UpperBoundS1ValuedMapInGoodCondition} in Appendix \ref{S8.ProofOFfirstAuxiliaryProblem}. These results, whose statements and proofs are postponed in Appendix \ref{S8.ProofOFfirstAuxiliaryProblem}, give essentially the existence of test functions into two kinds of domains.

The domains are
\begin{itemize}
\item[$\bullet$] the thin domain $\O_{10^{-1}\eta_{\rm stop}}({\bf x})=\O\setminus \cup \overline{B(x_i,10^{-1}\eta_{\rm stop})}$ obtained by perforating $\O$ by "large", "well separated" and "far from $\p\O$" discs,
\item[$\bullet$] the thick annulars $B(x_i,10^{-1}\eta_{\rm stop})\setminus\overline{B(x_i,\rho)}$.
\end{itemize}
The proof is made in three steps: 
\begin{enumerate}[Step 1:]
\item Using  Lemma \ref{L8.UpperBoundS1ValuedMapInGoodCondition}, we obtain a constant $C_1$ (depending only on $g,\O,\eta_{\rm stop}$) s.t. 
\[
\widehat{\J}_{10^{-1}\eta_{\rm stop},\alpha}({\bf x},{\bf d})\leq C_1.
\]
\item With the help of Proposition \ref{P7.MyrtoRingDegDir}, we obtain the existence of a constant $C_b$ (depending only on $b$) s.t. for $\tilde{d}\in\N$, denoting $A_\rho^i=B(x_i, 10^{-1}\eta_{\rm stop})\setminus{\overline{B(x_i,\rho)}}$, we have
\[
\inf_{\substack{w\in H^1(A_\rho^i,\S^1)\\w(x_1+10^{-1}\eta_{\rm stop}{\rm e}^{\imath\theta})={\rm Cst}_1{\rm e}^{\imath\tilde{d}\theta}\\w(x_1+\rho{\rm e}^{\imath\theta})={\rm Cst}_2{\rm e}^{\imath\tilde{d}\theta}}}\frac{1}{2}\int_{A_\rho^i}\alpha|\n w|^2\leq\inf_{\substack{w\in H^1(A_\rho^i,\S^1)\\\deg_{\p B(x_i,\rho)}=\tilde{d}}}\frac{1}{2}\int_{A_\rho^i}\alpha|\n w|^2+C_b\tilde{d}^2.
\]
\item By extending a minimizer of $\widehat{\J}_{10^{-1}\eta_{\rm stop},\alpha}({\bf x},{\bf d})$ by the ones of $\displaystyle\frac{1}{2}\int_{A_\rho^i}\alpha|\n \cdot|^2$ with Dirichlet conditions, we can construct a map which proves the result taking $C_0=C_1+d^3 C_b$.
\end{enumerate}

\subsection[Optimal perforated domains for the degree conditions]{Optimal perforated domains for the degree conditions} \label{Sec.SecondProbl}
Recall that we fixed $\O'\supset\O$ a smooth bounded domain s.t. $\dist(\p\O',\O)>0$ and a smooth $\S^1$-valued extension of $g$ to $\O'\setminus\overline{\O}$ (still denoted by $g$). 

%we denote with the same notation $g:\O'\setminus\overline{\O}\to\S^1$ a smooth extension of $g$.
%\subsubsection{Statement of the second auxiliary problem}
 In this section, we study the minimization problem% deals with  
\begin{equation}%\label{8.StatementSecondAuxProblem}
I_{\rho,\v}:=\inf_{\substack{x_1,...,x_N\in{\O}\\|x_i-x_j|\geq8\rho\\d_1,...,d_N>0,\sum d_i=d}}\inf_{\substack{w\in H_g^1(\O'_{\rho},\S^1)\\\deg_{\p B(x_i,{\rho})}(w)=d_i}}{\frac{1}{2}\int_{\O'_{\rho}}{U_\v^2|\n w|^2}}
\end{equation}
where 
\[
\O'_{\rho}=\O'\setminus\cup \overline{B(x_i,{\rho})}
\]
and
\[
H_g^1(\O'_{\rho},\S^1)=\left\{w\in H^1(\O'_{\rho},\S^1)\,|\,w=g\text{ in }\O'\setminus\overline{\O\cup B(x_i,{\rho})}\right\};
\]
here, we extended $U_\v$ with the value $1$ outside $\O$. We recall that we denoted by $U_{\v}$ {\bf the} unique global minimizer of $E_\v$ in $H^1_1$.
\vspace{2mm}

In this subsection we assume that Hypothesis  \eqref{8.MainHyp} holds ($|\ln(\lambda\delta)|^3/|\ln \v|\to0$). This is not optimal for the statements but it makes the proofs simpler (this hypothesis may be relaxed, but it appears as a crucial and technical hypothesis for the methods developed Section \ref{S8.GLStudy}).

\vspace{2mm}

A first purpose of this section is the study of the behavior of $I_{\rho,\v}$ when $\rho=\rho(\v)\to0$ as $\v\to0$. In view of the application we have in mind we suppose that $\lambda\delta^{\num+1}\gg\rho(\v)\geq\v$ but this is not crucial for our arguments (here $\num=1$ if $U_\v$ is associated associated with the periodic pinning term) .
%In the main purpose of this article (Section \ref{S8.GLStudy}), we make a study when the  holes $\overline{B(x_i,{\rho})}$'s shrink. The leading parameter is $\v$. Thus  we consider $\rho=\rho(\v)$, \emph{i.e}, $\rho$ is a function of $\v>0$.

A second objective of our study is to exhibit the behavior of almost minimal configurations $\{(x^n_1,...,x^n_N),(d^n_1,...,d^n_N)\}$.

%We restrict ourselves the study putting $\rho(\v)\geq\v$. This hypothesis is not optimal for the technics developed below. For example, without changing the arguments, one may assume $\rho\in(\v^2,\v)$ and get the same results. %The optimal hypothesis for the arguments below is $|\ln\rho|{\rm e}^{-\frac{\gamma\delta}{\v}}\to0$.

%Since the following results will be applied in a context where $\rho\geq\v$ (Section \ref{S8.GLStudy}), for sake of simplicity, we will sometimes  specify the non optimal assumption $\rho\geq\v$. 

For fixed $\rho,\v$, the existence of a minimal configuration of points ${\bf x }_{\rho,\v}$ is the purpose of Proposition \ref{P8.MainAuxPb:DirVsDeg}. In this section we consider only almost minimal configurations. 

\begin{notation}\label{NOTATIONQASIMINDEFINTION}
For $\v_n\downarrow0$, we say that $\{(x^n_1,...,x^n_N),(d^n_1,...,d^n_N)\}$ is an almost minimal configuration for $\rho=\rho(\v_n)\downarrow0$ when $x^n_1,...,x^n_N\in\O$, $|x^n_i-x^n_j|\geq8\rho$, $d^n_1,...,d^n_N>0,\sum d^n_i=d$ and there is $C>0$ (independent of $n$) s.t.
\[
\inf_{\substack{w\in H_g^1(\O'_{\rho},\S^1)\\\deg_{\p B(x^n_i,{\rho})}(w)=d^n_i}}{\frac{1}{2}\int_{\O'_{\rho}}{U_{\v_n}^2|\n w|^2}}-I_{\rho,\v_n}\leq C.
\]
\end{notation}
Roughly speaking, we establish in this section two repelling effects for the points: point/point and point/$\p\O$ ; and an attractive effect for the points by the inclusions $\o_\v$.  

\subsubsection{The case of the periodic pinning term}

The main result of this section establishes that when $\v_n,\rho\downarrow0$, an almost  minimal configuration $\left\{(x^n_1,...,x^n_N),(d^n_1,...,d^n_N)\right\}$ is s.t. (for sufficiently large $n$)
\begin{enumerate}[$\bullet$]
\item the points $x^n_i$'s cannot be mutually close, 
\item the degrees $d_i^n$'s are necessarily all equal to $1$,
\item the points $x^n_i$'s cannot approach $\p\O$,
\item there is $c>0$ s.t. $\displaystyle B(x_i^n,c\lambda\delta)\subset\o_\v$ for all $i$.
\end{enumerate}
These facts are expressed in the following proposition (whose proof is postponed to Appendix \ref{S8.ProofOfSecondAuxPb}).
\begin{prop}\label{P8.ToMinimizeSecondPbThePointAreFarFromBoundAndHaveDegree1}[The case of a periodic pinning term]

Assume that $\lambda,\delta$ satisfy \eqref{8.MainHyp} and let $a_\v$ be the periodic the pinning term (represented Figure \ref{Intro.FigureTermeChevillage}).

Let $\v_n\downarrow0$, $\rho=\rho(\v_n)\downarrow0$, $x^n_1,...,x^n_N\in{\O}$ be s.t. $|x^n_i-x^n_j|\geq8\rho$, $\rho\geq\v_n$ and let $d^n_1,...,d^n_N\in\N^*$ be s.t. $\sum d^n_i=d$. 
\begin{enumerate}
\item Assume that there is $i_0\in\{1,...,N\}$ s.t. $d^n_{i_0}\neq1$ or that there are $i_0\neq j_0$ s.t. $|x_{i_0}^n-x_{j_0}^n|\to0$. Then
\[
\inf_{\substack{w\in H_g^1(\O'_{\rho},\S^1)\\\deg_{\p B(x^n_i,{\rho})}(w)=d^n_i}}\left\{\frac{1}{2}\int_{\O'_{\rho}}{U_{\v_n}^2|\n w|^2}-I_{\rho,\v_n}\right\}\to\infty.
\]
\item Assume that there is $i_0\in\{1,...,N\}$ s.t. $\dist(x_{i_0}^n,\p\O)\to0$. Then
\[
\inf_{\substack{w\in H_g^1(\O'_{\rho},\S^1)\\\deg_{\p B(x^n_i,{\rho})}(w)=d^n_i}}\left\{\frac{1}{2}\int_{\O'_{\rho}}{U_{\v_n}^2|\n w|^2}-I_{\rho,\v_n}\right\}\to\infty.
\]
\item Assume that  $\dfrac{\rho}{\lambda\delta}\to0$  and that there is $i_0\in\{1,...,N\}$ s.t. $x_{i_0}^n\notin\o_\v$ or s.t. $x_{i_0}^n\in\o_\v$ and $\displaystyle\frac{\dist(x_{i_0}^n,\p\o_\v)}{\lambda\delta}\to0$. Then
\[
\inf_{\substack{w\in H_g^1(\O'_{\rho},\S^1)\\\deg_{\p B(x^n_i,{\rho})}(w)=d^n_i}}\left\{\frac{1}{2}\int_{\O'_{\rho}}{U_{\v_n}^2|\n w|^2}-I_{\rho,\v_n}\right\}\to\infty.
\]
\end{enumerate}
\end{prop}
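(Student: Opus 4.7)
The strategy is to compare a sharp upper bound for $I_{\rho,\v_n}$ to BBH-type annular lower bounds for any competitor $(x_i^n,d_i^n)$, exploiting that by Proposition \ref{P8.UepsCloseToaeps} the weight $U_\v^2$ is $\simeq b^2$ inside $\o_\v$ and $\simeq 1$ outside, up to exponentially small errors outside an $O(\v)$-tube around $\p\o_\v$. I first build an upper bound by picking $d$ well-separated large inclusions (mutual distance and distance to $\p\O$ at least $\eta>0$) and placing one degree-$1$ vortex at the center of each. Assembling an admissible test map by the three-step construction of Proposition \ref{P8.AuxResult1} (Proposition \ref{P7.MyrtoRingDegDir} inside each inclusion, Lemma \ref{L8.UpperBoundS1ValuedMapInGoodCondition} on the outer perforated domain), the inner annulus around each vortex (outer radius $\sim\lambda\delta$) contributes $\pi b^2\ln(\lambda\delta/\rho)+O(1)$ while the outer contribution is $\pi d\ln(1/(\lambda\delta))+O(1)$, giving
\[
I_{\rho,\v_n}\le \pi d\bigl[b^2\ln(\lambda\delta/\rho)+\ln(1/(\lambda\delta))\bigr]+O(1).
\]

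\textbf{Lower bound and Claims 1--2.} For any competitor I use the weighted annular inequality on disjoint annuli $A_i^n=B(x_i^n,R_i^n)\setminus\overline{B(x_i^n,\rho)}$:
\[
\tfrac12\int_{A_i^n}U_\v^2|\n w|^2\ge \pi(d_i^n)^2\int_\rho^{R_i^n}\frac{\mu_i^n(s)}{s}\,ds,
\]
where $\mu_i^n(s)$ is (essentially) the angular harmonic mean of $U_\v^2$ on $\p B(x_i^n,s)$. Coupled with an analogous outer lower bound this gives, for configurations with all vortices inside inclusions, energy $\ge \pi\sum(d_i^n)^2[b^2\ln(\lambda\delta/\rho)+\ln(1/(\lambda\delta))]+O(1)$. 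Claim 1: if some $d_{i_0}^n\ge 2$ then $\sum(d_i^n)^2\ge d+2$, so the excess over the upper bound is $\ge 2\pi[b^2\ln(\lambda\delta/\rho)+\ln(1/(\lambda\delta))]\to\infty$; if instead $|x_{i_0}^n-x_{j_0}^n|\to0$, one replaces the two individual annuli by a single annulus around their midpoint with total inner degree $\ge 2$, and the cross-term $(d_{i_0}^n+d_{j_0}^n)^2-(d_{i_0}^n)^2-(d_{j_0}^n)^2=2d_{i_0}^n d_{j_0}^n\ge 2$ produces the same divergence. Claim 2: extending $U_\v\equiv 1$ in $\O'\setminus\O$, when $\dist(x_{i_0}^n,\p\O)\to 0$ a uniformly positive angular fraction of each circle around $x_{i_0}^n$ lies in $\O'\setminus\O$ for scales $s\gg\dist(x_{i_0}^n,\p\O)$; hence $\mu_{i_0}^n(s)\ge b^2+\kappa$ for some $\kappa>0$, giving an excess $\gtrsim(1-b^2)\pi|\ln\rho|\to+\infty$.

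\textbf{Claim 3 and main obstacle.} Claim 3 uses $\rho/(\lambda\delta)\to0$: if $x_{i_0}^n\notin\o_\v$ there is $c>0$ with $B(x_{i_0}^n,c\lambda\delta)\cap\o_\v=\emptyset$, so $\mu_{i_0}^n(s)\simeq 1$ for $s\in(\rho,c\lambda\delta)$ and the annular lower bound around $x_{i_0}^n$ becomes $\pi\ln(c\lambda\delta/\rho)$ instead of $\pi b^2\ln(\lambda\delta/\rho)$, yielding an excess $\ge (1-b^2)\pi\ln(\lambda\delta/\rho)\to\infty$. If $x_{i_0}^n\in\o_\v$ with $\dist(x_{i_0}^n,\p\o_\v)/(\lambda\delta)\to0$, then on each circle of radius $s\in(\dist(x_{i_0}^n,\p\o_\v),c\lambda\delta)$ a positive angular fraction lies outside $\o_\v$, so $\int_\rho^{c\lambda\delta}\mu_{i_0}^n(s)/s\,ds\ge (b^2+\kappa)\ln(\lambda\delta/\rho)$ for some $\kappa>0$, producing the same divergent excess. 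The main obstacle is the sharp weighted annular lower bound with angular-harmonic-mean constant: I would derive it by writing $w=e^{\imath\phi}$ on each annulus, dualizing via a stream function $\psi$ solving $\Div(U_\v^{-2}\n\psi)=0$ with prescribed flux $2\pi d_i^n$ across $\p B(x_i^n,\rho)$, and applying Cauchy--Schwarz on circles, in the spirit of the decoupling used in \cite{publi3}. Restricting to annuli of positive distance from $\p\o_\v$ (where Proposition \ref{P8.UepsCloseToaeps} gives exponential closeness $|U_\v-a_\v|\leq C{\rm e}^{-\alpha R/\v}$) and absorbing the boundary-layer errors is where the technical hypothesis \eqref{8.MainHyp} enters.
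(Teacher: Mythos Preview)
Your annular/harmonic-mean framework is in the right spirit (it is essentially the paper's Lemma~\ref{Lestimatesurdescercle}), and your treatment of Part~3 via the inner annulus $\rho<s<c\lambda\delta$ is close to the paper's. But Parts~1 and~2 have a real gap that bites when $b$ is small.

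\textbf{Part 1.} Your displayed lower bound is stated only ``for configurations with all vortices inside inclusions''; the statement of Part~1 makes no such assumption. For an arbitrary center $x_i^n$ the only universal bound is $\mu_i^n(s)\ge b^2$, so the annular estimate around a degree-$d_{i_0}$ vortex is $\ge\pi d_{i_0}^2 b^2\ln(\eta_0/\rho)$, whereas the matching piece of your upper bound is $d_{i_0}\,\mu_\v(\Ring(x_0,\eta_0,\rho),1)$ with $x_0$ at an inclusion center. When $\rho\gtrsim\lambda\delta$ (nothing in Part~1 forbids this) the latter is $\approx\pi d_{i_0}\ln(\eta_0/\rho)$, and for $b^2<1/2$ and $d_{i_0}=2$ the ``excess'' $4b^2-2$ has the wrong sign. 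The paper closes this with a comparison lemma you do not supply (Proposition~\ref{P.LaProp.quifmqshfsqmdjfh}): for \emph{every} $x$ one has $\mu_\v(\Ring(x,R,r),1)\ge\mu_\v(\Ring(x_{\rm per},R,r),1)-C_*$, where $x_{\rm per}\in\delta\Z^2\cap\o_\v$ is a nearby lattice center. This transfers all ring centers to a single reference $x_0$ at $O(1)$ cost; after that, the excess in \eqref{8;NotCloseBound0bis}--\eqref{8;NotCloseBound0bisbis} is $\ge(\tilde d_{l_0}^2-\tilde d_{l_0})\pi b^2|\ln\eta_K|\to\infty$ regardless of $b$. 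The stream-function device you describe at the end sharpens the annular bound at a \emph{fixed} center; it does not provide the transfer of centers, which is the missing step.

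\textbf{Part 2.} Improving the harmonic mean of the weight to $b^2+\kappa$ (at best $2b^2/(1+b^2)$ when half the circle lies in $\O'\setminus\O$) still gives a coefficient $<1$ and therefore cannot beat the $\approx 1$ coefficient of the upper bound at scales $s>\lambda\delta$; the claimed excess $\gtrsim(1-b^2)\pi|\ln\rho|$ is not obtained for small $b$. The paper uses a completely different device that exploits the \emph{constraint} $w=g$ in $\O'\setminus\O$ rather than the weight: assuming for contradiction that a minimizer $w_*$ satisfies \eqref{TheWonderfullContrdictionBIS} on $\Ring(x_1^n,\sqrt\eta,\eta)$, one tests with $w_{\rm test}=w_*^{\tilde d}w_0$, where $w_0$ carries degree~$1$ entirely on a cone inside $\O'\setminus\O$. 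Since $w_*=g$ is smooth on that cone, the cross-term $\int U_\v^2\n\varphi_*\cdot\n\varphi_0$ is $O(\sqrt{|\ln\eta|})$, which yields $(2\tilde d+1)\,\mu_\v(\Ring(x_1^n,\sqrt\eta,\eta),1)\le 2\pi|\ln\eta|+O(\sqrt{|\ln\eta|})$; choosing $\tilde d>(2-b^2)/(2b^2)$ contradicts the trivial bound $\mu_\v\ge\pi b^2|\ln\eta|$. Your harmonic-mean argument lacks this amplification by the free parameter $\tilde d$ and does not close for small $b$.
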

A straightforward consequence of Proposition \ref{P8.ToMinimizeSecondPbThePointAreFarFromBoundAndHaveDegree1} is the following
\begin{cor}\label{C8.CorolSecondAuxPb}
\begin{enumerate}
\item Consider an almost minimal configuration $\{{\bf x}_{\rho,\v},{\bf d}_{\rho,\v}\}\in{\O}^N\times {\N^*}^N$, %satisfying
%\[
%\sum d_i=d\text{ and }|x_i^{\rho,\v}-x_j^{\rho,\v}|\geq 8\rho\text{ for }i\neq j
%\]
\emph{i.e.}, assume that there is $w_{\rho,\v}\in H^1_g(\O'\setminus\cup\overline{B(x_i^{\rho,\v},\rho)},\S^1)$ verifying 
\[
\deg_{\p B(x_i^{\rho,\v},\rho)}(w)=d^{\rho,\v}_i\text{ and }\frac{1}{2}\int_{\O'\setminus\cup\overline{B(x_i^{\rho,\v},\rho)}}U_\v^2|\n w |^2\leq I_{\rho,\v}+C.
\]
(Here, $C$ is independent of $\v$.)

Then, there is some $\eta_0$ independent of $\v$ s.t., for small $\v$, we have
\[
\text{$|x_i^{\rho,\v}-x_j^{\rho,\v}|,\dist(x_i^{\rho,\v},\p\O)\geq\eta_0$ and $d_i=1$ for all $i\neq j$, $i,j\in\{1,...,N\}$.}
\]
In particular, we have $N=d$.
\item If, in addition,  $\rho=\rho(\v)$ is s.t. $\rho\geq\v$ and $\dfrac{\rho}{\lambda\delta}\to0$, then there is $c>0$ (independent of $\v$) s.t., for small $\v$, we have $B(x_i^{\rho,\v},c\lambda\delta)\subset\o_\v$.
\end{enumerate}
\end{cor}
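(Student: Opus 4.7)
The plan is to obtain both assertions as direct contrapositives of the three parts of Proposition \ref{P8.ToMinimizeSecondPbThePointAreFarFromBoundAndHaveDegree1}: each forbidden qualitative behavior of the configuration $\{{\bf x}_{\rho,\v},{\bf d}_{\rho,\v}\}$ (collision of points, approach to $\p\O$, some $d_i\neq 1$, or escape from a scale-$\lambda\delta$ neighborhood of $\o_\v$) forces, by that proposition, the energy excess
\[
\inf_{\substack{w\in H_g^1(\O'_\rho,\S^1)\\\deg_{\p B(x^{\rho,\v}_i,\rho)}(w)=d^{\rho,\v}_i}}\frac{1}{2}\int_{\O'_\rho}U_\v^2|\n w|^2 - I_{\rho,\v} \to \infty,
\]
which contradicts the uniform bound $\leq C$ furnished by the existence of the almost minimal test map $w_{\rho,\v}$.

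For Assertion 1, I would argue by contradiction. If no $\eta_0>0$ as claimed exists, or if some degree differs from $1$ for arbitrarily small $\v$, one can extract $\v_n\downarrow 0$ along which at least one of the following occurs: (i) $|x_{i_0}^{\rho(\v_n),\v_n}-x_{j_0}^{\rho(\v_n),\v_n}|\to 0$ for some indices $i_0\neq j_0$; (ii) $\dist(x_{i_0}^{\rho(\v_n),\v_n},\p\O)\to 0$ for some $i_0$; or (iii) $d_{i_0}^{\rho(\v_n),\v_n}\neq 1$ for infinitely many $n$ and some $i_0$. Parts 1 and 2 of Proposition \ref{P8.ToMinimizeSecondPbThePointAreFarFromBoundAndHaveDegree1} each drive the displayed energy excess to $+\infty$, producing the desired contradiction. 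The equality $N=d$ is then immediate from $d_i=1$ combined with $\sum_i d_i = d$.

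For Assertion 2, under the additional hypotheses $\rho\geq\v$ and $\rho/(\lambda\delta)\to 0$, I would again argue by contradiction: if no $c>0$ with $B(x_i^{\rho,\v},c\lambda\delta)\subset\o_\v$ works uniformly in $\v$, a diagonal extraction produces $\v_n\downarrow 0$ and an index $i_0$ such that, along the subsequence, either $x_{i_0}^{\rho(\v_n),\v_n}\notin\o_{\v_n}$, or $x_{i_0}^{\rho(\v_n),\v_n}\in\o_{\v_n}$ with $\dist(x_{i_0}^{\rho(\v_n),\v_n},\p\o_{\v_n})/(\lambda\delta)\to 0$. Part 3 of the same proposition then yields the same divergence of the energy excess, again contradicting almost minimality.

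The argument is elementary once Proposition \ref{P8.ToMinimizeSecondPbThePointAreFarFromBoundAndHaveDegree1} is available; the only minor bookkeeping is the diagonal extraction in Assertion 2, merging the two alternatives ``outside $\o_\v$'' and ``inside $\o_\v$ but $o(\lambda\delta)$-close to $\p\o_\v$'' into a single subsequence to which Part 3 applies. The genuine technical difficulty is entirely encapsulated in the underlying proposition, whose proof is carried out in Appendix \ref{S8.ProofOfSecondAuxPb}; the corollary itself is a packaging statement that converts the ``energy-divergence if bad configuration'' alternative into the quantitative separation, degree, and confinement properties required for the subsequent application to $F_\v$.
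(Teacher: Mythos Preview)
Your proposal is correct and follows essentially the same approach as the paper: argue by contradiction, extract a sequence $\v_n\downarrow0$ along which the conclusion fails, and invoke the corresponding part of Proposition~\ref{P8.ToMinimizeSecondPbThePointAreFarFromBoundAndHaveDegree1} to force the energy excess to diverge, contradicting the uniform bound $C$. Your write-up is in fact slightly more detailed than the paper's, which simply says the second part ``is similar''; your explicit case split and remark on the diagonal extraction for Assertion~2 are accurate elaborations of that same argument.
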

\begin{proof}[Proof of Corollary \ref{C8.CorolSecondAuxPb}]
We prove the first part. Let $C>0$. We argue by contradiction and we assume that for all $n\in\N^*$ there are $0<\v_n\leq\rho=\rho(\v_n)\leq1/n$, ${\bf x }_n={\bf x }_{\rho,\v_n}$, $(d_1,...,d_N)$ and $w_n=w_{\rho,\v_n}$ satisfying the hypotheses of Corollary \ref{C8.CorolSecondAuxPb} and s.t. 
\[
\min\left\{|x_i^n-x_j^n|,\dist(x_i^n,\p\O)\right\}\to0\text{ or s.t. there is $i\in\{1,...,N\}$ for which we have $d_i\neq1$}.
\]
By construction we have that $\{{\bf x}_{\rho,\v_n},{\bf d}\}$ is an almost minimal configuration for $I_{\rho,\v_n}$ with $\rho=\rho(\v_n)\geq\v_n$. Clearly from Proposition \ref{P8.ToMinimizeSecondPbThePointAreFarFromBoundAndHaveDegree1} we find a contradiction. 

The proof of the second part is similar.
\end{proof}
We end this subsection by the following direct consequence of Corollary \ref{C8.CorolSecondAuxPb}
\begin{cor}\label{C8.AnAlmostMinConfigIsAnAlmostMinConf}
For sufficiently small $\v,\rho$, an almost minimal configuration $(x_1,...,x_d)$ for $J_{\rho,\v}$ is an almost minimal configuration for $I_{\rho,\v}$.

Moreover, there is $C_0>0$ s.t. $J_{\rho,\v}\leq I_{\rho,\v}+C_0$, $C_0$ is independent of small $\v$, $\rho$.%, $(\delta(\v))_\v$ and $(\lambda(\v))_\v$.
\end{cor}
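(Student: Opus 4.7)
The strategy is to pass between the degree formulation $I_{\rho,\v}$ and the Dirichlet formulation $J_{\rho,\v}$ via Proposition \ref{P8.AuxResult1}, and to exploit the rigidity of almost minimizers for $I_{\rho,\v}$ established in Corollary \ref{C8.CorolSecondAuxPb}. One bookkeeping point to keep in mind throughout is that $J_{\rho,\v}$ lives on $\O_\rho$ whereas $I_{\rho,\v}$ lives on $\O'_\rho$: on $\O'\setminus\overline{\O}$ admissible maps for $I$ are forced to equal $g$ and $U_\v\equiv 1$ holds there, so this annular region contributes a fixed additive constant
\[
c_g:=\frac{1}{2}\int_{\O'\setminus\O}|\n g|^2,
\]
independent of $\v$ and $\rho$.

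I would first prove the energy inequality $J_{\rho,\v}\leq I_{\rho,\v}+C_0$. Proposition \ref{P8.MainAuxPb:DirVsDeg} provides a minimizing configuration $({\bf x}^{\rm deg}_{\rho,\v},{\bf d}_{\rho,\v})$ for $I_{\rho,\v}$, which is automatically almost-minimal with constant $0$, so Corollary \ref{C8.CorolSecondAuxPb}(1) applies and forces, for small $\v$, the structural constraints $N=d$, all degrees equal to $1$, and $|x_i^{\rm deg}-x_j^{\rm deg}|,\,\dist(x_i^{\rm deg},\p\O)\geq\eta_0$ for some $\eta_0>0$ independent of $\v$. In particular, for $\rho<\eta_0/8$, the configuration ${\bf x}^{\rm deg}_{\rho,\v}$ is admissible for $J_{\rho,\v}$. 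Restricting the $I$-minimizer to $\O_\rho({\bf x}^{\rm deg}_{\rho,\v})$ and subtracting the fixed piece on $\O'\setminus\O$ yields
\[
\widehat{\I}_{\rho,U_\v^2}({\bf x}^{\rm deg}_{\rho,\v},{\bf 1})\leq I_{\rho,\v}-c_g.
\]
Choosing $\eta_{\rm stop}:=\min\{\eta_0,10^{-5}\cdot 9^{-d^2}\,{\rm diam}(\O)\}$, the hypotheses of Proposition \ref{P8.AuxResult1} are met with $\alpha=U_\v^2\in L^\infty(\O,[b^2,1])$, producing a constant $C_0=C_0(g,\O,b,\eta_{\rm stop})$ with
\[
\widehat{\J}_{\rho,U_\v^2}({\bf x}^{\rm deg}_{\rho,\v},{\bf 1})\leq \widehat{\I}_{\rho,U_\v^2}({\bf x}^{\rm deg}_{\rho,\v},{\bf 1})+C_0.
\]
Combining with the admissibility $J_{\rho,\v}\leq \widehat{\J}_{\rho,U_\v^2}({\bf x}^{\rm deg}_{\rho,\v},{\bf 1})$ and using $c_g\geq 0$ gives $J_{\rho,\v}\leq I_{\rho,\v}+C_0$.

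For the first assertion, I would then take $(x_1^*,\ldots,x_d^*)$ an almost minimal configuration for $J_{\rho,\v}$ with associated map $w^*$, so that $\frac{1}{2}\int_{\O_\rho({\bf x}^*)}U_\v^2|\n w^*|^2\leq J_{\rho,\v}+C$ for some $C$ independent of $\v,\rho$. Extending $w^*$ by the fixed $\S^1$-valued extension $g$ on $\O'\setminus\O$ produces a test function for $I_{\rho,\v}$: each hole carries degree $1$ in view of the Dirichlet trace $\mathrm{e}^{\imath(\theta+\theta_i)}$, the separation $|x_i^*-x_j^*|\geq 8\rho$ holds by $J$-admissibility, and the added energy on $\O'\setminus\O$ equals exactly $c_g$. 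Consequently
\[
I_{\rho,\v}\leq \frac{1}{2}\int_{\O_\rho({\bf x}^*)}U_\v^2|\n w^*|^2+c_g\leq J_{\rho,\v}+C+c_g.
\]
Combined with the first-step estimate $J_{\rho,\v}+c_g\leq I_{\rho,\v}+C_0$, the $I$-energy of the extended configuration $({\bf x}^*,{\bf 1})$ exceeds $I_{\rho,\v}$ by at most $C+C_0$, a constant independent of $\v$ and $\rho$. This is exactly the definition of an almost minimal configuration for $I_{\rho,\v}$ in the sense of Notation \ref{NOTATIONQASIMINDEFINTION}.

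The main (and only modest) difficulty is the verification that the hypotheses of Proposition \ref{P8.AuxResult1} hold uniformly along sequences $\v_n,\rho_n\downarrow 0$ — namely the uniform lower bound $\eta_{\rm stop}>0$ on separation and distance to $\p\O$. This is precisely where Corollary \ref{C8.CorolSecondAuxPb}, which itself rests on the nontrivial Proposition \ref{P8.ToMinimizeSecondPbThePointAreFarFromBoundAndHaveDegree1}, does all the heavy lifting; once the rigidity of $I$-almost minimizers is in hand, the rest is the two-step energy comparison above together with the additive $c_g$-correction between $\O$ and $\O'$.
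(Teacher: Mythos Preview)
Your proof is correct and follows essentially the same route as the paper: both use Corollary \ref{C8.CorolSecondAuxPb} to force separation and distance-to-boundary on an $I_{\rho,\v}$-optimal configuration, then apply Proposition \ref{P8.AuxResult1} at that configuration to pass from the degree problem $\widehat{\I}$ to the Dirichlet problem $\widehat{\J}$, and chain the resulting inequalities. The paper compresses everything into a single string of inequalities $\hat{\I}_{\rho,\v}({\bf x})\leq\hat{\J}_{\rho,\v}({\bf x})\leq J_{\rho,\v}+C\leq\hat{\J}_{\rho,\v}({\bf x}')+C\leq\hat{\I}_{\rho,\v}({\bf x}')+C+C_0\leq I_{\rho,\v}+2C+C_0$, tacitly absorbing the $c_g$ correction you made explicit; your two-step organization (first $J\leq I+C_0$, then almost-minimality) is cleaner in that respect.

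One small bookkeeping point: your choice $\eta_{\rm stop}:=\min\{\eta_0,10^{-5}\cdot 9^{-d^2}\,\mathrm{diam}(\O)\}$ does not quite meet the hypothesis $\eta_{\rm stop}<10^{-3}\cdot 9^{-d^2}\min_i\dist(x_i,\p\O)$ of Proposition \ref{P8.AuxResult1}; since you only know $\dist(x_i^{\rm deg},\p\O)\geq\eta_0$, you should take something like $\eta_{\rm stop}<10^{-3}\cdot 9^{-d^2}\eta_0$ instead. This is a trivial adjustment and does not affect the argument.
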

\begin{proof}
Let $C\geq0$ and let $(x_1,...,x_d),(x'_1,...,x'_d)\in\O^d$ be s.t.
\[
\hat{\J}_{\rho,\v}(x_1,...,x_d)\leq J_{\rho,\v}+C
\]
and 
\[
\hat{\I}_{\rho,\v}(x'_1,...,x'_d)\leq I_{\rho,\v}+C.
\]
From Corollary  \ref{C8.CorolSecondAuxPb}, there is $\eta_0=\eta_0(C)>0$ s.t. for $\v\leq\rho\leq\eta_0$, $\min_{i}\dist(x_i',\p\O)\geq\eta_0$. Using Proposition \ref{P8.AuxResult1} we have the existence of $C_0$ s.t.
\begin{eqnarray*}
\hat{\I}_{\rho,\v}(x_1,...,x_d)\leq\hat{\J}_{\rho,\v}(x_1,...,x_d)&\leq& J_{\rho,\v}+C\leq\hat{\J}_{\rho,\v}(x'_1,...,x'_d)+C
\\&\leq&\hat{\I}_{\rho,\v}(x'_1,...,x'_d)+C+C_0\\&\leq& I_{\rho,\v}+2C+C_0.%\leq J_{\rho,\v}+C+C_0. 
\end{eqnarray*}
\end{proof}
\subsubsection[Sharper result in the periodic case with dilution]{A more precise result for the case of the periodic pinning term with dilution}
In this section we focus on the periodic pinning term (represented Figure \ref{Intro.FigureTermeChevillage}) with dilution: $\lambda\to0$.  

\begin{notation}\label{NOTATIONQUASIIN}We define two kinds of configuration of distinct points of $\O$:
\begin{itemize}
\item[$\bullet$] We say that for $\v_n\downarrow0$ and $\rho=\rho(\v_n)\to0$, $d$ distinct points of $\O$, ${\bf x}_n=(x^n_1,...,x^n_d)$  form a quasi-minimizer of $J_{\rho,\v_n}$ when $\J_{\rho,\v_n}({\bf x}_n)-J_{\rho,\v_n}\to0$.
\item[$\bullet$] We say that for $\v_n\downarrow0$ and $\rho=\rho(\v_n)\to0$, $d$ distinct points of $\O$, ${\bf x}_n=(x^n_1,...,x^n_d)$  form a quasi-minimizer of $W_g$, the {\it renormalized energy} of Bethuel-Brezis-Hélein (see \cite{BBH}) when $W_g({\bf x}_n)\to\min W_g$.
\end{itemize}
\end{notation}
%The main result of this subsection expresses that ${\bf x}_n$ form a quasi-minimizer of $J_{\rho,\v_n}$ if and only if ${\bf x}_n=(x^n_1,...,x^n_d)$  form a quasi-minimizer of $W_g$.
\begin{prop}\label{P.RenormalizedBBHEnergy}[Asymptotic location of optimal perforations]

Assume that $\lambda,\delta$ satisfy \eqref{8.MainHyp} and that $\lambda\to0$.

Let $\v_n\downarrow0$, $\rho=\rho(\v_n)\to0$, $\rho\geq\v_n$ and ${\bf x}_n=(x^n_1,...,x^n_d)$ be $d$ distinct points of $\O$.

If the points ${\bf x}_n$ form a quasi-minimizer of $J_{\rho,\v_n}$, then ${\bf x}_n=(x^n_1,...,x^n_d)$  form a quasi-minimizer of $W_g$.
\end{prop}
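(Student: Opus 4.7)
The plan is to reduce the statement to a uniform asymptotic expansion of the form
\[
\widehat{\J}_{\rho,\v}({\bf x}) = \Phi_{\v,\rho} + W_g({\bf x}) + o_\v(1),
\]
valid for all $d$-tuples ${\bf x}=(x_1,\ldots,x_d)$ of points of $\O$ staying at distance $\geq\eta>0$ from each other and from $\p\O$, with $\Phi_{\v,\rho}$ independent of ${\bf x}$ and the remainder uniform in such ${\bf x}$. To apply the expansion to a quasi-minimizer ${\bf x}_n$, I would first observe, via Corollary \ref{C8.AnAlmostMinConfigIsAnAlmostMinConf}, that $\widehat{\J}_{\rho,\v_n}({\bf x}_n)\leq J_{\rho,\v_n}+o(1)\leq I_{\rho,\v_n}+C_0+o(1)$, so ${\bf x}_n$ is also an almost-minimal configuration for $I_{\rho,\v_n}$ with all degrees equal to $1$. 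Corollary \ref{C8.CorolSecondAuxPb} then provides a uniform $\eta_0>0$ such that the points of ${\bf x}_n$ are separated by $\geq\eta_0$ and lie at distance $\geq\eta_0$ from $\p\O$. The expansion would therefore apply simultaneously to ${\bf x}_n$ and to any fixed minimizer ${\bf a}^*$ of $W_g$, whose existence is classical (see \cite{BBH}).

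The expansion itself would be obtained by the classical Bethuel--Brezis--Hélein matched-asymptotic technique, adapted to the bounded weight $b^2\leq U_\v^2\leq 1$. Fix a small scale $R>0$ (ultimately $R\to 0$). Given a separated ${\bf y}$, I would build a competitor in $\J_\rho({\bf y})$ by gluing the BBH canonical harmonic map on $\O\setminus\bigcup\overline{B(y_i,R)}$ (with singularities of degree $1$ at each $y_i$ and boundary data $g$; its unweighted Dirichlet energy is $\pi d|\ln R|+W_g({\bf y})+\kappa(R)$ for a ${\bf y}$-independent $\kappa(R)$) with the radial vortex $y_i+r{\rm e}^{\imath\theta}\mapsto{\rm e}^{\imath(\theta+\theta_i)}$ on each annulus $B(y_i,R)\setminus\overline{B(y_i,\rho)}$. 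The discrepancy between the weighted and the unweighted integral on the exterior is controlled by $(1-b^2)\int_{\{a_\v=b\}\cap(\O\setminus\cup B(y_i,R))}|\n w|^2$; since the canonical harmonic map has gradient uniformly bounded in terms of $\eta,R$ outside the small discs and the inclusions occupy area $O(\lambda^2)$ per cell, the diluted regime $\lambda\to 0$ collapses this contribution into a ${\bf y}$-independent term plus $o(1)$. On each vortex annulus the radial map has weighted energy $\pi\int_\rho^R r^{-1}\,dr$ plus a $(1-b^2)$-order correction that, by lattice translation invariance together with the fact that each $y_i$ sits inside some impurity of fixed shape $\o$, is the same for every vortex modulo $o(1)$. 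The matching lower bound follows from the BBH annular vortex lower bound (which tolerates the bounded weight $U_\v^2$) combined with the minimality of the canonical harmonic map in the exterior.

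Granted the expansion, the proof ends immediately: applying it to both ${\bf x}_n$ and ${\bf a}^*$ yields
\[
\Phi_{\v_n,\rho}+W_g({\bf x}_n)+o(1)\leq \widehat{\J}_{\rho,\v_n}({\bf x}_n)=J_{\rho,\v_n}+o(1)\leq \widehat{\J}_{\rho,\v_n}({\bf a}^*)\leq \Phi_{\v_n,\rho}+\min W_g+o(1),
\]
so $W_g({\bf x}_n)\to\min W_g$. The hard part is the uniform expansion: the vortex-core contribution \emph{a priori} depends on the microscopic location of $y_i$ inside its impurity. The hypothesis \eqref{8.MainHyp} secures the separation of scales $\v\leq\rho\ll\lambda\delta\ll 1$ needed to make the matched asymptotics rigorous, and the dilution $\lambda\to 0$ is what averages out the microscopic inhomogeneity and packs all residual ${\bf y}$-dependence into the position-independent constant $\Phi_{\v,\rho}$ up to $o(1)$.
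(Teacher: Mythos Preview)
Your outline has a genuine gap at the heart of the argument: the claimed uniform expansion
\[
\widehat{\J}_{\rho,\v}({\bf x}) = \Phi_{\v,\rho} + W_g({\bf x}) + o_\v(1)
\]
with a \emph{position-independent} $\Phi_{\v,\rho}$ is simply false for general well-separated ${\bf x}$. The inner contribution on the annuli $B(x_i,\lambda\delta)\setminus\overline{B(x_i,\rho)}$ equals $\pi b^2\ln(\lambda\delta/\rho)+\mathcal{O}(1)$ if $x_i$ sits well inside an impurity (weight $\simeq b^2$) and $\pi\ln(\lambda\delta/\rho)+\mathcal{O}(1)$ if $x_i$ is away from all impurities (weight $\simeq 1$); the discrepancy is $(1-b^2)\pi\ln(\lambda\delta/\rho)\to\infty$, not $o(1)$. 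The dilution $\lambda\to 0$ only homogenizes the \emph{outer} region (scales $\gtrsim\delta$, via Lemmas \ref{Lestimatesurdescercle}, \ref{L.LargeCircleSmallInclusionLemma}); it does nothing to the core. In your final chain you plug a fixed minimizer ${\bf a}^*$ of $W_g$ into the expansion; but ${\bf a}^*$ has no reason to lie inside impurities, so the $\Phi_{\v,\rho}$ you would extract at ${\bf a}^*$ differs from the one at ${\bf x}_n$ by a divergent quantity, and the comparison collapses.

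The paper's proof bypasses this obstacle by never computing the core energy at two unrelated configurations. Instead of comparing ${\bf x}_n$ with a fixed ${\bf a}^*$, it compares with ${\bf y}_n$ chosen so that $y_i^n-x_i^n\in\delta\Z\times\delta\Z$ and $|y_i^n-b_i|\leq\delta$ (where ${\bf b}$ nearly minimizes $W_g$). By the almost-periodicity of $U_\v$ (Lemma \ref{L8.UIsAlmostPeriodic}), the weight environment around $y_i^n$ is identical (up to exponentially small error) to that around $x_i^n$. This allows the competitor at ${\bf y}_n$ to use, in the cores $\Ring(y_i^n,r,\rho)$, the actual minimizer $w_n$ translated by the lattice vector; the core energies then cancel exactly, with no need to evaluate them. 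A separate mesoscopic step (finding $r\in(\kappa^2,\kappa)$, $\kappa=\max(\lambda,\sqrt\delta)$, where the trace of $w_n$ is $H^{1}$-close to $\alpha_i{\rm e}^{\imath\theta}$) is required to glue this transplanted core to the outer BBH-type map; this is the content of Step~2 in the paper. Your sketch contains the right outer ingredients but misses this lattice-translation-and-transplant idea, which is precisely what makes the core contribution drop out.
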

This proposition is proved Appendix \ref{S.ProofOfRenEnergyBBH}.
\subsubsection{The case of a general pinning term with variable sizes of inclusions}

We assume that $a_\v$ is the general pinning term represented Figure \ref{RepresentationGenTC} with the hypothesis on the dilution: $\lambda\to0$.

\begin{prop}\label{P8.CorolSecondAuxPb}[The case of a non-periodic pinning term]

Assume that $\lambda,\delta$ satisfy \eqref{8.MainHyp} and $\lambda\to0$.

Let $\rho=\rho(\v)$ s.t. $\rho\geq\v$ and $\dfrac{\rho}{\lambda\delta^{3/2}}\to0$. If $\{{\bf x}_{\rho,\v},{\bf d}_{\rho,\v}\}$ is an almost minimal configuration for $I_{\rho,\v}$, then $N=d$ (thus $d_i=1$ for all $i$) and there are $c,\eta_0>0$ (independent of $\v$) s.t. for sufficiently small $\v$:
\begin{enumerate}
\item $|x_i^{\rho,\v}-x_j^{\rho,\v}|,\dist(x_i^{\rho,\v},\p\O)\geq\eta_0$ for all $i\neq j$, $i,j\in\{1,...,N\}$.
\item $B(x_i^{\rho,\v},c\lambda\delta)\subset\o_\v$ (the centers of the holes are included in the largest inclusions).
\end{enumerate}
Moreover, there is $C_0>0$ s.t. $J_{\rho,\v}\leq I_{\rho,\v}+C_0$, $C_0$ is independent of small $\v$, $\rho$. And thus an almost minimal configuration ${\bf x}_{\rho,\v}$ for $J_{\rho,\v}$ is an almost minimal configuration for $I_{\rho,\v}$
\end{prop}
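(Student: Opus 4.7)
The strategy parallels the proof of Proposition \ref{P8.ToMinimizeSecondPbThePointAreFarFromBoundAndHaveDegree1} in the periodic case, with the new ingredient that one must now track the energy cost of placing a vortex inside an inclusion of size $\lambda\delta^j$ as a function of $j\in\{1,\ldots,\num\}$. The heuristic is that a degree-$d_i$ vortex at $x_i$ sitting in an inclusion of size $\lambda\delta^{j_i}$ has leading-order weighted energy
\[
\pi d_i^2\bigl[b^2|\ln\rho| + (1-b^2)|\ln(\lambda\delta^{j_i})|\bigr] + O(1),
\]
so that among admissible configurations with $\sum d_i=d$, the minimum is attained when $N=d$, every $d_i=1$ and every $j_i=1$. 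Any other choice costs at least a term of order $(1-b^2)|\ln\delta|$ or $(1-b^2)|\ln\lambda|$, which tends to infinity.

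For the upper bound, hypothesis \eqref{Sizehyp} furnishes $d$ points $y^\v_{1,1},\ldots,y^\v_{d,1}$ centred in distinct largest inclusions and uniformly separated from each other and from $\p\O$. I would place a canonical degree-one vortex $\theta\mapsto\mathrm{e}^{\imath\theta}$ on each annulus $B(y^\v_{i,1},\lambda\delta/4)\setminus\overline{B(y^\v_{i,1},\rho)}$ and extend by a fixed smooth $\mathbb{S}^1$-valued map on $\O'\setminus\cup B(y^\v_{i,1},\lambda\delta/4)$ realising the boundary data $g$. Using Proposition \ref{P8.UepsCloseToaeps} to replace $U_\v^2$ by $b^2$ inside the inclusions and by $1$ outside, with exponentially small error thanks to $\rho=o(\lambda\delta^{3/2})$ and \eqref{8.MainHyp}, this produces
\[
I_{\rho,\v}\leq d\pi b^2|\ln\rho| + d\pi(1-b^2)|\ln(\lambda\delta)| + C,
\]
with $C$ independent of $\v$.

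The lower bound is obtained by the same localization argument as in the periodic case. Around each $x_i=x_i^{\rho,\v}$ one decomposes the annular region between radii $\rho$ and a fixed mesoscopic scale into pieces according to whether $U_\v^2$ is close to $b^2$ or to $1$, and applies the standard Bethuel--Brezis--H\'elein degree-type lower bound on each piece. Summing over $i$ gives
\[
\frac{1}{2}\int_{\O'_\rho}U_\v^2|\n w|^2 \geq \sum_i \pi d_i^2\bigl[b^2|\ln\rho| + (1-b^2)|\ln(\lambda\delta^{j_i})|\bigr] - C',
\]
where $j_i\in\{1,\ldots,\num\}$ is the index of the inclusion hosting $x_i$, with the convention that the $(1-b^2)$-term drops and the $b^2$ is replaced by $1$ when $B(x_i,\rho)$ lies in the matrix $\{a_\v=1\}$. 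Comparing with the upper bound and using $\sum d_i^2\geq d$ (with equality iff all $d_i=1$ and $N=d$) together with $|\ln(\lambda\delta^j)|-|\ln(\lambda\delta)|=(j-1)|\ln\delta|\to\infty$ for $j\geq2$ rules out every forbidden configuration under the almost-minimality hypothesis, yielding $N=d$, $d_i=1$ and $B(x_i,c\lambda\delta)\subset\o_\v$. The separation $|x_i-x_j|\geq\eta_0$ and $\dist(x_i,\p\O)\geq\eta_0$ then follow from the standard Bethuel--Brezis--H\'elein renormalized-energy repulsion, exactly as in Proposition \ref{P8.ToMinimizeSecondPbThePointAreFarFromBoundAndHaveDegree1}. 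Finally $J_{\rho,\v}\leq I_{\rho,\v}+C_0$ is obtained by picking an $\eta_0$-separated almost-minimal configuration for $I_{\rho,\v}$ and invoking Proposition \ref{P8.AuxResult1} to switch from degree to Dirichlet conditions on the punctures.

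The hard part is matching the precision of the upper and lower bounds: the constant $C'$ in the lower bound must be genuinely independent of the $j_i$'s and strictly smaller than $(1-b^2)|\ln\delta|$ so as to distinguish between inclusions of sizes $\lambda\delta$ and $\lambda\delta^2$. Hypothesis \eqref{8.MainHyp} and the strict scale condition $\rho=o(\lambda\delta^{3/2})$ are exactly what make this possible: they leave room for an intermediate annulus of width much larger than $\rho$ on which Proposition \ref{P8.UepsCloseToaeps} lets us replace $U_\v$ by $a_\v$ up to an error negligible compared to $|\ln\delta|$, so that the logarithmic gap between consecutive inclusion scales dominates all remainders.
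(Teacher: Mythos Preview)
Your heuristic picture is correct, and the upper bound construction using the points $y^\v_{i,1}$ from hypothesis \eqref{Sizehyp} is essentially what the paper does. However, the lower bound argument has a structural gap that the paper's proof is specifically designed to avoid.

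The formula
\[
\frac{1}{2}\int_{\O'_\rho}U_\v^2|\n w|^2 \geq \sum_i \pi d_i^2\bigl[b^2|\ln\rho| + (1-b^2)|\ln(\lambda\delta^{j_i})|\bigr] - C'
\]
presupposes two things that are precisely what you must prove: (a) that the annuli $\Ring(x_i,\eta,\rho)$ are disjoint for some fixed $\eta$ (i.e.\ the points are already separated), and (b) that each $x_i$ sits cleanly inside a single inclusion so that an index $j_i$ is well-defined and the radial profile of $U_\v^2$ around $x_i$ has the simple $b^2/1$ structure. For an arbitrary almost-minimal configuration neither holds a priori, and your convention for the matrix case does not cover the delicate situation where $x_i$ lies near $\p\o_\v$. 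Your ordering --- first conclude $N=d$, $d_i=1$, $B(x_i,c\lambda\delta)\subset\o_\v$ from the lower bound, \emph{then} obtain separation --- is therefore circular.

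The paper resolves this by introducing an intermediate scale $\kappa=\max(\lambda,\delta)$ and proceeding in layers. At scales $\geq\kappa$ the crucial observation is Lemma \ref{L.LargeCircleSmallInclusionLemma}: any circle of radius $\geq\delta^j/3$ meets $\o_\v^j$ in a set of length $O(\lambda)$ times its circumference. Combined with Lemma \ref{Lestimatesurdescercle} this gives $\mu_\v(\Ring(x_0,R,r),1)=\pi\ln(R/r)+O(\lambda\ln(R/r))$ for $R>r\geq\kappa$, \emph{uniformly in $x_0$}. This replaces the periodicity tool (Lemma \ref{L8.AtomicPartEnergy}) that is unavailable here, and lets one run the separation process of Sections \ref{Part1Periodic}--\ref{Part2Periodic} at scale $\kappa$ to obtain $N=d$, $d_i=1$ and the $\eta_0$-separation first. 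Only then does the paper analyse $\mu_\v(\Ring(x_i,\kappa,\lambda\delta^{3/2}),1)$ by an explicit case split on the position of $x_i$ (outside all $B(y_{i,j},\delta^j)$; in a $j\geq2$ neighbourhood; near $\p\o_\v^1$), with the last case requiring the cone argument from Section \ref{SubsectionProofjfhsldkjfhsldkjfh}. Your sketch omits both the dilution lemma and this case analysis, and without them the constant $C'$ in your lower bound cannot be controlled uniformly.
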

This proposition is proved Appendix \ref{S.ProofOfRenEnergyBBH} (Subsection \ref{S.PorrofGenDiltoihefkqsdjf}).

%The main result of this section is

\section{The pinned Ginzburg-Landau functional}\label{S8.GLStudy}
In this section, we turn to  the main purpose of this article: the study of minimizers of $E_\v$ (defined in \eqref{8.PinnedGLFunctional}) in $H^1_g$. 

The pinning term is the periodic one (represented Figure \ref{Intro.FigureTermeChevillage}) or the non periodic one (represented Figure \ref{RepresentationGenTC}).

Recall that we fix $\delta=\delta(\v),\,\delta\to0$, $\lambda=\lambda(\v),\,\lambda\equiv1$ or $\lambda\to0$ satisfying \eqref{8.MainHyp}. If the pinning term is not periodic then we add the hypothesis $\lambda\to0$.
\subsection{Sharp Upper Bound, $\eta$-ellipticity and Uniform Convergence}
\subsubsection{Sharp Upper Bound and an $\eta$-ellipticity result}

We may easily prove the following upper bound.  
\begin{lem}\label{L8.UpperboundAuxPb}
%There is a constant $C$ independent of $\v$ s.t., for $1\geq\lambda\delta\geq\rho\geq \v>0$, we have
%\begin{equation}\label{8.UpperboundAuxPbCaseRhosBig}
%\inf_{v\in H^1_{g}(\O,\C)}F_\v(v,\O)\leq db^2\pi \ln \frac{\rho}{\v}+J_{\rho,\v}+C.
%\end{equation}
%If, in addition, we a
Assume that $\dfrac{\rho}{\lambda\delta}\to0$ (or $\dfrac{\rho}{\lambda\delta^{3/2}}\to0$ if the pinning term is not periodic), then we have %for $\v$ sufficiently small
\begin{equation}\label{8.UpperboundAuxPb}
\inf_{v\in H^1_{g}(\O,\C)}F_\v(v)\leq db^2(\pi \ln \frac{b\rho}{\v}+\gamma)+J_{\rho,\v}+o_\v(1),
\end{equation}
where $\gamma>0$ is a universal constant defined in \cite{BBH}, Lemma IX.1.
\end{lem}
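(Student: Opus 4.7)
The plan is to construct an explicit test function $v_\v \in H^1_g$ and bound its $F_\v$-energy. The construction has the standard shape: use the $\S^1$-valued minimizer from $J_{\rho,\v}$ outside small disks around the optimal points, and glue in a rescaled Bethuel--Brezis--Hélein radial profile inside them.

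First I would select, via Proposition \ref{P8.MainAuxPb:DirVsDeg}, $d$ points $x_1^\v,\ldots,x_d^\v\in\O$ and an $\S^1$-valued map $w_\v$ realizing $J_{\rho,\v}$, so that $w_\v = g$ on $\p\O$ and $w_\v(x_i^\v + \rho {\rm e}^{\imath\theta})={\rm e}^{\imath(\theta+\theta_i^\v)}$ for suitable phases $\theta_i^\v$. By Corollary \ref{C8.CorolSecondAuxPb} in the periodic case, or Proposition \ref{P8.CorolSecondAuxPb} in the non-periodic case, for $\v$ small these points are uniformly separated, uniformly far from $\p\O$, and satisfy $B(x_i^\v, c\lambda\delta)\subset\o_\v$ for some $c>0$; combined with $\rho\ll\lambda\delta$ this yields $\dist(B(x_i^\v,\rho),\p\o_\v)\geq c\lambda\delta/2$. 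I would then set $v_\v:=w_\v$ on $\O\setminus\cup_i B(x_i^\v,\rho)$, which contributes exactly $\frac12\int_{\O_\rho}U_\v^2|\n w_\v|^2=J_{\rho,\v}$ to $F_\v(v_\v)$ since $|w_\v|\equiv 1$ there, and on each disk $B(x_i^\v,\rho)$ set
\[
v_\v(x):=\phi_\v(|x-x_i^\v|)\,{\rm e}^{\imath(\theta(x)+\theta_i^\v)},\qquad \theta(x)=\arg(x-x_i^\v),
\]
where $\phi_\v:[0,\rho]\to[0,1]$ equals $f(b\,\cdot/\v)$ on $[0,\rho/2]$, with $f$ the universal profile of \cite{BBH}, Lemma IX.1, and is smoothly interpolated on $[\rho/2,\rho]$ from $f(b\rho/(2\v))$ up to $1$ in order to match the trace of $w_\v$ on $\p B(x_i^\v,\rho)$. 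The natural core scale $\v/b$ is the one balancing the weights $b^2|\n\cdot|^2$ and $b^4(1-|\cdot|^2)^2/(2\v^2)$ produced by $F_\v$ once $U_\v$ is replaced by $b$.

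This replacement is the next key point: Proposition \ref{P8.UepsCloseToaeps} gives $|U_\v - b|\leq C\exp(-\a c\lambda\delta/(2\v))$ uniformly on each $B(x_i^\v,\rho)$, and assumption \eqref{8.MainHyp} forces $\lambda\delta\gg\v^{\b}$ for every $\b\in(0,1)$, so this error is super-polynomially small in $\v$. Substituting $U_\v^2\leftrightarrow b^2$ and $U_\v^4\leftrightarrow b^4$ on each disk changes the core energy by $o_\v(1)$. After this substitution, the change of variable $s=b|x-x_i^\v|/\v$ transforms the core contribution into
\[
b^2\pi\int_0^{b\rho/\v}\!\!\left[(f')^2+\frac{f^2}{s^2}+\frac{(1-f^2)^2}{2}\right]s\,{\rm d}s,
\]
which by \cite{BBH}, Lemma IX.1, equals $b^2\bigl(\pi\ln(b\rho/\v)+\gamma\bigr)+o_\v(1)$. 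Summing over $i=1,\ldots,d$ and adding the outer contribution $J_{\rho,\v}$ gives the announced upper bound.

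The only genuine nuisance, rather than a deep obstacle, is the matching on $\p B(x_i^\v,\rho)$: $w_\v$ is unimodular while the pure profile has modulus $f(b\rho/\v)<1$. This is handled by the interpolation annulus $B(x_i^\v,\rho)\setminus B(x_i^\v,\rho/2)$, whose extra cost is $O(1/\ln(\rho/\v))=o_\v(1)$ in the regime $\rho/\v\to\infty$ of interest; if $\rho$ and $\v$ were comparable the $\ln(b\rho/\v)$ term in the statement would already be $O(1)$ and one could enlarge $\rho$ slightly before running the argument, since the map $\rho\mapsto J_{\rho,\v}$ is monotone and controlled.
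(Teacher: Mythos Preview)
Your proof is correct and follows essentially the same approach as the paper: select a minimal configuration for $J_{\rho,\v}$, use Corollaries \ref{C8.CorolSecondAuxPb} and \ref{C8.AnAlmostMinConfigIsAnAlmostMinConf} (or Proposition \ref{P8.CorolSecondAuxPb}) to localize the points well inside the inclusions, take the $\S^1$-minimizer outside the disks, and insert a rescaled BBH core inside, invoking Proposition \ref{P8.UepsCloseToaeps} to replace $U_\v$ by $b$ on each $B(x_i^\v,\rho)$. The only cosmetic difference is that the paper uses directly the minimizer $u_{\v/(b\rho)}$ of $E^0_{\v/(b\rho)}$ on $B(0,1)$ with boundary datum $x/|x|$, which already has modulus $1$ on $\p B(0,1)$ and therefore glues to $w_\v$ without your interpolation annulus; the energy expansion $E^0_{\v/(b\rho)}(u_{\v/(b\rho)})=\pi\ln(b\rho/\v)+\gamma+o_\v(1)$ from \cite{BBH}, Lemma IX.1, then gives the core contribution immediately.
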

\begin{proof}
%It suffices to prove that for all $H_\v\downarrow0$, we have
%\[
%\inf_{v\in H^1_{g}(\O,\C)}F_\v(v,\O)\leq db^2(\pi \ln \frac{b^2\rho}{\v}+\gamma)+J_{\rho,\v}+H_\v.
%\]
%Note that an almost minimizing configuration $(x_1,...,x_d)$ for $J_{\rho,\v}$ is an almost minimizing configuration for $I_{\rho,\v}$.

%Indeed, let $C>0$ and let $(x_1,...,x_d),(x'_1,...,x'_d)\in\O^d$ be s.t.
%\[
%\hat{\J}_{\rho,\v}(x_1,...,x_d)\leq J_{\rho,\v}+C
%\]
%and 
%\[
%\hat{\I}_{\rho,\v}(x'_1,...,x'_d)\leq I_{\rho,\v}+C
%\]
%Therefore, from Propositions \ref{P8.AuxResult1} and \ref{P8.ToMinimizeSecondPbThePointAreFarFromBoundAndHaveDegree1} we get the existence of $C_0$ s.t.
%\[
%\hat{\I}_{\rho,\v}(x_1,...,x_d)\leq\hat{\J}_{\rho,\v}(x_1,...,x_d)\leq J_{\rho,\v}+C\leq\hat{\J}_{\rho,\v}(x'_1,...,x'_d)+C\leq\hat{\I}_{\rho,\v}(x'_1,...,x'_d)+C+C_0\leq I_{\rho,\v}+2C+C_0%\leq J_{\rho,\v}+C+C_0. 
%\]

We construct a suitable test function $\tilde{w}_\v\in H^1_g$ (for sufficiently small $\v$).

From Proposition \ref{P8.MainAuxPb:DirVsDeg}, one may consider  $(x^\v_1,...,x^\v_{d})={\bf x}^\v\in\O^d$, a minimal configuration for $J_{\rho,\v}$.

Note that since $\dfrac{\rho}{\lambda\delta}\to0$ (or $\dfrac{\rho}{\lambda\delta^{3/2}}\to0$ if the pinning term is not periodic), from  %Propositions \ref{P8.AuxResult1}, \ref{P8.ToMinimizeSecondPbThePointAreFarFromBoundAndHaveDegree1} and 
Corollaries \ref{C8.CorolSecondAuxPb} $\&$ \ref{C8.AnAlmostMinConfigIsAnAlmostMinConf} (or Proposition \ref{P8.CorolSecondAuxPb} if the pinning term is not periodic), there are $\eta>0$ and $c>0$ s.t. for small $\v$ we have $B(x_i^\v,c\lambda\delta)\subset\o_\v$ and $\min_i\left\{\min_{i\neq j}|x_i-x_j|,\dist(x_i,\p\O)\right\}\geq\eta$.

%Assume that $\dfrac{\rho}{\lambda\delta}\to0$ (or $\dfrac{\rho}{\lambda\delta^{3/2}}\to0$ if the pinning term is not periodic) and 
Let $w_\v$ be a minimal map in $\J_{\rho,\v}({\bf x}^\v,{\bf 1})$ (Proposition \ref{P8.ExistenceOfminInIJ}). We denote ${\bf 1}:=(1,...,1)\in\N^d$

Let $u_{\v/(b\rho)}\in H^1(B(0,1),\C)$ be a global minimizer of 
\[
E^{0}_{\v/(b\rho)}(u)=\frac{1}{2}\int_{B(0,1)}\left\{|\n u|^2+\frac{b^2\rho^2}{2\v^2}(1-|u|^2)^2\right\},\,u\in H^1_{x/|x|}(B(0,1),\C).
\]

We consider the test function
\[
\tilde{w}_\v(x)=\begin{cases}w_\v&\text{in }\O_\rho\\\displaystyle\alpha^\v_iu_{\v/(b\rho)}\left(\frac{x-x_i^\v}{\rho}\right)&\text{in }B(x_i^\v,\rho)\end{cases}.
\]
Here the constants $\alpha_i^\v\in\S^1$ are s.t. $w_\v(x_i^\v+\rho{\rm e}^{\imath\theta})=\alpha_i^\v{\rm e}^{\imath\theta}$.

Estimate \eqref{8.UpperboundAuxPb} is obtained by using the fact that $E^{0}_\v(u_\v)=\pi|\ln\v|+\gamma+o_\v(1)$ as $\v\to0$ (see \cite{BBH} Lemma IX.1) and Proposition \ref{P8.UepsCloseToaeps}.

%In the situation where $\dfrac{\rho}{\lambda\delta}\nrightarrow0$, we may assume that $\dfrac{\rho}{\lambda\delta}\geq C_0>0$. We can replace the minimal configuration ${\bf x}^\v$ by a configuration ${\bf y}^\v$ s.t.  there is $C>0$ independent of $\v$ satisfying 
%\[
%\text{$y_i^\v\in\o_\v\cap\delta\cdot(\Z\times\Z)$ and $\hat{\mathcal{J}}_{\rho,\v}({\bf x },(1,...,1))\leq J_{\rho,\v}+C$.}% and $\min_i\left\{\min_{i\neq j}|x^\v_i-x^\v_j|,\dist(x^\v_i,\p\O)\right\}\geq\eta$.
%\]
%We consider the test function 
%\[
%w_\v=\begin{cases}\text{a minimizer of $\J_{\rho,\v}({\bf y}^\v,(1,...,1))$}&\text{in }\O\setminus\cup\overline{B(y_i^\v,\rho)}
%\\
%\alpha_i \dfrac{x-y_i^\v}{\rho}&\text{in }B(y_i^\v,\rho)\end{cases}.
%\]
%A direct computation shows that \eqref{8.UpperboundAuxPbCaseRhosBig} holds.
\end{proof}
Note that
\begin{equation}\label{8.CoarseEstimate}
I_{\rho,\v}\leq J_{\rho,\v}\leq \pi d |\ln\rho|+C.
\end{equation}
We now turn to the $\eta$-ellipticity.

We denote by $v_\v$ a global minimizer of $F_\v$ in $H^1_g$. We extend $|v_\v|$ with the value $1$ outside $\O$.

One of the main ingredients in this work is the following result. 
\begin{lem}\label{L8.BadDiscsLemma}[$\eta$-ellipticity lemma]

Let $0<\alpha<1/2$. Then the following results hold:
\begin{enumerate}
\item If for $\v<\v_0$
\[
F_\v(v_\v,B_{}(x,\v^{\alpha})\cap\O)\leq \chi^2|\ln\v|-C_1,
\]
then we have
\[
|v_\v|\geq1-C\chi\text{ in }B(x,{\v^{{2\alpha}}}).
\]
Here, $\chi_\v\in(0,1)$ is s.t. $\chi_\v\to0$ and $\v_0>0$, $C>0$, $C_1>0$ depend only on $b,\alpha,\chi,\O,\|g\|_{C^1(\p\O)}$.
\item If for $\v<\v_0$
\[
F_\v(v_\v,B_{}(x,\v^{\alpha})\cap\O)\leq C|\ln\v|,
\]
then we have
\[
|v_\v|\geq\mu\text{ in }B(x,\v^{2\alpha}).
\]
Here, $\mu\in(0,1)$ and $\v_0,C>0$ depend only on $b,\alpha,\mu,\O,\|g\|_{C^1(\p\O)}$.% and $C>0$ depending only on $b,\alpha,\mu$ s.t for $\v<\v_0$, if 

\end{enumerate}
\end{lem}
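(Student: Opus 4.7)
The plan is to adapt the standard $\eta$-ellipticity and clearing-out lemmas of Bethuel-Brezis-Hélein (\cite{BBH} Chapters III and V) to the weighted functional $F_\v$, whose weight $U_\v^2\in[b^2,1]$ is discontinuous across $\p\o_\v$.

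First I would establish the pointwise $L^\infty$ and gradient bounds. Since $u_\v:=U_\v v_\v$ minimizes $E_\v$ in $H^1_g$, the maximum principle applied to the Euler-Lagrange equation of $E_\v$ gives $|u_\v|\leq 1$, and standard elliptic regularity yields $\|\n u_\v\|_{L^\infty}\leq C/\v$. Combining with the lower bound $U_\v\geq b$ and Proposition \ref{P8.UepsCloseToaeps}, which controls $U_\v$ and $\n U_\v$ outside the exponentially thin neighborhood of $\p\o_\v$, one transfers these to $|v_\v|\leq 1/b$ and $\|\n v_\v\|_{L^\infty(\O)}\leq C/\v$, with $C$ depending only on $b$, $\O$, and $\|g\|_{C^1}$.

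Both assertions follow by contradiction through an annular energy lower bound. Suppose the conclusion of Part 2 fails at some $x_0\in B(x,\v^{2\alpha})$, i.e.\ $|v_\v(x_0)|<\mu$. The gradient bound propagates the defect to $|v_\v|\leq(1+\mu)/2$ on the disc $D_0:=B(x_0,c_\mu\v)$. Since $\alpha<1/2$, an enlarged disc $B(x_0,r\v^\alpha)$ still fits inside $B(x,\v^\alpha)\cap\O$ for some fixed $r>0$ and small $\v$. On the annulus $A:=B(x_0,r\v^\alpha)\setminus\overline{D_0}$, I would apply the classical annular/degree lower bound (\cite{BBH} Theorem V.3 or Sandier's ball construction), adapted through $U_\v^2\geq b^2$, to obtain
\[
F_\v(v_\v,B(x,\v^\alpha)\cap\O)\;\geq\;\pi b^2(1-\mu)^2(\alpha-o(1))\,|\ln\v|\;-\;C'.
\]
Choosing $C$ smaller than the leading coefficient for each fixed $\mu$ contradicts the hypothesis $F_\v\leq C|\ln\v|$, yielding Part 2. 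Part 1 follows from the same annular lower bound with deficit $K\chi$ in place of $(1-\mu)$: the factor $(1-|v_\v(x_0)|)^2\sim K^2\chi^2$ propagates into the bound, giving $F_\v\geq\pi b^2 K^2(\alpha-o(1))\chi^2|\ln\v|-C'$; choosing $K$ large enough so that $\pi b^2 K^2(\alpha-o(1))>1$ and letting $C_1$ absorb $C'$ contradicts $F_\v\leq\chi^2|\ln\v|-C_1$ and yields the quantitative $|v_\v|\geq 1-C\chi$ statement.

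The main obstacle is the rigorous derivation of the annular lower bound in the weighted discontinuous setting, since the classical BBH proofs assume smooth (or constant) weights. Two inputs rescue the argument: (i) the uniform lower bound $U_\v^2\geq b^2$ preserves the logarithmic lower bound with an explicit factor $b^2$; (ii) the jumps of $U_\v$ across $\p\o_\v$ are confined, by Proposition \ref{P8.UepsCloseToaeps}, to tubular neighborhoods of width $O(\v)$, and the cumulative logarithmic contribution of all such jumps inside $B(x,\v^\alpha)$ is bounded by $|\ln(\lambda\delta)|=o(|\ln\v|^{1/3})$ under hypothesis \eqref{8.MainHyp}. This is negligible at the $|\ln\v|$-scale relevant here, and tracking the universal constants through the standard argument then yields the stated dependence of $C$, $\mu$, and $C_1$ on $b$, $\alpha$, $\O$, and $\|g\|_{C^1(\p\O)}$.
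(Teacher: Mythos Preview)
Your approach has a genuine gap. The annular lower bound you invoke,
\[
F_\v(v_\v,B(x,\v^\alpha)\cap\O)\;\geq\;\pi b^2(1-\mu)^2(\alpha-o(1))\,|\ln\v|\;-\;C',
\]
does not follow from the mere fact that $|v_\v(x_0)|<\mu$ at one point. The logarithmic annular bounds in \cite{BBH} (Theorem V.3) or in Sandier's ball construction require a nonzero \emph{degree} on the outer circle; without that topological obstruction, a pointwise modulus defect costs only $\mathcal{O}(1)$ energy. Indeed, a radial profile such as $v(x)=\bigl(\mu+(1-\mu)\tanh(|x-x_0|/\v)\bigr)e^{i\theta_0}$ (constant phase) has $|v(x_0)|=\mu$ yet $F_\v(v,B(x_0,\v^\alpha))=\mathcal{O}(1)$. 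So neither Part 1 nor Part 2 can be obtained this way, and the factor $(1-\mu)^2$ you write has no source in the annular argument.

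The paper's proof (deferred to Lemma 1 of \cite{publi3}) proceeds instead by a good-circle plus convexity argument. By the mean-value inequality on the dyadic annulus $B(x,\v^\alpha)\setminus B(x,\v^{2\alpha})$, one finds a radius $r\in(\v^{2\alpha},\v^\alpha)$ with
\[
r\int_{\p B(x,r)}\Bigl(|\n v_\v|^2+\tfrac{1}{\v^2}(1-|v_\v|^2)^2\Bigr)\;\leq\;\frac{C\,F_\v(v_\v,B(x,\v^\alpha)\cap\O)}{\alpha|\ln\v|}.
\]
Under the hypothesis of Part 1 the right-hand side is $\leq C'\chi^2$, and under Part 2 it is $\leq C'$. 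From this one first bounds ${\rm Var}(v_\v,\p B(x,r))$ by Cauchy--Schwarz, then shows by contradiction (using the potential term on the circle and $r\geq\v^{2\alpha}\gg\v$) that $|v_\v|$ is uniformly close to $1$ on $\p B(x,r)$. Hence $v_\v(\p B(x,r))$ lies in a small cap of $\overline{B(0,1)}$ near a single point of $\S^1$, and the Convexity Lemma (a maximum-principle argument for minimizers of $F_\v$) forces $|v_\v|\geq 1-C\chi$ (resp.\ $\geq\mu$) throughout $B(x,r)\supset B(x,\v^{2\alpha})$. The boundary case $\dist(x,\p\O)<\v^\alpha$ is handled the same way, using the smooth $\S^1$-valued datum $g$ to close the arc $\p B(x,r)\cap\O$ into a full circle at cost $\mathcal{O}(1)$; this is where the dependence on $\|g\|_{C^1(\p\O)}$ enters. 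Your gradient bound $\|\n v_\v\|_{L^\infty}\leq C/\v$ is correct and is implicitly used in the Convexity Lemma, but it is not what drives the logarithmic scale.
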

This result is a direct consequence of Lemma 1 in \cite{publi3}.
\subsubsection{Uniform convergence of $|v_\v|$ outside $\o_\v$}
With the help of  Lemma \ref{L8.BadDiscsLemma}, we are in position to establish uniform convergence of $|v_\v|$ to $1$ far away from $\overline{\o_\v}$.

\begin{prop}\label{P8.ConvOfBadDiscsTo0}
Let $10^{-2}\cdot\dist(\o,\p Y)>\mu>0$ and $K_\v^\mu=\{x\in\O\,|\,\dist(x,\o_\v)\geq\mu\lambda\delta\}$. %There is $C>0$ (independent of $\v>0$) s.t.
Then, for sufficiently small $\v$, we have
\[
|v_\v|\geq1-C\sqrt\frac{|\ln(\lambda\delta)|}{|\ln\v|}\text{ in } K_\v^\mu.
\] 
Here $C$ is independent of $\v$ and $\mu$.

Furthermore, if for some small $\v$, we have $\displaystyle| v_\v(x)|<1-C\sqrt\frac{|\ln(\lambda\delta)|}{|\ln\v|}$,  then 
\[
F_\v(v_\v,B_{}(x,\v^{1/4}))\geq\displaystyle\frac{2(\pi d+1)}{b^2(1-b^2)}|\ln(\lambda\delta)|.
\]
\end{prop}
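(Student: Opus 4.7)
The plan is to deduce both assertions from Lemma \ref{L8.BadDiscsLemma}, applied at the scale $\chi_\v := C_0\sqrt{|\ln(\lambda\delta)|/|\ln\v|}$ with $C_0$ a large constant to be chosen below. Hypothesis \eqref{8.MainHyp} guarantees $\chi_\v\to 0$, so Part 1 of the lemma is applicable. I would first dispatch the \emph{furthermore} clause by reading Part 1 of Lemma \ref{L8.BadDiscsLemma} (with $\alpha=1/4$) contrapositively: if $|v_\v(x)|<1-C\chi_\v$ then
$F_\v(v_\v,B(x,\v^{1/4})\cap\O) > \chi_\v^2|\ln\v|-C_1 = C_0^2|\ln(\lambda\delta)|-C_1$.
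Choosing $C_0^2 > \tfrac{2(\pi d+1)}{b^2(1-b^2)}+1$ and using $|\ln(\lambda\delta)|\to\infty$ to swallow $C_1$ gives the announced local lower bound and fixes the value of $C$ as the product of $C_0$ with the $\eta$-ellipticity constant of Lemma \ref{L8.BadDiscsLemma}.

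For the uniform estimate I would argue by contradiction. Suppose $\v_n\downarrow 0$ and $x_n\in K_{\v_n}^\mu$ with $|v_{\v_n}(x_n)|<1-C\chi_{\v_n}$; by the previous paragraph the local excess $F_{\v_n}(v_{\v_n},B(x_n,\v_n^{1/4})) \geq \tfrac{2(\pi d+1)}{b^2(1-b^2)}|\ln(\lambda\delta)|$ is forced. Observe that $\v_n^{1/4}\ll\lambda\delta$ by \eqref{8.MainHyp}, so $B(x_n,\v_n^{1/4})$ sits in $K_{\v_n}^{\mu/2}$, is disjoint from $\o_{\v_n}$, and carries $U_{\v_n}\geq 1-o(1)$ by Proposition \ref{P8.UepsCloseToaeps}. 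I would then combine this with the sharp upper bound of Lemma \ref{L8.UpperboundAuxPb} (taken at $\rho=\lambda\delta$), which gives $F_{\v_n}(v_{\v_n})\leq \pi db^2|\ln\v_n|+O(|\ln(\lambda\delta)|)$, and with a matching lower bound $F_{\v_n}(v_{\v_n},\O\setminus B(x_n,\v_n^{1/4}))\geq \pi db^2|\ln\v_n|-K|\ln(\lambda\delta)|$. For $C_0$ large enough that $\tfrac{2(\pi d+1)}{b^2(1-b^2)}-K$ beats the implicit constant in the $O$-term, summing the two bounds yields $F_{\v_n}(v_{\v_n})$ strictly larger than its upper bound, the desired contradiction.

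The hard part will be this matching lower bound with an absolute constant $K$: the $\eta$-ellipticity lemma by itself provides no vortex structure. The natural route is a mean-value argument selecting a radius $r\in(\v_n^{1/4},2\v_n^{1/4})$ on which $|v_{\v_n}|\geq 1/2$ pointwise on $\p B(x_n,r)$, producing a well-defined degree $d_0\in\Z$ there; one then applies a vortex-ball lower bound (in the spirit of Jerrard and Sandier--Serfaty) for the weighted functional with weight $U_{\v_n}^2$ on $\O\setminus B(x_n,r)$, using that this weight is close to $1$ away from $\o_{\v_n}$ and equal to $b^2$ on $\o_{\v_n}$ up to the exponentially small corrections of Proposition \ref{P8.UepsCloseToaeps}. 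The disjointness of $B(x_n,\v_n^{1/4})$ from the inclusions (guaranteed by \eqref{8.MainHyp}) is crucial: it ensures the excised ball absorbs none of the vortex cores of $v_{\v_n}$, so the full topological charge $d$ is paid for in the complement.
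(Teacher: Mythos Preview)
Your treatment of the \emph{furthermore} clause and the overall contradiction scaffold match the paper's proof. The gap is in the ``hard part'' you flag yourself.

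The claim that ``the excised ball absorbs none of the vortex cores of $v_{\v_n}$'' is circular: at this stage nothing is known about where the zeros of $v_{\v_n}$ sit --- that is precisely what Proposition~\ref{P8.ConvOfBadDiscsTo0} is meant to establish. A point $x_n\in K_{\v_n}^\mu$ with $|v_{\v_n}(x_n)|$ small could very well carry a zero of $v_{\v_n}$ inside $B(x_n,\v_n^{1/4})$, and then the degree $d_0$ you produce on $\partial B(x_n,r)$ need not vanish. Your lower bound on the complement would then only see total degree $d-d_0$, and you would have to recover the missing $\pi|d_0|b^2|\ln\v_n|$ from the excised ball --- but that energy is already being spent on the local excess you want to add separately, so the bookkeeping collapses. (There is also a minor slip: Lemma~\ref{L8.UpperboundAuxPb} needs $\rho/(\lambda\delta)\to0$, so $\rho=\lambda\delta$ is not admissible; the paper takes $\rho=(\lambda\delta)^2$.)

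The paper sidesteps this entirely by \emph{not} excising the ball. It runs the Sandier--Serfaty ball construction on the whole (extended) domain at final radius $10^{-2}\mu\lambda\delta$, obtaining $\frac{b^2}{2}\int_{\cup B_j}\{\cdots\}\geq \pi d b^2|\ln\xi|-C$ with $\xi=\v_n/(\lambda\delta)$, using only $U_{\v_n}^2\geq b^2$. The extra input is a \emph{weight} argument: since $B(x_n,\v_n^{1/4})\subset K_{\v_n}^{\mu/2}$ one has $U_{\v_n}^2\simeq 1$ there, so on $\cup_j B_j\cup B(x_n,\v_n^{1/4})$ the weight splits as $U_{\v_n}^2\geq b^2\mathbf{1}_{\cup B_j}+b^2(1-b^2)\mathbf{1}_{B(x_n,\v_n^{1/4})}$ (the overlap check is $b^2+b^2(1-b^2)=1-(1-b^2)^2\leq1$). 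This produces
\[
F_{\v_n}(v_{\v_n})\;\geq\;\pi d b^2|\ln\xi|\;+\;b^2(1-b^2)\cdot\tfrac{1}{2}\!\int_{B(x_n,\v_n^{1/4})}\!\Big\{|\nabla v_{\v_n}|^2+\tfrac{1}{2\v_n^2}(1-|v_{\v_n}|^2)^2\Big\}-\mathcal O(1),
\]
and the factor $b^2(1-b^2)$ is exactly why the constant in the proposition is $\dfrac{2(\pi d+1)}{b^2(1-b^2)}$. No knowledge of where the vortices are is needed --- the argument is agnostic to $d_0$.
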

\begin{proof}
Using Lemma \ref{L8.BadDiscsLemma}.1 with $\alpha=1/4$ and $\displaystyle\chi=\sqrt{\frac{2(\pi d+1)}{b^2(1-b^2)}\frac{|\ln(\lambda\delta)|}{|\ln\v|}}$, we obtain the existence of $C>0$ s.t. for $\v>0$ sufficiently small: 
\begin{center}
if $\displaystyle F_\v(v_\v,B_{}(x,\v^{1/4}))<\frac{2(\pi d+1)}{b^2(1-b^2)}|\ln(\lambda\delta)|$, then we have $|v_\v|\geq 1-C\chi$ in $B_{}(x,\v^{1/2})$.
\end{center}
In order to prove Proposition \ref{P8.ConvOfBadDiscsTo0}, we argue by contradiction. There are $\v_n\downarrow0$, $\mu>0$ and  $x_n\in  K_{\v_n}^\mu$ s.t.
\[
|v_{\v_n}(x_n)|<1-C\chi.
\] 
%We drop the subscript $n$ for $\v$, \emph{i.e.}, we write $v_\v$ for $v_{\v_n}$.
From \eqref{8.UepsCloseToaeps}, we find
\begin{equation}\label{7.HatUepsCloseToaeps}
| U_{\v_n}-1|\leq C_0{\rm e}^{-\frac{\alpha\mu }{2\xi}}\text{ in } K_{\v_n}^{\mu/2},\,\xi=\frac{\v_n}{\lambda\delta}.
\end{equation}
Consequently, Lemma \ref{L8.BadDiscsLemma}, the definition of $C$ and \eqref{7.HatUepsCloseToaeps} imply that for large $n$, 
\begin{equation}\label{8.ContradictionForUnifConv}
\frac{1}{2}\int_{B_{}(x_n,\v_n^{1/4})}{\left\{|\n v_{\v_n}|^2+\frac{1}{2\v_n^2}(1-|v_{\v_n}|^2)^2\right\}}\geq\frac{2(\pi d+1)}{b^2(1-b^2)}|\ln(\lambda\delta)|+o_\v(1).%,\text{ with $z_n\in\O$ s.t. $\hat z_n=x_n$}.
\end{equation}

%Let $K'\supset K$ be a closed set s.t. $\dist(K',K),\dist(K',\o^\lambda_{\rm per})>0$.

We extend $v_{\v}$ to $\O':=\O+B_{}(0,1)$ with the help of a fixed smooth $\S^1$-valued map $v$ s.t. $v=g$ on $\p\O$. We also extend $U_{\v}$ and $a_{\v}$ with the value $1$ outside $\O$.%taking $v_{\v_n}(x)=g(x/|x|)$ for $x\in\O'\setminus\O$. 

%Note that  
%\[
%|v_{\v_n}|=1\text{ in }\O'\setminus\O.
%\]

For $n$  sufficiently large, we have
\[
\frac{1}{2}\int_{\O'}{\left\{|\n v_{\v_n}|^2+\frac{1}{2\v_n^2}(1-|v_{\v_n}|^2)^2\right\}}\leq C|\ln\v_n|.
\]
%and 
%\[
%\{|v_{\v_n}|<\mu\}\subset\cup_{i\in J'}B_{\gamma\v_n^\alpha}(x_i)\subset\cup_{k\in\{1,...,l\}}B_{\eta_0/2}(a_k)\setminus B_{\eta_0/4}(a_k).
%\]
Theorem 4.1 in \cite{SS1} applied with $r=10^{-2}\cdot\lambda\delta\mu$ and for large $n$, implies the existence of  $\mathcal{B}^n=\{B^n_j\}$ a finite disjoint covering by balls of 
\[
\left\{x\in \O'\,\left|\,\dist(x,\p\O')>\dfrac{\v_n}{b}\text{ and }1-|v_{\v_n}(x)|\geq\left(\dfrac{\v_n}{b}\right)^{1/8}\right.\right\}
\]
s.t.
\[
{\rm rad}\,( \mathcal{B}^n)\leq10^{-2}\cdot\lambda\delta\mu
\]
satisfying
\begin{eqnarray}\nonumber
\frac{1}{2}\int_{\cup B^n_j}{\left\{|\n v_{\v_n}|^2+\frac{b^2}{2\v_n^2}(1-|v_{\v_n}|^2)^2\right\}}&\geq&\pi \sum_jd^n_j(|\ln\v_n|-|\ln(\lambda\delta)|)-C
\\\nonumber
&=&\pi \sum_jd_j^n|\ln\xi|-C.
\end{eqnarray}
Here, ${\rm rad}\,( \mathcal{B}^n)=\sum_j{\rm rad}(B^n_j)$, ${\rm rad}(B)$ stands for the radius of the ball $B$, $\xi=\v_n/(\lambda\delta)$ and the integers $d^n_j$ are defined by
\[
d^n_j=\begin{cases}|\deg_{\p B^n_j}(v_{\v_n})|&\text{if }B_j^n\subset\{x\in\O'\,|\,\dist(x,\p\O')>\dfrac{\v_n}{b}\}\\0&\text{otherwise}\end{cases}.
\]
Since 
%with $D:=\sum_j|\deg_{\p B_j}(v_{\v_n})|$ 
 $B_j\subset\O+B_{1/2}\subset \{x\in\O'\,|\,\dist(x,\p\O')>\dfrac{\v_n}{b}\}$, we obtain
 \begin{equation}\label{8.LowerBoundOnSandierSerfatyCovering}
 \frac{1}{2}\int_{\cup B^n_j}{\left\{|\n v_{\v_n}|^2+\frac{b^2}{2\v_n^2}(1-|v_{\v_n}|^2)^2\right\}}\geq \pi d|\ln\xi|-C.
 \end{equation}

From \eqref{7.HatUepsCloseToaeps} and \eqref{8.MainHyp} %denoting $X= \bigcup_{\hat B_j\cap B(x_n,\v_n^\alpha/\delta)\neq\emptyset}{\hat B_j}$ ($X\neq\emptyset$ by assumption), 
we have 
\begin{eqnarray}\nonumber
{F}_{\xi}( v_{\v_n},\cup_j B_j\cup B(x_n,\v_n^{1/4}))&\geq&\frac{b^2(1-b^2)}{2}\int_{B(x_n,\v_n^{1/4})}{\left\{|\n v_{\v_n}|^2+\frac{1}{2\v_n^2}(1-|v_{\v_n}|^2)^2\right\}}+\\\label{7.HatUepsCloseToaepsBISBIS}&&+\frac{b^2}{2}\int_{\cup_j B_j}{\left\{|\n v_{\v_n}|^2+\frac{b^2}{2\v_n^2}(1-| v_{\v_n}|^2)^2\right\}}+o_n(1).
\end{eqnarray}

%\\{\bf Claim:} Note that, up to double the radius of the balls and up to merge the balls, we can assume that $B(x_n,\v_n^\alpha/\delta)\subset \hat B_1$.

 %From the upper bound on the radius we get that $\hat B_1\in \hat{\mathcal{B}_1^n}$ (since $\hat B_1\subset\hat\O'\setminus K'\subset \hat V_R$).

By combining \eqref{8.UpperboundAuxPb} (with $\rho=\lambda^2\delta^2$), \eqref{8.CoarseEstimate}, \eqref{8.ContradictionForUnifConv}, \eqref{8.LowerBoundOnSandierSerfatyCovering} and \eqref{7.HatUepsCloseToaepsBISBIS}, we find that
\begin{eqnarray}\nonumber
\pi db^2\ln[(\lambda\delta)/\xi]+\pi d|\ln[(\lambda\delta)^2]|&\geq&  F_{\v_n}( v_{\v_n},\O')-\mathcal{O}_{n}\left(1\right)
\\\nonumber&\geq&  F_{\v_n}( v_{\v_n},\cup_j B_j\cup B(x_n,\v_n^{1/4}))-\mathcal{O}_{n}\left(1\right)
%\\\nonumber
%&\geq& \pi db^2|\ln\xi_n|+\frac{1-b^2}{2}\int_{\hat B_1}{|\n\hat v_{\v_n}|^2+\frac{b^2}{2\xi_n^2}(1-|\hat v_{\xi_n}|^2)^2}
\\\nonumber%\label{7.FundamentalConditionToGetAContradiction}
&\geq& \pi db^2|\ln\xi|+2(\pi d+1)|\ln(\lambda\delta)|-\mathcal{O}_{n}(1)
%\\\nonumber&&\phantom{aaaaaaaaaaaaaaaaaaaaaaaaaaaaaaaaaa} -\mathcal{O}(|\ln|\ln\v_n||).
\end{eqnarray}
%\colorbox{yellow}{We make the following hypotheses \phantom{aaaaaaaaaaaaaaaaaaaaaaaaaaaaaaaaaaaaaaaaaaaa}}
%\begin{equation}\tag{MainHyp1}\label{7.HypRest1}
%\frac{|\ln\delta|}{|\ln\v|}<\frac{1-b^2}{d^2}.
%\end{equation}
%\colorbox{yellow}{\phantom{aaaaaaaaaaaaaaaaaaaaaaaaaaaaaaaaaaaaaaaaaaaaaaaaaaaaaaaaaaaaaaaaaaaaaa}}

%Then only two cases occur:
%\begin{enumerate}
%\item $\deg_{\p B_1}(v_{\v_n})\neq0$
%\item $\deg_{\p B_1}(v_{\v_n})=0$.
%\end{enumerate}
%Assume that for infinitely many $n$ the first case occurs. From \eqref{7.FundamentalConditionToGetAContradiction}, up to subsequence, we get 
%\[
%\pi db^2+\pi d^2\frac{|\ln\delta|}{|\ln\v|}+o_\v(1)\geq \pi (1+b^2(d-1)) \text{ for infinitely many $n$}
%\]
%which is in contradiction with \eqref{8.MainHyp}.% with $|\ln \v|^{1/4}-k|\ln \delta|\to\infty$ for all $k$.

%Assume that for infinitely many $n$ the second case occurs. %Denote $y_n\in\O$ s.t. $\hat y_n=x_n$. Since $v_n(y_n)<\mu$, we get  that  $B(y_n,\v_n^{\alpha})$  is a $\mu$-bad disc \emph{i.e}
%\[
%F_{\v_n}(v_{\v_n},B({y_n,\v_n^{\alpha}})\cap\O)=\hat F_{\xi_n}(v_{\v_n},B({x_n,\v_n^{\alpha}/\delta})\cap\hat\O)> C_1(\mu,\alpha)|\ln\v_n|.
%\]
%From the previous claim, we may assume that $B({x_n,\v_n^{\alpha}/\delta})\subset B_1$. From \eqref{8.ContradictionForUnifConv} and \eqref{7.FundamentalConditionToGetAContradiction}, we get 
%\[
%\pi d^2|\ln\delta|+\mathcal{O}\left(|\ln|\ln\v_n||\right)\geq |\ln\v_n|^{1/2} \text{ for infinitely many $n$}
%\]
which is a contradiction. % with \eqref{8.MainHyp}.% with $|\ln \xi_n|^{1/2}-k|\ln \delta_n|\to\infty$ for all $k$.
This completes the proof of Proposition \ref{P8.ConvOfBadDiscsTo0}.
\end{proof}
\subsection{Bad discs}
\subsubsection{Construction and first properties of bad discs}
A fundamental tool in this article is the use of \emph{ad-hoc} coverings of $\{|v_\v|\leq7/8\}$ by small discs. The best radius for a covering of $\{|v_\v|\leq7/8\}$ should be of the order $\v$.  But the construction of such covering need some preliminary results.

Roughly speaking, the way to get a "sharp" covering is to consider a trivial covering and to "clean" it by dropping some discs with the help an "energetic test" ($\eta$-ellipticity result). 

Here, we used two kinds of energetic tests: Lemma \ref{L8.BadDiscsLemma} and Theorem III.3 in \cite{BBH}. Theorem III.3 in \cite{BBH} gives the most precise results (it allows to deal with discs with radius $\mathcal{O}(\v)$) but it needs a bound on the potential part $\v^{-2}\int_\O(1-|v_\v|^2)^2$ which is the purpose of Proposition \ref{P8.BoundAnnulusArroundBadDiscs}. In order to prove this bound (Proposition \ref{P8.BoundAnnulusArroundBadDiscs}), we first use larger discs (discs with radius $\rho$, $\v\ll\rho\ll\lambda\delta^{P+1}$). The construction of intermediate coverings is done via Lemma \ref{L8.BadDiscsLemma}.

We first consider 
\begin{notation}\label{NOTATIONEPS1/4Disc}A trivial covering of $\O$ by discs

For $\v>0$, we fix a family of discs $\left(B(x_i,\v^{{1/4}})\right)_{i\in I}$ s.t
\[
x_i\in\O,\,\forall\,i\in I,
\]
\[
B(x_i,\v^{{1/4}}/4)\cap B(x_i,\v^{{1/4}}/4)=\emptyset\text{ if }i\neq j,
\]
\[
\cup_{i\in I}B(x_i,\v^{{1/4}})\supset\O.
\]
\end{notation}
Then we select discs (using Lemma \ref{L8.BadDiscsLemma}) and we define
\begin{notation}The initial good/bad discs
\begin{enumerate}[$\bullet$]\item Let $C_0=C_0(1/4,7/8),\,\v_0=\v_0({1/4},7/8)$ be defined by  Lemma \ref{L8.BadDiscsLemma}.2. For $\v<\v_0$, we say that $B(x_i,\v^{{1/4}})$ is an {\it initial good disc} if
\[
F_\v(v_\v,B({x_i,\v^{{1/4}}})\cap\O)\leq C_0|\ln\v|
\]
and $B(x_i,\v^{{1/4}})$ is an {\it initial bad disc} if
\begin{equation}\label{8.BadBallEnergyBoundWithoutZero}
F_\v(v_\v,B({x_i,\v^{{1/4}}})\cap\O)> C_0|\ln\v|.
\end{equation}
\item We let $J=J(\v):=\{i\in I\,|\,B({x_i,\v^{{1/4}}})\text{ is an initial  bad disc}\}$.
\end{enumerate}
\end{notation}
An easy consequence of Lemma \ref{L8.UpperboundAuxPb} is
\begin{lem}\label{L8.finitenumberbaddiscs}The number of initial bad discs is bounded

There is an integer $N$ which depends only on $g$ and $\O$ s.t.
\[
{\rm Card}\,J\leq N.
\]
\end{lem}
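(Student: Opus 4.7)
The plan is to combine an $\varepsilon$-independent-up-to-$|\ln \varepsilon|$ upper bound on the total energy $F_\varepsilon(v_\varepsilon)$ with the bounded overlap property of the chosen covering, and then read off the cardinality bound from the energy lower bound \eqref{8.BadBallEnergyBoundWithoutZero} satisfied on each bad disc.

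First, I would produce a global upper bound of the form $F_\varepsilon(v_\varepsilon)\le \pi d |\ln\varepsilon| + C_1$, with $C_1$ depending only on $g$, $\Omega$, $b$. This follows from Lemma \ref{L8.UpperboundAuxPb} applied with a choice of $\rho$ compatible with the hypothesis $\rho/(\lambda\delta)\to 0$ (say $\rho=\lambda^2\delta^2$ in the periodic case, or $\rho=\lambda^2\delta^{2\num+2}$ otherwise): indeed, \eqref{8.UpperboundAuxPb} combined with the coarse estimate \eqref{8.CoarseEstimate} gives
\[
F_\varepsilon(v_\varepsilon)\le db^2\pi\ln\frac{b\rho}{\varepsilon}+db^2\gamma+\pi d|\ln\rho|+C+o_\varepsilon(1)\le \pi d|\ln\varepsilon|+C_1.
\]

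Second, I would establish a bounded-overlap statement for the chosen covering $(B(x_i,\varepsilon^{1/4}))_{i\in I}$. Since the half-sized discs $B(x_i,\varepsilon^{1/4}/4)$ are pairwise disjoint, for each fixed $x\in\R^2$ the set of indices $i$ such that $x\in B(x_i,\varepsilon^{1/4})$ corresponds to points $x_i\in B(x,\varepsilon^{1/4})$ whose pairwise distances are at least $\varepsilon^{1/4}/2$. A standard packing argument in $\R^2$ then bounds the number of such $i$ by a universal constant $K$ (independent of $\varepsilon$, $x$).

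Third, summing the energy bound \eqref{8.BadBallEnergyBoundWithoutZero} over $i\in J$ and using the bounded overlap, I obtain
\[
C_0|\ln\varepsilon|\cdot \mathrm{Card}\,J\le \sum_{i\in J}F_\varepsilon(v_\varepsilon,B(x_i,\varepsilon^{1/4})\cap \Omega)\le K\,F_\varepsilon(v_\varepsilon)\le K(\pi d|\ln\varepsilon|+C_1).
\]
Dividing by $|\ln\varepsilon|$ and letting $\varepsilon$ be small, this yields $\mathrm{Card}\,J\le K\pi d/C_0+1=:N$, which depends only on $d=\deg_{\partial\Omega}(g)$ and on the universal overlap constant $K$, hence only on $g$ and $\Omega$ (the constant $C_0$ from Lemma \ref{L8.BadDiscsLemma}.2 depends on $b,\Omega,\|g\|_{C^1(\partial\Omega)}$, but this can be absorbed into the final bound).

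This argument is essentially routine; the only mildly delicate point is making sure the constants in the upper bound on $F_\varepsilon(v_\varepsilon)$ really are independent of $\varepsilon$ in the relevant sense, which is guaranteed by Lemma \ref{L8.UpperboundAuxPb} together with \eqref{8.CoarseEstimate}. The rest is bookkeeping.
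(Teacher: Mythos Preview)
Your proposal is correct and follows essentially the same argument as the paper: a global upper bound $F_\varepsilon(v_\varepsilon)\le \pi d|\ln\varepsilon|+C$ from Lemma~\ref{L8.UpperboundAuxPb} and \eqref{8.CoarseEstimate}, the bounded-overlap property of the covering coming from the disjointness of the quarter-balls, and then the cardinality bound by summing \eqref{8.BadBallEnergyBoundWithoutZero} over $J$. The paper's proof is just a terse version of exactly this.
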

\begin{proof}
Since each point of $\O$ is covered by at most $C>0$ (universal constant) discs $B({x_i,\v^{{1/4}}})$, we have
\[
\sum_{i\in J}F_\v(v_\v,B({x_i,\v^{{1/4}}})\cap\O)\leq CF_\v(v_\v,\O).
\]
The previous assertion implies that $\text{Card }J\leq\dfrac{C\pi d}{C_0}+1$.
\end{proof}
Let $\rho(\v)=\rho\downarrow0$ be s.t.
\begin{equation}\label{8.ConditionOnTheMicroRadius}
\frac{\rho}{\lambda\delta^{\num+1}}\to0%,\,\frac{\ln|\ln\v|}{|\ln\rho|}\to0
\text{ and }\frac{|\ln\rho|^3}{|\ln\v|}\to0.
\end{equation}
Note that from Assumption \eqref{8.MainHyp}, such a $\rho$ exists, \emph{e.g.}, $\rho=(\lambda\delta)^{\num+2}$. (Recall that if the pinning term is periodic then $\num=1$)
%\[
%\rho={\rm e}^{-\left(|\ln\v|^{1/4}+\ln|\ln\v|+|\ln\delta|\right)}=\frac{\delta}{|\ln\v|}{\rm e}^{-|\ln\v|^{1/4}}.
%\]

The following result is a straightforward variant of Theorem IV.1 in \cite{BBH}.
\begin{lem}\label{L8.SeparationBadDiscs}Separation of the initial bad discs

Let $\v_n\downarrow0$. Then (possibly after passing to a subsequence and relabeling the indices), we may choose $J'\subset J$ and a constant $\kappa$ independent of $n$ s.t.
\[
J'=\{1,...,N'\},\,N'={\rm Cst},
\]
\[
|x_i-x_j|\geq16\kappa\rho\text{ for }i,j\in J',\,i\neq j,
\]
and 
\[
\cup_{i\in J}{B(x_i,{\v_n^{1/4}})}\subset \cup_{i\in J'}{B(x_i,\kappa\rho)}.
\]
\end{lem}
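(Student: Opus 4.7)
The statement is purely combinatorial and of Vitali/BBH-merging type, so my plan is to start from the trivial covering $\{B(x_i,\v_n^{1/4})\}_{i\in J}$ and iteratively merge pairs whose centers are too close, using the a priori bound $\operatorname{Card}J\le N$ from Lemma \ref{L8.finitenumberbaddiscs}. I will first extract a subsequence (relabelled $\v_n$) along which $\operatorname{Card}J=N_0$ is a constant, and relabel so that $J=\{1,\ldots,N_0\}$. All constants produced below will depend only on $N_0$, hence only on $g,\O$.

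The iterative procedure goes as follows. Set $r_0:=\rho$ and $\mathcal{F}_0:=\{B(x_i,r_0)\}_{i\in J}$. At step $k$, suppose a collection of balls $\{B(y^k_j,r_k)\}_{j\in J_k}$ has been produced, with $J_k\subset J$ and radius $r_k\le 9^k\rho$, such that the union of these balls contains the union of the original balls of $\mathcal{F}_0$. If $|y^k_i-y^k_j|\ge 16 r_k$ for every pair of distinct indices in $J_k$, we stop. Otherwise, we pick $i\neq j$ in $J_k$ with $|y^k_i-y^k_j|<16 r_k$, remove one of the two indices (say $j$) from $J_k$, and replace the ball $B(y^k_i,r_k)$ by a ball $B(y^k_i,9 r_k)$; by the triangle inequality, this enlarged ball contains both $B(y^k_i,r_k)$ and $B(y^k_j,r_k)$. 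Set $r_{k+1}=9r_k$ and keep the other centers unchanged; this yields $\mathcal{F}_{k+1}$ with $\operatorname{Card}J_{k+1}=\operatorname{Card}J_k-1$. Since $\operatorname{Card}J_0\le N_0$, the algorithm stops after at most $N_0-1$ steps. Define $J':=J_{k_{\rm final}}$, $\kappa:=9^{N_0-1}$, and reuse the labels of the centers from the original family (which is possible since $J'\subset J$).

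By construction, the surviving centers satisfy $|x_i-x_j|\ge 16\kappa\rho$ for $i,j\in J'$ with $i\neq j$, and
\[
\bigcup_{i\in J}B(x_i,r_0)\subset\bigcup_{i\in J'}B(x_i,\kappa\rho).
\]
It remains to replace $r_0=\rho$ by $\v_n^{1/4}$ in the left-hand side. This is automatic for $n$ large because condition \eqref{8.ConditionOnTheMicroRadius} (more precisely $|\ln\rho|^3/|\ln\v|\to0$) gives $\v_n^{1/4}=o(\rho)$ as $n\to\infty$, so $B(x_i,\v_n^{1/4})\subset B(x_i,\rho)$ for every $i\in J$ and every sufficiently large $n$.

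\textbf{Main obstacle.} There is no serious analytic difficulty: the only point to monitor is that the geometric growth $r_k\le 9^k\rho$ remains compatible with the constraints, \emph{i.e.} that the merging constant $\kappa=9^{N_0-1}$ is truly independent of $n$. This is ensured by the uniform bound $N_0\le N=N(g,\O)$ coming from Lemma \ref{L8.finitenumberbaddiscs}. The extraction of a subsequence along which $\operatorname{Card}J$ is constant, and the use of \eqref{8.ConditionOnTheMicroRadius} to absorb $\v_n^{1/4}$ inside $\rho$, are routine.
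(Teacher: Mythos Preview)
Your overall strategy is exactly the BBH-merging argument the paper invokes (and is essentially the paper's own commented-out proof), but there is a concrete arithmetic error that makes the covering claim false as written. If the stopping test is $|y^k_i-y^k_j|\ge 16 r_k$ and it fails, then some pair satisfies $|y^k_i-y^k_j|<16 r_k$, and to absorb $B(y^k_j,r_k)$ into a ball centered at $y^k_i$ you need radius at least $|y^k_i-y^k_j|+r_k<17 r_k$, not $9 r_k$. Concretely, if $|y^k_i-y^k_j|=15 r_k$, the far point of $B(y^k_j,r_k)$ lies at distance $16 r_k>9 r_k$ from $y^k_i$. So the iteration must use $r_{k+1}=17 r_k$ (or any factor $\ge 17$), giving $\kappa\le 17^{N_0-1}$; this is precisely the constant in the paper's version.

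A second, smaller point: with your choice $\kappa=9^{N_0-1}$ (or $17^{N_0-1}$), the separation you actually prove is $|x_i-x_j|\ge 16\, r_{k_{\rm final}}$, and $r_{k_{\rm final}}$ may be strictly smaller than $\kappa\rho$ if the process stops early. To get the statement as written, pass to a further subsequence along which $k_{\rm final}$ (hence $r_{k_{\rm final}}/\rho$) is constant, and set $\kappa$ equal to that value; the same extraction makes $N'={\rm Card}\,J'$ constant. With these two fixes (the growth factor and the additional subsequence), your argument is correct and matches the paper's.
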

\begin{notation}\label{NOTATIONRHOBAD}The $\rho$-bad disc

For $i\in J'$, we say that $B_{}(x_i, 2\kappa\rho)$ is a $\rho$-bad disc.
\end{notation}
\begin{prop}\label{P8.BadDiscSeparation}
We have
\begin{enumerate}
\item $\displaystyle\frac{\rho}{\dist(B_{}(x_i, 2\kappa\rho),\p\O)}\to0$,
\item $\deg_{\p B_{}(x_i,2\kappa\rho)}(v_{\v_n})>0$,
\item $\displaystyle F_{\v_n}(v_{\v_n},B_{}(x_i,2\kappa\rho))\geq \pi b^2 \deg_{\p B_{}(x_i,2\kappa\rho)}(v_{\v_n})\ln\frac{\rho}{\v_n}-\mathcal{O}(1)$,
\item $\displaystyle|v_{\v_n}|\geq1-C\sqrt{\frac{|\ln\rho|}{|\ln\v_n|}}$ in $\O\setminus\cup_{i\in J'}\overline{B_{}(x_i,2\kappa\rho)}$.
\end{enumerate}
\end{prop}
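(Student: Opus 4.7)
The plan is to bootstrap from a coarse modulus estimate to a sharp one, using the upper bound of Lemma \ref{L8.UpperboundAuxPb}, the Sandier--Serfaty vortex-ball lower bound (Theorem 4.1 in \cite{SS1}, already invoked in Proposition \ref{P8.ConvOfBadDiscsTo0}), and the two $\eta$-ellipticity statements of Lemma \ref{L8.BadDiscsLemma}. First I prove a weak version of part 4: fix $\mu \in (0,1)$. For any $y \in \O \setminus \cup_{i \in J'} \overline{B(x_i, 2\kappa\rho)}$ and any $n$ so large that $2\v_n^{1/4} < \kappa\rho$, every initial disc $B(x_j, \v_n^{1/4})$ whose center lies within $2\v_n^{1/4}$ of $y$ must satisfy $j \notin J$, for otherwise Lemma \ref{L8.SeparationBadDiscs} would place $x_j$ within $\kappa\rho$ of some $x_i$, $i \in J'$, forcing $y \in B(x_i, 2\kappa\rho)$. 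Hence $F_{\v_n}(v_{\v_n}, B(y, \v_n^{1/4})) \leq C|\ln\v_n|$ by the initial good-disc bound and finite overlap, and Lemma \ref{L8.BadDiscsLemma}.2 with $\alpha = 1/4$ yields $|v_{\v_n}(y)| \geq \mu$.

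With this weak bound the degree $d_i := \deg_{\p B(x_i, 2\kappa\rho)}(v_{\v_n})$ is well-defined and $\sum_{i \in J'} d_i = d$. The Sandier--Serfaty vortex-ball lower bound applied inside each $B(x_i, 2\kappa\rho)$ with weight $U_{\v_n}^2 \geq b^2$ yields
\[
F_{\v_n}(v_{\v_n}, B(x_i, 2\kappa\rho)) \geq \pi b^2 |d_i|\,\ln(\rho/\v_n) - C.
\]
Summing and comparing to Lemma \ref{L8.UpperboundAuxPb}, combined with $J_{\rho,\v_n} = O(|\ln\rho|) = o(|\ln\v_n|)$ from \eqref{8.CoarseEstimate} and \eqref{8.ConditionOnTheMicroRadius}, gives $\sum |d_i| \leq d + o(1)$; since the $d_i$ are integers summing to $d$, this forces $d_i \geq 0$ and $\sum |d_i| = d$. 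A disc with $d_i = 0$ would still contain an initial bad disc of energy $\geq C_0|\ln\v_n|$, which cannot be absorbed by the $O(|\ln\rho|)$ slack, so every $d_i \geq 1$, and thus $N' = d$ with each $d_i = 1$. This settles parts 2 and 3.

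For part 1, subtracting the $d$ bad-disc lower bounds from the upper bound gives $F_{\v_n}(v_{\v_n}, \O \setminus \cup B(x_i, 2\kappa\rho)) \leq J_{\rho,\v_n} + O(1)$. Since $|v_{\v_n}| \geq \mu$ there, the map $v_{\v_n}/|v_{\v_n}|$, extended by $g$ to $\O'_\rho$, is an $S^1$-valued competitor for $I_{\rho, \v_n}$ realising $(x_1, \dots, x_d)$ with degrees $(1, \dots, 1)$ as an almost-minimal configuration in the sense of Notation \ref{NOTATIONQASIMINDEFINTION}. Corollary \ref{C8.CorolSecondAuxPb}.1 (respectively Proposition \ref{P8.CorolSecondAuxPb} in the non-periodic case) then provides $\eta_0 > 0$ independent of $n$ with $\dist(x_i, \p\O) \geq \eta_0$; since $2\kappa\rho \to 0$, we get $\dist(B(x_i, 2\kappa\rho), \p\O) \geq \eta_0/2$ for $n$ large, and part 1 follows.

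For the sharp version of part 4, I argue by contradiction: assume the existence of $y_n \in \O \setminus \cup B(x_i, 2\kappa\rho)$ with $|v_{\v_n}(y_n)| < 1 - C\sqrt{|\ln\rho|/|\ln\v_n|}$ for a large constant $C$ to be chosen. Lemma \ref{L8.BadDiscsLemma}.1 with $\alpha = 1/4$ and $\chi = C\sqrt{|\ln\rho|/|\ln\v_n|}$ forces $F_{\v_n}(v_{\v_n}, B(y_n, \v_n^{1/4})) > C^2 |\ln\rho| - C_1$. Since $y_n$ lies at distance $\gg \v_n^{1/4}$ from every bad disc, $B(y_n, \v_n^{1/4})$ is disjoint from $\cup B(x_i, 2\kappa\rho)$, and adding this excess to the $d$ bad-disc lower bounds contradicts Lemma \ref{L8.UpperboundAuxPb} once $C^2 > \pi d + 1$, using $J_{\rho,\v_n} \leq \pi d|\ln\rho| + O(1)$ from \eqref{8.CoarseEstimate}. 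The main obstacle is the Sandier--Serfaty step: the $O(1)$-error in the weighted lower bound must be made uniform in $n$ despite the rapid oscillations and discontinuities of $U_{\v_n}$, which requires invoking the exponential decay estimate $|U_{\v_n} - a_{\v_n}| = O(e^{-\alpha R/\v_n})$ of Proposition \ref{P8.UepsCloseToaeps} on annuli disjoint from $\p\o_{\v_n}$, and using the scale separation $\rho \gg \lambda\delta^{\num+1}$ to locate such clean annuli around each bad disc.
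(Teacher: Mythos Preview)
Your overall strategy for parts 2, 3, and 4 matches the paper's and is sound: both run Sandier--Serfaty lower bounds in each $\rho$-bad disc, balance them against Lemma~\ref{L8.UpperboundAuxPb}, use the initial-bad-disc energy bound \eqref{8.BadBallEnergyBoundWithoutZero} to rule out $d_i\le 0$, and for part 4 combine the lower bounds with Lemma~\ref{L8.BadDiscsLemma}.1. Two points you glossed: (i) you need $B(y_n,\v_n^{1/4})$ disjoint from the bad discs, but $y_n$ could sit right on $\p B(x_i,2\kappa\rho)$; the paper fixes this by running the SS bound on the slightly smaller $B(x_i,\frac{3}{2}\kappa\rho)$. (ii) The SS step needs no care about oscillations of $U_{\v_n}$: the uniform pointwise bound $U_{\v_n}^2\ge b^2$ is all that enters, so your last paragraph is unnecessary --- and the scale separation is reversed: \eqref{8.ConditionOnTheMicroRadius} says $\rho\ll\lambda\delta^{\num+1}$, not $\gg$.

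The genuine gap is your route to part 1. Passing from $F_{\v_n}(v_{\v_n},\O\setminus\cup B)\le J_{\rho,\v_n}+O(1)$ to ``$v_{\v_n}/|v_{\v_n}|$ is an almost-minimal competitor'' requires comparing $\int U_{\v_n}^2|\nabla(v/|v|)|^2$ to $\int U_{\v_n}^2|\nabla v|^2$, and with only the weak bound $|v|\ge\mu$ you pick up a multiplicative factor $1/\mu^2$, hence an additive error of order $|\ln\rho|$, not $O(1)$. The paper's Proposition~\ref{P8.StudyOfS1Part} does exactly this computation but uses the \emph{sharp} part 4, so that $1-|v|^2\le C\sqrt{|\ln\rho|/|\ln\v_n|}$ and the error becomes $\sqrt{|\ln\rho|^3/|\ln\v_n|}=o(1)$ via \eqref{8.ConditionOnTheMicroRadius}. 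As written, your order of proof makes part 1 circular; you could rescue it by proving sharp part 4 first (it does not depend on part 1), then part 1.

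The paper avoids this detour entirely. For part 1 it invokes Proposition~\ref{P8.ConvOfBadDiscsTo0} directly: once $d_i>0$ the disc contains a point where $|v_{\v_n}|<7/8$; by Proposition~\ref{P8.ConvOfBadDiscsTo0} such a point lies within $\mu\lambda\delta$ of $\o_{\v_n}$, and $\o_{\v_n}$ sits at distance $\gtrsim\delta^{\num}$ from $\p\O$ by construction. Since $\rho\ll\lambda\delta^{\num+1}$, part 1 follows at once, without any almost-minimality argument. Finally, your conclusion ``$N'=d$ with each $d_i=1$'' does not follow from $d_i\ge1$ and $\sum d_i=d$ alone (this only gives $N'\le d$); the paper establishes $N'=d$ later, in Corollary~\ref{C8.TheCenterAreInGoodPosition}, via the very almost-minimality argument you anticipated --- but only after the sharp modulus bound is in hand.
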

\begin{proof}
We prove Assertions 1., 2. and 3.. Set 
\[
J_0':=\{i\in J'\,|\,\deg_{\p (B(x_i,2\kappa\rho)\cap\O)}(v_{\v_n})>0\}.
\]
Since $|v_{\v_n}|\geq \frac{7}{8}$ in $\O\setminus\cup_{i\in J'}\overline{B(x_i,2\kappa\rho)}$, we have  
\begin{equation}\label{8.GoodConfiDegree}
0<d=\sum_{I\in J'}\deg_{\p (B(x_i,2\kappa\rho)\cap\O)}(v_{\v_n})\leq\sum_{I\in J'_0}\deg_{\p (B(x_i,2\kappa\rho)\cap\O)}(v_{\v_n}).
\end{equation}
Consequently $J'_0\neq\emptyset$.
 
Up to a subsequence, we may assume that $J_0'$ is independent of $n$.

From Proposition \ref{P8.ConvOfBadDiscsTo0}, for all $i\in J'_0$ , we have $\dist(B(x_i,\v^{1/4}),\p\O)\gtrsim\delta$ (or $\delta^\num$ if the pinning term is not periodic).  Consequently, for $i\in J'_0$ we find
\begin{equation}\label{8.CalculIntermediaairedemandé}
\frac{\dist(B_{}(x_i,2\kappa\rho),\p\O)}{\rho}\to+\infty
\end{equation}
since $\dfrac{\rho}{\lambda\delta^{\num+1}}\to0$.

Assertions 1., 2. and 3. will follow from the estimate%In order to prove the first assertion we get the following lower bound for $i\in J'_0$
\begin{equation}\label{8.BadDiscDegLowBound}
F_{\v_n}(v_{\v_n},B(x_i,2\kappa\rho))\geq  b^2\pi \deg_{\p B(x_i,2\kappa\rho)}(v_{\v_n})\ln\frac{\rho}{\v_n}-\mathcal{O}(1),
\end{equation}
valid for $i\in J_0'$. Indeed, assume for the moment that \eqref{8.BadDiscDegLowBound} holds for $i\in J_0'$.

Then, by combining \eqref{8.UpperboundAuxPb}, \eqref{8.CoarseEstimate}, \eqref{8.BadBallEnergyBoundWithoutZero}, \eqref{8.ConditionOnTheMicroRadius}, \eqref{8.GoodConfiDegree} and \eqref{8.BadDiscDegLowBound},  we find that $J_0'=J'$, \emph{i.e.}, 2. holds. 
Consequently, by combining Assertion 2. with \eqref{8.CalculIntermediaairedemandé}, Assertion 1. yields and from Assertion 2. and \eqref{8.BadDiscDegLowBound}, Assertion 3. holds.%Consequently \eqref{8.BadDiscDegLowBound} implies 2..

We now turn to the proof of \eqref{8.BadDiscDegLowBound}, which relies on Proposition 4.1 in \cite{SS1}.  We apply this proposition in the domain $B=B(0,2\kappa)$, to the function $v'(x)=v_{\v_n}\left[\rho(x-x_i)\right]$ and with the rescaled parameter $\displaystyle\xm=\frac{\v}{\rho}$.

Note that, from \eqref{8.ConditionOnTheMicroRadius}, $\v\ll\xm\ll\rho\ll\lambda\delta^{\num+1}$ and $|\ln\v|\sim|\ln\xm|\gg|\ln(\lambda\delta)|$.

Clearly, $v'$ satisfies  
%\[
%|\n v'|\leq \frac{C}{\xm}
%\]
%and
\begin{eqnarray*}
 \int_B{\left\{|\n v'|^2+\frac{1}{\xm^2}(1-|v'|^2)^2\right\}}&=& \int_{B(x_i,2\kappa\rho)}{\left\{|\n v_{\v_n}|^2+\frac{1}{\v^2}(1-|v_{\v_n}|^2)^2\right\}}
 \\&=&\mathcal{O}(|\ln\v|)=\mathcal{O}(|\ln\xm|).
\end{eqnarray*}
Hence, one may apply the following result of Sandier and Serfaty: there is $(B_j)_{j\in I}$, a finite covering of 
\[
\{x\in B(0,2\kappa-\xm/b)\,|\,|v'(x)|\leq1-(\xm/b)^{1/8}\} 
\]
with disjoint balls $B_j$ of radius $r_j<10^{-3}$ s.t. %if $\overline{B_j}\subset B(0,2\kappa-\xm)$,
\[
\frac{1}{2}\int_{\displaystyle B\cap \cup B_j}{\left\{|\n v'|^2+\frac{b^2}{\xm^2}(1-|v'|^2)^2\right\}}\geq\pi \sum_jd_j|\ln\xm|-\mathcal{O}(1);
\]
here %, $H=\{j\,|\,B_j\subset B(0,2\kappa-\xm/b)\}$ and 
$%\[
d_j=\begin{cases}|\deg_{\p B_j}(v')|&\text{if }B_j\subset B(0,2\kappa-\xm/b)\\0&\text{otherwise}\end{cases}.
$%\]
%Moreover the $d_j$'s are bounded independently of $n$.

Note that from construction, $\{|v_{\v_n}|\leq7/8\}\subset\cup_{J}B(x_i,\v_n^{1/4})\subset\cup_{J'} B(x_i,\kappa\rho)$. Consequently: 
\[
\text{if }\deg_{\p (B_j\cap B(0,2\kappa-\xm/b))}(v')\neq 0\text{, then we have }B_j\subset B(0,\frac{3}{2}\kappa).
\]
Therefore, $\sum d_j=\deg_{\p B(0,2\kappa)}(v')=\deg_{\p B(x_i,2\kappa\rho)}(v_{\v_n})$ and
\begin{eqnarray}\nonumber
 \frac{1}{2}\int_{B(x_i,2\kappa\rho)}{\left\{|\n v_{\v_n}|^2+\frac{1}{2\v^2}(1-|v_{\v_n}|^2)^2\right\}}&\geq&\pi\deg_{\p B(x_i,2\kappa\rho)}(v_{\v_n})|\ln\xm|-\mathcal{O}(1)
\\\nonumber&=&\pi\deg_{\p B(x_i,2\kappa\rho)}(v_{\v_n})\ln\frac{\rho}{\v}-\mathcal{O}(1).
\end{eqnarray}
Thus \eqref{8.BadDiscDegLowBound} holds. 

%The second assertion is given by \eqref{8.BadDiscDegLowBound} and the first assertion.

The last assertion is obtained using Lemmas \ref{L8.UpperboundAuxPb} $\&$ \ref{L8.BadDiscsLemma}. Indeed, note that the proof of  \eqref{8.BadDiscDegLowBound} gives a more precise result
\[
F_{\v_n}(v_{\v_n},B(x_i,\frac{3}{2}\kappa\rho))\geq  b^2\pi \deg_{\p B(x_i,2\kappa\rho)}(v_{\v_n})\ln\frac{\rho}{\v_n}-\mathcal{O}(1).
\]
Let $x\in\O\setminus\cup_{J'} B(x_i,2\kappa\rho)$ then $B(x,\v_n^{1/4})\cap B(x_i,\frac{3}{2}\kappa\rho)=\emptyset$. Consequently, using  Lemma \ref{L8.UpperboundAuxPb} and the previous lower bound, we obtain:
\[
F_{\v_n}(v_{\v_n},B(x,\v_n^{1/4}))\leq I_{2\kappa\rho,\v_n}+C_0\leq \pi d|\ln\rho|+C_0.
\] 
Therefore, from Lemma \ref{L8.BadDiscsLemma}, there is $C>0$, independent of $x$ s.t. $|v_{\v_n}(x)|\geq\displaystyle1-C\sqrt{\frac{|\ln\rho|}{|\ln\v_n|}}$.
\end{proof}
\subsubsection{Location and degree of bad discs}
Let $\displaystyle w_n=\frac{v_{\v_n}}{|v_{\v_n}|}\in H^1(\O\setminus\cup_{J'} \overline{B(x_i,2\kappa\rho)},\S^1)$.% one of the most important results in this study is:
\begin{prop}\label{P8.StudyOfS1Part}
The map $w_n$ is an almost minimal function for $I_{2\kappa\rho,\v_n}$.
\end{prop}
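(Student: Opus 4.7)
\textbf{Proof plan for Proposition~\ref{P8.StudyOfS1Part}.} The strategy is to sandwich the desired quantity $\tfrac12\int_{\O'_{2\kappa\rho}} U_{\v_n}^2|\nabla w_n|^2$ between the sharp upper bound for $\inf F_{\v_n}$ and the precise lower bound on the contribution of the bad discs. What remains of the total energy is then the ``exterior'' piece, which, once the modulus is peeled off, is exactly $\tfrac12\int U_{\v_n}^2|\nabla w_n|^2$ up to an additive $O(1)$.

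First I would establish the two matching bounds for the total energy. Applying Lemma~\ref{L8.UpperboundAuxPb} with perforation radius $\rho'_n:=2\kappa\rho(\v_n)$ (legitimate since $\rho'_n/(\lambda\delta^{\num+1})\to 0$ by \eqref{8.ConditionOnTheMicroRadius}) and combining with Corollary~\ref{C8.AnAlmostMinConfigIsAnAlmostMinConf} gives the upper bound
\[
F_{\v_n}(v_{\v_n},\O)\leq d b^2 \pi\ln\frac{\rho(\v_n)}{\v_n}+I_{2\kappa\rho,\v_n}+O(1).
\]
For the lower bound, Proposition~\ref{P8.BadDiscSeparation} tells us the bad discs lie well inside $\O$, each carries a positive integer degree $d_i=\deg_{\partial B(x_i,2\kappa\rho)}(v_{\v_n})\geq 1$, and since $|v_{\v_n}|\geq 7/8$ outside these discs while $\deg_{\partial\O}g=d$, degree conservation forces $\sum d_i=d$. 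Proposition~\ref{P8.BadDiscSeparation}.3 summed over $i\in J'$ then yields
\[
\sum_{i\in J'}F_{\v_n}\bigl(v_{\v_n},B(x_i,2\kappa\rho)\bigr)\geq d b^2 \pi\ln\frac{\rho(\v_n)}{\v_n}-O(1),
\]
and subtracting from the upper bound gives the key ``exterior'' estimate
\[
F_{\v_n}\bigl(v_{\v_n},\O\setminus\cup_{J'}\overline{B(x_i,2\kappa\rho)}\bigr)\leq I_{2\kappa\rho,\v_n}+O(1).
\]

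Next I would convert this into an estimate on $w_n$. By Proposition~\ref{P8.BadDiscSeparation}.4, $|v_{\v_n}|\geq 1-\eta_n$ on the exterior domain, with $\eta_n:=C\sqrt{|\ln\rho|/|\ln\v_n|}\to 0$; hence $w_n=v_{\v_n}/|v_{\v_n}|$ is well-defined, $\S^1$-valued, equals $g$ on $\partial\O$, and inherits the degrees $d_i$. Using $|\nabla v_{\v_n}|^2=|\nabla|v_{\v_n}||^2+|v_{\v_n}|^2|\nabla w_n|^2$ and discarding the nonnegative modulus-gradient and potential terms of $F_{\v_n}$,
\[
\tfrac12\int_{\O\setminus\cup B}U_{\v_n}^2|\nabla w_n|^2\leq (1-\eta_n)^{-2}\,F_{\v_n}\bigl(v_{\v_n},\O\setminus\cup B\bigr).
\]
Extending $w_n$ by the fixed smooth $\S^1$-extension of $g$ on $\O'\setminus\O$ (where $U_{\v_n}\equiv 1$) adds an $O(1)$ boundary-strip contribution and produces a legitimate competitor for $I_{2\kappa\rho,\v_n}$: points $x_i\in\O$ with $|x_i-x_j|\geq 16\kappa\rho=8\cdot 2\kappa\rho$ by Lemma~\ref{L8.SeparationBadDiscs}, positive degrees summing to $d$.

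The only delicate step is absorbing the multiplicative factor $(1-\eta_n)^{-2}$ into an additive $O(1)$; this is where Hypothesis~\eqref{8.ConditionOnTheMicroRadius} is essential. By the coarse estimate \eqref{8.CoarseEstimate}, $I_{2\kappa\rho,\v_n}=O(|\ln\rho|)$, so the multiplicative error is of size $\eta_n|\ln\rho|=O\bigl(|\ln\rho|^{3/2}/|\ln\v_n|^{1/2}\bigr)=o(1)$ thanks to \eqref{8.ConditionOnTheMicroRadius}. Assembling the pieces,
\[
\tfrac12\int_{\O'_{2\kappa\rho}}U_{\v_n}^2|\nabla w_n|^2\leq I_{2\kappa\rho,\v_n}+O(1),
\]
which by Notation~\ref{NOTATIONQASIMINDEFINTION} is exactly the statement that $w_n$ realizes an almost minimal configuration for $I_{2\kappa\rho,\v_n}$.
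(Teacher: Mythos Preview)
Your argument is correct and follows essentially the same route as the paper's proof: combine the sharp upper bound \eqref{8.UpperboundAuxPb} (applied with radius $2\kappa\rho$, together with $J_{2\kappa\rho,\v_n}\le I_{2\kappa\rho,\v_n}+C_0$) with the lower bound from Proposition~\ref{P8.BadDiscSeparation}.3 on the bad discs to control the exterior energy, then peel off the modulus using Proposition~\ref{P8.BadDiscSeparation}.4 and absorb the resulting $\eta_n$-error via \eqref{8.ConditionOnTheMicroRadius}. The only cosmetic difference is that the paper writes the modulus correction additively as $\int U_{\v_n}^2(1-|v_{\v_n}|^2)|\nabla w_n|^2$ rather than multiplicatively as a $(1-\eta_n)^{-2}$ factor, but the two are equivalent and lead to the same crucial $\sqrt{|\ln\rho|^3/|\ln\v_n|}=o(1)$ error term.
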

\begin{proof}
Indeed, denote $\displaystyle K_n=\frac{1}{2}\int_{\O\setminus\cup_{J'} \overline{B(x_i,2\kappa\rho)}}{U_{\v_n}^2|\n w_n|^2}$, then we have

%The proof is a direct consequence of Lemma \ref{L8.UpperboundAuxPb}, Proposition \ref{P8.BadDiscSeparation} (Assertion 4.) and  \eqref{8.BadDiscDegLowBound}.

\begin{eqnarray*}
K_n\!\!\!\!\!\!\!\!&\leq&\!\!\!\!\!\!\!\!\!\!\!\!\!\!\!\!\!\!F_{\v_n}(v_{\v_n},\O\setminus\cup_{J'} \overline{B(x_i,2\kappa\rho)})%+\\&&\phantom{aaqqqq}
+\int_{\O\setminus\cup_{J'} \overline{B(x_i,2\kappa\rho)}}{U_{\v_n}^2(1-|v_{\v_n}|^2)|\n w_n|^2}
\\&=&\!\!\!\!\!\!\!\!\!\!\!\!\!\!\!\!\!\!\!\!F_{\v_n}(v_{\v_n},\O)-F_{\v_n}(v_{\v_n},\cup_{J'} B(x_i,2\kappa\rho))+
%\\&&\phantom{aaaa}
\int_{\O\setminus\cup_{J'} \overline{B(x_i,2\kappa\rho)}}{U_{\v_n}^2(1-|v_{\v_n}|^2)|\n w_n|^2}
\\&\leq \text{\eqref{8.UpperboundAuxPb}, Prop \ref{P8.BadDiscSeparation}}\leq&I_{2\kappa\rho,\v_n}+C\sqrt{\frac{|\ln\rho|}{|\ln\v_n|}}\int_{\O\setminus\cup_{J'} \overline{B(x_i,2\kappa\rho)}}{U_{\v_n}^2|\n w_n|^2}+\mathcal{O}(1)
\\&\leq \text{\eqref{8.UpperboundAuxPb}, Prop \ref{P8.BadDiscSeparation}}\leq&I_{2\kappa\rho,\v_n}+C\sqrt{\frac{|\ln\rho|}{|\ln\v_n|}}F_{\v_n}(v_{\v_n},\O\setminus\cup_{J'} \overline{B(x_i,2\kappa\rho)})+\mathcal{O}(1)
\\&\leq\begin{array}{c}\eqref{8.UpperboundAuxPb},\eqref{8.CoarseEstimate}\\ \text{Prop \ref{P8.BadDiscSeparation}}\end{array}\leq&I_{2\kappa\rho,\v_n}+C\sqrt{\frac{|\ln\rho|^3}{|\ln\v_n|}}+\mathcal{O}(1)
\\&\leq\eqref{8.ConditionOnTheMicroRadius}\leq&I_{2\kappa\rho,\v_n}+\mathcal{O}(1).%\,(\text{using \eqref{8.CoarseEstimate}}).
\end{eqnarray*}
\end{proof}
\begin{remark}
Note that the penultimate line in the proof of Proposition \ref{P8.StudyOfS1Part} is the main use of \eqref{8.MainHyp} (which is express in \eqref{8.ConditionOnTheMicroRadius}).
\end{remark}
By combining Proposition \ref{P8.ToMinimizeSecondPbThePointAreFarFromBoundAndHaveDegree1} with Proposition \ref{P8.StudyOfS1Part} in the periodic case or Proposition \ref{P8.CorolSecondAuxPb} if the pinning term is not periodic, we obtain the following
\begin{cor}\label{C8.TheCenterAreInGoodPosition}
The configuration $\{(x_1,...,x_{N'}),(\deg_{\p B(x_1,2\kappa\rho)}(v_{\v_n}),...,\deg_{\p B(x_{N'},2\kappa\rho)}(v_{\v_n}))\}$ is an almost minimal configuration of $I_{2\kappa\rho,\v_n}$ and consequently, $N'=d$, $\deg_{\p B(x_{i},2\kappa\rho)}(v_{\v_n})=1$ for all $i$ and there is $\eta_0>0$ independent of large $n$ s.t.
\[
\min\left\{\min_{i\neq j}|x_i-x_j|,\min_i\dist(x_i,\p\O)\right\}>2\eta_0,
\]
\[
B(x_i,2\eta_0\lambda\delta)\subset\o_\v.
\]
\end{cor}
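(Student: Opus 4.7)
The strategy is to verify that the configuration given by the centers of the $\rho$-bad discs together with their degrees is admissible in the definition of $I_{2\kappa\rho,\v_n}$, and then to transfer the structural information obtained for almost minimizers of $I_{\rho,\v_n}$ (via Corollary~\ref{C8.CorolSecondAuxPb} in the periodic setting, or Proposition~\ref{P8.CorolSecondAuxPb} in the general diluted setting) to our concrete configuration through Proposition~\ref{P8.StudyOfS1Part}.

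First I would check admissibility of the data $\{(x_i,d_i^n)\}_{i\in J'}$ where $d_i^n:=\deg_{\p B(x_i,2\kappa\rho)}(v_{\v_n})$. Lemma~\ref{L8.SeparationBadDiscs} gives the mutual separation $|x_i-x_j|\ge 16\kappa\rho\ge 8\cdot(2\kappa\rho)$; the points lie in $\O$ by construction; Proposition~\ref{P8.BadDiscSeparation}.2 ensures $d_i^n>0$; and the identity $\sum_i d_i^n=d$ follows because $|v_{\v_n}|\ge 7/8$ on $\O\setminus\cup_{J'}\overline{B(x_i,2\kappa\rho)}$, so $w_n=v_{\v_n}/|v_{\v_n}|\in H^1_g(\O\setminus\cup\overline{B(x_i,2\kappa\rho)},\S^1)$ is well-defined with boundary degree $\deg_{\p\O}(g)=d$. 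After extending $w_n$ to $\O'\setminus\O$ by $g$, this map is therefore an admissible competitor for $I_{2\kappa\rho,\v_n}$.

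Next, Proposition~\ref{P8.StudyOfS1Part} provides the bound $\tfrac12\int_{\O'\setminus\cup\overline{B(x_i,2\kappa\rho)}}U_{\v_n}^2|\n w_n|^2\le I_{2\kappa\rho,\v_n}+\mathcal{O}(1)$, so the configuration $\{(x_i,d_i^n)\}_{i\in J'}$ is almost minimal for $I_{2\kappa\rho,\v_n}$ in the sense of Notation~\ref{NOTATIONQASIMINDEFINTION}. The radius $\rho$ satisfies \eqref{8.ConditionOnTheMicroRadius}, in particular $\rho\ge \v_n$ and $\rho/(\lambda\delta)\to 0$ in the periodic case, respectively $\rho/(\lambda\delta^{3/2})\to 0$ in the general diluted case. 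Hence Corollary~\ref{C8.CorolSecondAuxPb} (periodic case) or Proposition~\ref{P8.CorolSecondAuxPb} (general case) applies and yields, for $n$ large, constants $\eta_0',c>0$ (independent of $n$) such that every $d_i^n=1$, $\min_{i\neq j}|x_i-x_j|\ge\eta_0'$, $\dist(x_i,\p\O)\ge\eta_0'$, and $B(x_i,c\lambda\delta)\subset\o_\v$. Since $\sum d_i^n=d$ and each $d_i^n=1$, one gets $N'=d$. Setting $\eta_0:=\tfrac12\min(\eta_0',c)$ gives all the stated inequalities at once.

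The only non-routine verification is really the admissibility step, and in particular the identity $\sum d_i^n=d$; once this is in hand, the rest is a mechanical application of the auxiliary results of Section~\ref{S8.FirstSection}. Note also that the hypothesis \eqref{8.MainHyp} enters only through Proposition~\ref{P8.StudyOfS1Part} (via \eqref{8.ConditionOnTheMicroRadius}), which is why no further use of it is required here.
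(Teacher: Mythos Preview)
Your proposal is correct and follows exactly the approach indicated by the paper, which simply states that the corollary is obtained ``by combining Proposition~\ref{P8.ToMinimizeSecondPbThePointAreFarFromBoundAndHaveDegree1} with Proposition~\ref{P8.StudyOfS1Part} in the periodic case or Proposition~\ref{P8.CorolSecondAuxPb} if the pinning term is not periodic''. You have merely written out the admissibility checks (separation, positivity and summation of the degrees, extension by $g$ to $\O'$) that the paper leaves implicit; citing Corollary~\ref{C8.CorolSecondAuxPb} rather than Proposition~\ref{P8.ToMinimizeSecondPbThePointAreFarFromBoundAndHaveDegree1} in the periodic case is immaterial since the former is the repackaging of the latter for almost minimal configurations.
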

%\noindent{\bf Claim:} Up to multiply $\rho$ by a fixed constant, %\begin{cor}\label{C8.ChangeOf Radius}
%Proposition \ref{P8.BadDiscSeparation} holds with $\rho/2$ instead of $2\kappa\rho$, \emph{i.e.}, one may consider $B(x_i,\rho/2)$, $i\in J'$ as bad discs instead of $B(x_i,2\kappa\rho)$, $i\in J'$ with exactly the same results.
%\end{cor}
%\begin{proof}Since there is a finite number $N$ of not renormalized bad discs $B(x_i,\v_n^\alpha)$, $i\in I$ (Lemma \ref{L7.finitenumberbaddiscs}), applying Lemma \ref{L8.SeparationLemma} with $\eta=10^{-N^2-10}\rho$, up to subsequence and to relabel, there are $I'\subset I$ and $\kappa'\in\{9^0,...,9^{N-1}\}$  (independent of $n$) s.t. the points $x_{i'}$ satisfy
%\[
%|x_{i}-x_j|\geq8\kappa'\eta\text{ for $i\neq j$, $i,j\in I'$ and }\cup_{i\in I}B(x_i,\v_n^\alpha)\subset\cup_{i\in I'}B(x_i,\kappa'\eta).
%\]
%Since $2\kappa'\eta\leq\rho$, we have from the proof of Lemma \ref{L8.SeparationBadDiscs} that 
%\[
%\cup_{i\in I'}B(x_i,2\kappa'\eta)\subset\cup_{j\in J}B(x_j,\kappa\rho).
%\]
%As simple copy of proof of Proposition \ref{P8.BadDiscSeparation}, one may prove that $\deg_{B(x_i,2\kappa'\eta)}(v_n)>0$ ($i\in I'$) (and Assertion 1. and Assertion 2.). Consequently,  each balls $B(x_j,\kappa\rho)$ ($j\in J$) contains a unique ball $B(x_i,2\kappa'\eta)$ ($i\in I'$). Therefore ${\rm Card}(I')={\rm Card }(J)=d$ and then one may consider $I'=J$.

%Clearly the third assertion is also verified.
%\end{proof}
\subsection{$H_{\rm loc}^1$-weak convergence}

In order to keep notations simple, we replace from now on, $2\kappa\rho$ by $\rho/2$. 

%Recall that the points $x_i$'s depend on $n$, thus we write them $x_i=x_n^i$.

Using Corollary \ref{C8.TheCenterAreInGoodPosition}, there is $\{a_1,...,a_{d}\}\subset\O$ s.t. possibly after passing to a subsequence, we have $x^{n}_i=x_i\to a_i$.
 
Let $\rho_0>0$ be defined as
\[
\rho_0=10^{-2}\cdot\min_{k\neq l}\left\{\dist(a_k,\p\O),|a_k-a_l|\right\}.
\]
\subsubsection{The contribution of the modulus is bounded in the whole domain}
We are now in position to bound the potential part of $F_\v(v_\v)$. More precisely we have

%As a direct consequence of Lemmas \ref{L8.AlmostminimizingtestFunction},\ref{L8.UpperboundAuxPb} and Propositions \ref{P8.DirectPropAnnProb}, \ref{P8.BadDiscSeparation}, \ref{P8.StudyOfS1Part} we have
\begin{prop}\label{P8.BoundAnnulusArroundBadDiscs}
We have $\displaystyle\int_{\O}{\left\{|\n |v_{\v_n}||^2+\frac{1}{\v_n^2}(1-|v_{\v_n}|^2)^2\right\}}=\mathcal{O}(1)$.
%Let $\eta_1=\eta_{1,n},\eta_2=\eta_{2,n}$ s.t. $\rho/2\leq\eta_1<\eta_2/2\leq\rho_0$ then $F_{\v_n}(v_n,B(x_i,\eta_2)\setminus B(x_i,\eta_1))=\mathcal{O}(\ln\frac{\eta_2}{\eta_1})$.
%\end{enumerate}
\end{prop}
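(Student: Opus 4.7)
The plan is to sandwich $F_{\v_n}(v_{\v_n})$ between matching upper and lower bounds differing by at most $O(1)$, and to extract the desired unweighted estimate from the slack via rescaling on bad discs together with a Pohozaev identity on the outer region. I would choose $\rho=(\lambda\delta)^{\num+2}$ (which satisfies \eqref{8.ConditionOnTheMicroRadius}), split $\O=\O^{\rm out}\sqcup\bigsqcup_iB_i$ with $B_i:=B(x_i,\rho/2)$ and $\O^{\rm out}:=\O\setminus\bigcup_i B_i$, and write $F_{\v_n}(v_{\v_n})=F^{\rm out}+\sum_i F^{{\rm in},i}$ accordingly.

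The sharp upper bound $F_{\v_n}(v_{\v_n})\le J_{\rho/2,\v_n}+db^2[\pi\ln(b\rho/\v_n)+\gamma]+o(1)$ is supplied by Lemma \ref{L8.UpperboundAuxPb}. For the lower bound on $F^{\rm out}$, I would use that $w_{\v_n}:=v_{\v_n}/|v_{\v_n}|$ is admissible in the definition of $I_{\rho/2,\v_n}$ (Proposition \ref{P8.StudyOfS1Part}) and that $|v_{\v_n}|^2\geq 1-C\sqrt{|\ln\rho|/|\ln\v_n|}$ on $\O^{\rm out}$ (Proposition \ref{P8.BadDiscSeparation}.4), through the identity $|\n v_{\v_n}|^2=|\n|v_{\v_n}||^2+|v_{\v_n}|^2|\n w_{\v_n}|^2$: this yields $F^{\rm out}\geq I_{\rho/2,\v_n}-o(1)$, where the error $o(1)$ uses hypothesis \eqref{8.MainHyp} via $|\ln\rho|^3/|\ln\v_n|\to 0$. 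For each bad disc, Corollary \ref{C8.TheCenterAreInGoodPosition} gives $B_i\subset\o_\v$, so Proposition \ref{P8.UepsCloseToaeps} yields $U_\v=b+O({\rm e}^{-\alpha\lambda\delta/\v_n})$ uniformly on $B_i$; the rescaling $\tilde v(y):=v_{\v_n}(x_i+\v_n y/b)$ on $\tilde B:=B(0,b\rho/(2\v_n))$ then gives $F^{{\rm in},i}=b^2 E_1^0(\tilde v,\tilde B)+o(1)$, and since $\deg_{\p\tilde B}(\tilde v)=1$, Bethuel-Brezis-Hélein's Lemma IX.1 gives $F^{{\rm in},i}\geq b^2[\pi\ln(b\rho/(2\v_n))+\gamma]-o(1)$.

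Summing and using $|I_{\rho/2,\v_n}-J_{\rho/2,\v_n}|=O(1)$ (Corollary \ref{C8.AnAlmostMinConfigIsAnAlmostMinConf}), the upper and lower bounds coincide up to $O(1)$. Thus each rescaled $\tilde v$ is a uniformly almost-minimizing degree-one vortex of the standard Ginzburg-Landau energy on $\tilde B$, satisfying a PDE that is an exponentially small perturbation of the standard one (Proposition \ref{P8.UepsCloseToaeps}). Bethuel-Brezis-Hélein's Theorem III.3, applied to $\tilde v$, then gives $\int_{\tilde B}\{|\n|\tilde v||^2+(1-|\tilde v|^2)^2\}=O(1)$, and scaling back yields $\int_{B_i}\{|\n|v_{\v_n}||^2+\v_n^{-2}(1-|v_{\v_n}|^2)^2\}=O(1)$ for each bad disc.

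For the outer contribution, I would use a weighted Pohozaev-type identity: multiplying the Euler-Lagrange equation $-\Div(U_\v^2\n v_{\v_n})=\v_n^{-2}U_\v^4 v_{\v_n}(1-|v_{\v_n}|^2)$ by $\bar v_{\v_n}$ and taking real parts gives $U_\v^2|\n v_{\v_n}|^2-\frac{1}{2}\Div(U_\v^2\n|v_{\v_n}|^2)=\v_n^{-2}U_\v^4|v_{\v_n}|^2(1-|v_{\v_n}|^2)$; integrating over $\O^{\rm out}$ and using the pointwise inequality $(1-|v_{\v_n}|^2)^2\leq 2|v_{\v_n}|^2(1-|v_{\v_n}|^2)$ on $\{|v_{\v_n}|^2\geq 1/2\}$, the potential integral is controlled by $\int_{\O^{\rm out}}U_\v^2|\n v_{\v_n}|^2=O(|\ln\rho|)$ plus boundary terms. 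On $\p\O$ the Dirichlet condition $|v_{\v_n}|=1$ and $H^2$-regularity bound the boundary integrand; on each $\p B_i$ the convergence of $\tilde v$ to the standard vortex profile yields $|\n v_{\v_n}|=O(1/\rho)$, hence an $O(1)$ contribution after integration against $\rho$. Combined with the pointwise bound $1-|v_{\v_n}|^2=o(1)$ and hypothesis \eqref{8.MainHyp}, the $O(|\ln\rho|)$ estimate for $\v_n^{-2}\int(1-|v_{\v_n}|^2)$ upgrades to an $o(1)$ bound for $\v_n^{-2}\int(1-|v_{\v_n}|^2)^2$. The main obstacle is controlling the weight corrections $\n U_\v$, of size $1/\v_n$ near $\p\o_\v$, that appear when the Pohozaev identity is localized: this requires cutoffs aligned with the inclusions and careful exploitation of Proposition \ref{P8.UepsCloseToaeps}.
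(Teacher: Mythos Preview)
Your overall sandwich is right, but you take an unnecessary detour for the outer region and invoke the wrong tool for the inner region.

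\textbf{Outer region.} The modulus--potential bound on $\O^{\rm out}$ falls out directly from the very sandwich you wrote, without any Pohozaev identity. In your lower bound $F^{\rm out}\ge I_{\rho/2,\v_n}-o(1)$ you silently dropped the nonnegative terms $\tfrac12\int U_\v^2|\nabla|v_{\v_n}||^2$ and $\tfrac1{4\v_n^2}\int U_\v^4(1-|v_{\v_n}|^2)^2$; if you keep them, the identity $|\nabla v|^2=|\nabla|v||^2+|v|^2|\nabla w|^2$ together with $|v_{\v_n}|^2\ge 1-C\sqrt{|\ln\rho|/|\ln\v_n|}$ gives
\[
F^{\rm out}\ge (1-o(1))\,I_{\rho/2,\v_n}+\frac12\int_{\O^{\rm out}}\Big\{U_\v^2|\nabla|v_{\v_n}||^2+\frac{U_\v^4}{2\v_n^2}(1-|v_{\v_n}|^2)^2\Big\}.
\]
Combined with the upper bound $F^{\rm out}\le I_{\rho/2,\v_n}+O(1)$ (your sandwich), the bracketed integral is $\le o(1)\cdot I_{\rho/2,\v_n}+O(1)=o(|\ln\rho|^{3/2}/|\ln\v_n|^{1/2})+O(1)=O(1)$ by \eqref{8.ConditionOnTheMicroRadius}. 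Dividing by $b^4$ gives the unweighted estimate. This is exactly the paper's argument (the one-line reference to \eqref{8.UpperboundAuxPb}, Proposition~\ref{P8.BadDiscSeparation} and Proposition~\ref{P8.StudyOfS1Part}); the Pohozaev route, with its $\nabla U_\v$ difficulties, is not needed.

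\textbf{Inner region.} Theorem~III.3 in \cite{BBH} is the $\eta$-ellipticity statement (small potential on a ball of radius $O(\v)$ $\Rightarrow$ $|v|$ close to $1$), not a bound on $\int|\nabla|v||^2+\v^{-2}(1-|v|^2)^2$; it cannot give what you claim. Almost-minimality of $\tilde v$ on $\tilde B$ does not immediately yield that bound either, and you have not specified any controllable boundary data for $\tilde v$. The paper's approach is cleaner: rescale $u_{\v_n}=U_\v v_{\v_n}$ (not $v_{\v_n}$), namely $u'(y)=b^{-1}u_{\v_n}(x_i+\rho y)$ on $B(0,1)$. Since $B(x_i,\rho)\subset\o_\v$ means $a_\v\equiv b$ there, $u'$ solves the \emph{exact} standard equation $-\Delta u'=(\v_n/(b\rho))^{-2}u'(1-|u'|^2)$ with $\mathcal O(|\ln(\v_n/\rho)|)$ energy. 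Then the local PDE estimates of Bethuel--Orlandi--Smets \cite{BOS1} (Theorem~1 for the potential, Proposition~1/Corollary~1 for $|\nabla|u'||$) give $\int_{B(0,1/2)}\{|\nabla|u'||^2+(b\rho/\v_n)^{2}(1-|u'|^2)^2\}=O(1)$ for any such solution, no minimality needed. Scaling back and using $|U_\v-b|=o(\v_n^2)$ on $B_i$ converts this to the bound for $v_{\v_n}$.
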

\begin{proof}
From \eqref{8.UpperboundAuxPb}, Proposition \ref{P8.BadDiscSeparation} (Assertion 1., 2. and 3.) and Proposition \ref{P8.StudyOfS1Part}, we infer that 
\[
\displaystyle\int_{\O\setminus \cup_i \overline{B(x_i,\rho/2)}}{\left\{|\n |v_{\v_n}||^2+\frac{1}{\v_n^2}(1-|v_{\v_n}|^2)^2\right\}}=\mathcal{O}(1).
\]
Consequently it suffices to obtain a similar estimate in $B(x_i,\rho/2)$. Note that $B(x_i,\rho)\subset \o_\v$. Thus, if we set % considering 
\[
u'(x)=\frac{u_{\v_n}(x_i+\rho x)}{b}:B(0,1)\to\C,
\]
then $u'$ solves
\[
-\Delta u'=\frac{1}{\left[{\v_n}/({b\rho})\right]^2}u'(1-|u'|^2)\text{ in }B(0,1).
\]
From \cite{BOS1}, we obtain
\[
\frac{1}{2}\int_{B(0,1/2)}{\left\{\left|\n |u'|\right|^2+\frac{b^2\rho^2}{2\v_n^2}(1-|u'|^2)^2\right\}}=\mathcal{O}(1).
\]
 This estimate is the subject of Theorem 1  for the potential part and Proposition 1 in \cite{BOS1} for the gradient of the modulus (see also Corollary 1 in \cite{BOS1}).
 
 Set $\displaystyle K_n=\frac{1}{2}\int_{B(0,1/2)}{\left\{\left|\n |u'|\right|^2+\frac{b^2\rho^2}{2\v_n^2}(1-|u'|^2)^2\right\}}$. Using Proposition \ref{P8.UepsCloseToaeps}, we obtain
 \begin{eqnarray*}
K_n=\mathcal{O}(1)&=&\frac{1}{2b^2}\int_{B(x_i,\rho/2)}{\left\{\left|\n |U_{\v_n}v_{\v_n}|\right|^2+\frac{b^4}{2\v_n^2}\left(1-\frac{|U_{\v_n}v_{\v_n}|^2}{b^2}\right)^2\right\}}
 \\&=&\frac{1}{2}\int_{B(x_i,\rho/2)}{\left\{\left|\n |v_{\v_n}|\right|^2+\frac{b^2}{2\v_n^2}\left(1-|v_{\v_n}|^2\right)^2\right\}}+o_n(1).
 \end{eqnarray*}
 Consequently, Proposition \ref{P8.BoundAnnulusArroundBadDiscs} holds.
%Using the first assertion and Proposition \ref{P8.BadDiscSeparation}, assertion 3, we get that $F_{\v_n}(v_n,B(x_i,\eta_2)\setminus B(x_i,\eta_1))=\frac{1}{2}\int_{B(x_i,\eta_2)\setminus B(x_i,\eta_1)}U_{\v_n}^2|\n w_n|^2+\mathcal{O}(1)$.
\end{proof}
\subsubsection{We bound the energy in a fixed perforated domain}
 \begin{prop}\label{P7.LocalizationOfTheEnergyPart1}
For $0<\eta\leq\rho_0$, there is $C(\eta)>0$ independent of $n$ s.t. we have
\begin{equation}\label{8.EnergyOutsideTheHole}
\frac{1}{2}\int_{\O\setminus\cup \overline{B(a_i,\eta)}}{|\n v_{\v_n}|^2}\leq C(\eta).
\end{equation}
\end{prop}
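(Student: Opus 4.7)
The plan is to combine the sharp global upper bound of Lemma \ref{L8.UpperboundAuxPb} with a matching sharp lower bound on the energy concentrated in each ball $B(a_i,\eta)$, obtained by a two-scale decomposition (once at the vortex scale, once at the scale of an inclusion). By Proposition \ref{P8.ConvOfBadDiscsTo0} and Corollary \ref{C8.TheCenterAreInGoodPosition}, the bad discs $B(x_i,\rho/2)$ lie in $B(a_i,\eta/2)$ and $|v_{\v_n}|\to 1$ uniformly on $\Omega_\eta:=\O\setminus\cup\overline{B(a_i,\eta)}$ for large $n$. The modulus-phase decomposition $|\n v_{\v_n}|^2=|\n|v_{\v_n}||^2+|v_{\v_n}|^2|\n w_n|^2$ combined with Proposition \ref{P8.BoundAnnulusArroundBadDiscs} and $U_{\v_n}\geq b$ reduces the problem to $\tfrac{1}{2}\int_{\Omega_\eta} U_{\v_n}^2|\n w_n|^2 \leq C(\eta)$, where $w_n=v_{\v_n}/|v_{\v_n}|$.

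Next I would establish a sharp lower bound on each $F_{\v_n}(v_{\v_n},B(a_i,\eta/2))$ by splitting at scale $\lambda\delta$. On the inner part $B(x_i,\lambda\delta)$ (which lies inside the single inclusion containing $x_i$, where $U_{\v_n}\approx b$), applying Proposition 4.1 of \cite{SS1} as in Proposition \ref{P8.BadDiscSeparation}.3 but rescaled by $\lambda\delta$ yields $F_{\v_n}(v_{\v_n},B(x_i,\lambda\delta))\geq\pi b^2\ln(\lambda\delta/\v_n)-\mathcal{O}(1)$. On the outer annulus $B(a_i,\eta/2)\setminus B(x_i,\lambda\delta)$, where $|v_{\v_n}|\to 1$ and $w_n$ has degree $1$ on every intermediate circle, Cauchy--Schwarz applied fiber-by-fiber gives $\tfrac12\int U_{\v_n}^2|\n w_n|^2\geq \pi\int_{\lambda\delta}^{\eta/2}\bar H_i(s)\,ds/s$ with $\bar H_i(s)=2\pi/\int_0^{2\pi}U_{\v_n}^{-2}(x_i+se^{\imath\theta})\,d\theta$ the harmonic mean of $U_{\v_n}^2$ on $\p B(x_i,s)$; in the diluted case $\bar H_i(s)=1+o(1)$ for $s\in[\lambda\delta,\eta]$ (and an analogous bound holds in the periodic non-diluted case), so the annulus contributes at least $\pi\ln(\eta/(\lambda\delta))-\mathcal{O}(1)$. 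Summing yields
\[
F_{\v_n}(v_{\v_n},B(a_i,\eta/2))\geq \pi b^2\ln(\lambda\delta/\v_n)+\pi\ln(\eta/(\lambda\delta))-\mathcal{O}(1).
\]

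Finally, summing over $i$ and subtracting from Lemma \ref{L8.UpperboundAuxPb}, which gives $F_{\v_n}(v_{\v_n})\leq db^2\pi\ln(b\rho/\v_n)+J_{\rho,\v_n}+db^2\gamma+o(1)$, the leading logarithmic contributions cancel provided one has the sharp asymptotic $J_{\rho,\v_n}=\pi db^2|\ln\rho|+\pi d(1-b^2)|\ln(\lambda\delta)|+\mathcal{O}(1)$, itself obtained by an analogous two-scale analysis on an almost-minimizer of $J_{\rho,\v_n}$ (inner vortex region of weight $b^2$, outer region of weight $\approx 1$). The main obstacle is precisely this matching of the two-scale logarithmic divergences --- both in the lower bound on $B(a_i,\eta/2)$ and in the asymptotic expansion of $J_{\rho,\v_n}$ --- where one must show the leading terms $\pi db^2|\ln\rho|$ and $\pi d(1-b^2)|\ln(\lambda\delta)|$ coincide on both sides; the hypothesis $\rho\ll\lambda\delta$ from \eqref{8.ConditionOnTheMicroRadius} ensures the vortex core sits strictly inside an inclusion, and the assumption $\lambda\to 0$ (or periodicity when $\lambda\equiv 1$) is what makes the harmonic mean of $U_{\v_n}^2$ tend to $1$ on circles of radius $s\gtrsim\lambda\delta$.
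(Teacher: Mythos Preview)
Your approach differs from the paper's and is more involved than necessary. After the same modulus--phase reduction, the paper argues by contradiction directly from Proposition~\ref{P8.StudyOfS1Part}: if $\int_{\O\setminus\cup \overline{B(x_i,10^{-1}\eta)}}U_{\v_n}^2|\n w_n|^2\to\infty$, one builds a competitor $\tilde w_n\in H^1_g(\O\setminus\cup\overline{B(x_i,\rho/2)},\S^1)$ by taking on the outer region $\O\setminus\cup\overline{B(x_i,10^{-1}\eta)}$ any map of bounded energy in $\J_{10^{-1}\eta}({\bf x},{\bf 1})$ (Lemma~\ref{L8.UpperBoundS1ValuedMapInGoodCondition}.1), and on each annulus $\Ring(x_i,10^{-1}\eta,\rho/2)$ a map matching that outer trace whose $U_{\v_n}^2$-energy exceeds that of $w_n$ on the same annulus by at most $\mathcal O(1)$ (Proposition~\ref{P7.MyrtoRingDegDir}). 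The resulting $\tilde w_n$ then has strictly smaller total weighted Dirichlet energy than $w_n$, contradicting almost-minimality. No explicit two-scale expansion of $J_{\rho,\v_n}$ is required.

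Your subtraction route can be completed in the diluted case $\lambda\to 0$, but in the non-diluted periodic case $\lambda\equiv 1$ the claim that ``an analogous bound holds'' conceals a real gap. For $s\geq\delta$ the circle $\p B(x_i,s)$ crosses many inclusions and $U_{\v_n}^2$ oscillates between $b^2$ and $1$, so the harmonic mean $\bar H_i(s)$ is \emph{not} $1+o(1)$; consequently neither the outer-annulus lower bound $\pi\ln(\eta/(\lambda\delta))$ nor the formula $J_{\rho,\v_n}=\pi db^2|\ln\rho|+\pi d(1-b^2)|\ln(\lambda\delta)|+\mathcal{O}(1)$ is correct when $\lambda\equiv 1$. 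One can repair this by working with $\mu_{\v_n}(\Ring(x_i,\cdot,\cdot),1)$ directly and invoking the periodicity Lemma~\ref{L8.AtomicPartEnergy} together with Proposition~\ref{P8.BoundOneAUsingAtomicDecomposition}, but that is precisely the machinery the paper's short competitor argument avoids.
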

\begin{proof}%We prove that 
%\begin{equation}\label{8.EnergyOutsideTheHoleBis}
%\frac{1}{2}\int_{\O\setminus\cup \overline{B(a_i,\eta)}}{U^2_{\v_n}|\n v_{\v_n}|^2}\leq \tilde{C}(\eta).
%\end{equation}
%Clearly, since $U^2_{\v_n}\geq b^2$, \eqref{8.EnergyOutsideTheHoleBis} implies \eqref{8.EnergyOutsideTheHole}.

We argue by contradiction and we assume that there is $\eta>0$ s.t., up to pass to a subsequence, we have $\int_{\O\setminus\cup \overline{B(a_i,\eta)}}{|\n v_{\v_n}|^2}\to\infty$. 

Because $\int_{\O\setminus\cup \overline{B(a_i,\eta)}}{|\n v_{\v_n}|^2}=\int_{\O\setminus\cup \overline{B(a_i,\eta)}}{|v_{\v_n}|^2|\n w_n|^2+|\n (|v_{\v_n}|)|^2}$, from Propositions \ref{P8.BadDiscSeparation} $\&$ \ref{P8.BoundAnnulusArroundBadDiscs} we get $\int_{\O\setminus\cup \overline{B(a_i,\eta)}}{|\n w_n|^2}\to\infty$. Therefore, we have $\int_{\O\setminus\cup \overline{B(x_i,10^{-1}\eta)}}{|\n w_n|^2}\to\infty$.

It is clear that we may get a map $\tilde{w}_n\in{\J}_{10^{-1}\eta,\v_n}({\bf x}_{\v_n},{\bf 1})$ s.t. $\int_{\O\setminus\cup \overline{B(x_i,10^{-1}\eta)}}{|\n \tilde{w}_n|^2}\leq C(\eta)$. 

For $i=1,...,d$, using Proposition \eqref{P7.MyrtoRingDegDir} (Appendix \ref{S8.ProofOFfirstAuxiliaryProblem}, Section \ref{Sectionkjsdhsdjfghpp}, Page \pageref{P7.MyrtoRingDegDir}), we get the existence of a map $\tilde{w}_{i,n}\in H^1(B(x_i,10^{-1}\eta)\setminus\overline{B(x_i,\rho/2)},\S^1)$ s.t. $\tilde{w}_{i,n}(x_i+10^{-1}\eta{\rm e}^{\imath \theta})=\tilde{w}_n(x_i+10^{-1}\eta{\rm e}^{\imath \theta})$ and
\[
\int_{B(x_i,10^{-1}\eta)\setminus\overline{B(x_i,\rho/2)}}U_{\v_n}^2|\n \tilde{w}_{i,n}|^2\leq \int_{B(x_i,10^{-1}\eta)\setminus\overline{B(x_i,\rho/2)}}U_{\v_n}^2|\n {w}_{n}|^2+\mathcal{O}(1)
\]
Therefore by extending $\tilde{w}_n$ with $\tilde{w}_{i,n}$ in  $B(x_i,10^{-1}\eta)\setminus\overline{B(x_i,\rho/2)}$ we get a map still denoted $\tilde{w}_n\in H^1_g(\O\setminus \cup \overline{B(x_i,\rho/2)},\S^1)$ s.t.
\[
\frac{1}{2}\int_{\O\setminus \cup \overline{B(x_i,\rho/2)}}|\n {w}_n|^2-\frac{1}{2}\int_{\O\setminus \cup \overline{B(x_i,\rho/2)}}|\n \tilde{w}_n|^2\to\infty
\]
which is in contradiction with Proposition \ref{P8.StudyOfS1Part}.

\end{proof}
Consequently, there is $v_*\in H^1_{\rm loc}(\overline{\O}\setminus\{a_1,...,a_d\},\S^1)$ s.t., up to pass to a subsequence, $v_{\v_n}\weak v_*$ in  $H^1_{\rm loc}(\overline{\O}\setminus\{a_1,...,a_d\})$. Next section is dedicate to the limiting equation of $v_*$.

\subsubsection{We establish the limiting equation}\label{SubSubSubSectionLimEq}

In order to obtain the expression of the homogenized problem, we use the {\it{unfolding operator}} (see \cite{CDG1}, definition 2.1). 

The use of the unfolding operator needs a slightly modification of the cell period. More precisely, instead of considering the $\delta\times\delta$-grid whose vertices-grid are the points $\{\delta(k,l)+(1/2,1/2)\,|\,k,l\in\Z\}$, we consider the one whose vertices-grid are $\{\delta(k,l)\,|\,k,l\in\Z\}$. 

Thus instead of having cells which contain one inclusion at their center we have cells with quarters of inclusion at their vertices. (See Figure \ref{Figure.Modifififif})
\begin{figure}[h]
\subfigure[Four period-cells which are obtained from $Y=(-1/2,1/2)^2$ and the new period-cell in dash obtained from $\tilde{Y}=(0,1)^2$]{
\psset{xunit=1.3cm,yunit=1.3cm}
\begin{pspicture*}(-6.05,-3.2)(0.05,3.05)
%\psgrid[subgriddiv=0,gridlabels=0,gridcolor=lightgray](0,0)(-6,-3)(0,3)
\psset{xunit=.65cm,yunit=.65cm,algebraic=true,dotstyle=o,dotsize=3pt 0,linewidth=0.8pt,arrowsize=3pt 2,arrowinset=0.25}
\psline[linewidth=1.6pt,linestyle=dashed,dash=4pt 4pt](-9,3)(-9,-3)
\psline[linewidth=1.6pt,linestyle=dashed,dash=4pt 4pt](-9,-3)(-3,-3)
\psline[linewidth=1.6pt,linestyle=dashed,dash=4pt 4pt](-3,-3)(-3,3)
\psline[linewidth=1.6pt,linestyle=dashed,dash=4pt 4pt](-3,3)(-9,3)
\psline(-6,0)(0,0)
\psline(0,0)(0,6)
\psline(0,6)(-6,6)
\psline(-6,6)(-6,0)
\psline(-6,0)(-6,-6)
\psline(-6,-6)(0,-6)
\psline(0,-6)(0,0)
\psline(0,0)(-6,0)
\psline(-6,0)(-12,0)
\psline(-12,0)(-12,-6)
\psline(-12,-6)(-6,-6)
\psline(-6,-6)(-6,0)
\psline(-12,0)(-6,0)
\psline(-6,0)(-6,6)
\psline(-6,6)(-12,6)
\psline(-12,6)(-12,0)
%\pscircle(-9,3){1.3}
%\pscircle(-3,3){1.3}
%\pscircle(-3,-3){1.3}
%\pscircle(-9,-3){1.3}
\rput(-12,-6){$\bullet$}
\rput(-10.8,-5.5){{$(-\frac{\delta}{2}, -\frac{\delta}{2})$}}
\rput(-6,-6){$\bullet$}
\rput(-4.9,-5.5){{$(\frac{\delta}{2}, -\frac{\delta}{2})$}}
\rput(-12,0){$\bullet$}
\rput(-11,0.5){{$(-\frac{\delta}{2}, \frac{\delta}{2})$}}
\rput(-8.97,-3){$\bullet$}
\rput(-9.5,-3.5){{$(0, 0)$}}
\rput{-21.7}(-3,3){\scalebox{1}{\psellipse(0,0)(2.56,1.37)}}
\rput{-21.7}(-9,3){\scalebox{1}{\psellipse(0,0)(2.56,1.37)}}
\rput{-21.7}(-9,-3){\scalebox{1}{\psellipse(0,0)(2.56,1.37)}}
\rput{-21.7}(-3,-3){\scalebox{1}{\psellipse(0,0)(2.56,1.37)}}
%\caption{Four period-cells which are obtained from $Y=(-1/2,1/2)^2$ and the new period-cell in dash obtained from $\tilde{Y}=(0,1)^2$}
\end{pspicture*}
}\hfill
%\hspace{2mm}\vline\hspace{1mm}\vline\hspace{2mm}
\subfigure[The new unit-cell $\tilde{Y}$ with four quarters of an inclusion and the values of $\tilde{a}=a^\lambda_{|\tilde{Y}}$]{
%\psset{xunit=1.0cm,yunit=1.0cm}
\begin{pspicture*}(-9,-4)(-3,3)
%\psgrid[subgriddiv=0,gridlabels=0,gridcolor=lightgray](0,0)(-10,-4)(-2,4)
\psset{xunit=1cm,yunit=1.0cm,algebraic=true,dotstyle=o,dotsize=3pt 0,linewidth=0.8pt,arrowsize=3pt 2,arrowinset=0.25}
\psline(-9,3)(-9,-3)
\psline(-9,-3)(-3,-3)
\psline(-3,-3)(-3,3)
\psline(-3,3)(-9,3)
%\psline[linecolor=black](-6,0)(-6,-6)
%\psline[linecolor=black](0,0)(-6,0)
%\psline[linecolor=black](-12,0)(-6,0)
%\psline[linecolor=black](-6,0)(-6,6)
%\parametricplot{-1.5707963267948966}{0.0}{1*2*cos(t)+0*2*sin(t)+-9|0*2*cos(t)+1*2*sin(t)+3}
%\parametricplot{3.141592653589793}{4.71238898038469}{1*2*cos(t)+0*2*sin(t)+-3|0*2*cos(t)+1*2*sin(t)+3}
%\parametricplot{1.5707963267948963}{3.141592653589793}{1*2*cos(t)+0*2*sin(t)+-3|0*2*cos(t)+1*2*sin(t)+-3}
%\parametricplot{0.0}{1.5707963267948966}{1*2*cos(t)+0*2*sin(t)+-9|0*2*cos(t)+1*2*sin(t)+-3}
%\rput(-9,-3){$\bullet$}
\rput(-6,0){\scalebox{3}{\rput{-21.7}(1,1){\scalebox{.3}{\psellipse(0,0)(2.56,1.37)}}
\rput{-21.7}(-1,1){\scalebox{.3}{\psellipse(0,0)(2.56,1.37)}}
\rput{-21.7}(1,-1){\scalebox{.3}{\psellipse(0,0)(2.56,1.37)}}
\rput{-21.7}(-1,-1){\scalebox{.3}{\psellipse(0,0)(2.56,1.37)}}}}
\rput(-8.1,-2.5){{$\tilde{a}=b$}}
\rput(-3.9,-2.5){{$\tilde{a}=b$}}
\rput(-8.1,2.5){{$\tilde{a}=b$}}
\rput(-3.9,2.5){{$\tilde{a}=b$}}
%\rput(-3,-3){$\bullet$}
\rput(-6,-0){{$\tilde{a}=1$}}
\psframe[fillstyle=solid,fillcolor=white,linecolor=black](-10,-3)(15,-15)
%\rput(-6,0){$\bullet$}
%\rput(-5.92,0.12){{$R_1$}}
%\caption{The new period-cell with four quarters of an inclusion}
\end{pspicture*}
}\caption{The modification of the reference cell}\label{Figure.Modifififif}
\end{figure}
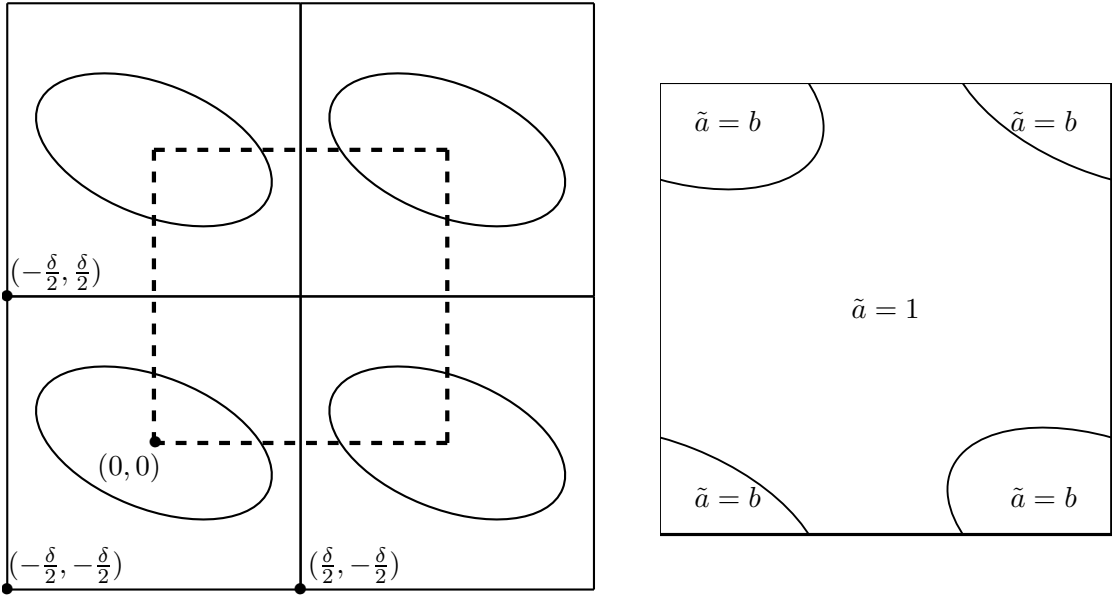

More specifically, we define, for $\O_0\subset\R^2$ an open set, $p\in(1,\infty)$ and $\delta>0$,
\[
\begin{array}{cccc}
\mathcal{T}_\delta:&L^p(\O_0)&\to&L^p(\O_0\times \tilde{Y})
\\
&\phi&\mapsto&\mathcal{T}_\delta(\phi)(x,y)=\left\{\begin{array}{cl}\phi\left(\delta\left[\displaystyle\frac{x}{\delta}\right]+\delta y\right)&\text{for }(x,y)\in\tilde{\O}^{\rm incl}_\delta\times \tilde{Y}\\0&\text{for }(x,y)\in\Lambda_\delta\times \tilde{Y}\end{array}\right.
\end{array}.
\]
Here, $\tilde{Y}=(0,1)^2$, $[s]$ is the integer part of $s\in\R$ and 
\[
\tilde{\O}^{\rm incl}_\delta:=\bigcup_{\substack{\tilde{Y}^K_\delta\subset\O_0,\,K\in\Z^2\\\tilde{Y}^K_\delta=\delta\cdot(K+\tilde{Y})}}\overline{\tilde{Y}^K_\delta},\,\Lambda_\delta:=\O_0\setminus\tilde{\O}^{\rm incl}_\delta\text{ and }\left[\frac{x}{\delta}\right]:=\left(\left[\frac{x_1}{\delta}\right],\left[\frac{x_2}{\delta}\right]\right).
\]

An adaptation of a result of Sauvageot (\cite{MS1}, Theorem 4) gives the following 
\begin{prop}\label{P8.asymptotiquedegnonzero}
Let $\O_0\subset\R^2$ be a smooth bounded open set. Let $v_n\in H^2(\O_0,\C)$ be s.t.
\begin{enumerate}
\item $|v_n|\leq1$ and $\displaystyle\int_{\O_0}(1-|v_n|^2)^2\to0$,
\item $v_n\weak v_*$ in $H^1(\O_0)$ for some $v_*\in H^1(\O_0,\S^1)$,
\item there are $H_n \in W^{1,\infty}(\O_0,[b^2,1])$ and $\delta=\delta_n\downarrow0$ s.t. $\mathcal{T}_\delta(H_n)(x,y)\to H_0(y)$ in $L^2(\O_0\times \tilde{Y})$,% with $H_0$ independent of $x\in\O_0$,
\item $-{\rm div}(H_n\n v_n)=v_nf_n(x)$, $f_n\in L^\infty(\O_0,\R)$.
\end{enumerate}
Then $v_*$ is a solution of
\begin{equation}\nonumber%\label{8.eqlimitv0epsegdel}
-\Div(\mathcal{A}\n v_*)=(\mathcal{A}\n v_*\cdot\n v_*)v_*
\end{equation}
where $\mathcal{A}$ is the homogenized matrix of $H_0(\frac{\cdot}{\delta}){\rm Id}_{\R^2}$. (See Appendix \ref{S8.ProofOfResultMyrto} to have more details about $\mathcal{A}$)% given by
%\begin{equation}\nonumber

%\mathcal{A}=\int_Y{H_0\left(\begin{array}{cc}1-\p_{1}\chi_1&-\p_1\chi_2\\-\p_2\chi_1&1-\p_2\chi_2\end{array}\right)}.
%\end{equation}
\end{prop}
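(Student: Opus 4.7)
The plan is to eliminate the real scalar $f_n$ by an exterior-product trick, then pass to the limit in the resulting divergence-free equation by periodic homogenization \`a la Sauvageot, and finally recover the stated harmonic-map-type equation from the pointwise constraint $|v_*|=1$.

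\emph{Reduction to a conservative form.} Taking the cross (exterior) product of the equation $-\Div(H_n\n v_n) = v_n f_n$ with $v_n$ annihilates the right-hand side because $f_n$ is real-valued. Using the scalar nature of $H_n$ together with the pointwise identity $\p_i v_n\times\p_i v_n=0$, one gets
\[
v_n\times\Div(H_n\n v_n) = \Div\!\big(H_n\,v_n\times\n v_n\big),
\]
so $\Div(H_n\,v_n\times\n v_n)=0$ in $\mathcal{D}'(\O_0)$; the nonlinear Lagrange multiplier $f_n$ has disappeared.

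\emph{Homogenization step.} Rellich together with $|v_n|\leq 1$ gives $v_n\to v_*$ in every $L^p$ ($p<\infty$), and a H\"older estimate combined with $\n v_n\weak\n v_*$ in $L^2$ and $|v_*|=1$ yields $v_n\times\n v_n\weak v_*\times\n v_*$ in $L^2$. Setting $\sigma_n:=H_n\,v_n\times\n v_n$, the core task is to identify its weak-$L^2$ limit. This fits the framework of Sauvageot (\cite{MS1}, Theorem~4): the div-free structure of $\sigma_n$, the uniform $H^1$ bound on $v_n$, and the unfolding-convergence $\mathcal{T}_\delta(H_n)\to H_0$ in $L^2(\O_0\times\tilde Y)$ allow one to introduce the periodic correctors $\chi^i$ solving $-\Div_y\!\big(H_0(y)(e_i+\n_y\chi^i)\big)=0$, use them to build oscillating test fields $\n(x\cdot\xi+\delta\chi_\xi(x/\delta))$, and invoke a div--curl compensated-compactness argument to obtain
\[
\sigma_n\weak\mathcal{A}\,(v_*\times\n v_*)\quad\text{weakly in }L^2,
\]
where $\mathcal{A}$ is the symmetric homogenized matrix. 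Passing to the limit in the conservative form yields $\Div\!\big(\mathcal{A}(v_*\times\n v_*)\big)=0$.

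\emph{Recovery of the equation.} For any symmetric matrix $\mathcal{A}$ and any sufficiently smooth $w$, the antisymmetry of $\p_i w\times\p_j w$ in $(i,j)$ gives
\[
w\times\Div(\mathcal{A}\n w)=\Div\!\big(\mathcal{A}(w\times\n w)\big)-\sum_{i,j}\mathcal{A}_{ij}\,\p_i w\times\p_j w=\Div\!\big(\mathcal{A}(w\times\n w)\big);
\]
applied to $w=v_*$ this forces $-\Div(\mathcal{A}\n v_*)$ to be pointwise parallel to $v_*$, i.e., $-\Div(\mathcal{A}\n v_*)=\lambda\,v_*$ for some scalar field $\lambda$. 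The constraint $|v_*|^2\equiv 1$ yields $\sum_k v_*^k\,\mathcal{A}\n v_*^k = \mathcal{A}\big(\tfrac12 \n|v_*|^2\big)=0$; taking the scalar product of the equation with $v_*$ and using this identity isolates $\lambda=\mathcal{A}\n v_*\cdot\n v_*$, which is the claimed PDE.

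\emph{Main obstacle.} The delicate point is the identification of the weak $L^2$-limit of $\sigma_n=H_n\cdot(v_n\times\n v_n)$: both factors converge only weakly in $L^2$, and the oscillations of $H_n$ shift the product limit from the naive $H_0\cdot(v_*\times\n v_*)$ to $\mathcal{A}(v_*\times\n v_*)$. Justifying this requires the full apparatus of periodic two-scale (or unfolding) homogenization combined with div--curl compensated compactness; this is precisely where Sauvageot's argument is invoked, and the adaptation to complex-valued $v_n$ with an $\mathbb{S}^1$-valued limit (ensuring that the cross product structure and the orthogonality $v_*\cdot\n v_*=0$ are preserved in the limit) is the technical content of the proposition, the nonlinearity of the original PDE having already been bypassed by the cross-product reduction.
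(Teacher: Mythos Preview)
Your proposal is correct and follows the same three-step strategy as the paper: the cross-product reduction $\Div(H_n\,v_n\times\n v_n)=0$, homogenization to obtain $\Div(\mathcal{A}(v_*\times\n v_*))=0$, and recovery of the full equation from the constraint $|v_*|=1$. The only differences are implementation details: the paper carries out the homogenization step explicitly via the unfolding operator (identifying the two-scale corrector $\hat v$ and showing $\hat v\times v_*=-\chi\cdot(\n v_*\times v_*)$) rather than citing Sauvageot and sketching a div--curl argument, and for the last step it uses a local lifting $v_*=e^{\imath\varphi_*}$ instead of your direct algebraic identity $w\times\Div(\mathcal A\n w)=\Div(\mathcal A(w\times\n w))$.
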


The proof of Proposition \ref{P8.asymptotiquedegnonzero} is postponed to Appendix \ref{S8.ProofOfResultMyrto}.

We apply the above proposition with $\O_0=\O\setminus\cup\overline{B(a_i,\eta)}$, $\delta=\delta_n\downarrow0$ the sequence which defines $a_{\v_n}$ and $H_n=U_{\v_n}^2$. By application of Proposition \ref{P8.UepsCloseToaeps}, we obtain 
\[\mathcal{T}_{\delta}(U^2_{\v_n})(x,y)\stackrel{L^2(\O_0\times \tilde{Y})}{\to}
\begin{cases}
\tilde{a}^2(y)&\text{if }\lambda\equiv1\\1&\text{if }\lambda\to0
\end{cases}.%\text{ and }\mathcal{T}_{\delta}(a^2_\v-U^2_\v)\to0
\]
Note that the $\tilde{Y}$-periodic extension of $\tilde{a}$ in $\R^2$ is equal to the $Y$-periodic extension of $1-(1-b^2)\1_{\o^\lambda}$ which is $a^\lambda$ (defined Construction \ref{Constru.ConstrucPinnTerm}).

We find that $v_*$ solves
\[
\begin{array}{cl}
-\Div(\mathcal{A}\n v_*)=(\mathcal{A}\n v_*\cdot\n v_*)v_*&\text{if }\lambda\equiv1\\-\Delta v_*=|\n v_*|^2v_*&\text{if }\lambda\to0
\end{array}.
\]
Here $\mathcal{A}$ is the homogenized matrix of $[{a^\lambda}(\frac{\cdot}{\delta})]^2{\rm Id}_{\R^2}$.
\subsection{The small bad discs}
\subsubsection{Definition}
With the help the bound on the potential part of the minimizers $\frac{1}{\v^2}\int_\O(1-|v_\v|^2)^2\leq C$ (Proposition \ref{P8.BoundAnnulusArroundBadDiscs}), in the spirit of \cite{BBH} (Theorem III.3), we may detect the vorticity defects (the connected components of $\{|v_\v|\leq7/8\}$) by smaller discs (discs with radius of order $\v$) than the $\rho$-bad discs (Notation \ref{NOTATIONRHOBAD}).
\begin{notation}\label{TheSmallBadDiscTGV}The small bad discs

The construction is done as follows:
\begin{enumerate}[$\bullet$]
\item We consider a covering of $\O$ as in Notation \ref{NOTATIONEPS1/4Disc} (page \pageref{NOTATIONEPS1/4Disc}). We fix $\rho=\rho(\v)\downarrow0$ s.t.  Assumption \eqref{8.ConditionOnTheMicroRadius} holds. For sufficiently small $\v$, we denote 
\[
\RHO(\v)=\{B(x,R/2)\,|\,B(x,R)\text{ given by Notation \ref{NOTATIONRHOBAD}, page \pageref{NOTATIONRHOBAD}}\}.
\]
\item Following \cite{BBH} (Theorem III.3), for $l\geq2$, there are $\kappa_l,\mu_l>0$ (depending only on $\O,g$ and $l$) s.t. for $x\in\O$,  if
\[
\frac{1}{\v^2}\int_{B(x,2\kappa_l\v)}(1-|v_\v|^2)^2\leq\mu_l
\]
then
\[
|v_\v|\geq1-\frac{1}{l^2}\text{ in }B(x,\kappa_l\v).
\]
We fix $l\geq2$ and we drop the subscript $l$. We now consider a covering of $\cup_{B\in\RHO(\v)}B$ by discs $(B(x^\v_i,\kappa\v))_{i\in I}$ s.t
\[
x^\v_i\in\cup_{B\in\RHO(\v)}B,\,\forall\,i\in I,
\]
\[
B(x^\v_i,\kappa\v/4)\cap B(x^\v_j,\kappa\v/4)=\emptyset\text{ if }i\neq j,
\]
\[
\cup_{i\in I}B(x^\v_i,\kappa\v)\supset\cup_{B\in\RHO(\v)}B.
\]
We say that $B(x^\v_i,\kappa\v)$ is a {\it small good disc} if
\[
\frac{1}{\v^2}\int_{B(x^\v_i,2\kappa\v)}(1-|v_\v|^2)^2<\mu.
\]
\item If $B(x^\v_i,\kappa\v)$ is not a small good disc, then we call it a {\it small bad disc}. We denote $J\subset I$ the set of indices of small bad discs.
\end{enumerate}
\end{notation}
Following \cite{BBH}, using Proposition \ref{P8.BoundAnnulusArroundBadDiscs}, there is $N_l=N>0$ (depending only on $\O$, $g$ and $l$) s.t. ${\rm Card}(J)\leq N$.

Using a standard separation process (Lemma \ref{L8.SeparationLemma}),  for $\v_n\downarrow0$, possibly after passing to a subsequence and relabeling the discs, there are $J'\subset J$ and $\kappa'\in\{\kappa,...,9^{N-1}\kappa\}$ s.t.
\begin{equation}\nonumber%\label{8.BBHNotRenDiscsDefinition}
\{|v_{\v_n}|<1-1/l^2\}\subset\cup_{i\in J}B(x^{\v_n}_i,\kappa\v_n)\subset\cup_{i\in J'}B(x^{\v_n}_i,\kappa'\v_n)
\end{equation}
and
\[
\displaystyle\frac{|x^{\v_n}_i-x^{\v_n}_j|}{\v_n}\geq 8\kappa'\text{ if }i,j\in J',\,i\neq j.
\]
%\subsubsection{Separation of small bad discs}
%We have the following proposition.
By a standard iterative procedure, we may assume that the small bad discs are mutually far away in the $\v$-scale.
\begin{prop}\label{P8.BBHBadBallsProperty} Possibly after passing to a subsequence, we have, for large $R$ and $J''\subset J'$, %Let $\v_n\downarrow0$ and $l\geq2$. There are an extraction $({n_k})_k$, $R_{l,(n_k)_k}>0$ and $J''_{l,(n_k)_k}\subset J'$ (independent of $k$) s.t. for $k\in\N^*$, denoting $v_k=v_{\v_{n_k}}$ and $x_i^k=x_i^{n_k}$, we have%there are $x_{i,l}^k\in\O$, $i\in J_{l,(n_k)_k}$ with ${\rm Card}(J_{l,(n_k)_k})={\rm Cst}\leq N_l$ satisfying
\begin{equation}\nonumber%\label{8.DefinitionBBHBadBalls1}
\{|v_{\v_n}|<1-{1}/{l^2}\}\subset\cup_{i\in J''} B(x_{i}^{\v_n},R\v_{n}),%\text{and for all }i\in I_{n,l}\text{ we have }\{|v_n|<1-\frac{1}{l^2}\}\cap B(x_{i,l}^n,R_l\v_n)\neq\emptyset,
\end{equation}
%\begin{equation}\nonumber%\label{8.DefinitionBBHBadBalls2}
%\text{for all }i\in J''\text{ we have }\{|v_{\v_n}|<1-{1}/{l^2}\}\cap B(x_{i}^k,R\v_{n})\neq\emptyset
%\end{equation}
where, for $i\neq j$, 
\begin{equation}\nonumber%\label{8.DefinitionBBHBadBalls3}
\frac{|x_{i}^{\v_n}-x_{j}^{\v_n}|}{\v_{n}}\to\infty\text{ as }n\to\infty.
\end{equation}
%and
%\begin{equation}\label{8.DefinitionBBHBadBalls3}
%|u_0|(R_l)\geq 1-\frac{1}{l^2} 
%\end{equation}
%with $u_0$ the special radial Ginzburg-Landau solution of degree $1$ (see \cite{BMR1}, \cite{M2}). 
\end{prop}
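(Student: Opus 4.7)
The plan is to run an iterative separation/merging procedure in the $\v$-scale, entirely analogous to the one used in Lemma~\ref{L8.SeparationBadDiscs} at the $\rho$-scale, but now exploiting the fact that the family $J'$ is \emph{already finite} (bounded by the constant $N$ coming from Notation~\ref{TheSmallBadDiscTGV}). The starting point is the inclusion
\[
\{|v_{\v_n}| < 1-1/l^2\} \subset \bigcup_{i\in J'} B(x_i^{\v_n},\kappa'\v_n),
\]
together with the lower bound $|x_i^{\v_n}-x_j^{\v_n}|/\v_n \geq 8\kappa'$ for $i\neq j$ in $J'$. The desired conclusion strengthens this lower bound to divergence to $+\infty$, at the price of enlarging the radius $\kappa'\v_n$ to $R\v_n$ with $R$ a sufficiently large (but fixed) constant and dropping some indices.

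The construction proceeds in at most $|J'|-1 \leq N-1$ steps. Set $J_0:=J'$, $R_0:=\kappa'$. At step $k\geq 1$, given the current index set $J_{k-1}$ and radius factor $R_{k-1}$, examine all ratios $|x_i^{\v_n}-x_j^{\v_n}|/\v_n$ for $i\neq j$ in $J_{k-1}$. If, after passing to a further subsequence, every such ratio diverges to $+\infty$, the procedure terminates with $J'':=J_{k-1}$ and $R:=R_{k-1}$. Otherwise, some pair $(i_0,j_0)$ admits a subsequence along which $|x_{i_0}^{\v_n}-x_{j_0}^{\v_n}|/\v_n$ stays bounded by a constant $L_k$. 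Extract this subsequence, set $J_k:=J_{k-1}\setminus\{j_0\}$ and $R_k:=R_{k-1}+2L_k$: then by the triangle inequality
\[
B(x_{j_0}^{\v_n},R_{k-1}\v_n)\subset B(x_{i_0}^{\v_n},R_k\v_n),
\]
so the covering property is preserved with the enlarged radius. Since $|J_k|=|J_{k-1}|-1$, the process terminates after at most $N-1$ steps, by which time either $J''$ is a singleton (in which case the separation condition is vacuous) or all remaining mutual ratios diverge along the final subsequence.

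The main (and essentially only) subtlety is to use a single diagonal subsequence: at each step one extracts a subsequence in $n$ to secure either divergence of all pending ratios or boundedness of one. Since only finitely many steps are performed (bounded by $N$, independent of $n$), the successive subsequence extractions can be composed into one, and the final $R$ is a finite constant $R=\kappa'+2(L_1+\cdots+L_{N-1})$ depending only on the choices made. No energy estimate is required for this step; it is a purely combinatorial/compactness argument on the finite configuration $\{x_i^{\v_n}\}_{i\in J'}$. This completes the proof.
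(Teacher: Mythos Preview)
Your argument is correct and is precisely the ``standard iterative procedure'' the paper invokes without giving details. The paper's (suppressed) argument differs only cosmetically: rather than merging one bounded pair at a time, it takes $\kappa_0=\liminf_n\min_{i\neq j}|x_i^{\v_n}-x_j^{\v_n}|/\v_n$, passes to a subsequence realizing this $\liminf$, and then applies the separation Lemma~\ref{L8.SeparationLemma} to the family $\{B(x_i^{\v_n},\kappa_0\v_n)\}_{i\in J'}$, which may drop several indices at once; iterating at most $|J'|-1$ times yields the result. Your pairwise merging is the same idea unwound, and arguably more transparent since it avoids appealing to Lemma~\ref{L8.SeparationLemma}. (A minor remark: $R_k=R_{k-1}+L_k$ already suffices for the inclusion $B(x_{j_0}^{\v_n},R_{k-1}\v_n)\subset B(x_{i_0}^{\v_n},R_k\v_n)$; your $R_{k-1}+2L_k$ is harmlessly generous.)
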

\begin{notation}\label{Not.DefOfSepSmallBadDi}The small and separated bad discs

The discs $\{B(x_i^{\v_n},R\v_n)\,|\,i\in J''\}$ obtained in Proposition \ref{P8.BBHBadBallsProperty} are the small and separated bad discs.
\end{notation}
\subsubsection[There are exactly $d$ small bad discs]{Each $\rho$-bad disc contains exactly one small bad disc}

By construction, we know that the small and separated bad discs (defined Notation \ref{Not.DefOfSepSmallBadDi}) are covered by the $\rho$-bad discs defined Notation \ref{NOTATIONRHOBAD} (page \pageref{NOTATIONRHOBAD}). We next prove that there are exactly $d$ small bad discs and consequently, there is exactly one small bad discs per $\rho$-bad discs.
\begin{prop}\label{P8.BBHBadBallsProperty2}
For large $n$ and for all $i\in J''$, we have 
\[
\deg_{\p B(x^{\v_n}_{i},R\v_{n})}(v_{\v_{n}})=1.
\]
\end{prop}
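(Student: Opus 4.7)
The plan is to combine additivity of the winding number inside each $\rho$-bad disc with a Sandier--Serfaty type lower bound and the sharp upper bound of Lemma~\ref{L8.UpperboundAuxPb}.

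First I would set up the topology. By Proposition~\ref{P8.BBHBadBallsProperty} the set $\{|v_{\v_n}|<1-1/l^2\}$ is contained in $\cup_{i\in J''}B(x_i^{\v_n},R\v_n)$, so $v_{\v_n}$ does not vanish outside these discs, and by construction each separated small bad disc is contained in exactly one $\rho$-bad disc $B(x_k,\rho/2)$ (with $k=1,\dots,d$ by Corollary~\ref{C8.TheCenterAreInGoodPosition}). Writing $I_k\subset J''$ for the indices of small bad discs inside the $k$-th $\rho$-bad disc, additivity of the winding combined with $\deg_{\p B(x_k,\rho/2)}(v_{\v_n})=1$ (Corollary~\ref{C8.TheCenterAreInGoodPosition}) yields
\[
\sum_{i\in I_k}d_i^n=1,\qquad d_i^n:=\deg_{\p B(x_i^{\v_n},R\v_n)}(v_{\v_n}).
\]

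Next I would run a two-scale lower bound in each $\rho$-bad disc. Set $\eta_n:=\min_{i\neq j\in I_k}|x_i^{\v_n}-x_j^{\v_n}|$ if $|I_k|\geq 2$ and $\eta_n:=\rho$ otherwise; by Proposition~\ref{P8.BBHBadBallsProperty}, $\eta_n/\v_n\to\infty$. Applying the ball construction of Sandier--Serfaty (Proposition~4.1 in \cite{SS1}) separately on the disjoint annuli $B(x_i^{\v_n},\eta_n/4)\setminus B(x_i^{\v_n},R\v_n)$ (each of degree $d_i^n$) and on the outer annulus $B(x_k,\rho/2)\setminus\cup_{i\in I_k}B(x_i^{\v_n},\eta_n/4)$ (of total degree $1$), in the spirit of the proof of Proposition~\ref{P8.BadDiscSeparation}, should produce
\[
F_{\v_n}(v_{\v_n},B(x_k,\rho/2))\geq \pi b^2\Bigl(\sum_{i\in I_k}|d_i^n|\Bigr)\ln\tfrac{\eta_n}{\v_n}+\pi b^2\ln\tfrac{\rho}{\eta_n}-C.
\]
The matching upper bound $F_{\v_n}(v_{\v_n},B(x_k,\rho/2))\leq \pi b^2\ln(\rho/\v_n)+O(1)$ is obtained by subtracting the lower bound on $\O\setminus\cup_k B(x_k,\rho/2)$ coming from Proposition~\ref{P8.StudyOfS1Part} from the global upper bound of Lemma~\ref{L8.UpperboundAuxPb}. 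Confronting the two gives $\bigl(\sum_{i\in I_k}|d_i^n|-1\bigr)\ln(\eta_n/\v_n)\leq O(1)$, hence $\sum_{i\in I_k}|d_i^n|\leq 1$ for $n$ large; combined with $\sum_{i\in I_k}d_i^n=1$ and integrality, exactly one $d_i^n$ in each $I_k$ equals $1$ and the remaining ones are $0$.

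The main obstacle is to exclude extra small bad discs of degree $0$, i.e.\ to rule out $|I_k|\geq 2$. I would handle this by a BBH-type surgery argument: by Notation~\ref{TheSmallBadDiscTGV} every small bad disc satisfies $\int_{B(x_i^{\v_n},2\kappa\v_n)}(1-|v_{\v_n}|^2)^2/\v_n^2\geq\mu>0$; if its degree is zero, the phase of $v_{\v_n}/|v_{\v_n}|$ admits a single-valued lift near its boundary, which can be extended inside together with a modulus bounded away from $0$, producing a competitor in $H^1_g$ with strictly smaller $F_{\v_n}$ and contradicting the minimality of $v_{\v_n}$. This forces $|I_k|=1$ and $d_i^n=1$ for every $i\in J''$, proving the proposition.
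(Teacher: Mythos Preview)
Your first two steps are essentially the paper's own argument: additivity of degrees inside each $\rho$-bad disc plus a two-scale annular lower bound (the paper uses Theorem~2 of \cite{HS95} in place of the Sandier--Serfaty construction, but the mechanism is identical). This correctly yields $\sum_{i\in I_k}|d_i^n|\le 1$, hence exactly one $d_i^n=1$ and the remaining ones equal $0$.

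The genuine gap is your third step. The surgery argument cannot produce a contradiction with minimality: $v_{\v_n}$ is the global minimizer of $F_{\v_n}$ in $H^1_g$, so in particular it minimizes $F_{\v_n}(\cdot,B)$ among all competitors sharing its own boundary values on $\p B$. Any map you build by extending the (single-valued) boundary phase inside with modulus close to $1$ therefore has energy \emph{at least} $F_{\v_n}(v_{\v_n},B)$, not strictly less. The bad-disc inequality $\frac{1}{\v_n^2}\int_B(1-|v_{\v_n}|^2)^2\ge\mu$ only tells you that your competitor's gradient energy must exceed the original's gradient energy by at least a constant; this is not a contradiction. In short, local surgery cannot beat a local minimizer.

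The paper handles the exclusion of degree-$0$ discs by a blow-up argument instead. Rescaling $u_n'(x)=b^{-1}u_{\v_n}(x_i^{\v_n}+b^{-1}\v_n x)$ yields (up to subsequence) $C^2_{\rm loc}$-convergence to an entire solution $u_0$ of $-\Delta u_0=u_0(1-|u_0|^2)$ with degree $0$ on large circles. One then invokes the classification from \cite{BMR1}: either $\int_{\R^2}(1-|u_0|^2)^2<\infty$, in which case $u_0$ is a constant of modulus $1$ and the $C^2_{\rm loc}$-convergence forces $\frac{1}{\v_n^2}\int_{B(x_i^{\v_n},2\kappa\v_n)}(1-|v_{\v_n}|^2)^2\to 0$, contradicting the bad-disc condition; or $\int_{\R^2}(1-|u_0|^2)^2=\infty$, contradicting the global potential bound of Proposition~\ref{P8.BoundAnnulusArroundBadDiscs}. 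This step genuinely requires the PDE satisfied by $u_{\v_n}$ together with the Liouville-type result of \cite{BMR1}; a purely variational surgery does not suffice.
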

\begin{proof}
%For simplicity we drop the subscripts $(n_k)_k$, $l$ and we assume that $n_k=k$ for all $k$.

First we prove that, for large $n$  and for all $i$, we have 
\[
\deg_{\p B(x_{i}^{\v_n},R\v_{n})}(v_{\v_n})\neq0.
\]
We argue by contradiction and we assume that, up to  a subsequence, there is $i$ s.t. $\deg_{\p B(x_{i}^{\v_n},R\v_{n})}(v_{\v_n})=0$.

Set 
\begin{equation}\label{EqNameMNNN}
M_n=\min\left\{b\displaystyle\min_{i\neq j}\frac{|x_i^{\v_n}-x_j^{\v_n}|}{8R\v_{n}},\delta^{-1}\right\}
\end{equation}
and set
\[
\begin{array}{cccc}u'_n:&B(0,M_n)&\to&\C\\&x&\displaystyle\mapsto&\displaystyle\frac{u_{\v_n}\left(\displaystyle\frac{\v_{n}}{b}x+x_i^{\v_n}\right)}{b}\end{array}.
\]
 Note that, $B(x_{i}^{\v_n},M_n\v_{n})\subset\o_{\v_n}$ and by Proposition \ref{P8.BBHBadBallsProperty}, we have $M_n\to\infty$.

It is easy to check that $u'_n$ solves $-\Delta u'_n=u'_n(1-|u'_n|^2)$. Following \cite{BMR1}, up to a subsequence, 
\begin{equation}\label{8.BlowupConvergence}
\text{$u'_n\to u_0$ in $C^2_{\rm loc}(\R^2)$;}
\end{equation}
here $u_0:\R^2\to\C$ solves $-\Delta u_0=u_0(1-|u_0|^2)$ in $\R^2$.

Then two cases occur: $\displaystyle\int_{\R^2}(1-|u_0|^2)^2<\infty$ or $\displaystyle\int_{\R^2}(1-|u_0|^2)^2=\infty$.

Assume first that $\displaystyle\int_{\R^2}(1-|u_0|^2)^2<\infty$. From \cite{BMR1}, noting that the degree of $u_0$ on large circles centered in $0$ is $0$, we obtain that $u_0={\rm Cst}\in\S^1$ and consequently $\displaystyle\int_{\R^2}(1-|u_0|^2)^2=0$.

Since $u'_n\to u_0$ in $L^4(B(0,2bR))$ ($R\geq\kappa$), we find that
\begin{eqnarray*}
\int_{B(0,2bR)}(1-|u_n'|^2)^2&=&\frac{b^2}{\v^2_{n}}\int_{B(x_i^{\v_n},2R\v_{n})}(1-|u_n/b|^2)^2\\&=&\frac{b^2}{\v^2_{n}}\int_{B(x_i^{\v_n},2R\v_{n})}(1-|v_{\v_n}|^2)^2+o_n(1)\to0.
\end{eqnarray*}
Noting that $B(x_i^{\v_n},\kappa\v_{n})$ is a small bad disc and that $B(x_i^{\v_n},2\kappa\v_{n})\subset B(x_i^{\v_n},2R\v_{n})$, we have a contradiction.

Therefore $\displaystyle\int_{\R^2}(1-|u_0|^2)^2=\infty$. Consequently, there is $M_0>0$ s.t.
\[
\int_{B(0,bM_0)}(1-|u_0|^2)^2\geq\sup_n\left\{\frac{4b^2}{\v_{n}^2}\int_\O(1-|v_{\v_n}|^2)^2\right\}.
\]
Thus, for large $n$  we have
\begin{eqnarray*}
\int_{B(0,bM_n)}(1-|u_n'|^2)^2&=&\frac{b^2}{\v^2_{n}}\int_{B(x_i^{\v_n},M_n\v_{n})}(1-|u_{\v_n}/b|^2)^2
\\&=&\frac{b^2}{\v^2_{n}}\int_{B(x_i^{\v_n},M_n\v_{n})}(1-|v_{\v_n}|^2)^2+o_n(1)\\&\geq& \sup_n\left\{\frac{2b^2}{\v_n^2}\int_\O(1-|v_{\v_n}|^2)^2\right\},
\end{eqnarray*}
which is a contradiction with $B(x_i^{\v_n},M_n\v_n)\subset\O$.

Consequently we obtain that for large  $n$, $\deg_{\p B(x_{i}^{\v_n},R\v_{n})}(v_{\v_n})\neq0$.

Now we prove that
\begin{equation}\label{8.ToutLesDegDesBBHSontEgauxA1}
\deg_{\p B(x_{i}^{\v_n},R\v_{n})}(v_{\v_n})=1\text{ for all $i$ and  large $n$}.
\end{equation}
Note that each small bad disc contains at least a zero of $v_{\v_n}$. Consequently, for $\rho$ satisfying \eqref{8.ConditionOnTheMicroRadius}, all small bad discs are included in a $\rho$-bad disc $B(y,\rho)$ defined Notation \ref{NOTATIONRHOBAD} (page \pageref{NOTATIONRHOBAD}). (For sake of simplicity we wrote $B(y,\rho)$ instead of $B(y,2\kappa\rho)$).

If $B(y,\rho)$ is a $\rho$-bad disc, we denote $\Lambda_y=\{i\in J''\,|\,x_i^{\v_n}\in B(y,\rho)\}$. Clearly, if ${\rm Card}(\Lambda_y)=1$, then \eqref{8.ToutLesDegDesBBHSontEgauxA1} holds. 

We define 
\[
\textsf{\ae}^y_n:=\begin{cases}10^{-2}\min_{i,j\in \Lambda_y,\,i\neq j}|x_i^{\v_n}-x_j^{\v_n}|&\text{if }{\rm Card}(\Lambda_y)>1\\R\v_n&\text{otherwise}\end{cases}.
\]
From Proposition \ref{P8.BBHBadBallsProperty}, if ${\rm Card}(\Lambda_y)>1$ then $\textsf{\ae}_n/\v_n\to\infty$.

For simplicity, we assume that $y=0$ and we let
\[
\tilde{B}=B(0,8)\setminus\cup_{i\in\Lambda_0}\overline{B\left(\frac{x^{\v_n}_i}{\rho}, \frac{\textsf{\ae}^0_n}{\rho}\right)}.
\]
\begin{remark}
Note that from Corollary \ref{C8.TheCenterAreInGoodPosition} we have $B(y,16\rho)\subset\o_\v$.
\end{remark}
Clearly, we are in position to apply Theorem 2 in \cite{HS95} in the perforated domain $\tilde{B}$. After scaling, we find that% to obtain (after scaling)
\[
\frac{1}{2}\int_{ B(y,8\rho)\setminus\cup \overline{B(x^{\v_n}_i, \textsf{\ae}^y_n)}}|\n v_{\v_n}|^2\geq\pi \left|\sum_{i\in\Lambda_y}\deg_{\p B(x_{i}^{\v_n},R\v_{n})}(v_{\v_n})\right|\ln\frac{\rho}{\textsf{\ae}^y_n}-C=\pi \ln\frac{\rho}{\textsf{\ae}^y_n}-C.
\]
In order to prove \eqref{8.ToutLesDegDesBBHSontEgauxA1}, we observe the case where there is  $y$ s.t. ${\rm Card}(\Lambda_y)>1$. Recall that if for all $y$ centers of $\rho$-bad discs we have ${\rm Card}(\Lambda_y)=1$, then \eqref{8.ToutLesDegDesBBHSontEgauxA1} holds. Since $\deg_{\p B(x_i^{\v_n},R\v_n)}(v_{\v_n})\neq0$, if ${\rm Card}(\Lambda_y)>1$, then we have
\[
\sum_{i\in\Lambda_y}|\deg_{\p B(x_{i}^{\v_n},R\v_{n})}(v_{\v_n})|>1.
\] 

We obtain easily the following lower bound for $i\in\Lambda_y$:
\[
\frac{1}{2}\int_{B(x^{\v_n}_i, \textsf{\ae}^y_n)\setminus \overline{B(x^{\v_n}_i,R\v_n)}}|\n v_{\v_n}|^2\geq\pi \left|\deg_{\p B(x_{i}^{\v_n},R\v_{n})}(v_{\v_n})\right|\ln\frac{\textsf{\ae}^y_n}{R\v_n}-C.
\]
Summing for $i\in\Lambda_y$, we obtain that
\[
\sum_{i\in\Lambda_y}\frac{1}{2}\int_{B(x^{\v_n}_i, \textsf{\ae}^y_n)\setminus \overline{B(x^{\v_n}_i,R\v_n)}}|\n v_{\v_n}|^2\geq2\pi \ln\frac{\textsf{\ae}^y_n}{R\v_n}-C.
\]
Consequently, we deduce that
\[
\sum_y\frac{1}{2}\int_{ B(y,8\rho)\setminus\cup \overline{B(x^{\v_n}_i, R\v_n)}}|\n v_{\v_n}|^2\geq \pi d\ln\frac{\rho}{R\v_n}+\pi\sum_{y\text{ s.t. }{\rm Card}(\Lambda_y)>1}\ln\frac{\textsf{\ae}^y_n}{R\v_n}-\mathcal{O}_n(1).
\]
From Lemma \ref{L8.UpperboundAuxPb} and Propositions \ref{P8.BadDiscSeparation} $\&$ \ref{P8.StudyOfS1Part}, we deduce easily 
\[
\frac{1}{2}\int_{\bigcup B(y,8\rho)\setminus\cup \overline{B(x^{\v_n}_i, R\v_{n})}}U_{\v_n}^2|\n v_{\v_n}|^2=\pi db^2\ln\frac{\rho}{\v_{n}}+\mathcal{O}_n(1).
\]
Combining the previous estimates, we obtain that
\[
\left\{y\text{ center of $\rho$-bad discs}\,|\,{\rm Card}(\Lambda_y)>1\right\}=\emptyset,
\]
and thus $\deg_{\p B(x_{i}^{\v_n},R\v_{n})}(v_{\v_n})=1$ for large $n$.
\end{proof}
\begin{cor}\label{C8.AuniqueZeroInsideEachBadBalls}
For large $n$, there is a unique zero inside each small and separated bad discs defined Notation \ref{Not.DefOfSepSmallBadDi} (page \pageref{Not.DefOfSepSmallBadDi}).
\end{cor}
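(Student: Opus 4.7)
The strategy is a blow-up analysis at the Ginzburg-Landau scale $\v_n/b$ centred on each small bad disc, identifying the limit profile and exploiting its structure. Following the argument already used in the proof of Proposition \ref{P8.BBHBadBallsProperty2}, I would set
\[
u'_n(x)=\frac{u_{\v_n}\!\left(x_i^{\v_n}+\tfrac{\v_n}{b}x\right)}{b}, \qquad x\in B(0,M_n),
\]
where $M_n$ is as in \eqref{EqNameMNNN}, so that $M_n\to\infty$ by Proposition \ref{P8.BBHBadBallsProperty}. By Corollary \ref{C8.TheCenterAreInGoodPosition}, each small bad disc centre satisfies $B(x_i^{\v_n},M_n\v_n)\subset\o_{\v_n}$ and lies at distance $\gtrsim\lambda\delta$ from $\p\o_{\v_n}$; thus Proposition \ref{P8.UepsCloseToaeps} yields $U_{\v_n}/b=1+o(1)$ uniformly on this ball, so that the zeros of $u_{\v_n}$ and of $v_{\v_n}$ coincide inside $B(x_i^{\v_n},R\v_n)$. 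Since $u'_n$ solves $-\Delta u'_n=u'_n(1-|u'_n|^2)$, the compactness result of \cite{BMR1} provides, up to a subsequence, convergence $u'_n\to u_0$ in $C^2_{\rm loc}(\R^2)$ to an entire solution of the same equation.

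To identify $u_0$, I would combine two ingredients. First, Proposition \ref{P8.BoundAnnulusArroundBadDiscs} rescales to $\int_{B(0,bM)}(1-|u'_n|^2)^2\leq C$ for every fixed $M>0$, so passing to the limit yields $\int_{\R^2}(1-|u_0|^2)^2<\infty$. Secondly, Proposition \ref{P8.BBHBadBallsProperty2} gives $\deg_{\p B(x_i^{\v_n},R\v_n)}(v_{\v_n})=1$, which together with the $C^2_{\rm loc}$ convergence and $U_{\v_n}\to b$ shows that $u_0$ has degree $1$ at infinity. The classification of finite-energy entire solutions of \cite{M2} (already invoked in Theorem \ref{T8.MainThm2}) then forces, up to a rigid motion,
\[
u_0(x)=f(|x|)\,\frac{x}{|x|}\,{\rm e}^{\imath\theta_0},
\]
where $f:\R^+\to\R^+$ is smooth, strictly increasing, with $f(0)=0$, $f'(0)>0$ and $f(r)\to1$ as $r\to\infty$.

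The key structural fact is that $u_0$ has a \emph{unique} zero, at the origin, and this zero is nondegenerate since $\det\n u_0(0)=f'(0)^2>0$. Fix $R'\geq R$ large enough that $|u_0|\geq 1/2$ on $B(0,bR')\setminus B(0,1)$; the $C^2_{\rm loc}$ convergence transfers this lower bound to $u'_n$ on the same annulus for large $n$, so every zero of $u'_n$ in $B(0,bR')$ must lie in $B(0,1)$. Nondegeneracy of the zero of $u_0$ at $0$ combined with $C^1_{\rm loc}$ convergence and a standard application of the inverse function theorem (or equivalently a Brouwer degree count on small circles around $0$) then forces $u'_n$ to have exactly one zero in $B(0,1)$ for large $n$. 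Undoing the rescaling gives exactly one zero of $u_{\v_n}$, and hence of $v_{\v_n}$, inside $B(x_i^{\v_n},R\v_n)$. The main obstacle I anticipate is the identification step: the degree $1$ information must be transported from fixed-radius circles (in the original scale) to arbitrarily large circles (in the blow-up scale), which is where the finite potential bound of Proposition \ref{P8.BoundAnnulusArroundBadDiscs} is crucial in order to apply \cite{M2}.
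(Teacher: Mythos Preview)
Your proposal is correct and follows the same blow-up strategy as the paper: rescale by $\v_n/b$ around each bad-disc centre, use the compactness of \cite{BMR1} to obtain a $C^2_{\rm loc}$ limit $u_0$, combine the finite-potential bound (Proposition \ref{P8.BoundAnnulusArroundBadDiscs}) with the degree-one information (Proposition \ref{P8.BBHBadBallsProperty2}) to identify $u_0$ via \cite{M2} as a translate of the radial vortex $f(|x|){\rm e}^{\imath(\theta+\theta_0)}$, and then conclude uniqueness of the zero.

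The only point of departure is the last step. The paper dispatches it by citing Theorem~2.3 of \cite{BCP1}, a ready-made stability result for Ginzburg--Landau zeros. You instead argue directly from the nondegeneracy $\det\nabla u_0(0)=f'(0)^2>0$ together with $C^1_{\rm loc}$ convergence and the inverse function theorem/Brouwer degree. Your argument is more self-contained and makes explicit the mechanism that \cite{BCP1} packages; the paper's version is shorter but imports an external reference. One small cosmetic point: since you do not first relocate the centre $x_i^{\v_n}$ to an actual zero of $v_{\v_n}$ (as the paper does), the zero of $u_0$ need not sit at the origin, so your annulus should be written as $B(0,bR)\setminus B(z_0,\epsilon)$ around the actual zero $z_0$ of $u_0$ rather than $B(0,bR')\setminus B(0,1)$; this is purely notational and does not affect the argument.
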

\begin{proof}
From Proposition \ref{P8.BBHBadBallsProperty2}, one may assume that $v_{\v_{n}}(x_{i}^{\v_n})=0$.

 Let $i\in\{1,...,d\}$. In view of \eqref{8.BlowupConvergence}, if we denote%denoting
\begin{equation}\label{8.BlowUpBBHBalls}
\begin{array}{cccc}u'_n:&B(0,M_{n})&\to&\C\\&x&\displaystyle\mapsto&\displaystyle\frac{u_{\v_{n}}(\displaystyle\frac{\v_{n}}{b}x+x_i^{\v_n})}{b}\end{array},
\end{equation}
then, up to a subsequence, 
\begin{equation}\label{Eq.ConvMicrcrocro}
u'_n\to u_0\text{ in }C^1(\overline{B(0,bR)}).
\end{equation}
Here $M_n$ is defined in \eqref{EqNameMNNN}.

Using the main result of \cite{M2}, we have the existence of a universal function $f:\R^+\to[0,1]$ s.t.
\begin{equation}\label{8.SpecialRadialSolution}
\left\{\begin{array}{c}\text{$u_0(x)=f(|x|){\rm e}^{\imath(\theta+\theta_i)}$ where $x=|x|{\rm e}^{\imath\theta}$, $\theta_i\in\R$}\\\text{ and $f:\R^+\to\R^+$ is increasing.}\end{array}\right.
\end{equation}

Therefore, we may apply Theorem 2.3 in \cite{BCP1} in order to obtain that, for large $n$, $u'_n$ has a unique zero in $B(0,bR)$. Consequently, for large $n$, $v_{\v_{n}}$ has a unique zero in $B(x_i^{\v_n},R\v_{n})$.
\end{proof}
\begin{cor}\label{C8.TheRadiusCanBeConsideredIndeOnSubsequ}
One may consider that $R$ depends only on $l$ ($R$ is independent of the extraction we consider), \emph{i.e}, for  $l\geq2$ there is $R_l>0$ s.t. for small $\v$, denoting $\{x_{i}^{\v}\,|\,i\in \{1,...,d\}\}$ the set of zeros of a minimizer $v_\v$, we have  
\[
\{|v_{\v}|<1-1/l^2\}\subset\cup_i B(x_{i}^{\v},R_l\v).
\]
\end{cor}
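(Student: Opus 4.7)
The plan is to argue by contradiction, exploiting the earlier selection results along the bad sequence. If the statement failed, then for some fixed $l\geq 2$ there would exist sequences $\v_n\downarrow 0$ and points $y_n\in\O$ satisfying $|v_{\v_n}(y_n)|<1-1/l^2$ together with
\[
\min_{1\le i\le d}\frac{|y_n-x_i^{\v_n}|}{\v_n}\to\infty,
\]
where $x_1^{\v_n},\ldots,x_d^{\v_n}$ are the zeros of $v_{\v_n}$ (exactly $d$ of them, by the previous analysis). The strategy is to apply the earlier extraction results to this specific subsequence and contradict its defining property.

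First, I would extract a further subsequence along which Proposition \ref{P8.BBHBadBallsProperty}, Proposition \ref{P8.BBHBadBallsProperty2} and Corollary \ref{C8.AuniqueZeroInsideEachBadBalls} all apply simultaneously. These yield some radius $R>0$ (a priori depending on the extracted subsequence) and centers $\tilde{x}_1^{\v_n},\ldots,\tilde{x}_d^{\v_n}$, mutually separated on the $\v_n$-scale, such that
\[
\{|v_{\v_n}|<1-1/l^2\}\subset\bigcup_{i=1}^{d}B(\tilde{x}_i^{\v_n},R\v_n),
\]
with each ball containing exactly one zero of $v_{\v_n}$. Since the total count of zeros is $d$, after relabelling one may assume $x_i^{\v_n}\in B(\tilde{x}_i^{\v_n},R\v_n)$ for every $i$, and in particular $|\tilde{x}_i^{\v_n}-x_i^{\v_n}|<R\v_n$.

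The contradiction then falls out of the triangle inequality. By the covering property, $y_n\in B(\tilde{x}_{i_n}^{\v_n},R\v_n)$ for some index $i_n$; combined with the previous bound on $|\tilde{x}_{i_n}^{\v_n}-x_{i_n}^{\v_n}|$ this gives $|y_n-x_{i_n}^{\v_n}|<2R\v_n$, which contradicts $\min_i|y_n-x_i^{\v_n}|/\v_n\to\infty$. The only real thing to be careful about is the bookkeeping during extraction: one must check that, along the subsequence, the selection $J''$ in Proposition \ref{P8.BBHBadBallsProperty} has cardinality $d$ and that the matching between covering centers $\tilde{x}_i^{\v_n}$ and actual zeros $x_i^{\v_n}$ is preserved by further extractions. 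Both facts are immediate once Corollary \ref{C8.AuniqueZeroInsideEachBadBalls} is in hand, so there is no substantial obstacle beyond this routine verification.
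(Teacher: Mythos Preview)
Your argument is correct, but it follows a different route from the paper's. You proceed by contradiction: if the statement failed you would produce a sequence $\v_n\downarrow0$ with $\min_i|y_n-x_i^{\v_n}|/\v_n\to\infty$, then re-run Propositions~\ref{P8.BBHBadBallsProperty}, \ref{P8.BBHBadBallsProperty2} and Corollary~\ref{C8.AuniqueZeroInsideEachBadBalls} along this bad sequence to obtain a (subsequence-dependent) covering radius $R$ that contradicts the assumption. The paper instead argues directly and constructively: the blow-up $u_n'$ around each zero converges in $C^1_{\rm loc}$ to the \emph{universal} radial solution $f(|x|){\rm e}^{\imath(\theta+\theta_i)}$ of \cite{M2}, and since this limit is unique the full sequence $|u_n'|$ (not merely a subsequence) converges to $f(|\cdot|)$ on any fixed ball. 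One then simply chooses $R_l$ so that $f(bR_l)\geq 1-\frac{1}{2l^2}$, which yields the explicit scale $R_l\simeq\sqrt{2}\,l/b$. Your soft compactness approach is more robust and avoids invoking the blow-up profile, while the paper's version supplies the explicit value of $R_l$ and isolates the universality of $f$, a fact that is reused immediately afterward in the proof of Proposition~\ref{P8.ExactExpanding}.
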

\begin{proof}
From Corollary \ref{C8.AuniqueZeroInsideEachBadBalls}, one may assume that $v_{\v_{n}}(x_{i}^{\v_n})=0$.

Let $f:\R^+\to\R^+$ be defined as in \eqref{8.SpecialRadialSolution} and $u_n'$ as in \eqref{8.BlowUpBBHBalls}. For $l\geq2$, consider $R_l>0$ s.t. 
\[
l\mapsto R_l \text{ is increasing and }f(bR_l)\geq1-\frac{1}{2l^2}.
\]
Note that from \cite{Sha1}, one may consider $R_l\simeq\sqrt2 l/b$.

By uniqueness of $f$, the full sequence $|u_n'|$ converges to $f$ in $L^\infty\left[B(0,b\max\left\{R,R_l\right\})\right]$. Consequently, for $n$ sufficiently large, since $f$ is not decreasing,
\[
\{|v_{\v_{n}}|<1-1/l^2\}\subset\cup_i B(x_{i}^{\v_n},R_l\v_{n}).
\]
\end{proof}

\subsection{Asymptotic expansion of $F_\v(v_\v)$}
This section is essentially devoted to proof Theorem \ref{T8.MainThm3}. The key argument in this proof is Proposition \ref{P8.ExactExpanding}.
\subsubsection{Statement of the main result and a corollary}
%The main result of this subsection is an exact expanding of $F_\v(v_\v)$. This part is devoted to the proof of Theorem \ref{T8.MainThm3}.
We state a technical and fundamental result and a direct corollary.
\begin{prop}\label{P8.ExactExpanding}
For all $\v_n\downarrow0$, up to a subsequence, there is $\rho=\rho(\v_n)$ s.t. $\v_n\ll\rho\ll\lambda\delta^{3/2}$ and s.t. when $n\to\infty$ the following holds
\begin{equation}\label{8.ExactExpanding}
F_{\v_n}(v_{\v_n})\geq J_{\rho,\v_n}+ db^2(\pi\ln\frac{b\rho}{\v_n}+\gamma)+o_n(1),
\end{equation}
where $J_{\rho,\v}$ is defined in \eqref{8.StatementSecondAuxProblemDir} %, $\eta_0=\eta_0(g,\O)$ is a small constant 
and $\gamma$ is the universal constant defined in \cite{BBH}, Lemma IX.1. 
\end{prop}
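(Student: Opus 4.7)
The goal is a matching lower bound to Lemma \ref{L8.UpperboundAuxPb}, by splitting the energy into a bulk piece on the perforated domain (yielding $J_{\rho,\v_n}$) and $d$ vortex-core pieces (each yielding the universal BBH contribution $b^2(\pi\ln\tfrac{b\rho}{\v_n}+\gamma)$).

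\emph{Step 1 -- Setup.} Fix $\v_n\downarrow 0$ and extract a subsequence so that the analysis of Section \ref{S8.GLStudy} applies. By Corollary \ref{C8.TheCenterAreInGoodPosition}, Proposition \ref{P8.BBHBadBallsProperty2} and Corollary \ref{C8.AuniqueZeroInsideEachBadBalls}, $v_{\v_n}$ has exactly $d$ zeros $x_1^{\v_n},\dots,x_d^{\v_n}$, each of local degree $1$, with $B(x_i^{\v_n},c\lambda\delta)\subset\o_{\v_n}$ and $\min\{|x_i^{\v_n}-x_j^{\v_n}|,\dist(x_i^{\v_n},\p\O)\}\geq\eta_0$. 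The blow-up $u'_n(y)=u_{\v_n}(x_i^{\v_n}+\tfrac{\v_n}{b}y)/b\to u_0(y)=f(|y|)e^{\imath(\theta+\theta_i)}$ in $C^1(\overline{B(0,bR)})$ for every fixed $R$ (cf.~\eqref{Eq.ConvMicrcrocro}). Choose a scale $\rho=\rho(\v_n)\downarrow 0$ with $\v_n\ll\rho\ll\lambda\delta^{3/2}$ and $|\ln\rho|^3/|\ln\v_n|\to 0$, e.g.\ $\rho=(\lambda\delta)^2$. Then $B(x_i^{\v_n},\rho)\subset\o_{\v_n}$ and Proposition \ref{P8.UepsCloseToaeps} gives $|U_{\v_n}-b|=o(1)$ uniformly on $B(x_i^{\v_n},\rho/2)$.

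\emph{Step 2 -- Decomposition and vortex cores.} Write
\[
F_{\v_n}(v_{\v_n})=F_{\v_n}(v_{\v_n},\O_\rho)+\sum_{i=1}^{d}F_{\v_n}(v_{\v_n},B(x_i^{\v_n},\rho)),\qquad \O_\rho=\O\setminus\bigcup_i\overline{B(x_i^{\v_n},\rho)}.
\]
For fixed large $R$, further decompose each disc as $B(x_i^{\v_n},R\v_n)\cup A_i^n$ with $A_i^n=B(x_i^{\v_n},\rho)\setminus\overline{B(x_i^{\v_n},R\v_n)}$. The change of variable $y=b(x-x_i^{\v_n})/\v_n$, together with $U_{\v_n}/b\to 1$ and $u'_n\to u_0$ in $C^1(\overline{B(0,bR)})$, yields
\[
F_{\v_n}(v_{\v_n},B(x_i^{\v_n},R\v_n))\longrightarrow\tfrac{b^2}{2}\!\int_{B(0,bR)}\!\!\{|\n u_0|^2+\tfrac12(1-|u_0|^2)^2\}=b^2\bigl(\pi\ln(bR)+\gamma\bigr)+o_R(1),
\]
the last equality being BBH Lemma IX.1. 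On $A_i^n$, $|v_{\v_n}|\geq 1-o(1)$ and $\deg_{\p B(x_i^{\v_n},r)}(v_{\v_n})=1$ for every $r\in(R\v_n,\rho)$; the standard Cauchy--Schwarz degree lower bound in polar coordinates, combined with $U_{\v_n}^2\to b^2$, gives $F_{\v_n}(v_{\v_n},A_i^n)\geq \pi b^2\ln(\rho/(R\v_n))-o(1)$. Summing over $i$ and letting $R\to\infty$ after $n\to\infty$ via a diagonal extraction,
\[
\sum_i F_{\v_n}(v_{\v_n},B(x_i^{\v_n},\rho))\geq db^2\Bigl(\pi\ln\tfrac{b\rho}{\v_n}+\gamma\Bigr)+o_n(1).
\]

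\emph{Step 3 -- Outer part and upgrade to $J_{\rho,\v_n}$.} Let $w_n=v_{\v_n}/|v_{\v_n}|$. By Proposition \ref{P8.BadDiscSeparation}(4), $|v_{\v_n}|^2\geq 1-C\sqrt{|\ln\rho|/|\ln\v_n|}$ on $\O_\rho$, hence
\[
F_{\v_n}(v_{\v_n},\O_\rho)\geq(1-o(1))\cdot\tfrac12\!\int_{\O_\rho}\!U_{\v_n}^{2}|\n w_n|^2.
\]
Since $\deg_{\p B(x_i^{\v_n},\rho)}(w_n)=1$, $w_n\in\I_\rho({\bf x}^n,{\bf 1})$. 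To upgrade from the degree-type infimum to the Dirichlet-type $J_{\rho,\v_n}$, I use that the trace of $w_n$ on $\p B(x_i^{\v_n},\rho)$ is, modulo a phase $\theta_i^n$, close to $\theta\mapsto e^{\imath\theta}$ in $H^{1/2}$ with $o(1)$ error. This follows by propagating the blow-up asymptotics through the intermediate annulus $R\v_n\leq|x-x_i^{\v_n}|\leq\rho$, where $v_{\v_n}$ is almost minimizing with $|v_{\v_n}|\to 1$ and degree $1$, so a near-harmonic equipartition applies. A harmonic-extension argument inside a thin subannulus then produces $\tilde w_n\in\J_\rho({\bf x}^n,{\bf 1})$ with
\[
\tfrac12\!\int_{\O_\rho}\!U_{\v_n}^{2}|\n w_n|^2\geq \tfrac12\!\int_{\O_\rho}\!U_{\v_n}^{2}|\n\tilde w_n|^2-o(1)\geq \widehat{\J}_{\rho,\v_n}({\bf x}^n,{\bf 1})-o(1)\geq J_{\rho,\v_n}-o(1).
\]
The multiplicative $(1-o(1))$ is absorbed into $o(1)$: by \eqref{8.CoarseEstimate}, $J_{\rho,\v_n}=O(|\ln\rho|)$, and $|\ln\rho|\sqrt{|\ln\rho|/|\ln\v_n|}=o(1)$ thanks to $|\ln\rho|^3/|\ln\v_n|\to 0$. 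Adding the bounds of Steps 2 and 3 gives \eqref{8.ExactExpanding}.

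\emph{Main obstacle.} The delicate point is Step 3: the blow-up only controls $w_n$ in $C^1$ on bounded balls $B(0,bR)$, whereas $b\rho/\v_n\to\infty$. Passing from the degree-type bound (which only gives $I_{\rho,\v_n}$, and only up to an $O(1)$ loss via Corollary \ref{C8.AnAlmostMinConfigIsAnAlmostMinConf}) to the sharper $J_{\rho,\v_n}$ with $o(1)$ error demands propagating the radial profile of $u_0$ across the whole annulus $R\v_n\leq|x-x_i^{\v_n}|\leq\rho$, via an elliptic (near-harmonic) trace estimate exploiting the almost-minimality of $v_{\v_n}$.
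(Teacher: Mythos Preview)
Your overall architecture (core discs $+$ perforated complement) matches the paper, but your choice of $\rho$ is wrong, and this creates genuine gaps in \emph{both} Steps 2 and 3.

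\textbf{Step 2 is not $o(1)$-sharp.} On the annulus $A_i^n=B(x_i^{\v_n},\rho)\setminus\overline{B(x_i^{\v_n},R\v_n)}$, the only modulus control you actually have is $|v_{\v_n}|\geq 1-1/l^2$ with $R=R_l$ (Corollary \ref{C8.TheRadiusCanBeConsideredIndeOnSubsequ}); this is \emph{not} $1-o_n(1)$. The Cauchy--Schwarz degree bound then gives
\[
F_{\v_n}(v_{\v_n},A_i^n)\ \geq\ b^2(1-1/l^2)^2\,\pi\ln\frac{\rho}{R_l\v_n},
\]
so the defect from $b^2\pi\ln(\rho/(R_l\v_n))$ is of order $l^{-2}\ln(\rho/\v_n)\sim l^{-2}|\ln\v_n|$. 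For fixed $l$ this diverges as $n\to\infty$, so ``letting $R\to\infty$ after $n\to\infty$'' fails. A diagonal extraction cannot repair this either: you would need $l^2\gg|\ln\v_{n_l}|$, but passing to a subsequence of a given $\v_n\downarrow0$ can only make $|\ln\v_{n_l}|$ larger.

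\textbf{Step 3 is unjustified.} Your ``propagation'' of the radial profile across the annulus $R\v_n\leq|x-x_i|\leq\rho$ is precisely the hard part, and no argument is given. The blow-up only controls $v_{\v_n}$ on balls of radius $O(\v_n)$; it says nothing about the trace at radius $\rho\gg\v_n$.

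\textbf{The paper's fix.} Both problems disappear if you set $\rho_l:=R_l\,\v_{n_l}$ via a diagonal extraction: for each $l\geq2$, choose $n_l$ so large that (i) the blow-up $u'_{n_l}$ approximates $u_0$ in $C^1(\overline{B(0,bR_l)})$ to accuracy $1/l$, and (ii) $\rho_l\leq\lambda^2\delta^2/l$. Then $B(x_i^{\v_{n_l}},\rho_l)$ corresponds exactly to $B(0,bR_l)$ in the blow-up, so the disc energy is computed \emph{directly} as $b^2(\pi\ln(bR_l)+\gamma)+o_l(1)$ (BBH Lemma IX.1 / Prop.~3.11 in \cite{SS1}) with no annulus at all. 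Simultaneously, the $C^1$ convergence of the phase on $\partial B(0,bR_l)$ gives the $H^{1/2}$-trace control (equations \eqref{8.EstPhaseBBHBallExtract}--\eqref{8.EstPhaseBBHBallExtractNonSacal}) needed to correct $w_{n_l}$ into a competitor in $\J_{\rho_l}({\bf x},{\bf 1})$ at cost $o_l(1)$. The modulus error on $\O_{\rho_l}$ is then handled by Mironescu's estimate \eqref{8.EstiamtionAlAlalaMironescu} together with $|v_l|\geq1-1/l^2$ there, yielding \eqref{8.BigStep2}. The point is that the statement only asks for \emph{some} $\rho$ with $\v_n\ll\rho\ll\lambda\delta^{3/2}$; taking $\rho$ just barely above the $\v_n$-scale is what makes the argument close.
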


\begin{cor}\label{C8.SeparationDansLeProblemeAuxiliaire}
Let $\v_n\downarrow0,\rho$ be as in Proposition \ref{P8.ExactExpanding}. Then we have%$\delta R(\v)\to\infty$ and $\frac{\v R(\v)}{\delta}\to0$. We have
\[
J_{\v_n,\v_n}-J_{\rho,\v_n}=\pi db^2\ln \frac{\rho}{\v_n}+o_n(1).
\] 
\end{cor}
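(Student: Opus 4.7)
The plan is to prove the equality by matching upper and lower bounds for $J_{\v_n,\v_n}-J_{\rho,\v_n}$. The lower bound is immediate from sandwiching $F_{\v_n}(v_{\v_n})$: Proposition \ref{P8.ExactExpanding} gives $F_{\v_n}(v_{\v_n})\geq J_{\rho,\v_n}+db^2(\pi\ln(b\rho/\v_n)+\gamma)+o_n(1)$, while Lemma \ref{L8.UpperboundAuxPb} applied at the smaller scale $\v_n$ (legal because \eqref{8.MainHyp} implies $\v_n/(\lambda\delta^{\num+1})\to 0$) gives $F_{\v_n}(v_{\v_n})\leq J_{\v_n,\v_n}+db^2(\pi\ln b+\gamma)+o_n(1)$. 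Subtracting and splitting $\ln(b\rho/\v_n)=\ln b+\ln(\rho/\v_n)$ yields $J_{\v_n,\v_n}-J_{\rho,\v_n}\geq \pi db^2\ln(\rho/\v_n)+o_n(1)$.

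For the matching upper bound I would construct an explicit competitor for $J_{\v_n,\v_n}$ out of a minimizer of $J_{\rho,\v_n}$. Using Propositions \ref{P8.MainAuxPb:DirVsDeg} and \ref{P8.ExistenceOfminInIJ}, fix a minimal configuration ${\bf x}^n=(x_1^n,\ldots,x_d^n)$ for $J_{\rho,\v_n}$ together with a minimal map $w^n\in\J_{\rho,\v_n}({\bf x}^n,{\bf 1})$, whose boundary trace on each $\p B(x_i^n,\rho)$ reads $w^n(x_i^n+\rho{\rm e}^{\imath\theta})={\rm e}^{\imath(\theta+\theta_i^n)}$. By Corollary \ref{C8.CorolSecondAuxPb} (or Proposition \ref{P8.CorolSecondAuxPb} in the non-periodic case), for large $n$ the points are uniformly separated from one another and from $\p\O$, and $B(x_i^n,c\lambda\delta)\subset\o_{\v_n}$. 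The competitor $\tilde w^n$ coincides with $w^n$ on $\O_\rho({\bf x}^n)$ and, on each annulus $B(x_i^n,\rho)\setminus\overline{B(x_i^n,\v_n)}$, equals the canonical map $(x-x_i^n)/|x-x_i^n|\cdot{\rm e}^{\imath\theta_i^n}$. The gluing is continuous across $\p B(x_i^n,\rho)$, and the inner trace on $\p B(x_i^n,\v_n)$ has exactly the form required by admissibility for $J_{\v_n,\v_n}$.

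The extra cost of $\tilde w^n$ over $w^n$ reduces to $\sum_i \tfrac{1}{2}\int_{B(x_i^n,\rho)\setminus B(x_i^n,\v_n)}U_{\v_n}^2|x-x_i^n|^{-2}$. Because $\rho\ll\lambda\delta$ (forced by \eqref{8.ConditionOnTheMicroRadius}) and $B(x_i^n,c\lambda\delta)\subset\o_{\v_n}$, the whole annulus sits at distance $\gtrsim\lambda\delta$ from $\p\o_{\v_n}$, so Proposition \ref{P8.UepsCloseToaeps} gives $|U_{\v_n}^2-b^2|\leq C{\rm e}^{-\alpha\lambda\delta/(2\v_n)}$ there. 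Polar integration of the $b^2$-piece produces $\pi b^2\ln(\rho/\v_n)$ per hole, and the exponentially small remainder is dominated by $C{\rm e}^{-\alpha\lambda\delta/(2\v_n)}|\ln\v_n|=o_n(1)$ thanks to \eqref{8.MainHyp}. Summing over $i=1,\ldots,d$ gives $J_{\v_n,\v_n}\leq J_{\rho,\v_n}+\pi db^2\ln(\rho/\v_n)+o_n(1)$, which together with the lower bound closes the argument. No genuine obstacle is expected: the only delicate point is the weight estimate on the annulus, where the exponential decay of $U_{\v_n}-b$ comfortably beats the $|\ln\v_n|$ factor.
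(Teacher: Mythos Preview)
Your proof is correct and follows essentially the same approach as the paper: both combine Lemma \ref{L8.UpperboundAuxPb} (applied at scale $\v_n$) with Proposition \ref{P8.ExactExpanding} for one inequality, and build an explicit competitor for $J_{\v_n,\v_n}$ by extending a minimizer of $J_{\rho,\v_n}$ with canonical maps on the annuli $B(x_i^n,\rho)\setminus\overline{B(x_i^n,\v_n)}$ for the other. One minor bookkeeping point: in the periodic case you should also invoke Corollary \ref{C8.AnAlmostMinConfigIsAnAlmostMinConf} (as the paper does) to pass from an almost minimal configuration for $J_{\rho,\v_n}$ to one for $I_{\rho,\v_n}$ before applying Corollary \ref{C8.CorolSecondAuxPb}, and the hypothesis $\rho\ll\lambda\delta$ is given directly by the statement of Proposition \ref{P8.ExactExpanding} rather than by \eqref{8.ConditionOnTheMicroRadius}.
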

\begin{proof}[Proof of Corollary \ref{C8.SeparationDansLeProblemeAuxiliaire}]
%We drop the subscript $n$.

%Indeed, let $C>0$ and let $(x_1,...,x_d),(x'_1,...,x'_d)\in\O^d$ be s.t.
%\[
%\hat{\J}_{\rho,\v}(x_1,...,x_d)\leq J_{\rho,\v}+C
%\]
%and 
%\[
%\hat{\I}_{\rho,\v}(x'_1,...,x'_d)\leq I_{\rho,\v}+C
%\]
%Therefore, from Propositions \ref{P8.AuxResult1} and \ref{P8.ToMinimizeSecondPbThePointAreFarFromBoundAndHaveDegree1} we get the existence of %$C_0$ s.t.
%\[
%\hat{\J}_{\rho,\v}(x_1,...,x_d)\leq J_{\rho,\v}+C\leq\hat{\J}_{\rho,\v}(x'_1,...,x'_d)+C\leq\hat{\I}_{\rho,\v}(x'_1,...,x'_d)+C\leq I_{\rho,\v}+C+C_0\leq J_{\rho,\v}%+C+C_0. 
%\]

Using Proposition \ref{P8.MainAuxPb:DirVsDeg}, we may consider ${\bf x}_n=(x^n_1,...,x^n_d)\in\O^d$  a minimal configuration of points for $J_{\rho,\v_n}$, \emph{i.e.} s.t.
\[
\hat{\J}_{\rho,\v_n}({\bf x}_n,{\bf 1})= J_{\rho,\v_n}.
\]
Combining Corollaries \ref{C8.CorolSecondAuxPb} $\&$ \ref{C8.AnAlmostMinConfigIsAnAlmostMinConf} (or Proposition \ref{P8.CorolSecondAuxPb} if the pinning term is not periodic), we have the existence of $c>0$ s.t. $B(x^n_i,c\lambda\delta)\subset\o_\v$. 

Therefore, for a minimal map $w_n$ of $\hat{\J}_{\rho,\v_n}({\bf x}_n,{\bf 1})$, we may easily construct a map $\tilde{w}_n\in H^1(\O\setminus\cup_i \overline{B(x_i,\v_n)},\S^1)$ s.t. $\tilde{w}_n\in{\J}_{\v_n}({\bf x}_n,{\bf 1})$ and
\begin{eqnarray}\nonumber
J_{\v_n,\v_n}&\leq& \frac{1}{2}\int_{\O\setminus\cup \overline{B(x_i,\v_n)}}U_{\v_n}^2|\n \tilde{w}_n|^2
\\\nonumber&=&\frac{1}{2}\int_{\O\setminus\cup \overline{B(x_i,\rho)}}U_{\v_n}^2|\n {w}_n|^2+\frac{1}{2}\int_{\cup B(x_i,\rho)\setminus\overline{B(x_i,\v_n)}}U_{\v_n}^2|\n \tilde{w}_n|^2
\\\label{8.PartCroitij1}&=& J_{\rho,\v_n}+db^2\pi\ln\frac{\rho}{\v_n}+o_n(1).
\end{eqnarray}
On the other hand, Lemma \ref{L8.UpperboundAuxPb} combined with Proposition \ref{P8.ExactExpanding} yield
 \begin{equation}\label{8.sopdfgisdgihj}
 J_{\rho,\v_n}+ db^2(\pi\ln\frac{b\rho}{\v_n}+\gamma)+o_n(1)\leq F_{\v_n}(v_{\v_n})\leq J_{\v_n,\v_n}+db^2(\pi\ln b+\gamma).
 \end{equation}
 We conclude with the help of \eqref{8.PartCroitij1} and \eqref{8.sopdfgisdgihj}.
% Consequently,
% \[
% J_{\rho,\v_n}+ db^2\pi\ln\frac{\rho}{\v_n}+o_n(1)\leq J_{\v_n,\v_n}
% \]which ends the proof.
\end{proof}
\subsubsection{Proof of Theorem \ref{T8.MainThm3}}\label{ProofOfEnExpenenehehe}
We are now in position to prove Theorem \ref{T8.MainThm3}, \emph{i.e.}, we are going to prove that%, when $\v\to0$ the following holds
\begin{equation}\nonumber%\label{8.ExactExpanding}
F_\v(v_\v)= J_{\v,\v}+ db^2(\pi\ln b+\gamma)+o_\v(1).
\end{equation}
Indeed, using Lemma \ref{L8.UpperboundAuxPb}, it suffices to prove that
\[
F_\v(v_\v)\geq J_{\v,\v}+ db^2(\pi\ln b+\gamma)+o_\v(1).
\]
This estimate is equivalent to: 
\[
\text{for all $\v_n\downarrow0$, up to subsequence, we have }F_{\v_n}(v_{\v_n})\geq J_{\v_n,\v_n}+ db^2(\pi\ln b+\gamma)+o_n(1).
\]
Let $\v_n\downarrow0$. Then, up to a subsequence, there is $\rho=\rho_n$ given by Proposition \ref{P8.ExactExpanding} s.t.
\[
F_{\v_n}(v_{\v_n})\geq J_{\rho,\v_n}+ db^2(\pi\ln\frac{b\rho}{\v_n}+\gamma)+o_n(1).
\]
We deduce from Corollary \ref{C8.SeparationDansLeProblemeAuxiliaire} that 
\begin{eqnarray*}
F_{\v_n}(v_{\v_n})&\geq &J_{\v_n,\v_n}-db^2\ln \frac{\rho}{\v_n}+ db^2(\pi\ln\frac{b\rho}{\v_n}+\gamma)+o_n(1)
\\&=&J_{\v_n,\v_n}+ db^2(\pi\ln b+\gamma)+o_n(1),
\end{eqnarray*}
which ends the proof of Theorem \ref{T8.MainThm3}.
\subsubsection{Proof of Proposition \ref{P8.ExactExpanding}}
%We argue by contradiction and we assume that there are $\v_n\downarrow0$ $\v\ll\rho=\rho_n\ll\delta$ and $\eta_0>0$ s.t.
%\begin{equation}\label{8.ContradictionExactLowerBound}
%F_{\v_n}(v_n)< J_{\rho,\v_n}+d b^2(\pi\ln\frac{b^2\rho}{\v_n}+\gamma)+\eta_0.
%\end{equation}
%\paragraph{Construction of a suitable extraction}~~
 In order to construct $\rho$, we first define a suitable extraction.

For $l\in\N\setminus\{0,1\}$, consider $R_l$ given by Corollary \ref{C8.TheRadiusCanBeConsideredIndeOnSubsequ}.% and $r_n=\rho/\v_n$.

Using Proposition \ref{P8.BBHBadBallsProperty2} and Corollary \ref{C8.AuniqueZeroInsideEachBadBalls}, for sufficiently large $n$, $v_{\v_n}$ has exactly $d$ zeros $x^n_1=x_1,...,x^n_d=x_d$. 

Clearly, these zeros are well separated and far from $\p\O$ (independently of $n$). 

Fix $i\in\{1,...,d\}$ and consider 
\[
\begin{array}{cccc}u'_n:&B(0,\lambda^2\delta^2/\v_n)&\to&\C\\&x&\displaystyle\mapsto&\displaystyle\frac{u_{\v_n}(\displaystyle\frac{\v_{n}}{b}x+x_i)}{b}\end{array}.
\]
For simplicity, assume $x_i=0$.

Up to a subsequence, one has, as in \eqref{8.SpecialRadialSolution},
\[
u_n'\to u_0\text{ in }C^2_{\rm loc}(\R^2,\C),\,u_0(x)=f(|x|){\rm e}^{\imath(\theta+\theta_i)}
\]
where $x=|x|{\rm e}^{\imath\theta}$, $\theta_i\in\R$ and $f:\R^+\to\R^+$ is increasing.

Consequently, for $l\in\N\setminus\{0,1\}$, one may construct an extraction $(n_l)_{l\geq2}$ s.t., denoting $u'_{n_l}=u'_l=|u'_l|{\rm e}^{\imath (\theta+\phi'_l)}$ and $v_{\v_{n_l}}=v_l$, we have%we have for $n_l$  rapidly growing  s.t.
\begin{equation}\label{8.OutsidePseudoBad}
\text{ $\{|v_l|<1-1/l^2\}\subset\cup_iB(x_i,R_l\v_{n_l})$},
\end{equation}
\begin{equation}\nonumber%\label{8.EstModBBHBallExtractLowerDelta}
\rho_l:=R_l\v_{n_l}\leq\frac{\lambda^2\delta^2}{l},
\end{equation}
\begin{equation}\label{8.EstGlobalBBHBallExtract}
\left|\int_{B(0,bR_l)}{\left|\n u_l'\right|^2+\frac{1}{2}\left(1-\left|u_l'\right|^2\right)^2}-\int_{B(0,bR_l)}{\left|\n u_0\right|^2+\frac{1}{2}\left(1-\left|u_0\right|^2\right)^2}\right|\leq\frac{1}{l},
\end{equation}
and
\begin{equation}\label{8.EstPhaseBBHBallExtract}
\|\phi'_l-\theta_i\|_{C^1(\overline{B(0,bR_l)})}\leq \frac{1}{l}.
\end{equation}
Here $R_l\simeq\sqrt2 l/b$ and is defined in Corollary \ref{C8.TheRadiusCanBeConsideredIndeOnSubsequ}.
%\begin{equation}\label{8.EstModBBHBallExtractNonNonSacal}
%\||v_l|^2-1\|_{L^\infty(B(0,bR_l))},\||v_l|^2\|_{L^\infty(B(0,bR_l))}\leq \frac{1}{l}
%\end{equation}

Following the proof of Proposition 1, Step 2 in \cite{BMR1}, one has
\begin{equation}\label{8.NonPhaseBoundedscal}
\int_{B(0,\frac{\lambda^2\delta^2}{\v_{n_l}})\setminus \overline{B(0,R_l)}}|\n \phi'_l|^2\leq C\text{ independently of }l.
\end{equation} 
%\paragraph{We go back to the BBH-bad discs}~~

In $B(0,\lambda^2\delta^2)\setminus\overline{B(0,\v_{n_l})}$, we denote $v_{n_l}=v_l=|v_l|{\rm e}^{\imath (\theta+\phi_l)}$ (${\rm e}^{\imath\theta}=x/|x|$). By conformal invariance, \eqref{8.EstPhaseBBHBallExtract} implies that
%\begin{equation}\label{8.EstModBBHBallExtractNonNonSacal}
%\||v_l|^2-1\|_{L^\infty(B(0,bR_l))},\||v_l|^2\|_{L^\infty(B(0,bR_l))}\leq \frac{1}{l}
%\end{equation}
%and
\begin{equation}\label{8.EstPhaseBBHBallExtractNonSacal}
\|\phi_l-\theta_i\|_{L^\infty(\p B(0,\rho_l))}+|\phi_l|_{H^{1/2}(\p B(0,\rho_l))}\leq \frac{C}{l}.
\end{equation}

Denote $W_l=B(0,2\rho_l)\setminus \overline{B(0,\rho_l)}$ and consider $\psi_{i}^l\in H^{1/2}(\p W_l,\R)$ s.t.
\[
\psi_{i}^l=\begin{cases}\phi_l-\theta_i&\text{on }\p B(0,\rho_l)\\0&\text{on }\p B(0,2\rho_l)\end{cases}.
\]
Using \eqref{8.EstPhaseBBHBallExtractNonSacal}, it is clear that $\|\psi_i^l\|_{L^\infty(\p W_l)}+|\psi_{i}^l|_{H^{1/2}(\p W_l)}=\mathcal{O}(1/l)$. From this, it is straightforward that there exists a constant %Arguing with the conformal invariance of the Dirichlet functional, with the scaling of ratio $\rho_l^{-1}$ and with \eqref{8.EstPhaseBBHBallExtractNonSacal}, one may easily get that there are 
$C_0>0$ (independent of $l$) and $\Psi_i^l\in H^1(W_l,\R)$ s.t.
\[
\text{$\tr_{\p W_l}\Psi_i^l=\psi_i^l$ and  }\frac{1}{2}\int_{W_l}|\n \Psi_i^l|^2\leq \frac{C_0}{l^2}.
\] 
Finally we define $\Psi_l\in H^1(\O\setminus\cup\overline{B(x_i,\rho_l)},\R)$ by
\[
\Psi_l=\begin{cases}\Psi_i^l(\cdot-x_i)&\text{in }x_i+W_l\\0&\text{otherwise }\end{cases}
\]
and
\[
\tilde{w}_l=\frac{v_l}{|v_l|}{\rm e}^{-\imath \Psi_l}\in\J_{\rho_l}({\bf x},{\bf 1})\text{ with }{\bf x}=(x_1,...,x_d).
\]
Therefore, denoting $w_l=\frac{v_l}{|v_l|}={\rm e}^{\imath (\theta+\phi_l)}$, $U_l=U_{\v_{n_l}}$ and $\O_{\rho_l}=\O\setminus\overline{B(x_i,\rho_l)}$, we have
\[
\hat{\J}_{\rho_l,\v_{n_l}}({\bf x},{\bf 1})\leq\frac{1}{2}\int_{\O_{\rho_l}}{U_l^2|\n\tilde{w}_l|^2}%=\frac{1}{2}\int_{\O_{\rho_l}}{U_l^2|\n (\theta+\phi_l)+\n\Psi_l|^2}
=\frac{1}{2}\int_{\O_{\rho_l}}{U_l^2|\n w_l|^2+2U_l^2\n(\theta+\phi_l)\cdot\n\Psi_l}+o_l(1).
\]
From \eqref{8.NonPhaseBoundedscal}, we obtain easily that
\[
\left|\int_{\O_{\rho_l}}{\n(\theta+\phi_l)\cdot\n\Psi_l}\right|=\sum_i\left|\int_{x_i+W_l}{\n(\theta+\phi_l)\cdot\n\Psi_i^l(\cdot-x_i)}\right|=o_l(1)
\]
and consequently
\begin{equation}\label{8.BigStep}
\hat{\J}_{\rho_l,\v_{n_l}}({\bf x},{\bf 1})\leq\frac{1}{2}\int_{\O_{\rho_l}}{U_l^2|\n w_l|^2}+o_l(1).
\end{equation} 
On the other hand, from direct computations, one has
\[
\frac{1}{2}\int_{\O_{\rho_l}}{U_l^2|\n v_l|^2}\geq\frac{1}{2}\int_{\O_{\rho_l}}{U_l^2|\n w_l|^2}+\frac{1}{2}\int_{\O_{\rho_l}}{U_l^2(|v_l|^2-1)|\n (\theta+\phi_l)|^2}.
\]
Using the same argument as Mironescu in \cite{M2}, one may obtain that
\begin{equation}\label{8.EstiamtionAlAlalaMironescu}
\frac{1}{2}\int_{\O_{\rho_l}}{(1-|v_l|^2)^{1/2}|\n (\theta+\phi_l)|^2}\leq C\text{ with $C$ independent of $l$}.
\end{equation}
From \eqref{8.EstiamtionAlAlalaMironescu} and \eqref{8.OutsidePseudoBad}, we obtain
\begin{equation*}
\frac{1}{2}\int_{\O_{\rho_l}}{U_l^2|\n v_l|^2}\geq\frac{1}{2}\int_{\O_{\rho_l}}{U_l^2|\n w_l|^2}-o_l(1).
\end{equation*}
Therefore, with \eqref{8.BigStep},
\begin{equation}\label{8.BigStep2}
F_{\v_{n_l}}(v_l,\O_{\rho_l})+o_l(1)\geq\frac{1}{2}\int_{\O_{\rho_l}}{U_l^2|\n v_l|^2}+o_l(1)\geq\hat{\J}_{\rho_l,\v_{n_l}}({\bf x},{\bf 1}) .
\end{equation} 
In order to complete the proof of \eqref{8.ExactExpanding}, it suffices to estimate the contribution of the discs $B(x_i,\rho_l)$.

One has (using \eqref{8.EstGlobalBBHBallExtract})
\begin{eqnarray*}
F_{\v_{n_l}}(v_l,B(x_i,\rho_l))&=&\frac{b^2}{2}\int_{B(0,\rho_l)}{\left|\n \left(\frac{u_l}{b}\right)\right|^2+\frac{b^2}{2\v_{n_l}^2}\left(1-\left|\frac{u_l}{b}\right|^2\right)^2}+o_l(1)
\\&=&\frac{b^2}{2}\int_{B(0,bR_l)}{\left|\n u_l'\right|^2+\frac{1}{2}\left(1-\left|u_l'\right|^2\right)^2}+o_l(1)
\\&=&\frac{b^2}{2}\int_{B(0,bR_l)}{\left|\n u_0\right|^2+\frac{1}{2}\left(1-\left|u_0\right|^2\right)^2}+o_l(1).
\end{eqnarray*}
From Proposition 3.11 in \cite{SS1}, one has
\[
\frac{1}{2}\int_{B(0,bR_l)}{\left|\n u_0\right|^2+\frac{1}{2}\left(1-\left|u_0\right|^2\right)^2}=\pi\ln (bR_l)+\gamma+o_l(1),
\]
hence
\begin{equation}\label{8.BigStep3}
F_{\v_{n_l}}(v_l,B(x_i,\rho_l))=b^2[\pi\ln (bR_l)+\gamma]+o_l(1).
\end{equation}
By combining \eqref{8.BigStep2} with \eqref{8.BigStep3}, we obtain \eqref{8.ExactExpanding} with $\rho_l=R_l\v_{n_l}$.
\subsection{Proof of Theorems \ref{THMMAIN}, \ref{T8.MainThm1}, \ref{T8.MainThm2} and \ref{T8.Localisationdanslesinclusions}}
We prove  Quantization part of Theorem \ref{THMMAIN} and Theorem \ref{T8.MainThm1}: 
\begin{enumerate}[$\bullet$]
\item The existence of exactly $d$ zeros is a direct consequence of Corollary \ref{C8.AuniqueZeroInsideEachBadBalls}.
\item The facts that they are well included in $\o_\v$, well separated and that $v_\v$ has a degree equal to $1$ on small circles around the zeros are obtained by Proposition \ref{P8.BBHBadBallsProperty2} and Corollary \ref{C8.AuniqueZeroInsideEachBadBalls}.% for the periodic pinning term and Proposition \ref{P8.CorolSecondAuxPb} for the non periodic pinning term.
\item The lower bound for $|v_\v|$ is given by Proposition \ref{P8.BadDiscSeparation}.4.
\end{enumerate}
We prove Macroscopic location part of Theorem \ref{THMMAIN}:
\begin{enumerate}[$\bullet$]
\item The macroscopic location part of Theorem \ref{THMMAIN} is a direct consequence of Theorem \ref{T8.MainThm3} (proved Subsection \ref{ProofOfEnExpenenehehe}), Proposition \ref{P8.BadDiscSeparation}, \eqref{8.BigStep3} and Proposition \ref{P.RenormalizedBBHEnergy}.

Indeed, from Theorem \ref{T8.MainThm3}, Proposition \ref{P8.BadDiscSeparation}.4 and \eqref{8.BigStep3}, we get that the zeros form a quasi-minimizer of $J_{\rho,\v}$ (defined Notation \ref{NOTATIONQUASIIN}, page \pageref{NOTATIONQUASIIN}). By using Proposition \ref{P.RenormalizedBBHEnergy} we deduce that they are a quasi-minimizer of the renormalized energy $W_g$ (defined Notation \ref{NOTATIONQUASIIN}). Thus, by smoothness of $W_g$, the zeros tend to a minimal configuration of $W_g$.
\end{enumerate}
We prove Microscopic location part of Theorem  \ref{THMMAIN} $\&$ \ref{T8.Localisationdanslesinclusions}:
\begin{enumerate}[$\bullet$]
\item In the case where $\lambda\to0$, the fact that we may localize the zeros inside the inclusions (microscopic location part of Theorem \ref{THMMAIN} and Theorem \ref{T8.Localisationdanslesinclusions}) is obtained via Theorem 4 in \cite{publi3}. 

Indeed we take $f_n(x)=\tr_{\p B((k_n,l_n),\delta/2)}v_{\v_n}\left((k_n,l_n)+\delta x\right)$ with $(k_n,l_n)$ a center of a cell containing a zero of $v_{\v_n}$. Using the main result of \cite{M1}, one may easily prove that $f_n$ satisfies the conditions ({A1}) and ({A2}) in \cite{publi3}. Thus we can apply Theorem 4 in \cite{publi3} and infer that the location of the zero inside the inclusion is governed by a renormalized energy which is independent of the boundary condition.
\end{enumerate}
Theorem  \ref{T8.MainThm2} is obtained by combining:
\begin{enumerate}[$\bullet$]
\item The weak $H^1$-convergence of $v_{\v_n}$ to $v_*$ is a direct  consequence of Proposition \ref{P7.LocalizationOfTheEnergyPart1}. The limiting equation for $v_*$ is a direct consequence of Proposition \ref{P8.asymptotiquedegnonzero} (this is explained right after  Proposition \ref{P8.asymptotiquedegnonzero}).%well as the following convergences: $\mathcal{T}_{\delta}(a_\v^2)(x,y)\to a^2(y)$ in $L^2(\O\times Y)$ and  $a^2_\v-U^2_\v\to$ in $L^2(\O)$.
\item The behavior in an $\v$-neighborhood of the zeros of $v_{\v_n}$ is given by \eqref{8.BlowUpBBHBalls}, \eqref{Eq.ConvMicrcrocro} and \eqref{8.SpecialRadialSolution} (noting that in \eqref{Eq.ConvMicrcrocro} we have $R=R_l\to+\infty$ as $l\to\infty$).% \eqref{8.BlowupConvergence} and Theorem  \ref{T8.MainThm1} (combined with the main result of \cite{M2}).
\end{enumerate}

\appendix
%\begin{subappendices}
\section{Proof of Proposition \ref{P8.ExistenceOfminInIJ}}\label{S8.ExistenceMiniMizingMap}
We prove the existence of minimal map in $\I_\rho$ and in $\J_\rho$. The main ingredient is the fact that these sets are closed under $H^1$-weak convergence (see \cite{TheseLassoued} or below). Thus, considering a minimizing sequence for $\displaystyle\frac{1}{2}\int_{\O_\rho}\alpha|\n \cdot|^2$ in above sets, we obtained the result.

We consider 
\begin{enumerate}[$\bullet$]
\item $\theta_i:\O_\rho\to\R$ the main argument of $x-x_i$, \emph{i.e.} ${\rm e}^{\imath\theta_i}=\frac{x-x_i}{|x-x_i|}$. Note that the $\theta_i$ are multivalued function with smooth gradient.  
\item For $d_i\in\N^*$ (given by the definition of $\I_\rho$ or $\J_\rho$) we let  $\theta_0=\sum d_i\theta_i$ and thus ${\rm e}^{\imath\theta_0}=\Pi_i\left(\frac{x-x_i}{|x-x_i|}\right)^{d_i}$.
\end{enumerate}
From Lemma 11 in \cite{glcoursep6}, there is $\phi_0\in C^\infty(\p\O,\R)$ s.t. $g{\rm e}^{-\imath\theta_0}={\rm e}^{\imath\phi_0}$.

Note that 
\begin{equation}\label{8.ConditionForPhaseForIRho}
w\in\I_\rho\Longleftrightarrow w={\rm e }^{\imath (\theta_0+\phi)}\text{ with }\phi\in H^1(\O_\rho,\R)\text{ and }\tr_{\p\O}\phi=\phi_0, 
\end{equation}
\begin{equation}\label{8.ConditionForPhaseForJRho}
w\in\J_\rho\Longleftrightarrow \left\{\begin{array}{c}w={\rm e }^{\imath (\theta_0+\phi)}\text{ with }\phi\in H^1(\O_\rho,\R),\\\displaystyle\sum_{j\neq i}d_j\theta_j+\phi={\rm Cst}_i\text{ on }\p B(x_i,\rho)\text{ and }\tr_{\p\O}\phi=\phi_0\end{array}\right..
\end{equation}

Clearly, from \eqref{8.ConditionForPhaseForIRho} and \eqref{8.ConditionForPhaseForJRho}, $\I_\rho$ and $\J_\rho$ are $H^1$-weakly closed.

We now prove the second part of Proposition \ref{P8.ExistenceOfminInIJ}.

One may easily obtain that for some $\lambda: \O_\rho\to\R$, denoting $w={\rm e}^{\imath(\theta_0+\phi)}$, $\phi\in H^1(\O_\rho,\R)$ (and thus $w\in\I_\rho$), we have 
\begin{equation}\label{8.CaracterisationEqS1Valued}
-\Div(\alpha\n w)=\lambda w\Longleftrightarrow\left\{-\Div\left[\alpha\n(\theta_0+\phi)\right]=0\text{ and }\lambda=\alpha|\n w|^2\right\}.
\end{equation}
This observation is a direct consequence of the following identity
\[
-\Div\left[\alpha\n{\rm e}^{\imath(\theta_0+\phi)}\right]=-\Div\left[\alpha\n(\theta_0+\phi)\right]\imath {\rm e}^{\imath(\theta_0+\phi)}+\alpha|\n(\theta_0+\phi)|^2{\rm e}^{\imath(\theta_0+\phi)}.
\]
Note that under these notations one has $|\n w|=|\n(\theta_0+\phi)|$. Thus $w$ is a minimizer in $\I_\rho$ or $\J_\rho$ if and only if $\theta_0+\phi$ minimizes the weighted Dirichlet functional under the condition fixed by the RHS of \eqref{8.ConditionForPhaseForIRho} or \eqref{8.ConditionForPhaseForJRho}. 

Consequently, we find that $\theta_0+\phi$ minimizes the weighted Dirichlet functional under its Dirichlet boundary condition.  

Therefore, we obtain easily that $-\Div\left[\alpha\n(\theta_0+\phi)\right]=0$. The identity $\n(\theta_0+\phi)=w\times\n w$ yields $-\Div(\alpha\n w)=\lambda w$. 

Hence, the Euler-Lagrange equations in \eqref{8.EquationForCriticalPointDir+Deg} and \eqref{8.EquationForCriticalPointDir+AlmostDir} are direct consequences of \eqref{8.CaracterisationEqS1Valued}.

The condition on the boundary of the holes for $w^{\rm deg}_{\rho,\alpha}$ (resp. $w^{\rm Dir}_{\rho,\alpha}$) follows from multiplying the equation satisfied by $\theta_0+\phi^{\rm deg}_{\rho,\alpha},\, w^{\rm deg}_{\rho,\alpha}={\rm e}^{\imath(\theta_0+\phi^{\rm deg}_{\rho,\alpha})}$ (resp. $\theta_0+\phi^{\rm Dir}_{\rho,\alpha},\, w^{\rm Dir}_{\rho,\alpha}={\rm e}^{\imath(\theta_0+\phi^{\rm Dir}_{\rho,\alpha})}$) by $\psi\in \mathcal{D}(\O,\R)$ (resp. $\psi\in \mathcal{D}(\O,\R)\text{ s.t }\psi\equiv {\rm Cst}_i$ in $B(x_i,\rho)$).

Since $\alpha$ is sufficiently smooth, we can rewrite the Euler-Lagrange equation as
\[
-\Delta \phi=\frac{\n\alpha\cdot\n(\phi+\theta_0)}{\alpha}\text{ with }\frac{\n\alpha\cdot\n(\phi+\theta_0)}{\alpha}\in L^2(\O_\rho).
\]
So, by elliptic regularity $\phi^{\rm deg}_{\rho,\alpha},\phi^{\rm Dir}_{\rho,\alpha}\in H^2(\O_\rho,\R)$, and consequently  $w^{\rm deg}_{\rho,\alpha},w^{\rm Dir}_{\rho,\alpha}\in H^2(\O_\rho,\S^1)$.

\section{Proof of Proposition \ref{P8.MainAuxPb:DirVsDeg}}\label{S8.ProofThirdAuxPb}

%Consider $\eta_0$ given by Corollary \ref{C8.CorolSecondAuxPb} s.t.  for  $\v\leq\rho<\eta_0$, if ${\bf x}\in\O^d$ is a configuration of points  s.t. $\hat{\mathcal{I}}_{\rho,\v}({\bf x},{\bf 1})\leq I_{\rho,\v}+1$ or $\hat{\mathcal{J}}_{\rho,\v}({\bf x},{\bf 1})\leq J_{\rho,\v}+1$, then $\dist(x_i,\p\O)\geq \eta_0$ and $|x_i-x_j|\geq\eta_0$.

%We first prove the existence of $\eta_1>0$ s.t. for $\v\leq\rho\leq\eta_1$, there is ${\bf x}_{\rho,\v}^{\rm deg}$ a minimizing configuration of points for $I_{\rho,\v}$. 

%Consider $\eta_0\geq\eta_1>0$ to be fixed later and consider $0<\v\leq\rho\leq\eta_1$. 

\vspace{3mm}
We prove the existence of a minimal configuration $\{{\bf x},{\bf d}\}=\{(x_1,...,x_N),(d_1,...,d_n)\}$ for $I_{\rho,\alpha}$.

Let $(\{{\bf x}_n,{\bf d}_n\})_n$ be a minimizing sequence of configuration of $I_{\rho,\alpha}$, \emph{i.e.}, 
\[
\inf_{\substack{w\in H^1(\O_\rho^n,\S^1)\text{ s.t. }\\w=g\text{ in }\O'\setminus\overline{\O\bigcup\cup B(x^n_i,\rho)}\\\deg_{\p B(x_i^n,\rho)}(w)=d^n_i\text{ for all }i}}\frac{1}{2}\int_{\O_\rho^n}\alpha|\n w|^2\to I_{\rho,\alpha};%\text{ with }\hat{\mathcal{I}}'_{\rho,\v}({\bf x},{\bf d})=\hat{\mathcal{I}}_{\rho,\v}({\bf x},{\bf d})+\frac{1}{2}\int_{\O'\setminus\overline{\cup B(x_i,\rho)\cup\O}}|\n g|^2.
\]
here $\O_\rho^n=\O'\setminus \cup\overline{B(x_i^n,\rho)}$.

%Thus, by definition of $\eta_0$, for large $n$, $|x_i^n-x_j^n|\geq\eta_0$ and $\dist(x_i^n,\p\O)\geq\eta_0$. Up to subsequence, one may assume that there is $(a_1,...,a_d)\in\O^d$ s.t. $x_i^n\to a_i$, $|a_i-a_j|\geq\eta_0$ and $\dist(a_i,\p\O)\geq\eta_0$.
Up to a subsequence, we have $N_n=N={\rm Cst}$, ${\bf d}_n={\bf d}={\rm Cst}$ and ${\bf x}_n\to{\bf x}$ with ${\bf x}=(x_1,...,x_N)$ s.t. $\min_{i\neq j}|x_i-x_j|\geq 8\rho$. 

Consider $w_n\in\mathcal{I}_\rho({\bf x}_n,{\bf d})$ a minimal map. Since $w_n$ is bounded independently of $n$ in $H^1(\O_\rho^n)$, up to a subsequence, we have $w_n\weak w_0$ in $H^1_{\rm loc}(\O^0_\rho)$, $\O^0_\rho=\O'\setminus \cup\overline{B(x_i,\rho)}$.

Clearly the following properties hold:
\begin{enumerate}[$\bullet$]
\item $w_0\in H^1_{\rm loc}(\O^0_\rho,\S^1)$ and  $w_0=g$ in $\O^0_\rho\setminus\overline{\O}$.
\item For all compact $K\subset \O^0_\rho$ we have $\displaystyle\frac{1}{2}\int_K \alpha|\n w_0|^2\leq\liminf\frac{1}{2}\int_K \alpha|\n w_n|^2\leq I_{\rho,\alpha}$.
\end{enumerate}
Thus $w_0\in H^1_g(\O^0_\rho,\S^1)$ and $\displaystyle\int_{\O^0_\rho} \alpha|\n w_0|^2\leq I_{\rho,\alpha}$. 

Now, it suffices to check that $\deg_{\p B(x_i,\rho)}(w_0)\in\N^*$ for all $i$.  Since $w_0$ is $\S^1$-valued, this fact is equivalent to $\deg_{\p B(x_i,\rho')}(w_0)\in\N^*$ for all $i$ and for all $\rho'\in(\rho,2\rho)$. %We argue by contradiction and we assume that there is $i$ s.t. $\deg_{\p B(a_i,\rho)}(w_0)\neq1$. Consequently, there is $j$ s.t. $|\deg_{\p B(a_j,\rho)}(w_0)|>1$.

In view of the facts:
\begin{enumerate}[$\bullet$]
\item for $\rho'\in(\rho,2\rho)$ we have $w_n'=w_{n|\O'\setminus \cup\overline{B(x_i,\rho')}}\weak w_0'=w_{0|\O'\setminus \cup\overline{B(x_i^n,\rho')}}$ 
\item the set $
\I':=\{w'\in H^1(\O'\setminus \cup\overline{B(x_i,\rho')},\S^1)\,|\,\deg_{\p B(x_i,\rho')}(w')=d_i\text{ for all }i\in\{1,...,N\}\}$ is closed under the $H^1$-weak convergence (see Appendix \ref{S8.ExistenceMiniMizingMap} or \cite{TheseLassoued}),
\end{enumerate}
since $w_n'\in\I'$, we obtain that $w'_0\in\I'$. Therefore  $\{{\bf x},{\bf d}\}=\{(x_1,...,x_N),(d_1,...,d_n)\}$ is a minimal configuration for $I_{\rho,\alpha}$.

%Therefore $w_0\in\mathcal{I}_{\rho,\alpha}((x_1,...,x_d),{\bf 1})$ and $\displaystyle\frac{1}{2}\int_{\O'_\rho} \alpha|\n w_0|^2\leq I_{\rho,\v}$ which implies that $(a_1,...,a_d)$ is a minimizing configuration for $I_{\rho,\v}$.
\vspace{3mm}

Now we prove the existence of a minimal configuration for $J_{\rho,\alpha}$.% when $0<\v\leq\rho\leq\eta_1$, $\eta_1$ defined above. 

Let $({\bf x}_n)_n$ be a minimizing sequence of configuration for $J_{\rho,\alpha}$, \emph{i.e.}, 
\[
\hat{\mathcal{J}}_{\rho,\alpha}({\bf x}_n,{\bf 1})\to J_{\rho,\alpha}.
\]
%Thus, for large $n$, $|x_i^n-x_j^n|\geq\eta_0$ and $\dist(x_i^n,\p\O)\geq\eta_0$. 

Up to a subsequence, one may assume that there is ${\bf x}=(x_1,...,x_d)\in\O^d$ s.t. $x_i^n\to x_i$, $|x_i-x_j|\geq8\rho$ and $\dist(x_i,\p\O)\geq8\rho$.

Let $\eta_n=8\max|x_i^n-x_i|$. There is a smooth diffeomorphism $\phi_n:\R^2\to\R^2$  satisfying
\[
\begin{cases}\phi_n\equiv{\rm Id}_{\R^2}&\text{in }\R^2\setminus\cup\overline{ B(x^n_i,\rho+\eta_n^{1/2})}\\\phi_n\left[x_i+(1+\eta_n)x\right]=x^n_i+x&\text{for }x\in B(0,\rho)\\\|\phi_n-{\rm Id}_{\R^2}\|_{C^1(\R^2)}=o_n(1)
\end{cases}.
\]
For example we can consider $\phi_n={\rm Id}_{\R^2}+H_n$ with
\[
\begin{cases}
H_n\equiv0&\text{in }\R^2\setminus\cup\overline{ B(x^n_i,\rho+\eta_n^{1/2})}
\\
%H_n(a_i^n+(1+\eta)x)=(x^n_i-a_i)+(1+\eta_n)x&\text{for }x\in B(0,\rho)
%\\
%\phi_n(tx_i^n+(1-t)a_i+x)=tx_i^n+(1-t)a_i+(1+\eta_n)x&\text{for }t\in(0,1),\,x\in \p B(0,\rho+(1+t)\eta_n)
%\\
%\phi_n(tx_i^n+(1-t)a_i+x)=tx_i^n+(1-t)a_i+(1+\eta_n)x&\text{for }x\in \p B(0,\rho+(1+t)\eta_n)
%\\
H_n\left[x_i+(1+\eta_n)x\right]=\left[1-\psi_n(|x|)\right](x_i^n-x_i-\eta_nx)&\text{for }x\in B(0,\displaystyle\frac{\rho+\eta_n^{1/2}}{1+\eta_n})%\setminus\overline{ B(0,\rho)}
\end{cases}.
\]
Here $\psi_n:\R^+\to[0,1]$ is a smooth function satisfying 
\[
\psi_n(r)=\begin{cases}0&\text{if }r\leq \rho\\1&\text{if }r\geq \rho+\eta_n^{1/2}/2\end{cases}\text{ and $|\psi_n^\prime|=\mathcal{O}(\eta_n^{-1/2})$.}
\]%,|{\rm D}^2\psi_n|=\mathcal{O}(\eta_n^{-1/2})$ where $D^2$ stands for an arbitrary derivative of order $2$, \emph{i.e}, $D^2 w=\p_i\p_j w$, $i,j\in\{1,2\}$ .

For $w_n\in\mathcal{J}_{\rho}({\bf x}_n,{\bf 1})$ a minimal map, we consider
\[
\begin{array}{cccc}
\tilde{w}_n:&\O\setminus\cup_i\overline{B(x_i,(1+\eta_n)\rho)}&\to&\S^1\\&x&\mapsto&w_n\left[\phi_n(x)\right]
\end{array}.
\]
Clearly $\tilde{w}_n$ is well defined and we have 
\[
\int_{\O\setminus\cup_i\overline{B(x_i,(1+\eta_n)\rho)}}\alpha|\n \tilde{w}_n|^2=\int_{\O\setminus\cup_i\overline{B(x^n_i,\rho)}}\alpha|\n {w}_n|^2+o_n(1),
\]
\[
\tilde{w}_n\left[x_i+(1+\eta_n)\rho{\rm e}^{\imath \theta}\right]={w}_n\left[\phi(x_i+(1+\eta_n)\rho{\rm e}^{\imath \theta})\right]={w}_n\left[x^n_i+\rho{\rm e}^{\imath \theta}\right]={\rm e}^{\imath (\theta+\theta_i)}.
\]
We can extend $\tilde{w}_n$ in $\cup_i B(x_i,(1+\eta_n)\rho)\setminus\overline{B(x_i,\rho)}$ by $\tilde{w}_n(x_i+r{\rm e}^{\imath\theta})={\rm e}^{\imath (\theta+\theta_i)}$, $\rho<r<(1+\eta_n)\rho$.

Clearly, we have $\tilde{w}_n\in \J_{\rho,\alpha}({\bf x},{\bf 1})$ and $\displaystyle\frac{1}{2}\int_{\O\setminus\cup_i\overline{B(x_i,\rho)}}\alpha|\n \tilde{w}_n|^2=J_{\rho,\alpha}+o_n(1)$.

Thus considering $w\in \J_{\rho,\alpha}({\bf x},{\bf 1})$ a minimizer of $\displaystyle\frac{1}{2}\int_{\O\setminus\cup_i\overline{B(x_i,\rho)}}\alpha|\n\cdot|^2$, we obtain
\[
\frac{1}{2}\int_{\O\setminus\cup_i\overline{B(x_i,\rho)}}\alpha|\n w|^2\leq\frac{1}{2}\int_{\O\setminus\cup_i\overline{B(x_i,\rho)}}\alpha|\n \tilde{w}_n|^2=J_{\rho,\alpha}+o_n(1).
\]
Letting $n\to\infty$ we deduce that the configuration ${\bf x}=(x_1,...,x_d)$ is minimal.
%Clearly, it suffices to prove the result for $\v>0$ small enough. 

%For $\v>0$ let $\{(x^\v_1,...,x^\v_{N_\v}),(d^\v_1,...,d^\v_{N_\v})\}=\{{\bf x}^\v,{\bf d}^\v\}$ be a configuration s.t. 
%\[
%\inf_{\substack{w\in H_g^1(\O'_{\rho},\S^1)\\\deg_{\p B(x_i,{\rho})}(w)=d_i}}{\frac{1}{2}\int_{\O'_{\rho}}{U_\v^2|\n w|^2}}-I_{\rho,\v}\leq1.
%\]
%Using Proposition \ref{P8.ToMinimizeSecondPbThePointAreFarFromBoundAndHaveDegree1}, for $\v>0$ small enough, there is $\eta_{\rm stop}>0$ (small and depending only on $\O,\O',g$ and $b$) s.t.
%\[
%\min_{i\neq j}|x_i^\v-x_j^\v|>8\eta_{\rm stop},\,\min_i\dist(x_i^\v,\p\O)>10^3\cdot9^{d^2}\eta_{\rm stop}\text{ and }d_i^\v=1,\,N_\v=d.
%\]
%Applying Proposition \ref{P8.AuxResult1}, we get that there is $C_0$ depending only on $\O,g$ and $b$ s.t.
%\[
%I_{\rho,\v}+C_0+1\geq\inf_{w\in \I_{\rho}({\bf x}^\v,{\bf d}^\v)}{\frac{1}{2}\int_{\O_{\rho}}U_\v^2|\n w|^2}+C_0\geq \inf_{w\in \J_{\rho}({\bf x}^\v,{\bf d}^\v)}{\frac{1}{2}\int_{\O_{\rho}}U_\v^2|\n w|^2}\geq J_{\rho,\v}.
%\]
%Which ends the proof.

\section{Proof of Proposition \ref{P8.AuxResult1}}\label{S8.ProofOFfirstAuxiliaryProblem}
As explained  Section \ref{S8.DegCondDirCond}, Proposition \ref{P8.AuxResult1} is easily established when either $N=1$ or when the points are well separated. It remains to consider the case where $N\geq2$ and there are $i\neq j$ s.t. $|x_i-x_j|\leq4\eta_{\rm stop}$.
\subsection{The separation process}\label{S8.SeparationProcess}
%Let $\eta_{\rm stop}>0$, $x_1,...,x_N\in\R^2$ be $N\geq2$ distinct points and  $0<\rho<\min\left(\frac{1}{8}\min_{i\neq j}|x_i-x_j|,\eta_{\rm stop}\right)$.

%So in this section we describe a separation process of points consisting in a selection $\{y_1,...,y_{N'}\}\subset\{x_1,...,x_N\}$ after a suitable  dealing with a perforated domain obtained from $\O$ a selection of point $\Owell separated family of points and a radius 

We assume that $N\geq2$ and that the points are not well separated. Our purpose is to compare the energy of $\hat{\J}_{\rho,\alpha}$ to the energy of $\hat{\I}_{\rho,\alpha}$. To this purpose, we decompose $\O_\rho$ into several regions and we compare energies in each regions. These regions are constructed recursively using the following version of   Theorem IV.1 in \cite{BBH}.

\begin{lem}\label{L8.SeparationLemma}
Let $N\geq2$, $x_1,...,x_{N}\in\R^2$ and ${\eta}>0$. There are $\kappa\in\{9^0,...,9^{N-1}\}$ and $\{y_1,...,y_{N'}\}\subset\{x_1,...,x_N\}$ s.t.
\[
\cup_{i=1}^NB(x_i,{\eta})\subset\cup_{i=1}^{N'} B(y_i,\kappa{\eta})
\]
and 
\[
|y_i-y_j|\geq8\kappa{\eta}\text{ for }i\neq j.
\]
\end{lem}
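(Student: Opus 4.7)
The plan is to prove Lemma \ref{L8.SeparationLemma} by an iterative merging procedure, reminiscent of the one used in Theorem IV.1 of \cite{BBH}. The key geometric observation is the following elementary fact: if $p,q \in \R^2$ satisfy $|p-q| < 8r$, then $B(q,r) \subset B(p,9r)$, so that two close balls of radius $r$ can be absorbed into a single ball of radius $9r$ centered at one of the two original points.

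First, I would initialize the process with $S_0 := \{x_1,\ldots,x_N\}$, $\kappa_0 := 9^0 = 1$, $N_0 := N$, and note the trivial inclusion $\cup_{i=1}^N B(x_i,\eta) \subset \cup_{y \in S_0} B(y,\kappa_0 \eta)$. At stage $k \geq 0$, I would maintain a subset $S_k \subset \{x_1,\ldots,x_N\}$ of cardinality $N_k$ and a scale $\kappa_k = 9^k$ satisfying
\begin{equation*}
\bigcup_{i=1}^N B(x_i,\eta) \;\subset\; \bigcup_{y \in S_k} B(y,\kappa_k \eta).
\end{equation*}
If $N_k = 1$, or if every pair of distinct points $y,y' \in S_k$ satisfies $|y-y'| \geq 8\kappa_k \eta$, the process halts and we set $(y_1,\ldots,y_{N'}) := S_k$ and $\kappa := \kappa_k$.

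Otherwise, there exist distinct $y,y' \in S_k$ with $|y-y'| < 8\kappa_k \eta$. I would then define $S_{k+1} := S_k \setminus \{y'\}$ and $\kappa_{k+1} := 9\kappa_k = 9^{k+1}$. The geometric fact above yields $B(y',\kappa_k\eta) \subset B(y,9\kappa_k \eta) = B(y,\kappa_{k+1}\eta)$, while for every $z \in S_{k+1}$ one trivially has $B(z,\kappa_k \eta) \subset B(z,\kappa_{k+1}\eta)$. Combining these inclusions preserves the covering property:
\begin{equation*}
\bigcup_{i=1}^N B(x_i,\eta) \;\subset\; \bigcup_{y \in S_k} B(y,\kappa_k \eta) \;\subset\; \bigcup_{y \in S_{k+1}} B(y,\kappa_{k+1}\eta).
\end{equation*}

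Since $N_{k+1} = N_k - 1$, the procedure must terminate after at most $N-1$ iterations, so the final scale satisfies $\kappa = 9^{k^*}$ with $k^* \leq N-1$, hence $\kappa \in \{9^0,\ldots,9^{N-1}\}$. At termination the stopping criterion gives either $N' = 1$ (separation condition vacuous) or $|y_i - y_j| \geq 8\kappa\eta$ for all $i \neq j$. I do not expect any genuine obstacle: the only mild subtlety is making sure the removed center $y'$ is indeed one of the original $x_i$ (which is automatic since $S_k \subset \{x_1,\ldots,x_N\}$ is preserved by the construction) and that the covering property propagates through the iteration, both of which are built into the inductive step.
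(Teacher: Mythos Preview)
Your proof is correct and follows the standard iterative merging argument behind Theorem IV.1 in \cite{BBH}. The paper does not actually give its own proof of this lemma; it simply states it as a version of that result, so your argument is exactly the kind of proof that is being quoted.

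One minor remark: in the separation process the paper builds on top of this lemma (Section \ref{S8.SeparationProcess}), several close points may be absorbed at once into a single ball, so the cardinality can drop by more than one per step; your version removes exactly one point per iteration. This makes no difference for the statement as written, since the bound $\kappa \in \{9^0,\ldots,9^{N-1}\}$ is met either way, but it explains why in practice the final $\kappa$ is often much smaller than $9^{N-1}$.
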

We let $x_1^0,...,x_N^0$ denote the initial points $x_1,...,x_N$ and $N_0=N$ the initial number of points. For $k\geq1$ (here, $k$ is an iteration in the construction of the regions), we let $N_k$ denote the number of points selected at Step $k$, and denote the points we select by $x_1^k,...,x_{N_k}^k$. 
%In each step $k\geq1$ of the construction, using Lemma \ref{L8.SeparationLemma}, we make a selection of some points $x_i$ under conditions of their mutual distance. 

%We denote the selection of points made in Step $k$ by $\{x_1^k,...,x_{N_k}^k\}$. 

The recursive construction is made in such a way that $N_{k}>N_{k+1}$ and $N_k\geq 1$ for all $k\geq1$.

The process will stop at the end of Step $k$ if and only if one of the following conditions yields
%\\{\bf Rules of the construction:} 
\begin{enumerate}[Rule 1:]
\item there is a unique point in the selection (\emph{i.e.} $N_k=1$),
\item $\min_{i\neq j}|x_i^{k}-x_j^{k}|>4\eta_{\rm stop}$.
\end{enumerate}
%We initialize the process denoting
%\[
%N_0=N,\,\eta_0=\rho\text{ and }x_i^0=x_i,\,i=1,...,N_0.
%\]
{\bf Step $k$, $k\geq1$:} Let $\eta'_k=\frac{1}{4}\min_{i\neq j}|x^{k-1}_i-x^{k-1}_j|$. %If $\eta_k'>\eta_{\rm stop}$, we take $\eta_k=\eta_{\rm stop}$ and $x_i^k=x_i^{k-1}$ and  we stop the construction (Rule 2). (Note that this situation is possible only when $k=1$ since in the contrary case the process would stop at Step $k-1$ by Rule 2).%  and we take $\eta_k=2\eta_k'$.

Using Lemma \ref{L8.SeparationLemma}, there are
\[
\kappa_k\in\{9^1,...,9^{N_{k-1}-1}\}\text{ and }\{x_1^k,...,x_{N_k}^k\}\subset\{x_1^{k-1},...,x_{N_{k-1}}^{k-1}\}
\]
s.t.
\[
\cup_{i=1}^{N_{k-1}}B(x^{k-1}_i,\eta_k')\subset \cup_{i=1}^{N_{k}}B(x_i^k,\kappa_k\eta'_k)\text{ and }|x_i^k-x_j^k|\geq8\kappa_k\eta_k'\text{ for }i\neq j.
\]We denote $\eta_k=2\kappa_k\eta_k'$. We stop the construction if $N_k=1$ (Rule 1) or if $\frac{1}{4}\min|x^{k-1}_i-x^{k-1}_j|>\eta_{\rm stop}$ (Rule 2). 

In Figure \ref{F8.StopSeparetedBall} $\&$ \ref{F8.StopUniqueBall} both  stop-conditions are presented.%\\{\bf The process will stop:} Since the sequence $N_k$ is decreasing, one get that after $K\leq N$ steps the construction stops.

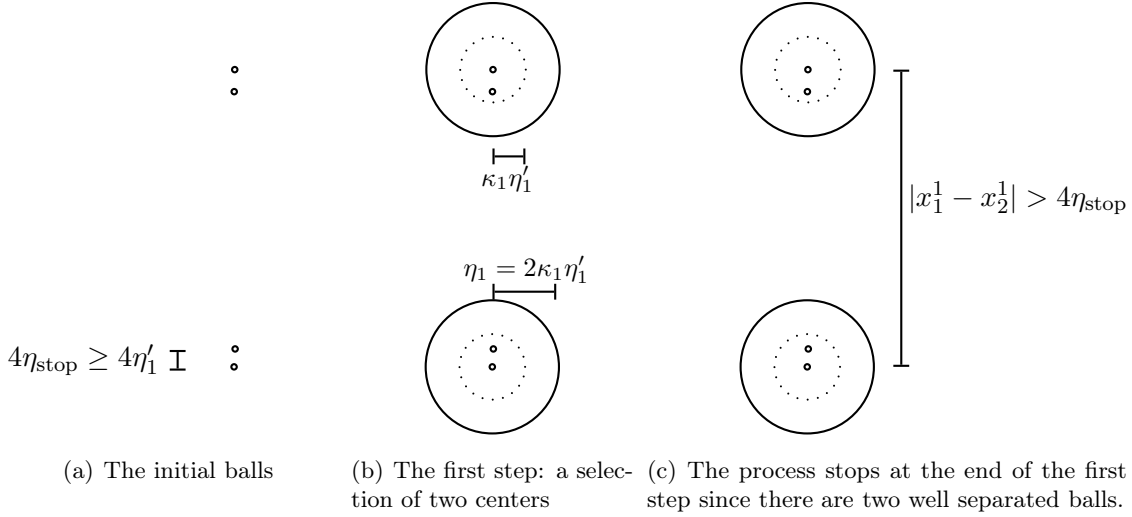
\begin{figure}[h!]%\centering
\subfigure[The initial balls]{\newrgbcolor{ttttff}{0.2 0.2 1}
\psset{xunit=0.05cm,yunit=0.05cm,algebraic=true,dotstyle=o,dotsize=3pt 0,linewidth=0.8pt,arrowsize=3pt 2,arrowinset=0.25}
\begin{pspicture*}(-65,-95)(20,22.1)
\psline{|-|}(-20,-76)(-20,-70.5)
\rput(-45,-73){$4\eta_{\rm stop}\geq4\eta_1'$}
\pscircle(-4.96,-1.77){0.05}
\pscircle(-4.82,4.09){0.05}
%\pscircle(-4.82,4.09){0.45}
%\pscircle[linestyle=dotted](-4.82,4.09){0.9}
%\pscircle[linestyle=dotted](-5,-75){0.9}
%\pscircle(-5,-75){0.45}
\pscircle(-5,-75){0.05}
%\psline{->}(-60,60)(-20,60)
%\psline{->}(-20.26,53.19)(-60.1,53.19)
%\psline{->}(47.93,68.91)(48.21,73.68)
\pscircle(-4.72,-70.23){0.05}
%\psline(-44.67,4.09)(-44.84,-75)
%\psdots[dotsize=2pt 0,dotstyle=x,linecolor=ttttff](-4.96,-1.77)
%\psdots[dotstyle=x,linecolor=blue](-4.82,4.09)
%\psdots[dotsize=2pt 0,dotstyle=x,linecolor=darkgray](-5,-75)
%\psdots[dotstyle=*,linecolor=blue](-60,60)
%\rput[bl](-58.67,62.14){\blue{$G$}}
%\psdots[dotstyle=*,linecolor=blue](-20,60)
%\rput[bl](-18.56,62.14){\blue{$H$}}
%\rput[bl](-40.31,61.46){$v$}
%\psdots[dotsize=2pt 0,dotstyle=x,linecolor=darkgray](-4.72,-70.23)
\end{pspicture*}
}\hfill
\subfigure[The first step: a selection of two centers]{\newrgbcolor{ttttff}{0.2 0.2 1}
\psset{xunit=0.05cm,yunit=0.05cm,algebraic=true,dotstyle=o,dotsize=3pt 0,linewidth=0.8pt,arrowsize=3pt 2,arrowinset=0.25}
\begin{pspicture*}(-40,-95)(30,22.1)
\pscircle(-4.96,-1.77){0.05}
\psline{|-|}(-4.96,-19)(3.8,-19)
\rput(-1,-25){{\small$\kappa_1\eta'_1$}}
\psline{|-|}(-4.96,-55)(12,-55)
\rput(4,-49){{\small$\eta_1=2\kappa_1\eta'_1$}}
\pscircle(-4.82,4.09){0.05}
\pscircle[linestyle=dotted](-4.82,4.09){0.45}
\pscircle(-4.82,4.09){0.9}
\pscircle(-5,-75){0.9}
\pscircle[linestyle=dotted](-5,-75){0.45}
\pscircle(-5,-75){0.05}
%\psline{->}(-60,60)(-20,60)
%\psline{->}(-20.26,53.19)(-60.1,53.19)
%\psline{->}(47.93,68.91)(48.21,73.68)
\pscircle(-4.72,-70.23){0.05}
%\psline(-44.67,4.09)(-44.84,-75)
%\psdots[dotsize=2pt 0,dotstyle=x,linecolor=ttttff](-4.96,-1.77)
%\psdots[dotstyle=x,linecolor=blue](-4.82,4.09)
%\psdots[dotsize=2pt 0,dotstyle=x,linecolor=darkgray](-5,-75)
\psdots[dotstyle=*,linecolor=blue](-60,60)
%\rput[bl](-58.67,62.14){\blue{$G$}}
%\psdots[dotstyle=*,linecolor=blue](-20,60)
%\rput[bl](-18.56,62.14){\blue{$H$}}
%\rput[bl](-40.31,61.46){$v$}
%\psdots[dotstyle=*,linecolor=blue](-20.26,53.19)
%\rput[bl](-18.9,55.34){\blue{$I$}}
%\psdots[dotstyle=*,linecolor=blue](-60.1,53.19)
%\rput[bl](-58.67,55.34){\blue{$J$}}
%\rput[bl](-40.31,52.28){$w$}
%\psdots[dotstyle=*,linecolor=blue](47.93,68.91)
%\rput[bl](49.43,70.98){\blue{$K$}}
%\psdots[dotstyle=*,linecolor=blue](48.21,73.68)
%\rput[bl](49.43,75.74){\blue{$L$}}
%\rput[bl](46.71,71.66){$z$}
%\psdots[dotsize=2pt 0,dotstyle=x,linecolor=darkgray](-4.72,-70.23)
\end{pspicture*}
}\hfill
\subfigure[normal][The process stops at the end of the first step since there are two well separated balls.]{\newrgbcolor{ttttff}{0.2 0.2 1}
\psset{xunit=0.05cm,yunit=0.05cm,algebraic=true,dotstyle=o,dotsize=3pt 0,linewidth=0.8pt,arrowsize=3pt 2,arrowinset=0.25}
\begin{pspicture*}(-45,-95)(80,22.1)
\pscircle(-4.96,-1.77){0.05}
\pscircle(-4.82,4.09){0.05}
\pscircle[linestyle=dotted](-4.82,4.09){0.45}
\pscircle(-4.82,4.09){0.9}
\pscircle(-5,-75){0.9}
\pscircle[linestyle=dotted](-5,-75){0.45}
\pscircle(-5,-75){0.05}
\psline{->}(-60,60)(-20,60)
\psline{->}(-20.26,53.19)(-60.1,53.19)
\psline{->}(47.93,68.91)(48.21,73.68)
\pscircle(-4.72,-70.23){0.05}
\psline{|-|}(20,4.09)(20,-75)
\rput(51,-30){$|x_1^1-x_2^1|>4\eta_{\rm stop}$}
%\psdots[dotsize=2pt 0,dotstyle=x,linecolor=ttttff](-4.96,-1.77)
%\psdots[dotstyle=x,linecolor=blue](-4.82,4.09)
%\psdots[dotsize=2pt 0,dotstyle=x,linecolor=darkgray](-5,-75)

%\psdots[dotsize=2pt 0,dotstyle=x,linecolor=darkgray](-4.72,-70.23)
\end{pspicture*}}\caption{The process stops when we obtain well separated balls}\label{F8.StopSeparetedBall}
\end{figure} 
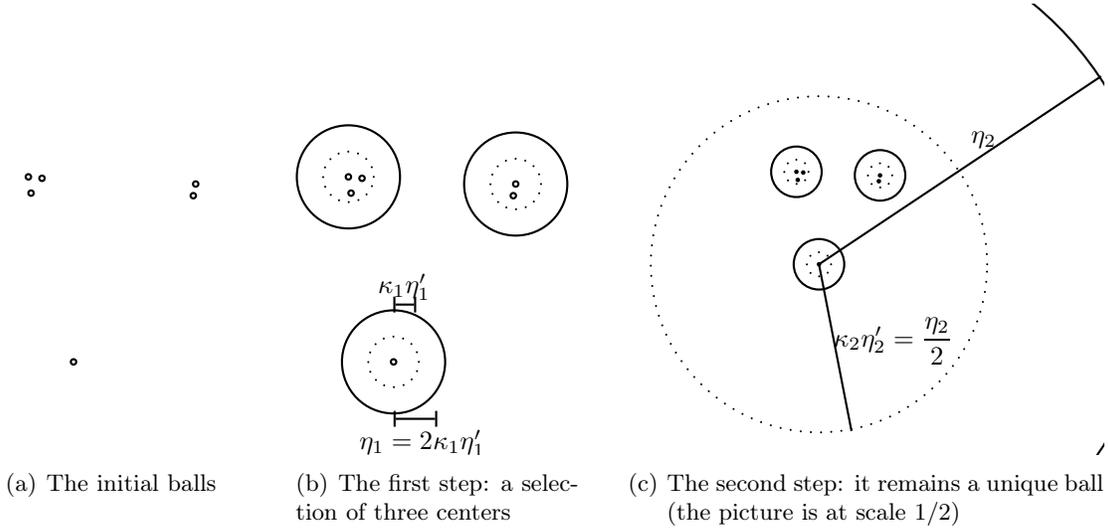
\begin{figure}[h!]%\centering
\subfigure[The initial balls]{\psset{xunit=0.08cm,yunit=0.08cm,algebraic=true,dotstyle=o,dotsize=3pt 0,linewidth=0.8pt,arrowsize=3pt 2,arrowinset=0.25}
\begin{pspicture*}(-21,-42)(24,20)
\pscircle(-12.26,4.24){0.05}
\pscircle(-10,4){0.05}
\pscircle(-11.82,1.54){0.05}
\pscircle(15.54,3.05){0.05}
\pscircle(15.14,1.11){0.05}
%\pscircle(-12.26,4.24){0.7}
%\pscircle(15.54,3.05){0.7}
%\pscircle(-4.76,-26.53){0.7}
\pscircle(-4.76,-26.53){0.05}
%\pscircle[linestyle=dotted](-4.76,-26.53){1.4}
%\pscircle[linestyle=dotted](-12.26,4.24){1.4}
%\pscircle[linestyle=dotted](15.54,3.05){1.4}
%\psdots[dotsize=1pt 0,dotstyle=x,linecolor=blue](-12.26,4.24)
%\psdots[dotsize=1pt 0,dotstyle=x,linecolor=blue](-10,4)
%\psdots[dotsize=1pt 0,dotstyle=x,linecolor=blue](-11.82,1.54)
%\psdots[dotsize=1pt 0,dotstyle=x,linecolor=darkgray](-4.76,-26.53)
%\psdots[dotsize=1pt 0,dotstyle=x,linecolor=darkgray](15.54,3.05)
%\psdots[dotsize=1pt 0,dotstyle=x,linecolor=darkgray](15.14,1.11)
\end{pspicture*}}\hfill
\subfigure[The first step: a selection of three centers]{\psset{xunit=0.08cm,yunit=0.08cm,algebraic=true,dotstyle=o,dotsize=3pt 0,linewidth=0.8pt,arrowsize=3pt 2,arrowinset=0.25}
\begin{pspicture*}(-21,-42)(26,20)
\pscircle(-12.26,4.24){0.05}
\pscircle(-10,4){0.05}
\pscircle(-11.82,1.54){0.05}
\pscircle(15.54,3.05){0.05}
\pscircle(15.14,1.11){0.05}
\pscircle[linestyle=dotted](-12.26,4.24){0.35}
\pscircle[linestyle=dotted](15.54,3.05){0.35}
\pscircle[linestyle=dotted](-4.76,-26.53){0.35}
\pscircle(-4.76,-26.53){0.05}
\pscircle(-4.76,-26.53){0.7}
\pscircle(-12.26,4.24){0.7}
\pscircle(15.54,3.05){0.7}
%\psdots[dotsize=1pt 0,dotstyle=x,linecolor=blue](-12.26,4.24)
%\psdots[dotsize=1pt 0,dotstyle=x,linecolor=blue](-10,4)
%\psdots[dotsize=1pt 0,dotstyle=x,linecolor=blue](-11.82,1.54)
%\psdots[dotsize=1pt 0,dotstyle=x,linecolor=darkgray](-4.76,-26.53)
%\psdots[dotsize=1pt 0,dotstyle=x,linecolor=darkgray](15.54,3.05)
%\psdots[dotsize=1pt 0,dotstyle=x,linecolor=darkgray](15.14,1.11)
\psline{|-|}(-4.76,-36)(2.5,-36)
\psline{|-|}(-4.8,-17)(-1,-17)
%\rput(-5,-22){{\small$\kappa_1\eta'_1$}}
\rput(0,-40){{\small$\eta_1=2\kappa_1\eta'_1$}}
\rput(-3,-14){{\small$\kappa_1\eta'_1$}}
\end{pspicture*}}\hfill
\subfigure[normal][The second step: it remains a unique ball \\ $\phantom{aaa}$(the picture is at scale $1/2$)]{
\psset{xunit=0.04cm,yunit=0.04cm,algebraic=true,dotstyle=o,dotsize=3pt 0,linewidth=0.8pt,arrowsize=3pt 2,arrowinset=0.25}
\begin{pspicture*}(-65,-90)(90,60)
\pscircle(-12.26,4.24){0.03}
\pscircle(-10,4){0.03}
\pscircle(-11.82,1.54){0.03}
\pscircle(15.54,3.05){0.03}
\pscircle(15.14,1.11){0.03}
\pscircle[linestyle=dotted](-12.26,4.24){0.175}
\pscircle[linestyle=dotted](15.54,3.05){0.175}
\pscircle[linestyle=dotted](-4.76,-26.53){0.175}
\pscircle(-4.76,-26.53){0.03}
\pscircle[linestyle=dotted](-4.76,-26.53){2.25}
\pscircle(-4.76,-26.53){0.35}
\pscircle(-12.26,4.24){0.35}
\pscircle(15.54,3.05){0.35}
\pscircle(-4.76,-26.53){4.5}
\psline(89,36)(-4.76,-26.53)
\psline(-4.76,-26.53)(6.05,-82)
%\psdots[dotsize=1pt 0,dotstyle=x,linecolor=blue](-12.26,4.24)
%\psdots[dotsize=1pt 0,dotstyle=x,linecolor=blue](-10,4)
%\psdots[dotsize=1pt 0,dotstyle=x,linecolor=blue](-11.82,1.54)
%\psdots[dotsize=1pt 0,dotstyle=x,linecolor=darkgray](-4.76,-26.53)
%\psdots[dotsize=1pt 0,dotstyle=x,linecolor=darkgray](15.54,3.05)
%\psdots[dotsize=1pt 0,dotstyle=x,linecolor=darkgray](15.14,1.11)
\rput(20,-52){{\small$\kappa_2\eta'_2=\displaystyle\frac{\eta_2}{2}$}}
\rput(50,15){{\small$\eta_2$}}
\end{pspicture*}}\caption{The process stops when we obtain a unique ball}\label{F8.StopUniqueBall}
\end{figure}

\begin{remark}\begin{enumerate}[i.]
\item From the definitions of $\eta'_k$ and $\eta_k$, we have $N_k<N_{k-1}$ and $\eta_{k-1}\leq\eta_k'<\eta_k$. %(except perhaps if the process stoped at the first case), 
\item The balls $B(x^k_j,2\eta_k)$ are disjoint.% (except perhaps if the process stoped at the first case),  
\item Denoting $\Lambda^k_j\subset\{1,...,N_{k-1}\}$ the set of indices $i$ s.t. $x_i^{k-1}\in B(x_j^k,\kappa_k\eta_k')$, then for $i\in\Lambda^k_j$ we have $B(x^{k-1}_i,\eta_k')\subset B(x_j^k,\kappa_k\eta_k')$. Furthermore, by construction, $|x^{k-1}_i-x^{k-1}_j|\geq 4\eta_k'$.% (except perhaps if the process stoped at the first case) .% we see easily that these balls satisfy the geometric hypotheses of Lemma \ref{L8.UpperBoundS1ValuedMapInGoodCondition}, part 2..
\end{enumerate}
\end{remark}

\begin{comment}

\begin{figure}[h!]%\centering
\subfigure[The initial balls]

{%{\newrgbcolor{ttttff}{0.2 0.2 1}
\psset{xunit=.5cm,yunit=.5cm}
\begin{pspicture*}(-2.8,-10.8)(9.7,-1.2)
%\psgrid[subgriddiv=0,gridlabels=0,gridcolor=lightgray](0,0)(-2.8,-10.8)(9.7,-1.2)
\psset{xunit=.5cm,yunit=.5cm,algebraic=true,dotstyle=o,dotsize=3pt 0,linewidth=0.8pt,arrowsize=3pt 2,arrowinset=0.25}
\pscircle(-0.1,-5.76){0.5}
\pscircle(4.68,-3.16){0.5}
\pscircle(6.22,-8.38){0.5}
\rput{0}(3.48,-5.99){\psellipse(0,0)(5.93,4.51)}
\end{pspicture*}
}
\subfigure[The initial balls]

{

\psset{xunit=1.0cm,yunit=1.0cm}
\begin{pspicture*}(-4.25,-5)(5,4.25)
\psgrid[subgriddiv=0,gridlabels=0,gridcolor=lightgray](0,0)(-4.25,-5)(5,4.25)
\psset{xunit=.5cm,yunit=.5cm,algebraic=true,dotstyle=o,dotsize=3pt 0,linewidth=0.8pt,arrowsize=3pt 2,arrowinset=0.25}
\pscircle(0,0){8}
\pscircle[linestyle=dotted](0,0){4}
\pscircle(-3.27,1.61){0.2}
\pscircle(1.95,2.19){0.2}
\pscircle(0,0){0.2}
\psline(0,0)(6,-8)
\psline(-3.27,1.61)(6,-8)
\psline(1.95,2.19)(6,-8)
\psline(2,7)(8,7)
\end{pspicture*}

}
\end{figure}

\end{comment}

\subsection{The separation process gives a natural partition of $\O$}\label{S8.SeparationProcessGive}
Let $\O,g$, $x_1,...,x_N$, ${\bf d}$ and $\rho,\eta_{\rm stop}$ like in  Section \ref{S8.DegCondDirCond} with $N\geq2$ and s.t. the points are not well separated.

We apply the separation process. The process stops after $K$ steps, $1\leq K\leq N-1$.

We denote 
\begin{center}
$\{y_1,..., y_{N'}\}\subset\{x_1,...,x_N\}$ the selection that we obtain,\emph{ i.e.}, $y_j=x_j^K$ and $N'=N_K$,
\end{center}
\begin{equation}\label{8.EstimateAuxMutualDistance}
% \kappa=\begin{cases}1&\text{if $N'=1$}\\ \kappa_K&\text{if }N'>1\end{cases},\,
\eta=\begin{cases}
9^{N}\cdot\eta_{\rm stop}&\text{if $N'=1$}\\ \min\left\{9^{N}\cdot\eta_{\rm stop}\,,\,\frac{1}{4}\min|y_i-y_j|\right\}&\text{if }N'>1\end{cases},\text{ so }\eta\geq\max( \eta_K,\eta_{\rm stop}),
\end{equation}
\[
 \Lambda_j=\{i\in\{1,...,N\}\,|\,x_i\in B(y_j,\eta)\}\text{ and }\eta_0=\rho.
\]
%By construction
%\begin{equation}\label{8.EstimateAuxRadius}
%\frac{\eta}{2\cdot9^{N-1}}\leq\eta_{\rm stop}\leq\eta,
%\end{equation}
%\begin{equation}\label{8.EstimateAuxMutualDistance}
%\text{ if }N'>1\text{ then }\min_{i\neq j}|y_i-y_j|\geq4\eta.
%\end{equation}
We denote
\begin{equation}\label{8.PerforatedBall}
D_{j,k}=B(x^k_j,\eta_k)\setminus\cup_{x_i^{k-1}\in B(x^k_j,\eta_k)} \overline{B(x_i^{k-1},\eta'_{k})},\,k\in\{1,...,K\},\,j\in\{1,...,N_k\},
\end{equation}
%(With $x_i^0=x_i$)
\begin{equation}\label{8.Ring}
R_{j,k}=B(x^k_j,\eta'_{k+1})\setminus \overline{B(x^k_j,\eta_{k})},\,k\in\{0,...,K-1\},\,j\in\{1,...,N_k\},%\text{ for $k>1$ and }R_{j,1}=B(x_j,\eta'_1)\setminus \overline{B(x_j,\delta)} ,
\end{equation}
\begin{equation}\label{8.RingUnique}
R_j=B(y_j,\eta)\setminus \overline{B(y_j,\eta_K)},\,j\in\{1,...,N'\}%&\text{if }N'=1\\\emptyset&\text{ otherwise}\end{cases}.
\end{equation}
and 
\begin{equation}\nonumber%\label{8.PerforatedDomain}
D=\O\setminus\cup_{j\in\{1,...,N'\}} \overline{B(y_j,\eta)}.
\end{equation}
\begin{figure}[h!]%\centering
\subfigure[The macroscopic perforated domain and the first mesoscopic rings]{%{\newrgbcolor{ttttff}{0.2 0.2 1}
\psset{xunit=0.5cm,yunit=0.5cm,algebraic=true,dotstyle=o,dotsize=3pt 0,linewidth=0.8pt,arrowsize=3pt 2,arrowinset=0.25}
\begin{pspicture*}(-16.3,-5)(0,7.3)
\pscircle[fillstyle=vlines](-11.96,2.41){0.90}
\pscircle[fillstyle=solid](-11.96,2.41){0.30}
\pscircle(-11.96,2.41){0.01}
\pscircle(-11.90,2.55){0.01}
\pscircle(-11.86,2.34){0.01}

\pscircle[fillstyle=vlines](-3.87,2.63){0.90}
\pscircle[fillstyle=solid](-3.87,2.63){0.30}

\psline(-11,3)(-10,4.8)
\psline(-5,3)(-6,4.8)
\pscircle(-3.87,2.63){0.01}
\rput{-178.51}(-8.14,2.3){\psellipse(0,0)(7.87,4.79)}
\rput{0}(-8,0){$D=\O\setminus\cup B(y_j,\eta)$}
\rput{0}(-10,5.2){ $R_1$}
\rput{0}(-6,5.2){ $R_2$}
\end{pspicture*}}\hfill
\subfigure[A mesoscopic ring and a mesoscopic perforated domain]{
\psset{xunit=0.2cm,yunit=0.2cm,algebraic=true,dotstyle=o,dotsize=3pt 0,linewidth=0.8pt,arrowsize=3pt 2,arrowinset=0.25}
\begin{pspicture*}(-35.8,-20.34)(8.05,24.62)
\pscircle%[linestyle=dashed,dash=3pt 3pt]
(-11.96,2.21){4}
\pscircle[fillstyle=vlines](-11.96,2.21){1.4}
%\pscircle[fillstyle=solid](-11.96,2.34){0.7}
\pscircle[fillstyle=solid](-11.96,2.21){0.2}
\pscircle[fillstyle=solid](-9.3,4){0.2}
\pscircle[fillstyle=solid](-10.46,-.5){0.2}
\pscircle[fillstyle=solid](-11.96,2.21){0.05}
\pscircle[fillstyle=solid](-9.3,4){0.05}
\pscircle[fillstyle=solid](-10.46,-.5){0.05}

\psline{|-|}(-32,21.4)(-32,2.37)
\psline{|-|}(-32,2.37)(-32,-4.88)
%\psline{|-|}(0,-1)(0,-0.30)
\rput{0}(-34,-1.75){$\eta_k$}
\rput{0}(-34,15){$\eta'_{k+1}$}
%\rput{0}(-30.5,15){$\eta'_{k+1}$}
%\rput{0}(3,-.7){ $2\eta_{k-1}$}
\psline{|-|}(0,3.2)(0,4.7)
\rput{0}(2,3.8){ $2\eta'_k$}
\rput{0}(-13,-10){ $R_{j,k}$}
\psline(-8,6)(-3,10)
\rput{0}(-1,10){ $D_{j,k}$}

\psline(-21,7)(-10.6,0)
\psline(-21,7)(-12.5,2)
\psline(-21,7)(-9.9,4)
\rput{0}(-25,7){ $R_{j',k-1}$'s}
\psline(-19,-4)(-10.5,-.5)
\rput{0}(-24.5,-4){$B(x_{i'}^{k-1}\!\!,\eta_{k-1})$}
\end{pspicture*}
}
\end{figure}
{Note that by construction of $\eta'_{k}$, $\eta_{k}$ and $x_i^k$  the following properties are satisfied:
\begin{equation}\label{8.TheAuxBallAreDisj}
\text{the balls $B(x_i^{k-1},2\eta'_{k})$ are disjoint}
\end{equation}
and
\begin{equation}\label{8.TheRadiusHaveTheSameOrder}
\text{$2\cdot9\eta'_k\leq \eta_k\leq 9^{N}\eta_k'$}.
\end{equation}
%Assume that the construction stops after $K$ steps and consider $B_1^K=B(x^K_1,\eta_K),...,B_{N_K}^K=B(x^K_{N_K},\eta_K)$. Note that $\eta_K\leq2\cdot9^{d-1}\eta_{\rm stop}$ otherwise the process would stop before and if $N_K>1$, $\eta_{\rm stop}\leq\eta_K$.

Therefore
\begin{equation}\label{8.DecompositionOfThePerforatedDomain}
\O_\rho=D\bigcup\cup_{j,k}\overline{D_{j,k}}\bigcup\cup_{j,k}R_{j,k}\bigcup \cup_{j}\overline{R_j}\text{ with disjoint unions}.
\end{equation}
}
\subsection{Construction of test functions}\label{Sectionkjsdhsdjfghpp}
\paragraph{Construction of test functions in $D$ and $D_{j,k}$ }~~\\
%In the perforated domains $D$ and $D_{j,k}$'s using the following classical result one may easily get appropriate test functions.
\begin{lem}\label{L8.UpperBoundS1ValuedMapInGoodCondition}
\begin{enumerate}
\item Let $\eta>0$. There is $C_1(\eta)>0$ (depending on $\O,g$ and $\eta$) s.t. if $x_1,...,x_N\in\O$ satisfy $\min_{i\neq j}|x_i-x_j|,\min_i\dist(x_i,\p\O)>4\eta$ and $d_1,...,d_N\in\N^*$ are s.t. $\sum d_i=d$  then there is $w\in H^1_g(\O\setminus\cup\overline{B(x_i,\eta)},\S^1)$ s.t. $w(x)=\frac{(x-x_i)^{d_i}}{\eta^{d_i}}$ on $\p B(x_i,\eta)$ and
\[
\int_{\O\setminus\cup\overline{B(x_i,\eta)}}|\n w|^2\leq C_1(\eta).
\]
Moreover $C_1$ can be considered decreasing with $\eta$.
\item Let $\eta>0,\kappa\geq8$, $d_0,d_1,...,d_N\in\N^*$ be s.t. $\sum_{1\leq i\leq N}d_i=d_0$. Then, there is $C_2(\kappa,d_0)$ s.t. for $x_1,...,x_N\in B(0,\kappa\eta)$ satisfying $\min_{i\neq j}|x_i-x_j|\geq4\eta$ we can associate a map  $w\in H^1(B(0,2\kappa\eta)\setminus\cup \overline{B(x_i,\eta)},\S^1)$ s.t.
\[
w(x)=\begin{cases}
\displaystyle\frac{x^{d_0}}{(2\kappa\eta)^{d_0}}&\text{on $\p B(0,2\kappa\eta)$ }\\\displaystyle\frac{(x-x_i)^{d_i}}{\eta^{d_i}}&\text{on }\p B(x_i,\eta)
\end{cases}
\]
and
\[
\int_{B(0,2\kappa\eta)\setminus\cup \overline{B(x_i,\eta)}}{|\n w|^2}\leq C_2(\kappa,d_0).
\]
Moreover $C_2$ can be considered increasing with $\kappa$, $d_0$. 
\end{enumerate}
\end{lem}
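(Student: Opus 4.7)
\medskip

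\noindent\textbf{Plan of proof.} For Part 1 the plan is a phase-decomposition construction in the spirit of Bethuel--Brezis--Hélein. I would introduce the canonical smooth $\S^1$-valued map
\[
w_0(x):=\prod_{i=1}^N\left(\frac{x-x_i}{|x-x_i|}\right)^{d_i}\in C^\infty(\overline{\O\setminus\cup B(x_i,\eta)},\S^1),
\]
which on each inner circle satisfies $w_0(x_i+\eta{\rm e}^{\imath\theta})={\rm e}^{\imath d_i\theta}\cdot h_i(\theta)$ with $h_i$ a smooth $\S^1$-valued function of degree $0$, and on $\p\O$ has winding $d=\sum d_i$. The prescribed boundary value on $\p B(x_i,\eta)$ is $(x-x_i)^{d_i}/\eta^{d_i}={\rm e}^{\imath d_i\theta}$, so the ratio of the target and $w_0$ is a smooth $\S^1$-valued map of degree $0$ on every component of $\p(\O\setminus\cup \overline{B(x_i,\eta)})$ — including $\p\O$, since $\deg_{\p\O}(g\bar w_0)=d-d=0$. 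Using the lifting result for $\S^1$-valued maps of degree $0$ on a circle, these ratios can be written as ${\rm e}^{\imath \phi_0}$ on $\p\O$ and ${\rm e}^{\imath\phi_i}$ on $\p B(x_i,\eta)$ with $\phi_0,\phi_i$ real and smooth. I then extend the collection $(\phi_0,\phi_1,\dots,\phi_N)$ to a function $\phi\in H^1(\O\setminus\cup\overline{B(x_i,\eta)},\R)$ (e.g.\ by harmonic extension) and set $w:=w_0\,{\rm e}^{\imath\phi}$. This map belongs to $H^1_g(\O\setminus\cup\overline{B(x_i,\eta)},\S^1)$ and has the required trace on every $\p B(x_i,\eta)$.

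\medskip

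\noindent The energy bound then follows from $|\n w|^2\leq 2(|\n w_0|^2+|\n\phi|^2)$, both terms being controlled only in terms of $\O,g$ and the separation parameter $\eta$ (which bounds $|\n w_0|$ from above by $C\eta^{-1}$ on the relevant region and bounds the traces of $\phi$ in $H^{1/2}$ by constants depending only on $\eta,\O,g$). This yields an upper bound $C_1(\eta)$ that a priori depends also on the precise location of the points; however the admissible configurations form a compact subset of $\O^N$ modulo the constraints $\min_{i\neq j}|x_i-x_j|>4\eta$ and $\dist(x_i,\p\O)>4\eta$, and by continuity of the construction with respect to the configuration one gets a uniform bound. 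To make $C_1$ \emph{decreasing} in $\eta$ it is enough to replace the constant obtained at level $\eta$ by $\sup_{\eta'\geq\eta}C_1(\eta')$, which remains finite for any $\eta>0$.

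\medskip

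\noindent For Part 2, I would use the same phase-decomposition on the domain $B(0,2\kappa\eta)\setminus\cup\overline{B(x_i,\eta)}$. After rescaling $x\mapsto x/\eta$, this is a domain of the form $B(0,2\kappa)\setminus\cup\overline{B(\tilde x_i,1)}$ with $|\tilde x_i|\leq\kappa$ and $|\tilde x_i-\tilde x_j|\geq 4$, so all geometric parameters are independent of $\eta$. I would build $w_0(x)=\prod_i((x-\tilde x_i)/|x-\tilde x_i|)^{d_i}$, whose trace on $\p B(0,2\kappa)$ has degree $d_0=\sum d_i$, and thus matches the prescribed boundary datum $(x/(2\kappa))^{d_0}$ modulo a degree-$0$ factor; the lifting/harmonic extension argument from Part 1 applies verbatim and the energy is a \emph{scale-invariant} quantity of the unscaled problem. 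Varying the admissible $(\tilde x_i)$ in the compact set described above, the maximum energy is finite and depends only on $\kappa$ and the multi-index $(d_0,d_1,\dots,d_N)$; since $d_i\geq 1$ and $\sum d_i=d_0$, only finitely many such tuples occur and we may bound uniformly by a single constant $C_2(\kappa,d_0)$, which can be arranged monotone in $\kappa$ and $d_0$ by taking a running supremum as in Part 1.

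\medskip

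\noindent The only genuine point of vigilance, and what I would regard as the main (though mild) obstacle, is matching the lifted phases simultaneously on all boundary components so that a single-valued $\phi\in H^1$ extension exists; this is where the compatibility $\deg_{\p\O}g=\sum d_i$ (and the analogous outer-boundary condition in Part 2) is used decisively. Once this is set up, the remainder of the proof is routine: a gradient estimate for $w_0$, a standard $H^{1/2}\!\to\!H^1$ extension of $\phi$, and a compactness argument to make the bound uniform over admissible point configurations.
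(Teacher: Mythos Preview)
Your approach is correct and, for Part 1, essentially coincides with the paper's: the paper also writes $w={\rm e}^{\imath H}\prod_i\frac{(x-x_i)^{d_i}}{|x-x_i|^{d_i}}$ and fixes the phase $H$ harmonically to match the prescribed boundary data, the only cosmetic difference being that the paper localizes $H$ to the $\eta$-collar of $\partial\Omega_\eta$ (setting $H\equiv 0$ at distance $\geq\eta$ from the boundary) rather than extending harmonically on the whole perforated domain.

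For Part 2 the routes genuinely diverge. You rescale to $\eta=1$, repeat the abstract phase-decomposition of Part~1, and then appeal to compactness of the admissible configurations $\{(\tilde x_i)\subset B(0,\kappa):|\tilde x_i-\tilde x_j|\geq 4\}$ together with continuity of the energy in the configuration to obtain a uniform bound. The paper instead follows Han--Shafrir and writes down an \emph{explicit} three-zone test map: in the outer shell $B(0,2\kappa)\setminus\overline{B(0,3\kappa/2)}$ a radial homotopy sliding the singularities $x_i$ to the origin, in the intermediate region the canonical product $\prod_i\frac{(x-x_i)^{d_i}}{|x-x_i|^{d_i}}$, and in each inner annulus $B(x_i,3/2)\setminus\overline{B(x_i,1)}$ a linear phase interpolation killing the contribution of the other factors. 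This yields the bound $C_2(\kappa,d_0)$ by direct computation, with no compactness or continuity argument needed. Your method is cleaner to state but leaves the dependence on $\kappa,d_0$ implicit and requires checking that the harmonic extension energy varies continuously as the perforated domain changes; the paper's method is more hands-on but makes the uniformity immediate.
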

\begin{proof}
In order to prove 1., we consider, \emph{e.g.}, the test function defined in $\O_\eta:=\O\setminus\cup\overline{B(x_i,\eta)}$ by%The first part of the lemma is a very classical result, for example, it may be obtained considering as test function: 
\[
w={\rm e}^{\imath H}\Pi_i\frac{(x-x_i)^{d_i}}{|x-x_i|^{d_i}}\text{ with }H\text{ s.t. }\begin{cases}H:\O_\eta\to\R\\H\equiv0\text{ in }\left\{\dist\left[x,\p\O_\eta\right]\geq\eta\right\} \\-\Delta H=0\text{ in }\left\{\dist\left[x,\p\O_\eta\right]<\eta\right\}\\\text{$w\in H^1_g(\O_\eta,\S^1)$} \text{ and  $w(x)=\frac{(x-x_i)^{d_i}}{\eta^{d_i}}$ on $\p B(x_i,\eta)$}\end{cases}.
\]

Assertion 2. was essentially established in \cite{HS95}, Section 3. We adapt here the argument in \cite{HS95}. By conformal invariance, we may assume that $\eta=1$. We let 
\[
w(x)=\begin{cases}
\displaystyle\Pi_i\frac{\left[x+2x_i\left(\frac{|x|}{\kappa}-2\right)\right]^{d_i}}{\left|x+x_i\left(\frac{|x|}{\kappa}-2\right)\right|^{d_i}}&\text{in }B(0,2\kappa)\setminus\overline{B(0,\frac{3\kappa}{2})}
\\
\displaystyle\Pi_i\frac{(x-x_i)^{d_i}}{|x-x_i|^{d_i}}&\text{in }B(0,\frac{3\kappa}{2})\setminus\cup\overline{B(x_i,3/2)}
\\
\displaystyle\frac{(x-x_i)^{d_i}}{|x-x_i|^{d_i}}{\rm e}^{\imath(2|x-x_i|-2)\varphi_i}&\text{in }B(x_i,3/2)\setminus\cup\overline{B(x_i,1)}
\end{cases};
\]
here $\varphi_i\in C^\infty(B(x_i,3/2),\R)$ is defined by ${\rm e}^{\imath\varphi_i}=\Pi_{j\neq i}\dfrac{(x-x_j)^{d_j}}{|x-x_j|^{d_j}}$ and $\varphi_i(x_i)\in[0,2\pi)$. Clearly $\|\varphi_i\|_{H^1(B(x_i,3/2)\setminus\overline{B(x_i,1)})}$ is bounded by a constant which depends only on $d_0$.
\end{proof}

%\subsubsection{The energetic contribution of $D\bigcup\cup_{j,k}D_{j,k}$ is bounded} 
 
By \eqref{8.EstimateAuxMutualDistance} and Lemma \ref{L8.UpperBoundS1ValuedMapInGoodCondition}.1, one may find a map $w_0\in H^1(D,\S^1)$ s.t.
 \[
 w_0=\begin{cases}g&\text{on }\p\O\\ w_0(x)=\displaystyle\frac{(x-y_j)^{\tilde{d_j}}}{\eta^{\tilde{d}_j}}&\text{on }\p B(y_j,\eta)
 \end{cases}(\text{ where }\tilde{d}_j=\sum_{x_i\in B(y_j,\eta)}d_i)
 \]
satisfying in addition
\begin{equation}\label{8.ContributionOnD}
\int_{D}|\n w_0|^2\leq C_1(\eta)\leq C_1(\eta_{\rm stop}).
\end{equation}
For each $D_{j,k}$, combining  \eqref{8.PerforatedBall}, \eqref{8.TheAuxBallAreDisj}, \eqref{8.TheRadiusHaveTheSameOrder}  and using Lemma \ref{L8.UpperBoundS1ValuedMapInGoodCondition}.2, there exists a map $w_{j,k}\in H^1(D_{j,k},\S^1)$ s.t.
 \[
 w_{j,k}(x)=\begin{cases}\displaystyle\frac{(x-x^k_j)^{\tilde{d}_{j,k}}}{\eta_k^{\tilde{d}_{j,k}}}&\text{for }x\in\p B(x_j^k,\eta_k)\\\displaystyle\frac{(x-x_i^{k-1})^{\tilde{d}_{i,k-1}}}{\eta_k'^{\tilde{d}_{i,k-1}}}&\text{for }x\in\p B(x_i^{k-1},\eta'_k)
 \end{cases}.
 \]
Here,
 \[
 \tilde{d}_{j,k}=\sum_{x_i\in B(x_j^{k},\eta_k)}d_i
 \]
and
\begin{equation}\label{8.ContributionOnDjk}
\int_{D_{j,k}}|\n w_{j,k}|^2\leq C_2(2\kappa_k,d_{j,k})\leq C_2(2\cdot 9^{d-1},d).
\end{equation}

%\subsubsection{The case $N_k=1$}
%We denote
%\begin{enumerate}
%\item $\{B(x^k_i,\eta_k),i\in\{1,...,N_k\}\}$ the covering obtained in Step $k$
%\item  for $j\in\{1,...,N_k\}$, $\Lambda_j^k$ set of indices $i$ of balls $B(x_i^{k-1},\eta'_{k})$ which are in $B(x_j^k,\frac{1}{2}\eta_k)$, where the $x_i^{k-1}$ is obtained in  Step $k-1$, $\eta'_k=\frac{1}{4}\min|x_i^{k-1}-x_l^{k-1}|$ and $\eta_k=2\kappa_k\eta_k'$, $\kappa_k\in\{9^0,...,9^{d-1}\}$ .
%\end{enumerate}
\paragraph{Construction of test functions in $R_j$'s and $R_{j,k}$'s}~~\\
For $R>r>0$ and $x_0\in\R^2$ we denote $\Ring(x_0,R,r):=B(x_0,R)\setminus \overline{B(x_0,r)}$. For $\alpha\in L^{\infty}(\R^2,[b^2,1])$, we define
\begin{equation}\label{8.MinAnnular}
\mu_\alpha{}(\Ring(x_0,R,r),\tilde{d})=\inf_{\substack{w\in H^1(\Ring(x_0,R,r),\S^1)\\\deg_{\p B(x_0,R)}(w)=\tilde{d}}}\frac{1}{2}\int_{\Ring(x_0,R,r)}\alpha|\n w|^2
\end{equation}
and
\begin{equation}\label{8.MinAnnularDir}
\mu_\alpha^{\rm Dir}(\Ring(x_0,R,r),\tilde{d})=\inf_{\substack{w\in H^1(\Ring(x_0,R,r),\S^1)\\w(x_0+R{\rm e}^{\imath{\theta}})={\rm e}^{\imath\tilde{d}{\theta}}\\w(x_0+r{\rm e}^{\imath{\theta}}){\rm e}^{-\imath\tilde{d}{\theta}}={\rm Cst}}}{\frac{1}{2}\int_{\Ring(x_0,R,r)}{\alpha|\n w|^2}}.
\end{equation}
In the special case $\alpha=U_\v^2$, we denote
\begin{equation}\nonumber%\label{8.MinAnnular}
\mu_\v{}(\Ring(x_0,R,r),\tilde{d})=\mu_{U_\v^2}(\Ring(x_0,R,r),\tilde{d})%\inf_{\substack{w\in H^1(\Ring(x_0,R,r),\S^1)\\\deg_{\p B(x_0,R)}(w)=\tilde{d}}}\frac{1}{2}\int_{\Ring(x_0,R,r)}U_\v^2|\n w|^2
\end{equation}
and
\begin{equation}\nonumber%\label{8.MinAnnularDir}
\mu_\v^{\rm Dir}(\Ring(x_0,R,r),\tilde{d})=\mu_{U^2_\v}^{\rm Dir}(\Ring(x_0,R,r),\tilde{d}).%\inf_{\substack{w\in H^1(B(x_0,R)\setminus\overline{B(x_0,r)},\S^1)\\w(x_0+R{\rm e}^{\imath{\theta}})={\rm e}^{\imath\tilde{d}{\theta}},\,w(x_0+r{\rm e}^{\imath{\theta}}){\rm e}^{-\imath\tilde{d}{\theta}}={\rm Cst}_i}}{\frac{1}{2}\int_{B(x_0,R)\setminus\overline{B(x_0,r)}}{U_\v^2|\n w|^2}}.
\end{equation}

Note that the minimization problems \eqref{8.MinAnnular} and \eqref{8.MinAnnularDir} admit solutions; this is obtained by adapting the proof of Proposition \ref{P8.ExistenceOfminInIJ}.% for these minimization problems.

%If $\alpha$ is Lipschitz, then the solutions of \eqref{8.MinAnnular} and \eqref{8.MinAnnularDir} are in $H^2$. 

%Thus we may apply Theorem 2 in \cite{MS1} which gives the following
We present an adaptation of a result of Sauvageot, Theorem 2 in \cite{MS1}.
\begin{prop}\label{P7.MyrtoRingDegDir}
There is  $C_b>0$ depending only on $b\in(0,1)$ s.t. for $R>r>0$ and $\alpha\in L^{\infty}(\R^2,\R)$ satisfying $1\geq\alpha\geq b^2$, we have
\[
\mu_\alpha^{\rm Dir}(\Ring(x_0,R,r),\tilde{d})\leq \mu_\alpha{}(\Ring(x_0,R,r),\tilde{d})+{\tilde{d}}^2 C_b.
\]
\end{prop}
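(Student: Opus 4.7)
The plan is to adapt the strategy of Theorem 2 in \cite{MS1} to the weighted setting, by modifying a minimizer of the degree problem in thin collars near each boundary circle so that it becomes admissible for the Dirichlet problem, controlling the extra cost by $C_b\tilde d^2$. By translation assume $x_0=0$, and let $w\in H^1(\Ring(0,R,r),\S^1)$ be a minimizer of $\mu_\alpha(\Ring(0,R,r),\tilde d)$ (existence via the same direct method as in Proposition \ref{P8.ExistenceOfminInIJ}, using $H^1$-weak closure of the degree class). Since $we^{-i\tilde d\theta}$ has degree zero on each boundary circle, we lift $w=e^{i(\tilde d\theta+\phi)}$ with $\phi\in H^1(\Ring(0,R,r),\R)$ single-valued.

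Assuming for concreteness $\ln(R/r)\geq 2$ (the opposite case being easier and handled by a direct scaling argument), set $R_1:=Re^{-1}$ and $r_1:=re$, and keep $\tilde w:=w$ on the middle sub-ring $\Ring(0,R_1,r_1)$. In each collar $\Ring(0,R,R_1)$ and $\Ring(0,r_1,r)$ (each of normalized logarithmic width $1$), define $\tilde w:=e^{i(\tilde d\theta+\tilde\phi)}$, where $\tilde\phi$ is the harmonic extension taking the value $\phi|_{\p B_{R_1}}$ on the interior side of the collar and the angular mean $\bar\phi_R:=\frac{1}{2\pi}\int_0^{2\pi}\phi(R,\theta)\,d\theta$ (a constant) on $\p B_R$. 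Expanding the trace fluctuation as a Fourier series $\phi|_{\p B_{R_1}}-\overline{\phi|_{\p B_{R_1}}}=\sum_{n\neq 0}a_n e^{in\theta}$ gives an explicit formula for $\tilde\phi$, with the $n$-th mode decaying like $\rho^{|n|}$ and with harmonic energy comparable to $\|\phi-\bar\phi\|_{H^{1/2}(\p B_{R_1})}^2$.

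Computing $\int_{\mathrm{collar}}\alpha|\nabla\tilde w|^2-\int_{\mathrm{collar}}\alpha|\nabla w|^2$ via the identity $|\nabla\tilde w|^2-|\nabla w|^2=|\nabla\tilde\phi|^2-|\nabla\phi|^2+2\tilde d\,\nabla\theta\cdot\nabla(\tilde\phi-\phi)$, the quadratic part is bounded using the harmonic-extension estimate by $C\|\phi-\bar\phi\|_{H^{1/2}(\p B_{R_1})}^2$ (with $C$ depending only on $b$ through $\alpha\in[b^2,1]$), while the cross term is absorbed via Cauchy--Schwarz and Young's inequality at the price of an additive $\tilde d^2$. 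Summing the contributions of both collars, the total cost is bounded by $C_b\tilde d^2$ provided the boundary trace fluctuations satisfy
\[
\|\phi-\bar\phi_R\|_{H^{1/2}(\p B_R)}^2+\|\phi-\bar\phi_r\|_{H^{1/2}(\p B_r)}^2\leq C'_b\tilde d^2
\]
uniformly in $R$, $r$, and $\alpha$.

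The main obstacle is establishing this last estimate. It should follow from the minimality of $w$ together with its Euler--Lagrange equation $\mathrm{div}(\alpha\nabla(\tilde d\theta+\phi))=0$ in $\Ring(0,R,r)$, whose natural boundary condition (free trace of $\phi$) is $\alpha\p_\nu\phi=0$ on each boundary circle. By conformal invariance of the $\S^1$-valued Dirichlet integral, one passes to a normalized cylinder where $\alpha$ remains in $[b^2,1]$ and the geometry is bounded, and then applies an elliptic/trace inequality in a normalized strip adjacent to each end. The uniform bounds $b^2\leq\alpha\leq 1$ propagate through the ellipticity constants to yield constants depending only on $b$. Combining the collar constructions with this $H^{1/2}$-estimate gives the desired inequality $\mu_\alpha^{\rm Dir}(\Ring(x_0,R,r),\tilde d)\leq\mu_\alpha(\Ring(x_0,R,r),\tilde d)+C_b\tilde d^2$.
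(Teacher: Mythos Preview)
Your overall strategy --- keep the degree minimizer in a middle sub-ring and modify it in two collars so that the boundary trace becomes of the form ${\rm Cst}\cdot e^{i\tilde d\theta}$ --- is exactly the right idea, and is the one the paper uses. However, the crucial step you flag as ``the main obstacle'' is a genuine gap, and the fix you sketch does not close it.

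Concretely: you fix the inner radii of your collars \emph{a priori} as $R_1=Re^{-1}$ and $r_1=re$, and then need the uniform bound
\[
\|\phi-\bar\phi\|_{H^{1/2}(\partial B_{R_1})}^2+\|\phi-\bar\phi\|_{H^{1/2}(\partial B_{r_1})}^2\le C'_b\tilde d^2.
\]
But the only global information you have on $\phi$ is $\int\alpha|\nabla\phi|^2\le C_b\tilde d^2\ln(R/r)$, which grows with the ring's thickness; there is no reason the energy (hence the trace norm) on the \emph{particular} circles $\partial B_{R_1}$, $\partial B_{r_1}$ should be uniformly bounded. Passing to the normalized cylinder does not help: you still have a uniformly elliptic operator on a cylinder of length $\ln(R/r)$, with only the total energy controlled, and no interior elliptic or trace estimate yields a bound on a fixed cross-section depending only on the ellipticity ratio. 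Your appeal to the natural boundary condition $\alpha\partial_\nu\phi=0$ is likewise of no use here, since the circles $\partial B_{R_1}$, $\partial B_{r_1}$ are interior.

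The paper's remedy is precisely to \emph{not} fix the collar radii in advance. Working with the minimizer on a slightly smaller ring $\Ring(0,R/2,2r)$ (so that there is room on both sides), one considers the set
\[
I=\left\{\rho\in[2r,R/2]\,:\,\int_0^{2\pi}\alpha|\nabla(\theta+\phi)|^2(\rho,\theta)\,d\theta\le\frac{1}{\rho^2}\int_0^{2\pi}\alpha(\rho,\theta)\,d\theta\right\},
\]
which is nonempty by the mean value theorem. At $r_1=\min I$ and $r_2=\max I$ one then has, automatically, $\int_0^{2\pi}|\partial_\theta\phi|^2(r_j,\theta)\,d\theta\le C_b$ --- exactly the angular trace control your argument was missing. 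The interpolation to constants is then done in the collars $[r_2,2r_2]$ and $[r_1/2,r_1]$, and the extra cost is bounded by a constant depending only on $b$. (The paper also first reduces to Lipschitz $\alpha$ by mollification --- needed to make sense of circle traces --- and to $\tilde d=1$ by the obvious scaling, both of which you should incorporate.)

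In short: replace your fixed radii $R_1,r_1$ by radii selected via this mean-value argument, and your proof becomes essentially the paper's.
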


\begin{proof}
This result was obtained by Sauvageot with $\alpha\in W^{1,\infty}(\R^2,[b^2,1])$. We may extend this estimate to $\alpha\in L^\infty(\R^2,[b^2,1])$. 

Indeed, let $(\rho_t)_{1>t>0}$ be a classical mollifier, namely $\rho_t(x)=t^{-2}\rho(x/t)$ with $\rho\in C^\infty(\R^2,[0,1])$, ${\rm Supp}\,\rho\subset B(0,1)$ and $\int_{\R^2}\rho=1$. 

Set $\alpha_t=\alpha\ast\rho_t\in W^{1,\infty}(B(x_0,R),[b^2,1])$. We have 
\begin{equation}\label{7.Extensonsdihfresultsauv}
\displaystyle\lim_{t\to0} \mu_{\alpha_t}(\Ring(x_0,R,r),\tilde{d})=\mu_\alpha(\Ring(x_0,R,r),\tilde{d})
\end{equation}
and 
\begin{equation}\label{7.ExtensonsdihfresultsauvDir}
\displaystyle\lim_{t\to0} \mu_{\alpha_t}^{\rm Dir}(\Ring(x_0,R,r),\tilde{d})=\mu_\alpha^{\rm Dir}(\Ring(x_0,R,r),\tilde{d}).
\end{equation}
We prove \eqref{7.Extensonsdihfresultsauv},  Equality \eqref{7.ExtensonsdihfresultsauvDir} follows with the same lines.

Let $w$ be a minimizer of $\mu_\alpha(\Ring(x_0,R,r),\tilde{d})$. By using Dominated convergence theorem, since $\alpha_t\to\alpha$ in $L^1(B(x_0,R))$, we obtain that $\alpha_t|\n w|^2\to\alpha|\n w|^2$ in $L^1(\Ring(x_0,R,r))$ as $t\to0$. Consequently
\[
\displaystyle\lim_{t\to0} \mu_{\alpha_t}(\Ring(x_0,R,r),\tilde{d})\leq\mu_\alpha(\Ring(x_0,R,r),\tilde{d}).
\]
On the other hand, let $w_t$  be a minimizer of $\mu_\alpha(\Ring(x_0,R,r),\tilde{d})$ and let $t_n\downarrow0$. Up to a subsequence, $w_{t_n}\weak w_0$ in $H^1( \Ring(x_0,R,r))$ as $n\to\infty$ and $\sqrt{\alpha_{t_n}}\n w_{t_n}\weak \sqrt{\alpha}\n w_0$ in $L^2(\Ring(x_0,R,r))$.

Since the class $\I:=\{w\in H^1( \Ring(x_0,R,r),\S^1)\,|\,\deg_{B(x_0,R)}(w)=\tilde{d}\}$ is closed under the $H^1$-weak convergence (see Appendix \ref{S8.ExistenceMiniMizingMap} or \cite{TheseLassoued}), we obtain that $w_0\in\I$. Consequently, we have 
\[
\liminf _{t\to0}\mu_{\alpha_t}(\Ring(x_0,R,r),\tilde{d})\geq \mu_\alpha(\Ring(x_0,R,r),\tilde{d}).
\]
Thus the proof of \eqref{7.Extensonsdihfresultsauv} is complete.

Therefore, without loss of generality, we may assume that $\alpha$ is Lipschitz.
\vspace{2mm}

One may easily prove that if $R\leq4r$, then $\mu_\alpha^{\rm Dir}(\Ring(x_0,R,r),\tilde{d})\leq2\tilde{d}^2\pi\ln4$. Thus we assume that $R>4r$. Clearly, it suffices to obtain the result for $\tilde{d}=1$ and $x_0=0$.

Let $w$ be a global minimizer of $\mu_\alpha{}(\Ring(x_0,R/2,2r),1)$. As explained  Section \ref{S8.ExistenceMiniMizingMap}, denoting $x/|x|={\rm e}^{\imath\theta}$, one may write $w= {\rm e}^{\imath(\theta+\phi)}$ for some $\phi\in H^2(\Ring(x_0,R/2,2r),\R)$. Now we switch to polar coordinates.

Consider 
\[
I=\left\{\rho\in[2r,R/2]\,\left|\,\int_0^{2\pi}\alpha|\n(\theta+\phi)|^2(\rho,\theta)\,{\rm d}\theta\leq\frac{1}{\rho^2}\int_0^{2\pi}\alpha(\rho,\theta)\,{\rm d}\theta\right.\right\}.
\]
Then $I$ is closed (since $\phi\in H^2$). On the other hand, $I$ is non empty, by the mean value theorem.% Arguing by contradiction one may prove that $I\neq\emptyset$ and since $\phi\in H^2$, $I$ is a closed set.

Let $r_1=\min I$ and $r_2=\max I$. We may assume that $\phi(r_2,0)=0$ and $\phi(r_1,0)=\theta_0$. We construct a test function:
\[
\phi'(\rho,\theta)=\begin{cases}
0&\text{if }2r_2\leq\rho\leq R
\\
\displaystyle\frac{2r_2-\rho}{r_2}\phi(r_2,\theta)&\text{if }r_2\leq\rho\leq2r_2
\\
\phi(\rho,\theta)&\text{if }r_1\leq\rho\leq r_2
\\
\displaystyle\frac{2\rho-r_1}{r_1}\phi(r_1,\theta)+2\frac{r_1-\rho}{r_1}\theta_0&\text{if }r_1/2\leq \rho\leq r_1
\\
\theta_0&\text{if }r\leq\rho\leq r_1/2
\end{cases}.
\]
As explained in \cite{MS1}, there is $C$ depending only on $b$ s.t. 
\[
\frac{1}{2}\int_{\Ring(0,R/2,2r)}\alpha\left(|\n (\theta+\phi')|^2-|\n (\theta+\phi)|^2\right)\leq C.
\]
Thus the result follows.
\end{proof}

As a direct consequence of Proposition \ref{P7.MyrtoRingDegDir} (the two first assertions of the next proposition are direct), we have
\begin{prop}\label{P8.DirectPropAnnProb}
Let $\alpha\in L^{\infty}(\R^2,[b^2,1])$, $R>r_1>r>0$, $\tilde{d}\in\Z$ and $x_0\in\R^2$,  we have
\begin{enumerate}
\item $\mu_\alpha{}(\Ring(x_0,R,r),\tilde{d})=\tilde{d}^2\mu_\alpha{}(\Ring(x_0,R,r),1)$,
\item $\displaystyle b^2\pi\ln\frac{R}{r}\leq\mu_\alpha{}(\Ring(x_0,R,r),1)\leq \pi\ln\frac{R}{r}$,
\item $\mu_\alpha{}(\Ring(x_0,R,r),1)\leq\mu_\alpha{}(\Ring(x_0,R,r_1),1)+\mu_\alpha(\Ring(x_0,r_1,r),1)+2C_b$ where $C_b$ is given by Proposition \ref{P7.MyrtoRingDegDir} and depends only on $b$.
\end{enumerate}
\end{prop}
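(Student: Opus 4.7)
\textbf{Proof plan for Proposition \ref{P8.DirectPropAnnProb}.} The plan is to handle the three assertions separately, using the lifting argument from Appendix \ref{S8.ExistenceMiniMizingMap} for (1) and (2), and reducing (3) to Proposition \ref{P7.MyrtoRingDegDir}.

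For (1), I will exploit the lifting: writing ${\rm e}^{\imath \theta}=(x-x_0)/|x-x_0|$, any admissible $w$ in \eqref{8.MinAnnular} with degree $\tilde d$ can be expressed as $w={\rm e}^{\imath(\tilde d\theta+\phi)}$ for some single-valued $\phi\in H^1(\Ring(x_0,R,r),\R)$, and conversely. Since $|\n w|^2=|\n(\tilde d\theta+\phi)|^2$, the substitution $\phi=\tilde d\psi$ establishes a bijection between the admissible classes for degrees $\tilde d$ and $1$, and multiplies the functional by $\tilde d^2$. This immediately yields $\mu_\alpha(\Ring(x_0,R,r),\tilde d)=\tilde d^2\mu_\alpha(\Ring(x_0,R,r),1)$.

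For (2), the upper bound comes from the canonical competitor $w(x)=(x-x_0)/|x-x_0|$, for which $|\n w|^2=|x-x_0|^{-2}$, whence $\frac12\int_{\Ring(x_0,R,r)}\alpha|\n w|^2\leq\frac12\int_{\Ring(x_0,R,r)}|x-x_0|^{-2}=\pi\ln(R/r)$. The lower bound combines the pointwise estimate $\alpha\geq b^2$ with the classical degree-one inequality on an annulus: on each circle $\p B(x_0,\rho)$, Cauchy--Schwarz applied to $2\pi=\int_{\p B(x_0,\rho)} w\times\p_\tau w$ yields $\int_{\p B(x_0,\rho)}|\p_\tau w|^2\geq 2\pi/\rho$. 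Integrating in $\rho\in(r,R)$ gives $\frac12\int_{\Ring(x_0,R,r)}|\n w|^2\geq\pi\ln(R/r)$, so $\mu_\alpha(\Ring(x_0,R,r),1)\geq b^2\pi\ln(R/r)$.

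For (3) I will switch to the Dirichlet variant $\mu_\alpha^{\rm Dir}$ defined in \eqref{8.MinAnnularDir}. Let $w_1$ and $w_2$ be minimizers of $\mu_\alpha^{\rm Dir}(\Ring(x_0,R,r_1),1)$ and $\mu_\alpha^{\rm Dir}(\Ring(x_0,r_1,r),1)$ respectively. By the Dirichlet condition, on the common circle $\p B(x_0,r_1)$ we have $w_j(x_0+r_1{\rm e}^{\imath\theta})={\rm e}^{\imath(\theta+c_j)}$ for some $c_j\in\R$. Replacing $w_2$ by the energetically equivalent map ${\rm e}^{\imath(c_1-c_2)}w_2$, the two boundary traces coincide, and the concatenation furnishes an admissible competitor for $\mu_\alpha(\Ring(x_0,R,r),1)$. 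Hence
\[
\mu_\alpha(\Ring(x_0,R,r),1)\leq\mu_\alpha^{\rm Dir}(\Ring(x_0,R,r_1),1)+\mu_\alpha^{\rm Dir}(\Ring(x_0,r_1,r),1),
\]
and applying Proposition \ref{P7.MyrtoRingDegDir} (with $\tilde d=1$) to each Dirichlet energy on the right produces the extra term $2C_b$. The only nontrivial point is the gluing in (3), where one must pass through the Dirichlet variant to align traces on the intermediate circle; everything else is a direct consequence of the lifting formalism and a standard annulus estimate.
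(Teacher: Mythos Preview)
Your proof is correct and follows essentially the same approach the paper has in mind: the paper merely states that assertions (1) and (2) are ``direct'' and that (3) is a direct consequence of Proposition \ref{P7.MyrtoRingDegDir}, and you have filled in those details appropriately. In particular, your gluing argument for (3) --- passing through $\mu_\alpha^{\rm Dir}$ on each subannulus, rotating one piece to match traces on $\p B(x_0,r_1)$, and then invoking Proposition \ref{P7.MyrtoRingDegDir} twice to return to $\mu_\alpha$ --- is exactly the intended mechanism.
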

%Assertion 1 and 2 are direct. The third one is a consequence  of Proposition \ref{P7.MyrtoRingDegDir}.
%This result is proved in \cite{MS1} (Theorem 2) under the hypothesis $U_\v\in W^{2,\infty}$. For the convenience of the reader we sketch the adaptation of the proof.
%\begin{proof}
%Let $w$ be a solution of the problem $\mu_\v{}(B(x_0,R)\setminus \overline{B(x_0,r)},\tilde{d})$. It is very classical to see that $w$ satisfies
%\begin{equation}\label{8.EquationForThePaseImplicit}
%\begin{cases}-{\rm div}(U_\v^2 w\times\n w)=0&\text{in }B(x_0,R)\setminus \overline{B(x_0,r)}\\w\times\p_\nu w=0&\text{on }\p\left(B(x_0,R)\setminus \overline{B(x_0,r)}\right)\end{cases}
%\end{equation}
%From \eqref{8.EquationForThePaseImplicit}, we get that there is $h\in H^1(B(x_0,R)\setminus \overline{B(x_0,r)},\R)$ s.t.
%\[
%\begin{cases}\n^\bot h=U_\v^2 w\times\n w&\text{in }B(x_0,R)\setminus \overline{B(x_0,r)}\\\displaystyle-{\rm div}\left(\frac{1}{U_\v^2} \n h\right)=0&\text{in }B(x_0,R)\setminus \overline{B(x_0,r)}\\h=1&\text{on }\p B(x_0,R)\\h={\rm Cst}&\text{on }\p B(x_0,r)\end{cases}.
%\]

%We obtain easily that $h\in H^2$ and consequently $w\times\n w\in H^1$. Noting that $w\times\n w$ plays the role of the gradient of the phase of $w$, one may use the argument of Myrto Sauvageot.
 
%\end{proof}

We turn to the construction of test functions in $R_j$ and $R_{j,k}$.\vspace{2mm}

Using Proposition \ref{P7.MyrtoRingDegDir}, there is $C_b$ depending only on $b\in(0,1)$ s.t. for $\alpha\in L^{\infty}(\O,[b^2,1])$ and for all $k\in\{1,...,K-1\}$, $j\in\{1,...,N_k\}$,  there is $w_{\alpha,j,k}\in H^1(R_{j,k},\S^1)$ s.t.
\[
w_{\alpha,j,k}(x)=\begin{cases}\displaystyle\frac{(x-x_j^k)^{\tilde{d}_{j,k}}}{{\eta'}_{k+1}^{\tilde{d}_{j,k}}}&\text{for }x\in\p B(x_j^k,\eta_{k+1}')
\\
\displaystyle\gamma_{\alpha,j,k}\frac{(x-x_j^k)^{\tilde{d}_{j,k}}}{\eta_{k}^{\tilde{d}_{j,k}}}&\text{for }x\in\p B(x_j^k,\eta_{k})\text{ where }\gamma_{\alpha,j,k}\in\S^1
\end{cases}
\]
and s.t. for all $w\in H^1(R_{j,k},\S^1)$ satisfying $\deg_{\p B(x_j^k,\eta_k)}(w)=\tilde{d}_{j,k}$ one has
\begin{equation}\label{8.ContributionOnRjk}
\int_{R_{j,k}}\alpha|\n w_{\alpha,j,k}|^2\leq \int_{R_{j,k}}\alpha|\n w|^2+C_b\tilde{d}_{j,k}^2\leq \int_{R_{j,k}}\alpha|\n w|^2+2d^2C_b.
\end{equation}
Now we consider the rings $R_j$. For $j\in\{1,...,N'\}$, we denote 
\[
\tilde{d}_{j}=\sum_{x_i\in B(y_j,\eta)}d_i.
\]
Using Proposition \ref{P7.MyrtoRingDegDir}, for $j\in\{1,...,N'\}$, we obtain $w_{\alpha,j}\in H^1(R_j,\S^1)$ s.t.
\[
w_{\alpha,j}(x)=\begin{cases}\displaystyle\frac{(x-y_j)^d}{{\eta}^d}&\text{for }x\in\p B(y_j,\eta)
\\
\displaystyle\gamma_{\alpha,j}\frac{(x-y_j)^{d}}{\eta_{K}^{d}}&\text{for }x\in\p B(y_j,\eta_{K})\text{ where }\gamma_{\alpha,j}\in\S^1
\end{cases}
\]
and s.t. for all $w\in H^1(R_{j},\S^1)$ satisfying $\deg_{\p B(y_j,\eta)}(w)=\tilde{d}_j$ one has
\begin{equation}\label{8.ContributionOnR0}
\int_{R_{j}}\alpha|\n w_{\alpha,j}|^2\leq \int_{R_{j}}\alpha|\n w|^2+2d^2C_b.
\end{equation}
\subsection{Proof of Proposition \ref{P8.AuxResult1}}
Note that there are at most $d^2$ regions $D_{j,k}$, at most $d^2$ rings $R_{j,k}$ and at most $d$ rings $R_j$. Consequently, denoting
\begin{equation}\nonumber
C_4(\eta_{\rm stop})=C_1(\eta_{\rm stop})+d^2 C_2(2\cdot 9^{d-1},d)+4d^4C_b
\end{equation}
and using \eqref{8.DecompositionOfThePerforatedDomain}, \eqref{8.ContributionOnD}, \eqref{8.ContributionOnDjk}, \eqref{8.ContributionOnRjk}, \eqref{8.ContributionOnR0}, one may construct a test function $w_\alpha\in\J_{\rho}$ (up to multiply by some $\S^1$-Constants each function previously constructed) s.t. for all $w\in\I_{\rho}$, one has
\begin{equation}\label{8.FundamentalEstAuxProblS1ValuedMap}
\int_{\O_{\rho}}{\alpha|\n w_\alpha|^2}\leq \int_{\O_{\rho}}{\alpha|\n w|^2}+C_4.
\end{equation} 
Clearly, \eqref{8.FundamentalEstAuxProblS1ValuedMap} allows us to prove Proposition \ref{P8.AuxResult1} with $C_0=C_4/2$.

\section{Proof of Proposition \ref{P8.ToMinimizeSecondPbThePointAreFarFromBoundAndHaveDegree1}}\label{S8.ProofOfSecondAuxPb}
\subsection{Description of the special solution $U_\v$}\label{S8.DescrSpecSol}

From Proposition \ref{P8.UepsCloseToaeps}, we know that far away $\p\o_\v$, $U_\v$ is uniformly close to $a_\v$. Here we prove that, in a neighborhood of $\p\o_\v$, $U_\v$ is very close to \emph{a cell regularization  of $a_\v$}.

Let 
\[
\begin{array}{cccc}
a^\lambda:&Y=(-\frac{1}{2},\frac{1}{2})\times (-\frac{1}{2},\frac{1}{2})&\to&\{b,1\}\\&x&\mapsto&\begin{cases}b&\text{if }x\in\o^\lambda=\lambda\cdot\o\\1&\text{otherwise}\end{cases}
\end{array}.
\]
Consider $V_\xi$ {\bf the} unique minimizer of 
\begin{equation}\label{SpecialSolutionForPeriodicity}
E_\xi^{a^\lambda}(V,Y)=\frac{1}{2}\int_{Y}{|\n V|^2+\frac{1}{2\xi^2}({a^\lambda}^2-V^2)^2},\,V\in H^1_1(Y,\R).
\end{equation}

\begin{lem}\label{L8.UIsAlmostPeriodic}
We have the existence of $C,\gamma>0$ s.t. for $\v>0$ and $x \in Y$
\[
|U_\v[y_{i,j}^\v+\delta^j x]-V_{\v/\delta^j}\left(x\right)|\leq C{\rm e}^{-\frac{\gamma \delta^j}{\v}}.%\text{ if }x\in Y.
\]
Thus in the periodic case, we have $U_\v$ which is almost a $\delta\cdot(\Z\times\Z)$-periodic function in $\O_\delta^{\rm incl}$ in the sense that
\[
|U_\v(x)-U_\v\left[x+(\delta k,\delta l)\right]|\leq C{\rm e}^{-\frac{\gamma\delta}{\v}}\text{ if }x,x+(\delta k,\delta l)\in\O_\delta^{\rm incl}\text{ and }k,l\in\Z.
\]
\end{lem}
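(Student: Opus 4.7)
The plan is to rescale $U_\v$ around the inclusion center $y_{i,j}^\v$ at the natural scale $\delta^j$ and compare the rescaled function directly with $V_\xi$, where $\xi := \v/\delta^j$. Concretely, set $\tilde U(x) := U_\v(y_{i,j}^\v + \delta^j x)$ for $x \in Y$. The separation condition \eqref{SepHyp}, together with $\overline{\o} \subset Y$, guarantees that inside the rescaled cell $Y$ the only inclusion present is $\o^\lambda$ itself; hence $a_\v(y_{i,j}^\v + \delta^j x) = a^\lambda(x)$ throughout $Y$ and, from \eqref{8.EquationforU},
\[
-\Delta \tilde U = \frac{1}{\xi^2}\, \tilde U \bigl[(a^\lambda)^2 - \tilde U^2 \bigr] \text{ in } Y.
\]
This is exactly the Euler--Lagrange equation satisfied by the unique minimizer $V_\xi$ of \eqref{SpecialSolutionForPeriodicity}, so $\tilde U$ and $V_\xi$ differ only through their boundary conditions on $\p Y$.

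Next, I would control the boundary difference. By \eqref{SepHyp} and $\overline{\o^\lambda} \subset Y$, every point of $y_{i,j}^\v + \delta^j \p Y$ lies at distance at least $c\delta^j$ from $\p\o_\v$ for some constant $c > 0$ depending only on $\o$. Proposition \ref{P8.UepsCloseToaeps} (applied with $R = c\delta^j$) therefore yields $|U_\v - 1| \leq C \mathrm{e}^{-\alpha c \delta^j/\v}$ on $y_{i,j}^\v + \delta^j \p Y$, which rescales to $|\tilde U - V_\xi| \leq C \mathrm{e}^{-\gamma_0 \delta^j/\v}$ on $\p Y$ for some $\gamma_0 > 0$, since $V_\xi \equiv 1$ there.

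The main step is to propagate this boundary bound to all of $Y$. Subtracting the two Euler--Lagrange equations, $W := \tilde U - V_\xi$ satisfies the linear equation
\[
-\Delta W + \frac{1}{\xi^2}\bigl[\tilde U^2 + \tilde U V_\xi + V_\xi^2 - (a^\lambda)^2 \bigr] W = 0 \text{ in } Y,
\]
with boundary data of size $\mathcal{O}(\mathrm{e}^{-\gamma_0 \delta^j/\v})$. Applying the rescaled version of Proposition \ref{P8.UepsCloseToaeps} to both $\tilde U$ and $V_\xi$ (each is exponentially close to $a^\lambda$ on $\{x : \dist(x, \p\o^\lambda) \geq r\}$ for every fixed $r > 0$, uniformly in small $\xi$), the zero-order coefficient of $W$ satisfies $\geq 2b^2/\xi^2 - o_\xi(1)$ away from the transition layer of width $\mathcal{O}(\xi)$ around $\p\o^\lambda$. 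A barrier argument on this favorable region then gives $|W| \leq C \mathrm{e}^{-\gamma \delta^j/\v}$ throughout $Y$ for some $\gamma \in (0, \gamma_0]$.

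The principal obstacle is precisely the propagation across the $\mathcal{O}(\xi)$-thin transition layer near $\p\o^\lambda$: there neither $\tilde U$ nor $V_\xi$ is close to $a^\lambda$, so the zero-order coefficient of $W$ has no useful sign and a direct maximum principle on $Y$ fails. The remedy --- splitting $Y$ into the transition layer and its complement, using a barrier on the complement, and controlling the layer crossing by its thickness $\mathcal{O}(\v/\delta^j)$ together with the gradient bound \eqref{7.GradUepsCloseToaeps} --- is exactly the strategy carried out in \cite{publi3}, Proposition 2, and applies verbatim in the present rescaled setting. The almost-periodicity assertion in $\O_\delta^{\rm incl}$ is then immediate: in the periodic case $\num = 1$, two points $x$ and $x + (\delta k, \delta l)$ inside $\O_\delta^{\rm incl}$ share the identical local profile $V_{\v/\delta}$, and the triangle inequality gives the claimed exponential control.
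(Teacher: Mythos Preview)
Your setup coincides with the paper's: rescale, note that $\tilde U$ and $V_\xi$ solve the same equation in $Y$, control $W$ on $\p Y$ via Proposition~\ref{P8.UepsCloseToaeps}, and seek a maximum principle for the linear equation satisfied by $W$. The difference is in how the transition-layer difficulty is resolved, and your treatment has a gap.

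You are right that with only the crude bound $\tilde U, V_\xi \geq b$ the coefficient $q = \xi^{-2}\bigl[\tilde U^2 + \tilde U V_\xi + V_\xi^2 - (a^\lambda)^2\bigr]$ can be negative just \emph{outside} $\o^\lambda$ when $b^2 < 1/3$. But your proposed remedy does not go through as written. The gradient estimate \eqref{7.GradUepsCloseToaeps} is stated \emph{away} from $\p\o_\v$, so it cannot be invoked inside the layer; there $|\n\tilde U|,|\n V_\xi| = \mathcal O(1/\xi)$, and crossing a width-$\xi$ strip at that slope yields only an $\mathcal O(1)$ control on $W$, not exponential. The invocation of \cite{publi3}, Proposition~2, is also off: in this paper that result is cited as the analogue of Proposition~\ref{P8.UepsCloseToaeps} (exponential decay of $|a_\v - U_\v|$ far from the interface), not as a layer-crossing comparison of two solutions.

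The paper sidesteps the layer issue altogether by a sharper pointwise bound (its Step~1). Blowing up at a point of $\p\o_\v$ and using the uniqueness of the positive one-dimensional interface profile $U_b$ (cf.\ \cite{K1}), one computes the exact interface value $U_b(0)^2 = (b^2+1)/2$. Hence, for small $\v$, both $\tilde U^2$ and $V_\xi^2$ are at least $(b^2+1)/2 - s$ throughout $Y\setminus\o^\lambda$; taking $s=b^2$ gives $\tilde U^2, V_\xi^2 \geq \max\bigl(b^2,(1-b^2)/2\bigr) \geq 1/3$ there, so $\tilde U^2 + \tilde U V_\xi + V_\xi^2 \geq 1 = (a^\lambda)^2$. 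On $\o^\lambda$ the bracket is $\geq 3b^2 > b^2$ trivially. Thus $q \geq 0$ \emph{everywhere} in $Y$, and the endgame is two lines: $W \geq 0$ on $\p Y$ together with the minimum principle give $W \geq 0$ in $Y$; then $-\Delta W = -qW \leq 0$ makes $W$ subharmonic, whence $W \leq \max_{\p Y} W \leq C\mathrm e^{-\gamma/\xi}$. In short, the paper trades your barrier-and-layer construction for a blow-up computation of $U_b(0)$; that extra analytic input removes the obstruction rather than working around it.
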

%Recall that $\O_\delta^{\rm incl}$ is the union of the cells  $[\delta k,\delta(k+1))\times[\delta l, \delta(l+1))\subset\R^2$ ($k,l\in\Z$) that are include in $\O$.
\begin{proof}
%For $W\subset \R^2$, we denote $\hat W=\displaystyle\frac{W}{\delta}=\{\hat x\in\R^2\,|\,\delta \hat x\in W\}$ and for $w\in H^1(W)$, we define $\hat{w}\in H^1(\hat{W})$ s.t. $\hat{w}(\hat x)=w(\delta\hat x)$.

%It suffices to prove that there is $V_\xi:Y=(-\frac{1}{2},\frac{1}{2})\times (-\frac{1}{2},\frac{1}{2})\to[b,1]$ s.t. for $x\in Y$ and $ x+( k, l)\in\widehat{\O_\delta^{\rm incl}}$ we have
%\[
%|V_\xi(x)-\hat{U_\v}\left[x+( k, l)\right]|\leq C{\rm e}^{-\frac{\gamma}{\xi}}.
%\]
%Let 
%\[
%\begin{array}{cccc}
%a_\lambda:&Y=(-\frac{1}{2},\frac{1}{2})\times (-\frac{1}{2},\frac{1}{2})&\to&\{b,1\}\\&x&\mapsto&\begin{cases}b&\text{if }x\in\o^\lambda=\lambda\cdot\o\\1&\text{otherwise}\end{cases}
%\end{array}.
%\]
%We consider $V_\xi$ the unique minimizer of 
%\[
%E_\xi^{a_\lambda}(V,Y)=\frac{1}{2}\int_{Y}{|\n V|^2+\frac{1}{2\xi^2}(a_\lambda^2-V^2)^2},\,V\in H^1_1(Y,\R).
%\]

{\bf Step 1.} We first prove that, for all $s>0$ and for sufficiently small $\v$, we have $U_\v^2\geq\dfrac{b^2+1}{2}-s$ in $\O\setminus\o_\v$. The same argument leads to $U_\v^2\leq\dfrac{b^2+1}{2}+s$ in $\overline{\o_\v}$ and for sufficiently small $\xi$: $V^2_\xi\geq\dfrac{b^2+1}{2}-s$ in $Y\setminus\o^\lambda$ and $V_\xi^2\leq\dfrac{b^2+1}{2}+s$ in $\o^\lambda$.

From Proposition \ref{P8.UepsCloseToaeps}, it suffices to prove that for 
\[
R=\alpha^{-1}\ln\frac{C}{1-\sqrt{\frac{1+b^2}{2}}},
\] 
we have  $U^2_\v\geq\dfrac{b^2+1}{2}-s$  in $\{x\in\O\setminus\o_\v\,|\,\dist(x,\p\o_\v)<R\v\}$ (for sufficiently small $\v$). Here $C>1,\alpha>0$ are given by \eqref{8.UepsCloseToaeps}. 

We fix $0<s<1$ and we % and we prove that for sufficiently small $\v$,  we have $U_\v^2\geq\dfrac{b^2+1}{2}-s$ in $\{x\in\O\setminus\o_\v\,|\,\dist(x,\p\o_\v)<R\v\}$. %Once this done, by compactness of $\p\o$, in order to complete the proof of Step 1., we drop the dependence on $z_0\in\p\o$ of $\v_{s,z_0}$.
let $z_\v=y^\v_{i,j}+\lambda\delta^j z_\v^0\in\p\o_\v$, $z_\v^0\in\p\o$. For $x\in B(z_\v,\lambda\delta^{\num+1})$, we write $x=z_\v+\v \tilde{x}$ with $\tilde{x}\in B(0,{\lambda\delta^{\num+1}}/{\v})$. Here $\num=1$ and $y^\v_{i,j}\in\delta\Z\times\delta\Z$ if we are in the periodic situation. 

We define  
\[
\begin{array}{cccc}
\tilde{U}_\v(\tilde{x}):&B(0,{\lambda\delta^{\num+1}}/{\v})&\to&[b,1]\\&\tilde{x}&\mapsto&U_\v(z_\v+\v \tilde{x}) 
\end{array}.
\]

It is easy to check that
\begin{equation}\label{ProcheinterfEq}
\begin{cases}
-\Delta \tilde{U}_\v=\tilde{U}_\v(\tilde{a}_\v^2-\tilde{U}_\v^2)\text{ in } B(0,{\lambda\delta^{\num+1}}/{\v})
\\
\tilde{U}_\v\in H^1\cap L^\infty(B(0,{\lambda\delta^{\num+1}}/{\v}),[b,1])
\end{cases}
\end{equation}
where
\[
\tilde{a}_\v=\begin{cases}
b&\text{in }\dfrac{\o_\v-z_\v}{\v}\cap B(0,{\lambda\delta^{\num+1}}/{\v})
\\
1&\text{in }\dfrac{(\R^2\setminus\o_\v)-z_\v}{\v}\cap B(0,{\lambda\delta^{\num+1}}/{\v})
\end{cases}.
\]
Clearly
\begin{eqnarray*}
\dfrac{\o_\v-z_\v}{\v}\cap B(0,\lambda\delta^{\num+1}/{\v})%=\dfrac{y^\v_{i,j}+\lambda\delta^j\cdot\o-(y^\v_{i,j}+\lambda\delta^j z_\v^0)}{\v}\cap B(0,{\lambda\delta^{\num+1}}/{\v})
&=&\left[\dfrac{\lambda\delta^j}{\v}\cdot(\o-z_\v^0)\right]\cap B(0,\lambda\delta^{\num+1}/{\v})
\\&=&\dfrac{\lambda\delta^j}{\v}\cdot\left[(\o-z_\v^0)\cap B(0,\delta^{\num+1-j})\right],
\end{eqnarray*}
and thus
\begin{eqnarray*}
\dfrac{(\R^2\setminus\o_\v)-z_\v}{\v}\cap B(0,\lambda\delta^{\num+1}/{\v})%=\dfrac{y^\v_{i,j}+\lambda\delta^j\cdot\o-(y^\v_{i,j}+\lambda\delta^j z_\v^0)}{\v}\cap B(0,{\lambda\delta^{\num+1}}/{\v})
%&=&\left\{\dfrac{\lambda\delta^j}{\v}\cdot[(\R^2\setminus\o)-z_\v^0]\right\}\cap B(0,\lambda\delta^{\num+1}/{\v})
%\\
&=&\dfrac{\lambda\delta^j}{\v}\cdot\left[(\R^2\setminus\o)-z_\v^0)\cap B(0,\delta^{\num+1-j})\right].
\end{eqnarray*}
Note that ${\lambda\delta^{\num+1}}/{\v}\to\infty$ and $\delta^{\num+1-j}\to0$, thus by smoothness of $\o$, up to a subsequence, we have $\dfrac{\lambda\delta^j}{\v}\cdot\left\{[(\R^2\setminus\o)-z_\v^0]\cap B(0,\delta^{{\num+1}-j})\right\}\to\mathcal{R}_{\theta_0}(\R\times\R^+)$. Here $\mathcal{R}_{\theta_0}$ is the vectorial rotation of angular $\theta_0\in[0,2\pi)$.

For sake of simplicity, we assume that $\theta_0=0$. 

 From \eqref{ProcheinterfEq} and standard elliptic estimates, we obtain that $\tilde{U}_\v$ is bounded in $W^{2,p}(B(0,R))$ for $p\geq2$, $R>0$. Thus up to consider a subsequence, we obtain that $\tilde{U}_\v\underset{}{\to} \tilde{U}_b$ in $C^1_{\rm loc}(\R^2)$ ($\v\to0$) where $\tilde{U}_b\in C^1(\R^2,[b,1])$ is a solution of
\begin{equation}\label{EquaLimite}
\begin{cases}
-\Delta \tilde{U}_b=\tilde{U}_b(1-\tilde{U}_b^2)&\text{in }\R\times\R^+
\\
-\Delta \tilde{U}_b=\tilde{U}_b(b^2-\tilde{U}_b^2)&\text{in }\R\times\R^-
\\
\tilde{U}_b\in C^1(\R^2)\cap H^2_{\rm loc}(\R^2)\cap L^\infty(\R^2)
\end{cases}.
\end{equation}
It is proved in \cite{K1} (Theorem 2.2), that \eqref{EquaLimite} admits a unique positive solution. Moreover $\tilde{U}_b(x,y)=U_b(y)$ ($\tilde{U}_b$ is independent of its first variable) and $U_b$ is {\bf the unique} solution of 
\[\begin{cases}
-U''_b=U_b(1-U_b^2)&\text{in }\R^+\\- U''_b=U_b(b^2-U_b^2)&\text{in }\R^-
\\U_b\in C^1(\R,\R),\, U_b'>0,&\displaystyle\lim_{+\infty}U_b=1,\,\lim_{-\infty}U_b=b
\end{cases}.
\]
Note that since the limit is unique, the convergence is valid for the whole sequence. 

This solution $U_b$ may be explicitly obtained by looking for $U_b$ under the form
\[
U_b(x)=
\begin{cases}
\dfrac{A{\rm e}^{\sqrt2 x}-1}{A{\rm e}^{\sqrt2 x}+1}&\text{if }x\geq0\\b\dfrac{B{\rm e}^{-b\sqrt2 x}-1}{B{\rm e}^{-b\sqrt2 x}+1}&\text{if }x<0
\end{cases}.
\]
We get $B=-\dfrac{3b^2+1+2b\sqrt{2(b^2+1)}}{1-b^2}$,  $A=\dfrac{B(1+b)+1-b}{B(1-b)+1+b}$ and 
\[
U_b(0)=b\dfrac{B-1}{B+1}=\frac{1+b^2+b\sqrt{2(b^2+1)}}{2b+\sqrt{2(b^2+1)}}=\frac{1-b^2}{2b+\sqrt{2(b^2+1)}}+b=\sqrt{\dfrac{b^2+1}{2}}%\geq\dfrac{1}{\sqrt2}.
\]

Since $U_b(0)^2=\dfrac{b^2+1}{2}$ and $U_b$ is an increasing function, for $x\geq0$, $U_b(x)^2\geq\dfrac{b^2+1}{2}$. From the convergence $\tilde{U}_\v\to \tilde{U}_b$ in  $L^\infty(B(0,R))$, we obtain that, for $\v$ sufficiently small, $\tilde{U}_\v^2\geq\dfrac{b^2+1}{2}-s$ in $B(0,R)\cap\left\{\dfrac{\lambda\delta^j}{\v}\cdot[(\R^2\setminus\o)-z_\v^0]\right\}$.
\\{\bf Step 2.} Fix $j\in\{1,...,\num\}$ s.t. $\M^\v_{j}\neq\emptyset$ and fix $i\in\M^\v_{j}$. Note that if we are in the periodic case then $j=1$ and we fix $y_{k,l}=(\delta k ,\delta l)\in\delta\Z\times\delta\Z$ s.t. $y_{k,l}+\delta\cdot Y\subset \O$.

We denote $\xi:=\dfrac{\delta^j}{\v}$. For $x\in Y$, consider $W(x)=V_\xi(x)-U_\v(y_{i,j}^\v+\delta^j x)$ which satisfies (using \eqref{8.UepsCloseToaeps})
\begin{equation}\nonumber%\label{8.EqForDifference}
\begin{cases}
\displaystyle-\xi^2\Delta W(x)={W(x)}{}\left\{{a^\lambda}(x)^2-[V_\xi(x)^2+U_\v(y_{i,j}^\v+\delta^j x)V_\xi(x)+U_\v(y_{i,j}^\v+\delta^j x)^2]\right\}&\text{in }Y\\0\leq W\leq C{\rm e}^{-\frac{\gamma}{\xi}}&\text{on }\p Y 
\end{cases}.
\end{equation}
Here $\gamma=\alpha\cdot\dist(\p Y,\o)$, $C$ and $\alpha$ given by \eqref{8.UepsCloseToaeps}.

By Step 1, taking $s=b^2$, for sufficiently small $\v$, we have for $x\in Y\setminus\o^\lambda$
\[
U^2_\v(y_{i,j}^\v+\delta^j x),V^2_\xi(x)\geq\max\left(b^2,\dfrac{1-b^2}{2}\right)\geq\dfrac{1}{3}.
\]
Thus, using the weak maximum principle, we find that $W\geq0$ in $Y$. Consequently, since $W$ is subharmonic, we deduce that $W\leq  C{\rm e}^{-\frac{\gamma}{\xi}}$.
\end{proof}

\subsection{Behavior of almost minimizers of $I_{\rho,\v}$}\label{S8.Intermediateresult}
We recall that for $x_0\in\R^2$ and $R>r>0$, we denoted $\Ring(x_0,R,r):=B(x_0,R)\setminus \overline{B(x_0,r)}$.
%In this section we prove two kinds of result: we establish an atomic upper bound of $I_{\rho,\v}$ and some properties for almost minimizers of $I_{\rho,\v}$ and $\mu_\v$.

\subsubsection{Useful results for the periodic situation}% ~~\\
We establish three preliminary results for the periodic situation represented Figure \ref{Intro.FigureTermeChevillage}. Thus in this subsection we assume that $U_\v$ is {\bf the} unique global minimizer of $E_\v$ in $H^1_1$ with the periodic pinning term $a_\v$ represented Figure \ref{Intro.FigureTermeChevillage}.%In order to prove Proposition \ref{P8.ToMinimizeSecondPbThePointAreFarFromBoundAndHaveDegree1} we need Proposition \ref{P8.DirectPropAnnProb} and Proposition \ref{P8.BoundOneAUsingAtomicDecomposition}.

\paragraph{Energetic estimates in rings and global energetic upper bounds}~\\

From Lemma \ref{L8.UIsAlmostPeriodic} ($U_\v$ is close to a periodic function) we obtain 
\begin{lem}\label{L8.AtomicPartEnergy}
For all $1\geq R>r\geq\v$, $x,x_0\in\R^2\text{ s.t. }B(x_0,R)\subset\O_\delta^{\rm incl}$ and $x-x_0\in\delta\cdot\Z^2$, we have
\[
\mu_\v(\Ring(x,R,r),1)\geq \mu_\v(\Ring(x_0,R,r),1)-o_\v(1).
\]
Adding the condition that $B(x,R)\subset\O_\delta^{\rm incl}$, we have
\[
\left|\mu_\v(\Ring(x,R,r),1)- \mu_\v(\Ring(x_0,R,r),1)\right|\leq o_\v(1).
\]
Moreover the $o_\v(1)$ may be considered independent of $x,x_0,R,r$.
\end{lem}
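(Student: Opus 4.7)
The plan is to prove both inequalities by translating a minimizer between the two rings and showing the change of the weighted energy is exponentially small. Concretely, let $h = x - x_0 \in \delta \cdot \Z^2$. Given a minimizer $w \in H^1(\Ring(x,R,r),\S^1)$ of $\mu_\v(\Ring(x,R,r),1)$, define the competitor $\tilde w(y) := w(y+h)$ on $\Ring(x_0,R,r)$. Since $h$ is a translation, $\deg_{\p B(x_0,R)}(\tilde w) = \deg_{\p B(x,R)}(w) = 1$, so $\tilde w$ is admissible for $\mu_\v(\Ring(x_0,R,r),1)$. A change of variables gives
\[
\int_{\Ring(x_0,R,r)} U_\v^2(y)\,|\n\tilde w(y)|^2\,{\rm d}y - \int_{\Ring(x,R,r)} U_\v^2(z)\,|\n w(z)|^2\,{\rm d}z = \int_{\Ring(x,R,r)} \bigl[U_\v^2(z-h) - U_\v^2(z)\bigr]\,|\n w(z)|^2\,{\rm d}z.
\]

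The key observation is that the bracketed factor above is bounded by an exponentially small quantity uniformly in $z \in \Ring(x,R,r)$. Indeed, for every such $z$ the translated point $z-h$ lies in $\Ring(x_0,R,r) \subset \O_\delta^{\rm incl}$. When $z \in \O_\delta^{\rm incl}$ as well, Lemma \ref{L8.UIsAlmostPeriodic} gives $|U_\v(z)-U_\v(z-h)| \leq C{\rm e}^{-\gamma\delta/\v}$. When $z \notin \O_\delta^{\rm incl}$, we use that $z$ then lies in a boundary cell or outside $\O$; in either case $a_\v(z)=1$ and $\dist(z,\p\o_\v)\geq c\delta$ (boundary cells contain no inclusion), so Proposition \ref{P8.UepsCloseToaeps} together with the convention $U_\v\equiv 1$ outside $\O$ yields $U_\v^2(z) \geq 1 - C{\rm e}^{-\alpha c\delta/\v}$, while $U_\v^2(z-h) \leq 1$; hence $U_\v^2(z-h)-U_\v^2(z) \leq C{\rm e}^{-\alpha c\delta/\v}$. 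In all cases the one-sided bound $U_\v^2(z-h)-U_\v^2(z) \leq C'{\rm e}^{-\gamma'\delta/\v}$ holds.

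To conclude, I estimate $\int_{\Ring(x,R,r)}|\n w|^2$: since $U_\v \geq b$, Proposition \ref{P8.DirectPropAnnProb} yields
\[
\int_{\Ring(x,R,r)} |\n w|^2 \leq \frac{2}{b^2}\,\mu_\v(\Ring(x,R,r),1) \leq \frac{2\pi}{b^2}\,|\ln(r/R)| \leq C|\ln\v|,
\]
using $r\geq\v$ and $R\leq 1$. Combining, the difference of energies is at most $C{\rm e}^{-\gamma'\delta/\v}|\ln\v|$; hypothesis \eqref{8.MainHyp} forces $\delta\gg\v^\alpha$ for every $\alpha\in(0,1)$, so this is $o_\v(1)$ uniformly in $x,x_0,R,r$. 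This gives $\mu_\v(\Ring(x_0,R,r),1) \leq \mu_\v(\Ring(x,R,r),1) + o_\v(1)$, which is the first claimed inequality. Under the additional assumption $B(x,R)\subset\O_\delta^{\rm incl}$, one repeats the argument with the roles of $x$ and $x_0$ swapped, translating by $-h$; in this symmetric setting $z+h$ belongs to $B(x,R)\subset\O_\delta^{\rm incl}$ for every $z\in\Ring(x_0,R,r)$, so Lemma \ref{L8.UIsAlmostPeriodic} applies on both sides and yields the reverse inequality, hence the two-sided bound. The only minor obstacle is keeping track of the two distinct sources of smallness for $|U_\v^2(z-h)-U_\v^2(z)|$ (periodicity inside $\O_\delta^{\rm incl}$ vs. closeness of $U_\v$ to $a_\v\equiv 1$ in boundary cells), but both reduce to exponentially small errors and are absorbed in the same $o_\v(1)$.
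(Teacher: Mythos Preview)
Your proof is correct and follows exactly the approach the paper intends: the paper does not give a detailed argument for this lemma, simply stating that it follows from Lemma~\ref{L8.UIsAlmostPeriodic} (the almost-periodicity of $U_\v$). Your translation-of-minimizer argument, together with the careful one-sided treatment of boundary cells via Proposition~\ref{P8.UepsCloseToaeps}, is precisely the detail that the paper leaves implicit, and the final bound $C{\rm e}^{-\gamma'\delta/\v}|\ln\v|=o_\v(1)$ is justified by \eqref{8.MainHyp}.
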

Lemma \ref{L8.AtomicPartEnergy} implies easily the following estimate.
\begin{prop}\label{P8.BoundOneAUsingAtomicDecomposition}
Let $\eta>0$ and  $\eta>\rho\geq\v$. Then there is $C=C(\O,\O',g,b,\eta)>0$ s.t.  for $x_0\in\R^2$ we have
\[
I_{\rho,\v}\leq d\,\mu_\v(\Ring(x_0,\eta,\rho),1)+C(\eta),
\]
where $C(\eta)$ is independent of $x_0$ and $\rho$.
\end{prop}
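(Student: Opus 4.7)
The plan is, for each $x_0\in\R^2$, to construct a test configuration of $d$ points $a_1,\ldots,a_d\in\O$ (each with degree $1$) and a test map $w\in H^1_g(\O'_\rho,\S^1)$ whose weighted Dirichlet energy splits as $d$ ``ring'' contributions---which can be transported back to $x_0$ via Lemma~\ref{L8.AtomicPartEnergy}---plus a ``macroscopic'' remainder controlled by $\eta,\O,\O',g,b$ alone.

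First I would select the configuration. Fix $d$ template points $p_1,\ldots,p_d\in\O$ (independent of $x_0,\v,\rho$) with $|p_i-p_j|\geq 10\eta$ and $\dist(p_i,\p\O)\geq 10\eta$, and let $a_i$ be the closest point of $x_0+\delta\Z^2$ to $p_i$. Since $|a_i-p_i|\leq \delta\sqrt 2\to 0$, for $\v$ small enough the $a_i$ are distinct with $|a_i-a_j|\geq 8\eta$, $\dist(a_i,\p\O)\geq 8\eta$, and $B(a_i,\eta)\subset\O_\delta^{\rm incl}$; by construction $a_i-x_0\in\delta\Z^2$, which is the key feature needed later.

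Next I would build $w$ by gluing three pieces. On the macroscopic part $\O\setminus\cup\overline{B(a_i,\eta)}$, Lemma~\ref{L8.UpperBoundS1ValuedMapInGoodCondition}.1 (with all degrees equal to $1$) provides $w_0$ with $w_0=g$ on $\p\O$, $w_0(x)=(x-a_i)/\eta={\rm e}^{\imath\theta}$ on $\p B(a_i,\eta)$, and $\int|\n w_0|^2\leq C_1(\eta)$. On each ring $\Ring(a_i,\eta,\rho)$, take a minimizer $w_i$ of $\mu_\v^{\rm Dir}(\Ring(a_i,\eta,\rho),1)$: it satisfies $w_i(a_i+\eta{\rm e}^{\imath\theta})={\rm e}^{\imath\theta}$ on the outer circle (matching $w_0$) and has degree $1$; by Proposition~\ref{P7.MyrtoRingDegDir} applied with $\alpha=U_\v^2$,
\[
\mu_\v^{\rm Dir}(\Ring(a_i,\eta,\rho),1)\leq\mu_\v(\Ring(a_i,\eta,\rho),1)+C_b.
\]
Extending by $g$ on $\O'\setminus\overline\O$, the three pieces agree on their common traces, and I get $w\in H^1_g(\O'_\rho,\S^1)$ with $\deg_{\p B(a_i,\rho)}(w)=1$.

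Finally I apply Lemma~\ref{L8.AtomicPartEnergy} with reference point $a_i$ (since $B(a_i,\eta)\subset\O_\delta^{\rm incl}$) and second point $x_0$ (since $x_0-a_i\in\delta\Z^2$) to obtain $\mu_\v(\Ring(a_i,\eta,\rho),1)\leq\mu_\v(\Ring(x_0,\eta,\rho),1)+o_\v(1)$, uniformly in $x_0,a_i,\eta,\rho$. Summing the three contributions and absorbing $d\cdot o_\v(1)$ for $\v$ small yields
\[
I_{\rho,\v}\leq\frac12\int_{\O'_\rho}U_\v^2|\n w|^2\leq d\,\mu_\v(\Ring(x_0,\eta,\rho),1)+C(\eta),
\]
with $C(\eta):=dC_b+\frac12 C_1(\eta)+\frac12\int_{\O'\setminus\overline\O}|\n g|^2+1$. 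The main obstacle is the geometric Step~1: producing, uniformly in $x_0\in\R^2$, $d$ grid-aligned points well inside $\O_\delta^{\rm incl}$, mutually well separated, and far from $\p\O$. Once this is in place, the rest is a mechanical combination of Lemma~\ref{L8.UpperBoundS1ValuedMapInGoodCondition}.1, Proposition~\ref{P7.MyrtoRingDegDir}, and the periodic transport Lemma~\ref{L8.AtomicPartEnergy}.
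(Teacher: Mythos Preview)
Your proposal is correct and is precisely the argument the paper has in mind: the paper's own proof is the single line ``Lemma~\ref{L8.AtomicPartEnergy} implies easily the following estimate,'' and what you have written is exactly that implication fleshed out---choose $d$ well-separated grid-aligned points $a_i\in x_0+\delta\Z^2$ inside $\O_\delta^{\rm incl}$, build the test map via Lemma~\ref{L8.UpperBoundS1ValuedMapInGoodCondition}.1 on the macroscopic part and $\mu_\v^{\rm Dir}$-minimizers on the rings (compared to $\mu_\v$ through Proposition~\ref{P7.MyrtoRingDegDir}), then transport each ring contribution to $x_0$ via the first inequality of Lemma~\ref{L8.AtomicPartEnergy}. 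One cosmetic point: your separation hypothesis $|p_i-p_j|\ge 10\eta$, $\dist(p_i,\p\O)\ge 10\eta$ tacitly assumes $\eta$ is small compared to $\O$; if not, simply run the construction with $\eta':=\min(\eta,\eta^*(\O,d))$ and use the monotonicity $\mu_\v(\Ring(x_0,\eta',\rho),1)\le\mu_\v(\Ring(x_0,\eta,\rho),1)$ (restrict a minimizer), which also handles the constraint $R\le 1$ in Lemma~\ref{L8.AtomicPartEnergy}.
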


From Lemma \ref{L8.UIsAlmostPeriodic} we get the {almost periodicity \it} of $\mu_\v(\Ring(\cdot,R,r),1)$ w.r.t. a $\delta\times\delta$-grid (expressed in Lemma \ref{L8.UIsAlmostPeriodic}). Therefore, the "best points" to minimize $\mu_\v(\Ring(\cdot,R,r),1)$ should be almost periodic. 

Another important result is the next proposition. It expresses that the center of an inclusion is not too far to a good point to minimize $\mu_\v(\Ring(\cdot,R,r),1)$. This proposition may be seen as a first step in the proof of the pinning effect of $\o_\v$. 
\begin{prop}\label{P.LaProp.quifmqshfsqmdjfh}
There is $C_*$ which depends only on $\o$, $b$ and $\O$ s.t. for sufficiently small $\v$, for $x\in\O$ and $x_{\rm per}\in B(x,3\delta\sqrt2/2)\cap(\delta\Z\times\delta\Z)\cap\o_\v$ we have for $1>R>r>\v$
\begin{equation}\label{LaCléDuMySQtlkdsjgf}
\mu_\v(\Ring(x_{\rm per},R,r),1)\leq\mu_\v(\Ring(x,R,r),1)+C_*.
\end{equation} 
\end{prop}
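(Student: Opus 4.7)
I will construct a competitor $\tilde w \in H^1(\Ring(x_{\rm per}, R, r), \S^1)$ of degree one whose weighted Dirichlet energy exceeds $\mu_\v(\Ring(x, R, r), 1)$ by at most a constant $C_*$ depending only on $\o$, $b$, $\O$. The construction decomposes both rings at the common intermediate scale $\rho_0 := \min(R, 20\delta)$ and treats the outer and inner sub-annuli separately.

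Applying Proposition \ref{P8.DirectPropAnnProb}.3 to $\mu_\v(\Ring(x_{\rm per}, R, r), 1)$ at $\rho_0$ gives the upper-bound splitting, while restricting an optimal map for $\mu_\v(\Ring(x, R, r), 1)$ to the two concentric sub-annuli (and using that the degree is preserved on every concentric circle by a Stokes-type argument) yields the matching lower-bound splitting. It thus suffices to establish an outer comparison $\mu_\v(\Ring(x_{\rm per}, R, \rho_0),1)\leq \mu_\v(\Ring(x, R, \rho_0),1)+C$ (nontrivial only when $R > 20\delta$) and an inner comparison $\mu_\v(\Ring(x_{\rm per}, \rho_0, r),1)\leq \mu_\v(\Ring(x, \rho_0, r), 1)+C$.

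For the outer rings, the shift $v:=x_{\rm per}-x$ has norm $\leq 3\delta\sqrt{2}/2 \ll \rho_0=20\delta$. Writing $v=v_{\rm grid}+v_{\rm small}$ with $v_{\rm grid}\in\delta\Z^2$ and $|v_{\rm small}|\leq\delta\sqrt{2}/2$, Lemma \ref{L8.UIsAlmostPeriodic} shows that translation by $v_{\rm grid}$ changes $U_\v$ by $O(\mathrm{e}^{-\gamma\delta/\v})$ inside $\O_\delta^{\rm incl}$, so only the residual shift $v_{\rm small}$ needs care. Starting from a minimizer $w^\star$ for $\mu_\v(\Ring(x+v_{\rm grid}, R, \rho_0),1)$, I set $\tilde w=w^\star(\cdot-v_{\rm small})$ on the shrunk ring $\Ring(x_{\rm per},R-|v_{\rm small}|,\rho_0+|v_{\rm small}|)$ and fill in two boundary-layer annuli of radial width $|v_{\rm small}|$ using Lemma \ref{L8.UpperBoundS1ValuedMapInGoodCondition}; each layer has outer radius $\geq \rho_0 = 20\delta$, so by Proposition \ref{P8.DirectPropAnnProb}.2 its weighted Dirichlet cost is $\pi \ln(1 + O(\delta/\rho_0))=O(1)$.

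For the inner rings, I use the explicit radial test function $\tilde w(y)=(y-x_{\rm per})/|y-x_{\rm per}|$ on $\Ring(x_{\rm per},\rho_0,r)$. Since $x_{\rm per}$ is the center of the inclusion in its cell and $0\in\o$, Proposition \ref{P8.UepsCloseToaeps} gives $U_\v\leq b+o_\v(1)$ on $B(x_{\rm per},c\lambda\delta)$ for some $c=c(\o)>0$, while $U_\v\leq 1$ elsewhere. Splitting the angular-average integral at radius $c\lambda\delta$ yields
\begin{equation*}
\mu_\v(\Ring(x_{\rm per},\rho_0,r),1)\leq \pi b^2\ln\tfrac{c\lambda\delta}{r}+\pi\ln\tfrac{\rho_0}{c\lambda\delta}+C_1(\o,b).
\end{equation*}
The main obstacle is to match this against a lower bound for $\mu_\v(\Ring(x,\rho_0,r),1)$: the crude bound $b^2\pi\ln(\rho_0/r)$ of Proposition \ref{P8.DirectPropAnnProb}.2 leaves the unbounded remainder $\pi(1-b^2)\ln(\rho_0/(c\lambda\delta))$ in the diluted regime $\lambda\to 0$. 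I will close this gap by sub-additively splitting $\mu_\v(\Ring(x,\rho_0,r),1)$ at scale $c\lambda\delta$ and proving the sharp bound $\mu_\v(\Ring(x,\rho_0,c\lambda\delta),1)\geq \pi\ln(\rho_0/(c\lambda\delta))-O(1)$ uniformly in $x$: for $\rho\in(c\lambda\delta,\rho_0)$ the circle of radius $\rho$ centered at $x$ meets the inclusions only on arcs of bounded total angular measure (since each cell contributes an arc of length $O(\lambda\delta)$ and at most $O(\rho/\delta)$ cells intersect the circle), so the angular average $\overline{U_\v^2}(x,\rho)$ exceeds $1-o_\v(1)$, and applying the Jensen-type lower bound for the radial decomposition of the weighted Dirichlet energy of degree-one $\S^1$-maps cancels the unwanted $(1-b^2)$ factor and completes the proof.
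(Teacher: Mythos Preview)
Your three-scale decomposition (split at $\sim\delta$ and at $\sim\lambda\delta$) matches the paper's, and your treatment of the innermost scale ($r\le\lambda\delta$, where $x_{\rm per}$ sits inside an inclusion so the trivial bound $\pi b^2\ln(R/r)$ suffices in both directions) is correct. Two points deserve correction, one of which is a genuine gap.

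\textbf{Outer ring: a real gap.} Your handling of the comparison for $R>r\ge\rho_0=20\delta$ does not work as written. After the grid shift $v_{\rm grid}$ you set $\tilde w(y)=w^\star(y-v_{\rm small})$ on the shrunk ring; changing variables gives
\[
\frac12\int U_\v^2(z+v_{\rm small})\,|\n w^\star(z)|^2\,dz,
\]
and you need this to be controlled by $\frac12\int U_\v^2(z)\,|\n w^\star(z)|^2\,dz$. But $|v_{\rm small}|$ is of order $\delta$ while $U_\v$ varies on scale $\v\ll\delta$, so $U_\v^2(z+v_{\rm small})$ and $U_\v^2(z)$ need not be close pointwise (one can equal $b^2$ and the other $1$). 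The boundary-layer filling addresses only the thin collars, not this bulk weight mismatch; near-periodicity (Lemma \ref{L8.UIsAlmostPeriodic}) handles only shifts in $\delta\Z^2$, not $v_{\rm small}$. The paper avoids all of this with a one-line containment argument: since $|x-x_{\rm per}|\le 3\delta\sqrt2/2$ and $r\ge\delta$, one has $\Ring(x_{\rm per},R,10r)\subset\Ring(x,10R,r/10)$; combining Proposition \ref{P7.MyrtoRingDegDir} (to pass to Dirichlet data $e^{\imath\theta}$ at cost $C_b$), radial extension in the two thin collars (cost $\le 2\pi\ln 10$), and restriction, gives
\[
\mu_\v(\Ring(x,R,r),1)\ \ge\ \mu_\v(\Ring(x_{\rm per},R,r),1)-3\pi\ln10-2C_b.
\]
No periodicity is needed, and in particular no hypothesis $B(x,R)\subset\O_\delta^{\rm incl}$ is required.

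\textbf{Inner ring: an imprecision that happens to integrate away.} The claim that for $\rho\in(c\lambda\delta,\rho_0)$ the angular average $\overline{U_\v^2}(x,\rho)\ge 1-o_\v(1)$ is false: if $x$ lies near $\p\o_\v$ then for $\rho$ comparable to $\lambda\delta$ the circle $\p B(x,\rho)$ can spend an $O(1)$ angular fraction inside the inclusion. What is true is that at most $O(1)$ cells meet $\p B(x,\rho)$ (since $\rho\le 20\delta$), each contributing arc length $O(\lambda\delta)$, so the angular measure in $\o_\v$ is $O(\lambda\delta/\rho)$. Feeding this into Lemma \ref{Lestimatesurdescercle} gives a circle lower bound $\pi\bigl(1-O(\lambda\delta/\rho)\bigr)$, and
\[
\int_{c\lambda\delta}^{\rho_0}\frac{O(\lambda\delta/\rho)}{\rho}\,d\rho=O(1),
\]
so your integrated conclusion $\mu_\v(\Ring(x,\rho_0,c\lambda\delta),1)\ge\pi\ln(\rho_0/(c\lambda\delta))-O(1)$ is correct. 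The paper obtains the same estimate by a dual ``bad-radii'' argument: the set $\{\rho:\p B(x,\rho)\cap\o_\v\neq\emptyset\}\cap[\lambda\delta,\delta]$ is a union of at most four intervals of length $\le 8\lambda\delta$, hence has log-measure $\le 4\ln 9$.
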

\begin{proof}
If $R\leq 10^2r$, then the result is obvious with $C_*=2\pi\ln10$. Thus we assume that $R>10^2r$.

We share the proof in three cases: 
\begin{enumerate}[{Case} 1.]
\item $r \geq\delta$, 
\item $\delta\geq R>r\geq\lambda\delta$,
\item $R\leq\lambda\delta$.
\end{enumerate}
Assume for the moment that 
\begin{equation}\label{AssumptEstimateSpecialRiohdsflkjsqdh}
\text{There exists $\tilde{C_*}>0$ s.t. \eqref{LaCléDuMySQtlkdsjgf} yields  in the three previous cases with $C_*=\tilde{C_*}$.}
\end{equation}
For the general case, we divide $\Ring(x,R,r)$ into $R_1(x)\cup {R_2(x)}\cup R_3(x)$ with %for $x\in\R^2$
\[
R_1(x)=\Ring\left(x,R,\min\{\max(\delta,r),R\}\right),
\]
\[
R_2(x)=\Ring\left(x,\min\{\max(\delta,r),R\},\min\left\{R,\max(\lambda\delta,r)\right\}\right),
\]
\[
R_3(x)=\Ring\left(x,\min\left\{R,\max(\lambda\delta,r)\right\},r\right).
\] 
\begin{remark}
\begin{enumerate}\label{RemarkSurLaDecsdqkfhskdjfh}
\item For $k\in\{1,2,3\}$, we have $\emptyset\subseteq R_k\subseteq\Ring(x,R,r)$.
\item It is easy to check that
\begin{itemize}
\item[$\bullet$] $\left[R_1=\Ring(x,R,r)\Leftrightarrow r\geq \delta\text{ (Case 1.)}\right]$ and  $\left[R_1=\emptyset\Leftrightarrow R\leq\delta\right]$,
\item[$\bullet$] $\left[R_2=\Ring(x,R,r)\Leftrightarrow \lambda\delta\leq r<R\leq \delta\text{ (Case 2.)}\right]$\\\phantom{aaaaaaaaassssssssssaaa}and  $\left[R_2=\emptyset\Leftrightarrow \{\lambda=1\text{ or }r\geq\delta\text{ or }R\leq\lambda\delta\}\right]$,
\item[$\bullet$] $\left[R_3=\Ring(x,R,r)\Leftrightarrow R\leq \lambda\delta\text{ (Case 3.)}\right]$ and  $\left[R_3=\emptyset\Leftrightarrow r\geq\lambda\delta\right]$.
\end{itemize}
\item If $\lambda\equiv1$ then Case 2. never occurs and $R_2=\emptyset$.
\end{enumerate}
\end{remark}
Therefore we have (using Propositions \ref{P7.MyrtoRingDegDir}, \ref{P8.DirectPropAnnProb}.3 and \eqref{AssumptEstimateSpecialRiohdsflkjsqdh})
\[
\begin{array}{ccl}
\mu_\v(\Ring(x,R,r),1)&\geq&\mu_\v(R_1(x),1)+\mu_\v(R_2(x),1)+\mu_\v(R_3(x),1)
\\
\text{\eqref{AssumptEstimateSpecialRiohdsflkjsqdh}}&\geq&\mu_\v(R_1(x_{\rm per}),1)+\mu_\v(R_2(x_{\rm per}),1)+\mu_\v(R_3(x_{\rm per}),1)-3\tilde{C_*}
\\
(\text{Prop. \ref{P7.MyrtoRingDegDir}})&\geq&\mu^{\rm Dir}_\v(R_1(x_{\rm per}),1)+\mu^{\rm Dir}_\v(R_2(x_{\rm per}),1)+\mu^{\rm Dir}_\v(R_3(x_{\rm per}),1)-3(\tilde{C_*}+C_b)
\\
&\geq&\mu_\v(\Ring(x_{\rm per},R,r),1)-3(\tilde{C_*}+C_b).
\end{array}
\]
The last line is obtained by constructing a test function. Therefore, it suffices to take $C_*:=3(\tilde{C_*}+C_b)$.

We now turn to the proof of \eqref{AssumptEstimateSpecialRiohdsflkjsqdh} in Case 1, 2 and 3. %In order to simplify the notations, we extend the definition of $\mu_\v$ and $\mu^{\rm Dir}_\v$ for an annular type domain : Let $W=B(X,R)\setminus \overline{B(Y,r)}$ be s.t. $\overline{B(Y,r)}\subset B(X,R)$. Then we let
%\begin{equation}\nonumber%\label{8.MinAnnular}
%\mu_\v{}(W,\tilde{d})=\inf_{\substack{w\in H^1(W,\S^1)\\\deg_{\p B(X,R)}(w)=\tilde{d}}}\frac{1}{2}\int_{W}U_\v^2|\n w|^2
%\end{equation}
%and
%\begin{equation}\nonumber%\label{8.MinAnnularDir}
%\mu_\v^{\rm Dir}(W,\tilde{d})=\inf_{\substack{w\in H^1(W,\S^1)\\w(X+R{\rm e}^{\imath{\theta}})={\rm e}^{\imath\tilde{d}{\theta}}\\w(Y+r{\rm e}^{\imath{\theta}}){\rm e}^{-\imath\tilde{d}{\theta}}={\rm Cst}}}{\frac{1}{2}\int_{W}{U_\v^2|\n w|^2}}.
%\end{equation}
 Recall that we assumed that $R> 10^2r$. 
 
We treat Case 1. ($R>r \geq\delta$):
\[
\begin{array}{rl}
\mu_\v(\Ring(x,R,r),1)\geq(\text{Prop. \ref{P7.MyrtoRingDegDir}})\geq&\mu^{\rm Dir}_\v(\Ring(x,R,r),1)-C_b
\\\geq&\mu^{\rm Dir}_\v(\Ring(x,10R,10^{-1}r),1)-2\pi\ln10-C_b
\\
\{\Ring(x_{\rm per},R,10r)\subset\Ring(x,10R,10^{-1}r)\}\geq&\mu_\v(\Ring(x_{\rm per},R,10r),1)-2\pi\ln10-C_b
\\\geq(\text{Prop. \ref{P7.MyrtoRingDegDir}})\geq&\mu_\v(\Ring(x_{\rm per},R,r),1)-3\pi\ln10-2C_b.
%\\
%\{\text{Prop. \ref{P7.MyrtoRingDegDir}}\}&\geq&\mu^{\rm Dir}_\v(B(x,10^2R)\setminus \overline{B(x_{\rm per},R)},1)+\mu^{\rm Dir}_\v(\Ring(x_{\rm per},R,r),1)+
%\\&&\phantom{addddddddaaaaaaaaa}+\mu^{\rm Dir}_\v(B(x_{\rm per},10r)\setminus \overline{B(x,10^{-1}r)},1)-3C_b
\end{array}
\]
Thus we may take $\tilde{C_*}=3\pi\ln10+2C_b$.

We treat Case 2. Note that from Remark \ref{RemarkSurLaDecsdqkfhskdjfh}.3, we may assume that $\lambda\to0$. On the one hand, it is clear that
\[
\mu_\v(\Ring(x_{\rm per},R,r),1)\leq\pi \ln\dfrac{R}{r}.
\]
On the other hand, letting 
\[
\begin{array}{cccc}
\alpha_\v:&\R^2&\to&\{b^2,1\}\\&x&\mapsto&\begin{cases}b^2&\text{if }x\in\cup_{M\in\Z^2}\overline{B(\delta M,\lambda\delta)}\\1&\text{otherwise}\end{cases}
\end{array},
\]
we have from Proposition \ref{P8.UepsCloseToaeps} that $\alpha_\v\leq U_\v^2+V_\v$ with $\|V_\v\|_{L^\infty}=o(\v^2)$.

If $\Ring(x,R,r)\cap \{\alpha_\v=b^2\}=\emptyset$, then we have $\mu_\v(\Ring(x,R,r),1)\geq\pi \ln\dfrac{R}{r}+o(\v^2\ln\lambda)$. And thus the result holds with $\tilde{C_*}=1$ (for sufficiently small $\v$).

Otherwise we have $\Ring(x,R,r)\cap \{\alpha_\v=b^2\}\neq\emptyset$. In this situation, because $R\leq\delta$, we get that $\Ring(x,R,r)\cap \{\alpha_\v=b^2\}$ is a union of at most four connected components. Therefore $S=\{\rho\in(r,R)\,|\,\p B(x,\rho)\cap\{\alpha_\v=b^2\}\}$ is a union of at most four segments whose length is lower than $8\lambda\delta$. Consequently, denoting  $\overline{S}=\cup_{i=1}^k [s_i,t_i]$ (with $s_i<s_{i+1}$), we have for $w_*\in H^1(\Ring(x,R,r),\S^1)$ which minimizes $\mu_\v(\Ring(x,R,r),1)$
\begin{eqnarray*}
\frac{1}{2}\int_{\Ring(x,R,r)}U_\v^2|\n w_*|^2+o_\v(1)&\geq&\frac{1}{2}\int_{\Ring(x,R,r)}\alpha_\v|\n w_*|^2
\\(t_0=r\,\&\,s_{k+1}=R)&\geq&\sum_{i=0}^{k}\frac{1}{2}\int_{t_i}^{s_{i+1}}\frac{{\rm d}\rho}{\rho}\int_{0}^{2\pi}|\p_\theta w_*|^2
\\&\geq&\pi\sum_{i=0}^{k}\ln\frac{s_{i+1}}{t_i}=\pi\ln\frac{R}{r}-\pi\sum_{i=1}^k\ln\frac{t_i}{s_i}
\end{eqnarray*}
Since $\lambda\delta\leq s_i\leq t_i\leq s_i+8\lambda\delta$, we have $1\leq\dfrac{t_i}{s_i}\leq1+\dfrac{8\lambda\delta}{s_i}\leq9$. Therefore we may take $\tilde{C_*}=4\pi\ln9+1$.

We treat the last case. Since $R\leq\lambda\delta$ and $x_{\rm per}\in (\delta\Z\times\delta\Z)\cap\o_\v$, there is $\tilde{C_*}$ s.t. we have (for sufficiently small $\v$) $\mu_\v(\Ring(x_{\rm per},R,r),1)\leq\pi b^2 \ln\dfrac{R}{r}+\tilde{C_*}$. On the other hand (Proposition \ref{P8.DirectPropAnnProb}.2) we have $\mu_\v(\Ring(x,R,r),1)\geq\pi b^2\ln\dfrac{R}{r}$. Therefore the estimate in the third case is proved.
\end{proof}

\paragraph{Estimates for almost minimizers}~\\

In this subsection we establish a fundamental result: fix an almost minimal configuration $\{{\bf x},{\bf 1}\}$ for $I_{\rho,\v}$ (the existence of such configuration is proved Section \ref{Part1Periodic}) and a map which {\it almost} minimizes $\frac{1}{2}\int_{\O'\setminus\cup\overline{B(x_i,\rho)}}U_\v^2|\n \cdot|^2$. Then the map {\it almost} minimizes the weighted Dirichlet functional $\frac{1}{2}\int_{\Ring(x_i,\rho',\rho)}U_\v^2|\n \cdot|^2$, $10^{-2}\min_{i\neq j}|x_i-x_j|>\rho'>\rho$.
\begin{lem}\label{L8.AlmostminimizingtestFunction}
\begin{enumerate}
\item Let $x\in\R^2$, $0<r<R$, $\alpha\in L^{\infty}(\R^2,[b^2,1])$, $C_0>0$ and a map $w\in H^1(\Ring(x,R,r),\S^1)$ s.t. $\deg_{\p B(x,R)}(w)=1$ and 
\[
\frac{1}{2}\int_{\Ring(x,R,r)}\alpha|\n w|^2-\mu_\alpha(\Ring(x,R,r),1) \leq C_0.
\]
Then for all $r',R'$ s.t. $r<r'<R'<R$ one has
\[
\frac{1}{2}\int_{\Ring(x,R',r')}\alpha|\n w|^2-\mu_\alpha(\Ring(x,R',r'),1)\leq 4C_b+C_0,
\]
where $C_b$ depends only on $b$ and is given by Proposition \ref{P7.MyrtoRingDegDir}.
\item Let $x_1,...,x_d\in\O$ ($x_i\neq x_j$ for $i\neq j$), $d_i=1$, $\v<\rho<10^{-2}\eta$, $\eta:=10^{-2}\cdot\min\left\{|x_i-x_j|,\dist(x_i,\p\O)\right\}$, $C_0>0$ and $w\in H^1(\O'_\rho,\S^1)$ s.t. 
\[
\frac{1}{2}\int_{\O'_\rho}U_\v^2|\n w|^2\leq I_{\rho,\v}+C_0.
\]
Then for  $\rho\leq r<R<\eta$ one has for all $i$
\[
\frac{1}{2}\int_{\Ring(x_i,R,r)}U_\v^2|\n w|^2-\mu_\v(\Ring(x_i,R,r),1)\leq C_0+C(\eta);%\text{ for all $x\in\R^2$}
\]
here $C(\eta)$ depends only on $b,g,\O,\O'$ and $\eta$.
\item Under the hypotheses of 2., we also have for $\eta>\rho_0>\rho$
\[
\frac{1}{2}\int_{\O'_{\rho_0}}U_\v^2|\n w|^2\leq C(\rho_0,C_0);
\]
here $C(\rho_0,C_0)$ depends only on $b,g,\O,\O',C_0,\rho_0$ and $\eta$.
\end{enumerate}
\end{lem}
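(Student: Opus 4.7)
For Part 1, I would decompose $\Ring(x,R,r) = \Ring(x,R,R')\cup \Ring(x,R',r')\cup \Ring(x,r',r)$. Since $w$ is $\S^1$-valued with $\deg_{\p B(x,R)}(w)=1$, every intermediate circle also carries degree $1$, so each of the three sub-ring integrals is bounded below by the corresponding $\mu_\alpha(\cdot,1)$. Rearranging the hypothesis yields
\[
\tfrac12\int_{\Ring(x,R',r')}\alpha|\n w|^2 \leq \mu_\alpha(\Ring(x,R,r),1) - \mu_\alpha(\Ring(x,R,R'),1) - \mu_\alpha(\Ring(x,r',r),1) + C_0.
\]
To conclude, it suffices to establish the near-subadditivity
\[
\mu_\alpha(\Ring(x,R,r),1) \leq \mu_\alpha(\Ring(x,R,R'),1) + \mu_\alpha(\Ring(x,R',r'),1) + \mu_\alpha(\Ring(x,r',r),1) + 3C_b,
\]
which follows by gluing three independent $\mu_\alpha^{\rm Dir}$-minimizers (their traces are of the form ${\rm Cst}\cdot {\rm e}^{\imath\theta}$ on each boundary circle, so one can rotate to match across each interface at no energetic cost) and applying Proposition \ref{P7.MyrtoRingDegDir} to each sub-ring to bound $\mu_\alpha^{\rm Dir}$ by $\mu_\alpha+C_b$; since $3C_b\leq 4C_b$, this closes Part 1.

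For Part 2, given $w$, I would build a competitor $\tilde w \in H^1(\O'_\rho,\S^1)$ for the minimization defining $I_{\rho,\v}$, agreeing with $w$ on $\O'_\rho \setminus \Ring(x_i,R,r)$ and replacing $w$ on $\Ring(x_i,R,r)$ by a near-minimizer of $\mu_\v(\Ring(x_i,R,r),1)$. The traces of the two maps on $\p B(x_i,R)$ and (when $r>\rho$) on $\p B(x_i,r)$ need not agree; the mismatch is reconciled by inserting thin buffer annuli inside $\Ring(x_i,R,r)$ in which Proposition \ref{P7.MyrtoRingDegDir} together with Lemma \ref{L8.UpperBoundS1ValuedMapInGoodCondition} provides interpolation at a total energy cost bounded by a constant $C(\eta)$ depending only on $b,g,\O,\O',\eta$. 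Since $\tilde w$ is admissible,
\[
I_{\rho,\v}\leq \tfrac12\int_{\O'_\rho} U_\v^2|\n \tilde w|^2 \leq \tfrac12\int_{\O'_\rho\setminus \Ring(x_i,R,r)} U_\v^2|\n w|^2 + \mu_\v(\Ring(x_i,R,r),1) + C(\eta),
\]
and subtracting this from the hypothesis $\tfrac12\int_{\O'_\rho} U_\v^2|\n w|^2\leq I_{\rho,\v}+C_0$ produces the claim. The main obstacle here is that $r,R$ are only constrained by $\rho\leq r<R<\eta$ and may be arbitrarily close to each other or to $\rho$; this apparent difficulty is defused by the fact that Proposition \ref{P7.MyrtoRingDegDir} converts degree conditions into Dirichlet ones at a cost $C_b$ independent of annulus thickness, so buffer annuli of any positive thickness inside $\Ring(x_i,R,r)$ are admissible.

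For Part 3, I would decompose $\O'_\rho = \O'_{\rho_0} \sqcup \bigsqcup_{i=1}^d \Ring(x_i,\rho_0,\rho)$. The degree-one condition of $w$ around each $x_i$ gives $\tfrac12\int_{\Ring(x_i,\rho_0,\rho)} U_\v^2|\n w|^2 \geq \mu_\v(\Ring(x_i,\rho_0,\rho),1)$. Using Proposition \ref{P8.BoundOneAUsingAtomicDecomposition} combined with the splitting estimate of Proposition \ref{P8.DirectPropAnnProb}.3, and transferring the base point via Proposition \ref{P.LaProp.quifmqshfsqmdjfh} together with the near-periodicity of $\mu_\v$ from Lemma \ref{L8.AtomicPartEnergy}, one obtains
\[
I_{\rho,\v} \leq \sum_{i=1}^d \mu_\v(\Ring(x_i,\rho_0,\rho),1) + d\,\mu_\v(\Ring(x_0,\eta,\rho_0),1) + C(\eta,\rho_0),
\]
so that the $\rho$-dependent contribution in the upper bound on $I_{\rho,\v}$ is matched (up to a bounded error) by the sum of lower bounds on the inner annuli. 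Plugging this into the decomposition of $\tfrac12\int_{\O'_\rho} U_\v^2|\n w|^2\leq I_{\rho,\v}+C_0$ yields a bound on $\tfrac12\int_{\O'_{\rho_0}} U_\v^2|\n w|^2$ depending only on $\rho_0$ and $C_0$ (and $b,g,\O,\O',\eta$), as required.
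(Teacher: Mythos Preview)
Your Parts 1 and 3 are correct and coincide with the paper's argument. In Part 1 the paper invokes Proposition \ref{P8.DirectPropAnnProb}.3 twice (hence the $4C_b$) rather than gluing three $\mu_\alpha^{\rm Dir}$-minimizers directly, but this is the same mechanism. Part 3 is exactly the decomposition-and-cancel argument the paper alludes to.

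Part 2, however, has a genuine gap. Your competitor $\tilde w$ must be $H^1$ across $\p B(x_i,R)$ and $\p B(x_i,r)$, so the buffer annuli have to interpolate from the \emph{arbitrary} degree-one traces $w|_{\p B(x_i,R)}$ and $w|_{\p B(x_i,r)}$ to the standard data ${\rm Cst}\cdot e^{\imath\theta}$ carried by a near-minimizer of $\mu_\v$. The energy cost of such an interpolation is controlled by the $H^{1/2}$ semi-norms of these traces, which are not bounded a priori by any constant depending only on $b,g,\O,\O',\eta$. Neither of the results you cite provides this: Proposition \ref{P7.MyrtoRingDegDir} compares $\mu_\alpha^{\rm Dir}$ (standard data on \emph{both} circles) to $\mu_\alpha$, and Lemma \ref{L8.UpperBoundS1ValuedMapInGoodCondition} produces maps with prescribed, not arbitrary, traces.

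The paper bypasses this by not building a competitor at all: it reuses the decomposition argument of Part~1 at the level of the whole perforated domain. From the hypothesis and Proposition \ref{P8.BoundOneAUsingAtomicDecomposition} one has
\[
\tfrac{1}{2}\int_{\O'_\rho}U_\v^2|\n w|^2 \ \leq\ I_{\rho,\v}+C_0 \ \leq\ d\,\mu_\v(\Ring(x_0,\eta,\rho),1)+C(\eta)+C_0,
\]
while, since the rings $\Ring(x_j,\eta,\rho)$ are disjoint in $\O'_\rho$,
\[
\tfrac{1}{2}\int_{\O'_\rho}U_\v^2|\n w|^2 \ \geq\ \sum_{j\neq i}\mu_\v(\Ring(x_j,\eta,\rho),1)+\mu_\v(\Ring(x_i,\eta,R),1)+\tfrac{1}{2}\int_{\Ring(x_i,R,r)}U_\v^2|\n w|^2+\mu_\v(\Ring(x_i,r,\rho),1).
\]
Choosing $x_0$ as in Proposition \ref{P.LaProp.quifmqshfsqmdjfh} gives $\mu_\v(\Ring(x_0,\eta,\rho),1)\leq\mu_\v(\Ring(x_j,\eta,\rho),1)+C_*$ for every $j$; combining this with the near-subadditivity (Proposition \ref{P8.DirectPropAnnProb}.3 applied twice at $R$ and $r$) cancels all $\mu_\v$ terms up to a constant depending only on $b,d,\eta$, and the desired bound on $\tfrac{1}{2}\int_{\Ring(x_i,R,r)}U_\v^2|\n w|^2$ follows. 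No trace matching is needed.
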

\begin{proof}
Using the third part of Proposition \ref{P8.DirectPropAnnProb}, we have
\begin{eqnarray*}
\frac{1}{2}\int_{\Ring(x,R,r)}\alpha|\n w|^2&\leq&\mu_\alpha(\Ring(x,R,R'),1)+\mu_\alpha(\Ring(x,R',r'),1)\\&&\phantom{aaaaaassss}+\mu_\alpha(\Ring(x,r',r),1)+4C_b+C_0.
\end{eqnarray*}
We easily obtain
\begin{eqnarray*}
\frac{1}{2}\int_{\Ring(x,R,r)}\alpha|\n w|^2&\geq&\mu_\alpha(\Ring(x,R,R'),1)+\frac{1}{2}\int_{\Ring(x,R',r')}\alpha|\n w|^2+\mu_\alpha(\Ring(x,r',r),1)
\end{eqnarray*}
which proves the first assertion.

The second assertion is obtained by using the same argument combined with Proposition \ref{P8.BoundOneAUsingAtomicDecomposition}.

Last assertion is a straightforward consequence of Proposition \ref{P8.BoundOneAUsingAtomicDecomposition} and both previous assertions.
\end{proof}
\subsubsection{Lower bound on circles}

In this subsection we prove an estimate for the minimization of weighted $1$-dimensional Dirichlet functionals. In the following this estimate will be used to get lower bounds in rings.
\begin{lem}\label{Lestimatesurdescercle}
Let $\theta_0\in(0,2\pi)$ and let $\alpha\in L^\infty([0,2\pi],\{b^2,1\})$ be s.t. $\H^1(\{\alpha=b^2\})=\theta_0$.

Let $\varphi\in H^1([0,2\pi],\R)$ s.t. $\varphi(2\pi)-\varphi(0)=2\pi$. The following lower bound holds
%\begin{enumerate}
%\item
\[
\frac{1}{2}\int_0^{2\pi}{\alpha(\theta)|\p_\theta\varphi(\theta)|^2\,{\rm d}\theta}\geq\frac{2\pi^2}{\int_0^{2\pi}\frac{1}{\alpha}}=\frac{2\pi^2}{2\pi+\theta_0(b^{-2}-1)}.
\]
Here $\H^1$ is the $1$-dimensional Hausdorff measure.
%\item if there are  $\theta_1\in(0,2\pi-\theta_0)$ and $\eta\in(0,\theta_1)$ s.t. $|\varphi(2\pi-\theta_1)-\varphi(2\pi)|\leq\eta$, then
%\[
%\frac{1}{2}\int_0^{2\pi-\theta_1}{\alpha(\theta)|\p_\theta\varphi(\theta)|^2\,{\rm d}\theta}\geq\frac{(2\pi-\eta)^2}{2\int_0^{2\pi-\theta_1}\frac{1}{\alpha}}\geq \frac{(2\pi-\eta)^2}{2\left[2\pi-\theta_1+\theta_0(b^{-2}-1)\right]}.
%\] 
%\end{enumerate}
\end{lem}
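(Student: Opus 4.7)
The plan is to obtain the lower bound as a direct consequence of the Cauchy–Schwarz inequality applied to a suitable decomposition of $\partial_\theta \varphi$, combined with an explicit computation of $\int_0^{2\pi} \alpha^{-1}$ using the hypothesis on $\{\alpha = b^2\}$.

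First, I would use the boundary condition $\varphi(2\pi) - \varphi(0) = 2\pi$ to write
\[
2\pi \;=\; \int_0^{2\pi} \partial_\theta \varphi(\theta)\,{\rm d}\theta \;=\; \int_0^{2\pi} \bigl[\sqrt{\alpha(\theta)}\,\partial_\theta \varphi(\theta)\bigr]\cdot\frac{1}{\sqrt{\alpha(\theta)}}\,{\rm d}\theta.
\]
Here the factorisation makes sense because $\alpha \geq b^2 > 0$ a.e., so that $\alpha^{-1/2} \in L^\infty([0,2\pi])$ and in particular lies in $L^2$.

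Next, I would apply the Cauchy–Schwarz inequality in $L^2([0,2\pi])$ to this product, which yields
\[
(2\pi)^2 \;\leq\; \left(\int_0^{2\pi}\alpha(\theta)\,|\partial_\theta\varphi(\theta)|^2\,{\rm d}\theta\right)\cdot\left(\int_0^{2\pi}\frac{1}{\alpha(\theta)}\,{\rm d}\theta\right),
\]
and dividing by $2\int_0^{2\pi}\alpha^{-1}$ gives the desired estimate up to the computation of $\int_0^{2\pi}\alpha^{-1}$.

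Finally, since $\alpha$ takes only the two values $b^2$ and $1$ and $\H^1(\{\alpha = b^2\}) = \theta_0$ (so $\H^1(\{\alpha = 1\}) = 2\pi - \theta_0$), a direct computation gives
\[
\int_0^{2\pi}\frac{1}{\alpha(\theta)}\,{\rm d}\theta \;=\; \frac{\theta_0}{b^2} + (2\pi - \theta_0) \;=\; 2\pi + \theta_0(b^{-2}-1),
\]
which yields the stated identity. There is no genuine obstacle here: the argument is a one‑line application of Cauchy–Schwarz, and the only point worth recording is the equality case, which is attained exactly when $\partial_\theta\varphi$ is proportional to $1/\alpha$, i.e.\ when $\varphi$ is the harmonic phase associated with the weight $\alpha$ on the circle.
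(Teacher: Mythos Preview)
Your proof is correct. The paper takes a slightly different route: rather than applying Cauchy--Schwarz, it argues by direct minimisation, observing that a minimiser $\varphi_{\min}$ of $\tfrac{1}{2}\int_0^{2\pi}\alpha|\partial_\theta\varphi|^2$ under the constraint $\varphi(2\pi)-\varphi(0)=2\pi$ exists and satisfies the Euler--Lagrange equation $\partial_\theta(\alpha\,\partial_\theta\varphi_{\min})=0$, whence $\partial_\theta\varphi_{\min}=\mathrm{Cst}/\alpha$ with $\mathrm{Cst}=2\pi/\int_0^{2\pi}\alpha^{-1}$; plugging this back gives the value of the infimum. Your Cauchy--Schwarz argument is more direct and avoids invoking existence and the Euler--Lagrange equation; the paper's approach, on the other hand, produces the explicit minimiser from the outset. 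The two are of course equivalent: the equality case you identify ($\partial_\theta\varphi$ proportional to $1/\alpha$) is precisely the paper's $\varphi_{\min}$.
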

\begin{proof}
The proof of this lower bound is based on the computation of the minimal energy. 

It is easy to check that a minimal function $\varphi_{\rm min}\in H^1([0,2\pi],\R)$ for $\frac{1}{2}\int_0^{2\pi}{\alpha(\theta)|\p_\theta\cdot|^2\,{\rm d}\theta}$ under the constraint $\varphi(2\pi)-\varphi(0)=2\pi$ exists and satisfies $\p_\theta(\alpha \p_\theta\varphi_{\rm min})=0$. Thus $ \p_\theta\varphi_{\rm min}=\dfrac{{\rm Cst}}{\alpha}$ with ${\rm Cst}=\dfrac{2\pi}{\int_0^{2\pi}\alpha^{-1}}$. Therefore
\[
\frac{1}{2}\int_0^{2\pi}{\alpha(\theta)|\p_\theta\varphi(\theta)|^2\,{\rm d}\theta}\geq\frac{1}{2}\int_0^{2\pi}{\alpha(\theta)|\p_\theta\varphi_{\rm min}(\theta)|^2\,{\rm d}\theta}=\frac{2\pi^2}{\int_0^{2\pi}\frac{1}{\alpha}}=\frac{2\pi^2}{2\pi+\theta_0(b^{-2}-1)}.
\]
\begin{comment}
Now we turn to the second lower bound. Arguing as previously, we may prove that a minimal map $\varphi_{\rm min}^{\theta_1,\eta}\in H^1([0,2\pi],\R)$ for $\frac{1}{2}\int_0^{2\pi-\theta_1}{\alpha(\theta)|\p_\theta\cdot|^2\,{\rm d}\theta}$ under the constraints $|\varphi(2\pi-\theta_1)-\varphi(0)|\geq2\pi-\eta$ exists. More precisely, since $\eta<\theta_1$,  $|\varphi^{\theta_1,\eta}(2\pi-\theta_1)-\varphi^{\theta_1,\eta}(0)|=2\pi-\eta$ and $\p_\theta(\alpha \p_\theta\varphi^{\theta_1,\eta}_{\rm min})=0$ in $(0,2\pi-\theta_1)$. We get $ \p_\theta\varphi^{\theta_1,\eta}_{\rm min}=\dfrac{{\rm Cst}}{\alpha}$ with ${\rm Cst}=\dfrac{2\pi-\eta}{\int_0^{2\pi-\theta_1}\alpha^{-1}}$. Consequently, we obtain
\begin{eqnarray*}
\frac{1}{2}\int_0^{2\pi-\theta_1}{\alpha(\theta)|\p_\theta\varphi(\theta)|^2\,{\rm d}\theta}&\geq&\frac{1}{2}\int_0^{2\pi-\theta_1}{\alpha(\theta)|\p_\theta\varphi^{\theta_1,\eta}_{\rm min}(\theta)|^2\,{\rm d}\theta}
\\&=&\frac{(2\pi-\eta)^2}{2\int_0^{2\pi-\theta_1}\frac{1}{\alpha}}=\frac{(2\pi-\eta)^2}{2\left[2\pi-\theta_1+\theta_0(b^{-2}-1)\right]}.
\end{eqnarray*}
\end{comment}
\end{proof}

\subsection{Proof of the first part of Proposition \ref{P8.ToMinimizeSecondPbThePointAreFarFromBoundAndHaveDegree1}}\label{Part1Periodic}

Let $x^n_1,...,x^n_N\in\O$ s.t. $|x^n_i-x^n_j|\geq8\rho$ and $d_1,...,d_N>0,\sum d_i=d$ (up to a subsequence the degrees may be considered independent of $n$). 

Assume that 
\begin{equation}\label{HypContrSeparationpoint}
\text{there is $i_0\in\{1,...,N\}$ s.t. $d_{i_0}\neq1$ or there are $i\neq j$ s.t. $|x_i^n-x_j^n|\to0$.}
\end{equation}
Up to pass to a subsequence, there are $a_1,...,a_M\in\overline{\O}$ and $\{\Lambda_1,...,\Lambda_M\}$ a partition of $\{1,...,N\}$ s.t.
\[
i\in\Lambda_l\Longleftrightarrow x_i^n\to a_l. 
\]
For sake of simplicity, we drop the superscript $n$ for the points, \emph{i.e.}, we write  $x_i$ instead of $x^n_i$.

We let $\rho_0:=10^{-2}\min\{\min_{k\neq l}|a_k-a_l|\,,\,\dist(\p\O,\p\O')\}$ with $\min_{k\neq l}|a_k-a_l|=+\infty$ if $M=1$.

Note that since $d_i>0$, \eqref{HypContrSeparationpoint} is equivalent to
\begin{equation}\label{NUMADDadd}
\text{there exists $l_0\in\{1,...,M\}$ s.t. $\displaystyle\tilde{d}_{l_0}=\sum_{i\in\Lambda_{l_0}}d_i>1$}.
\end{equation}
We are going to prove that \eqref{NUMADDadd} is not possible for almost minimal configurations. In order to do this, for $l\in\{1,...,M\}$, we obtain a lower bound for the weighted Dirichlet functional defined around $a_l$. Then using Proposition \ref{P8.BoundOneAUsingAtomicDecomposition} we will conclude.

For $l\in\{1,...,M\}$, there are two cases:
\begin{enumerate}
\item  ${\rm Card}(\Lambda_l)>1$, 
\item  ${\rm Card}(\Lambda_l)=1$.
\end{enumerate}
In the first case (${\rm Card}(\Lambda_l)>1$), we apply the separation process (defined  Section \ref{S8.SeparationProcess}) in $\O_l^n=B(a_l,2\rho_0)\setminus\cup_{i\in\Lambda_l}\overline{ B(x_i,\rho)}$ with $\eta_{\rm stop}=10^{-2}\rho_0$.
 
By construction, the process stops after $K$ steps. For $k\in\{1,...,K\}$ we denote:
\begin{itemize}
\item[$\bullet$] $\{x_1^k,...,x_{N_k}^k\}$ the selection of points made in Step $k$ ($x_i^0=x_i$, $i\in\Lambda_l$), 
\item[$\bullet$] $\eta'_k$  the radius of the intermediate balls in Step $k$ ($\eta'_k=\frac{1}{4}\min_{i\neq j}|x^{k-1}_i-x^{k-1}_j|$), 
\item[$\bullet$] $\eta_k$ the radius of the final balls in Step $k$ ($\eta_k=2\kappa_k\eta_k'$, $\kappa_k\in\{9^0,...,9^d\}$ and $\eta_0=\rho$).
\end{itemize}

Since for $i,j\in\Lambda_l$ we have $|x_i-x_j|\to0$, then, in the end of the process (after $K$ steps), we obtain a unique $x_1^K=y_l\in\{x_i\,|\,i\in\Lambda_l\}$ in the final selection of points and $\eta_K\to0$.

From \eqref{8.Ring} and \eqref{8.RingUnique}, the following rings are mutually disjoint (denoting $\eta_0=\rho$)
\begin{equation}\nonumber%\label{8.Ring}
R^l_0=\Ring(y_l,\rho_0,\eta_K)\text{ and }R_{j,k}=\Ring(x^k_j,\eta'_{k+1},\eta_{k})\text{ for }k\in\{0,...,K-1\},\,j\in\{1,...,N_k\}.%\text{ for $k>1$ and }R_{j,1}=B(x_j,\eta'_1)\setminus \overline{B(x_j,\delta)} ,
%\end{equation}
%\begin{equation}\nonumber%\label{8.RingUnique}
\end{equation}
We let
\begin{itemize}
\item[$\bullet$] for $k\in\{0,...,K-1\}$ and $j\in\{1,...,N_k\}$,  $\tilde{d}_{j,k}:=\sum_{x_i\in B(x^k_j,\eta'_{k+1})}d_i$,
\item[$\bullet$] for $n\geq 1$ we let $x_0=x_0(n)\in(\delta\Z\times\delta\Z)\cap\o_{\v_n}$ be s.t. $B(x_0,2\rho_0)\subset\O$. Thus combining Lemma \ref{L8.AtomicPartEnergy} with Proposition \ref{P.LaProp.quifmqshfsqmdjfh}, we get that (for sufficiently large $n$) and  for $\rho_0\geq R>r\geq\rho$
\begin{equation}\label{TheKeyEqForDescsdjhslkqdjhg}
\mu_{\v_n}(\Ring(x_0, R,r),1)\leq\inf_{x\in\O}\mu_{\v_n}(\Ring(x,R,r),1)+C_*+1.
\end{equation}
\end{itemize}
For $w\in H_g^1(\O'_{\rho},\S^1)$ we have %if  ${\rm Card}(\Lambda_l)>1$
\begin{align}\nonumber
\frac{1}{2}\int_{\O_l^n}U_{\v_n}^2|\n w|^2\geq\:&\:\frac{1}{2}\int_{R^l_0}U_{\v_n}^2|\n w|^2+\sum_{k=0}^{K-1}\sum_{j=1}^{N_k}\frac{1}{2}\int_{R_{j,k}}U_{\v_n}^2|\n w|^2
\\\nonumber
\geq\:&\:\frac{1}{2}\int_{R^l_0}U_{\v_n}^2|\n w|^2+\sum_{k=0}^{K-1}\sum_{j=1}^{N_k}\mu_{\v_n}(\Ring(x^k_j,\eta'_{k+1},\eta_{k}),\tilde{d}_{j,k})
\\\nonumber\text{\eqref{TheKeyEqForDescsdjhslkqdjhg}}\geq\:&\:\frac{1}{2}\int_{\Ring(x_0,\rho_0,\eta_{K})}U_{\v_n}^2|\n w|^2+\sum_{k=0}^{K-1}\sum_{j=1}^{N_k}\mu_{\v_n}(\Ring(x_0,\eta'_{k+1},\eta_{k}),\tilde{d}_{j,k})-\mathcal{O}(1)\\\nonumber
\text{\eqref{8.TheRadiusHaveTheSameOrder}}\geq\:&\:\tilde{d}_l^2\,\mu_{\v_n}(\Ring(x_0,\rho_0,\eta_{K}),1)+\sum_{k=0}^{K-1}\sum_{j=1}^{N_k}\tilde{d}_{j,k}\,\mu_{\v_n}(\Ring(x_0,\eta_{k+1},\eta_{k}),1)-\mathcal{O}(1)%\,(\text{using Lemma \ref{L8.AtomicPartEnergy}})
\\\label{8;NotCloseBound0bis}\text{(Prop. \ref{P8.DirectPropAnnProb}.3)}\geq&\:\tilde{d}_l\,\mu_{\v_n}(\Ring(x_0,\rho_0,\rho),1)+(\tilde{d}_l^2-\tilde{d}_l)\pi b^2|\ln\eta_K|-\mathcal{O}(1).
\end{align}
In the second case (${\rm Card} (\Lambda_l)=1$) the computations are direct
\begin{align}\nonumber
\frac{1}{2}\int_{\O_l^n}U_{\v_n}^2|\n w|^2\:\geq&\:\frac{1}{2}\int_{\Ring(x_i,\rho_0,\rho)}U_{\v_n}^2|\n w|^2%+\sum_{k=1}^K\sum_{j=1}^{N_k}\frac{1}{2}\int_{R_{j,k}}U_{\v_n}^2|\n w|^2
%\\\nonumber&\geq&\frac{1}{2}\int_{R^l_0}U_{\v_n}^2|\n w|^2+\sum_{k=1}^K\sum_{j=1}^{N_k}\mu_{\v_n}(B(x^k_j,\eta'_k)\setminus \overline{B(x^k_j,\eta_{k-1})},\tilde{d}_{j,k})
%\\\nonumber&\geq&\tilde{d}_l^2\mu_{\v_n}(B(x_0,\rho_0)\setminus \overline{B(x_0,\eta_{k-1})},1)+\sum_{k=1}^K\sum_{j=1}^{N_k}\tilde{d}_{j,k}\mu_{\v_n}(B(x_0,\eta_k)\setminus \overline{B(x_0,\eta_{k-1})},1)-\mathcal{O}(1)%\,(\text{using Lemma \ref{L8.AtomicPartEnergy}})
\\\label{8;NotCloseBound0bisbis}\geq&\:\tilde{d}_l\,\mu_{\v_n}(\Ring(x_0,\rho_0,\rho),1)+(\tilde{d}_l^2-\tilde{d}_l)\pi b^2|\ln\rho|-\mathcal{O}(1).
\end{align}
Summing the lower bounds \eqref{8;NotCloseBound0bis} and \eqref{8;NotCloseBound0bisbis} over $l$ and applying Proposition \ref{P8.BoundOneAUsingAtomicDecomposition}, we obtain that if \eqref{NUMADDadd} occurs, then the configuration $\{{\bf x},{\bf d}\}$ cannot be almost minimal because $\eta_K,\rho\to0$ and $\tilde{d}_{l_0}>1$. Therefore \eqref{NUMADDadd} cannot occur for almost minimal configurations.

\subsection{Proof of the second part of Proposition \ref{P8.ToMinimizeSecondPbThePointAreFarFromBoundAndHaveDegree1}}\label{Part2Periodic}
We now prove the second part of Proposition \ref{P8.ToMinimizeSecondPbThePointAreFarFromBoundAndHaveDegree1}: we establish the repelling effect of $\p\O$ on the centers $x_i$'s.

Let $x_1^n,...,x_d^n\in\O$ and $\rho=\rho(\v_n)\downarrow0$ be s.t. $|x_i^n-x_j^n|\geq 8\rho$  ($i\neq j$) and $\dist(x_1^n,\p\O)\to0$. From the previous subsection we may assume that there is $\eta_0>0$ (independent of $n$) s.t.
\[
\min\left\{\min_{i\neq j}|x_i^n-x_j^n|\,,\,\dist(\O,\p\O')\right\}\geq10^2\eta_0.
\]
Up to pass to a subsequence, we may assume that $x_i^n\to a_i\in\overline{\O}$ with $a_i\neq a_j$ for $i\neq j$ and that $\eta=\max\{\sqrt{\dist(x_1^n,\p\O)},\rho\}\to0$.

For sake simplicity, we assume that for $i=2,...,d$ we have $a_i\in\O$. If this condition is not satisfied, then a direct adaptation of the following argument may be done. We assume that $\eta_0$ is s.t. for $i=2,...,d$ we have $\dist(x_i^n,\p\O)\geq 10^2\eta_0$.

We fix $x_0=x_0(\v_n)\in\O$ s.t. 
\[
x_0-x_1^n\in\delta\Z\times\delta\Z,\, \dist(x_0^n,\p\O)\geq 10^2\eta_0\text{ and }\min_{i=1,...,d}|x_0-x_j^n|\geq10^2\eta_0.%\mu_{\v_n}(\Ring(x_0,\eta_0,\rho),1)\leq \inf_{x\in\R^2}\mu_{\v_n}(\Ring(x,\eta_0,\rho),1)+1.
\]
We are going to prove that for $w\in H^1_g(\O'\setminus\cup_i\overline{B(x_i^n,\rho)},\S^1)$ we have
\begin{equation}\label{TheWonderfullContrdiction}
\frac{1}{2}\int_{\Ring(x_1^n,\sqrt\eta,\eta)}U_{\v_n}^2|\n w|^2-\mu_{\v_n}(\Ring(x_0,\sqrt\eta,\eta),1)\to\infty.
\end{equation}
\begin{remark}\label{TheRemarkForTherepEf}Estimate \eqref{TheWonderfullContrdiction} implies that $\{x_1^n,...,x_d^n\}$ can not be an almost minimal configuration of points. 

Indeed, we may construct a suitable test function $\tilde{w}$ as follows:
\begin{construction}The test function $\tilde{w}\in H^1_g(\O'\setminus(\overline{B(x_0,\rho)}\cup\cup_{i=2}^d\overline{B(x_i^n,\rho)}),\S^1)$\label{Constr.NearBoundaryTestFunction}
\begin{enumerate}[$\bullet$]
\item For $i=2,...,d$, we define $\tilde{w}_{|\Ring(x^n_i,\eta_0,\rho)}$ by taking a minimal map for $\frac{1}{2}\int_{\Ring(x^n_i,\eta_0,\rho)}U_{\v_n}^2|\n\cdot|^2$ in $H^1(\Ring(x^n_i,\eta_0,\rho),\S^1)$ with the boundary conditions $\tilde{w}(x_i^n+\eta_0{\rm e}^{\imath\theta})={\rm e}^{\imath\theta}$ and $\tilde{w}(x_i^n+\rho{\rm e}^{\imath\theta})={\rm Cst}_i{\rm e}^{\imath\theta}$, ${\rm Cst}_i\in\S^1$.
From Proposition \ref{P7.MyrtoRingDegDir} we have 
\[
\frac{1}{2}\int_{\Ring(x_i^n,\eta_0,\rho)}U_{\v_n}^2|\n\tilde{w}|^2\leq\mu_{\v_n}(\Ring(x_i^n,\eta_0,\rho),1)+C_b.
\]
\item We divide $\Ring(x_0,\eta_0,\rho)$ into $\Ring(x_0,\eta_0,\sqrt\eta)$, $\Ring(x_0,\sqrt\eta,\eta)$ and $\Ring(x_0,\eta,\rho)$. In each of these rings we consider the minimal maps for $\frac{1}{2}\int_{{\rm ring}}U_{\v_n}^2|\n\cdot|^2$ with the boundary conditions $\tilde{w}(x_0+R{\rm e}^{\imath\theta})={\rm e}^{\imath\theta}$ and $\tilde{w}(x_0+r{\rm e}^{\imath\theta})={\rm Cst}_i{\rm e}^{\imath\theta}$, ${\rm Cst}_i\in\S^1$ where ${\rm ring}\in\{\Ring(x_0,\eta_0,\sqrt\eta),\Ring(x_0,\sqrt\eta,\eta),\Ring(x_0,\eta,\rho)\}$, $r<R$ and ${\rm ring}=\Ring(x_0,R,r)$. 

Up to consider suitable rotations, we glue these functions to get a map $\tilde{w}_{|\Ring(x_0,\eta_0,\rho)}\in H^1(\Ring(x_0,\eta_0,\rho),\S^1)$ which is s.t. $\tilde{w}(x_0+\eta_0{\rm e}^{\imath\theta})={\rm e}^{\imath\theta}$ and (from Proposition \ref{P7.MyrtoRingDegDir})
\[
\frac{1}{2}\int_{\rm ring}U_{\v_n}^2|\n\tilde{w}|^2\leq\mu_{\v_n}({\rm ring},1)+C_b\,\text{}
\]
with ${\rm ring}\in\{\Ring(x_0,\eta_0,\sqrt\eta),\Ring(x_0,\sqrt\eta,\eta),\Ring(x_0,\eta,\rho)\}$.
\item We extend $\tilde w$ in $\O\setminus(\overline{B(x_0,\eta_0)}\cup\cup_{i=2}^d\overline{B(x_i^n,\eta_0)})$ using Lemma \ref{L8.UpperBoundS1ValuedMapInGoodCondition}.1. Then we finally obtain $\tilde{w}\in H^1_g(\O'\setminus(\overline{B(x_0,\rho)}\cup\cup_{i=2}^d\overline{B(x_i^n,\rho)}),\S^1)$.
\end{enumerate}
\end{construction}
From Lemma \ref{L8.AtomicPartEnergy}, \eqref{TheWonderfullContrdiction} and by construction of $\tilde{w}$, for $w_n\in H^1_g(\O'\setminus\cup_i\overline{B(x_i^n,\rho)},\S^1)$  we have easily that
\[
\int_{\O'\setminus\cup_i\overline{B(x_i^n,\rho)}}U_{\v_n}^2|\n w_n|^2-\int_{\O'\setminus(\overline{B(x_0,\rho)}\cup\cup_{i=2}^d\overline{B(x_i^n,\rho)})}U_{\v_n}^2|\n \tilde{w}|^2\to+\infty
\]
which implies that $\{x_1^n,...,x_d^n\}$ can not be an almost minimal configuration of points.

\end{remark}

%Indeed, by combining \eqref{TheWonderfullContrdiction} with the third assertion of Proposition \ref{P8.DirectPropAnnProb}, we obtained that $\mu_{\v_n}(\Ring(x_1^n,\eta_0,\rho),1)\gg\mu_{\v_n}(\Ring(x_0,\eta_0,\rho),1)$. And since for $j\in\{2,...,d\}$ we have $\mu_{\v_n}(\Ring(x_j^n,\eta_0,\rho),1)\geq\mu_{\v_n}(\Ring(x_0,\eta_0,\rho),1)+\mathcal{O}(1)$, by Proposition \ref{P8.BoundOneAUsingAtomicDecomposition}, we get the result. 

We now turn to the proof of \eqref{TheWonderfullContrdiction}. We argue by contradiction and we assume that there is $w_*=w_*^{\v_n}\in H^1_g(\O'\setminus\cup_i\overline{B(x_i^n,\rho)},\S^1)$ s.t. 
\begin{equation}\label{TheWonderfullContrdictionBIS}
\frac{1}{2}\int_{\Ring(x_1^n,\sqrt\eta,\eta)}U_{\v_n}^2|\n w_*|^2\leq\mu_{\v_n}(\Ring(x_0,\sqrt\eta,\eta),1)+\mathcal{O}(1).
\end{equation}
In particular (using Lemma \ref{L8.AtomicPartEnergy}) we have 
\[
\frac{1}{2}\int_{\Ring(x_1^n,\sqrt\eta,\eta)}U_{\v_n}^2|\n w_*|^2=\mu_{\v_n}(\Ring(x_1^n,\sqrt\eta,\eta),1)+\mathcal{O}(1).
\]

The key ingredient to get a contradiction is the fact that the map $w_*$ is almost constant in the "half" ring $\Ring(x_1^n,\sqrt\eta,\eta)\setminus\O$.

By smoothness of $\O$, we may assume that the cone $K_{\sqrt\eta,\eta}:=\{x=x_1^n+\rho{\rm e}^{\imath \theta}\,|\,\theta\in[0,\pi/2],\, \eta\leq\rho\leq \sqrt\eta\}$ does not intersect $\O$: $K_{\sqrt\eta,\eta}\cap\O=\emptyset$.

We consider the map 
\[
w_0(x_1^n+r{\rm e^{\imath\theta}})=\begin{cases}{\rm e}^{\imath 4\theta}&\text{if }\theta\in[0,\pi/2]\\1&\text{otherwise}\end{cases},\,r>0
\]which is s.t. $w_0\in H^1(\Ring(x_1^n,\sqrt\eta,\eta),\S^1)$ and $\deg_{\p B(x_1^n,\sqrt\eta)}(w_0)=1$.

For $\tilde{d}\in\N^*$ (to be fixed later) we define the map $w_{\rm test}=w_*^{\tilde{d}}w_0\in H^1(\Ring(x_1^n,\sqrt\eta,\eta),\S^1)$ and $\deg_{\p B(x_1^n,\sqrt\eta)}(w_{\rm test})=\tilde{d}+1$.

Thus, we have 
\[
\frac{1}{2}\int_{\Ring(x_1^n,\sqrt\eta,\eta)}{U_{\v_n}^2|\n w_{\rm test}|^2}\geq\mu_{\v_n}(\Ring(x_1^n,\sqrt\eta,\eta),\tilde{d}+1)=(\tilde{d}+1)^2\mu_{\v_n}(\Ring(x_1^n,\sqrt\eta,\eta),1).
\]
On the other hand, letting $\varphi_*,\varphi_0:\Ring(x_1^n,\sqrt\eta,\eta)\to\R$ s.t. $w_*={\rm e}^{\imath \varphi_*}$ and $w_0={\rm e}^{\imath \varphi_0}$, (note that $\varphi_*,\varphi_0$ are locally defined and those gradients are globally defined and lie in $L^2(\Ring(x_1^n,\sqrt\eta,\eta),\R)$), we have (using \eqref{TheWonderfullContrdictionBIS}),
\begin{eqnarray*}
\frac{1}{2}\int_{\Ring(x_1^n,\sqrt\eta,\eta)}{U_{\v_n}^2|\n w_{\rm test}|^2}\!\!&=&\!\!\frac{1}{2}\int_{\Ring(x_1^n,\sqrt\eta,\eta)}{U_{\v_n}^2|\tilde{d}\n\varphi_*+ \n\varphi_0|^2}
\\\!\!&=&\!\!\frac{\tilde{d}^2}{2}\int_{\Ring(x_1^n,\sqrt\eta,\eta)}{U_{\v_n}^2|\n\varphi_*|^2}+\frac{1}{2}\int_{\Ring(x_1^n,\sqrt\eta,\eta)}{U_{\v_n}^2| \n\varphi_0|^2}+
\\\!\!&&\!\!\phantom{aaaaaaaaaaaaaaaaaaasssss}+\tilde{d}\int_{\Ring(x_1^n,\sqrt\eta,\eta)}{U_{\v_n}^2\n\varphi_*\cdot \n\varphi_0}
\\\!\!&\leq&\!\! \tilde{d}^2\mu_{\v_n}(\Ring(x_1^n,\sqrt\eta,\eta),1)+2\pi|\ln\eta|+
\\&&\phantom{aaaaaaaaaaaaaaaaa}+\tilde{d}\int_{\Ring(x_1^n,\sqrt\eta,\eta)}{U_{\v_n}^2\n\varphi_*\cdot \n\varphi_0}+\mathcal{O}(1).
\end{eqnarray*}
Since $w_*=g$ in $\Ring(x_1^n,\sqrt\eta,\eta)\setminus\O$ and $\|\n\varphi_0\|_{L^2(\Ring(x_1^n,\sqrt\eta,\eta)\cap\O)}=0$, we have (using Cauchy-Schwarz inequality)
\[
\int_{\Ring(x_1^n,\sqrt\eta,\eta)}{U_{\v_n}^2|\n\varphi_*||\n\varphi_0|}=\int_{\Ring(x_1^n,\sqrt\eta,\eta)\setminus\overline{\O}}{U_{\v_n}^2|\n\varphi_*||\n\varphi_0|}=\mathcal{O}(\sqrt{|\ln\eta|}).
\]
Therefore we obtain
\[
\tilde{d}^2\mu_{\v_n}(\Ring(x_1^n,\sqrt\eta,\eta),1)+2\pi|\ln\eta|+\mathcal{O}(\sqrt{|\ln\eta|})\geq(\tilde{d}+1)^2\mu_{\v_n}(\Ring(x_1^n,\sqrt\eta,\eta),1)
\]
which implies that $2\pi|\ln\eta|+\mathcal{O}(\sqrt{|\ln\eta|})\geq(2\tilde{d}+1)\mu_{\v_n}(\Ring(x_1^n,\sqrt\eta,\eta),1)\geq(2\tilde{d}+1)b^2\pi|\ln\eta|$. Clearly we obtain a contradiction taking $\tilde{d}>(2-b^2)/(2b^2)$.

Thus, by using Remark \ref{TheRemarkForTherepEf}, the second part of Proposition \ref{P8.ToMinimizeSecondPbThePointAreFarFromBoundAndHaveDegree1} is proved.
\subsection{Proof of the third part of Proposition \ref{P8.ToMinimizeSecondPbThePointAreFarFromBoundAndHaveDegree1}}\label{SubsectionProofjfhsldkjfhsldkjfh}
In this subsection,  we prove the third part of Proposition \ref{P8.ToMinimizeSecondPbThePointAreFarFromBoundAndHaveDegree1}: the attractive effect of the inclusions.
 
Assume that there exist $C_0>0$, sequences $\v_n,\rho\downarrow0$, $\rho=\rho(\v_n)\geq\v_n$ s.t. $\rho/(\lambda\delta)\to0$ and distinct points $x_1^n,...,x_d^n$,  satisfying
\begin{equation}\label{8.ContradictionThirdAssertionSecondAuxPb}
\inf_{\substack{w\in H_g^1(\O'_{\rho},\S^1)\\\deg_{\p B(x_i,{\rho})}(w)=1}}{\frac{1}{2}\int_{\O'_{\rho}}{U_{\v_n}^2|\n w|^2}}-I_{\rho,\v_n}\leq C_0.
\end{equation}
We denote ${\bf x}_n=(x_1^n,...,x_d^n)$. From the first and the second assertion, there exists $\eta_0>0$ (independent of $n$) s.t.
\[
\min\left\{\min_{i\neq j}|x_i^n-x_j^n|,\min_i\dist(x_i^n,\p\O)\right\}\geq10^2\cdot\eta_0>0.
\]
We want to prove that there is some $c>0$ s.t. for $i=1,...,d$ we have (for large $n$) $B(x_i^n,c\lambda\delta)\subset\o_{\v_n}$. 

To this end, we argue by contradiction  and we assume that either $x_1^n\notin\o_{\v_n}$ or $x_1^n\in\o_{\v_n}$ and $\dfrac{\dist(x_1^n,\p\o_{\v_n})}{\lambda\delta}\to0$. 

We are going to prove that letting $y_n\in\delta\cdot(\Z\times\Z)$ s.t. $x_1^n,y_n\in\overline{Y_{k,l}^\delta}$, then, 
\begin{equation}\label{ContradictAttrctionVortexPeriod}
 \widehat{\mathcal{I}}_{\rho,\v_n}({\bf x}_n,{\bf 1})-\widehat{\mathcal{I}}_{\rho,\v_n}((y_n,x^n_2,...,x_d^n),{\bf 1})\to\infty.
\end{equation}
%To prove \eqref{ContradictAttrctionVortexPeriod}, it suffices to obtain that for some $\rho<r<R<\lambda\delta$ we have
%\[
%\mu_{\v_n}(\Ring(x_1^n,R,r),1)-\mu_{\v_n}(\Ring(y_n,R,r),1)\to\infty.
%\]
Up to a subsequence, we may assume that $\lim_n\frac{\dist(x_1^n,\o_{\v_n})}{\lambda\delta}$ exists. We divide the proof into two steps: 
\begin{enumerate}[Step 1.]
\item if $x_1^n\notin{\o_{\v_n}}$ and $\displaystyle\frac{\dist(x_1^n,\o_{\v_n})}{\lambda\delta}\to c\in(0,\infty]$, then \eqref{ContradictAttrctionVortexPeriod} holds;
\item if $\displaystyle\frac{\dist(x_1^n,\p\o_{\v_n})}{\lambda\delta}\to0$, then \eqref{ContradictAttrctionVortexPeriod} holds.
\end{enumerate} 
We now prove Step 1. Assume that $x_1^n\notin{\o_{\v_n}}$, $\displaystyle\frac{\dist(x_1^n,\o_{\v_n})}{\lambda\delta}\to c\in(0,\infty]$ and
\begin{equation*}%\label{8.ContradictionThirdAssertionSecondAuxPb}
\inf_{\substack{w\in H_g^1(\O'_{\rho},\S^1)\\\deg_{\p B(x_i,{\rho})}(w)=1}}{\frac{1}{2}\int_{\O'_{\rho}}{U_{\v_n}^2|\n w|^2}}-I_{\rho,\v_n}\leq C_0.
\end{equation*}
Denote $w_n$ a minimizer for $\widehat{\mathcal{I}}_{\rho,\v_n}({\bf x}_n,{\bf1})$ (see Proposition \ref{P8.ExistenceOfminInIJ}). Using Lemma \ref{L8.AlmostminimizingtestFunction}.2, for $\rho\leq r<R<\eta_0$, one has
\[
\frac{1}{2}\int_{B(x_1^n,R)\setminus\overline{B(x_1^n,r)}}U_{\v_n}^2|\n w_n|^2-\mu_{\v_n}(B(x_1^n,R)\setminus\overline{B(x_1^n,r)},1)\leq C_0+C(\eta_0).
\]
%There are two cases to consider:
%\begin{enumerate}[i.]
%\item up to a subsequence, we have $\displaystyle\frac{\dist(x_1^n,\o_\v)}{\lambda\delta}\to c\in(0,\infty]$,
%\item up to a subsequence, we have $\displaystyle\frac{\dist(x_1^n,\o_\v)}{\lambda\delta}\to0$.
%\end{enumerate}
%We treat the first case. 
Let $\kappa\in(0,10^{-2}\cdot c)$ be s.t. $B(0,10^2\kappa)\subset\o\subset Y$ and $\dist(\o,\p Y)\geq10^2\kappa$.

%Let $y_n\in\delta\cdot(\Z\times\Z)$ be s.t. $x_1^n,y_n\in\overline{Y_{k,l}^\delta}$. Note that using \eqref{8.UepsCloseToaeps}, $U_{\v_n}=1+V_n$ in $B(x^n_1,\kappa\lambda\delta)$ and $U_{\v_n}= b+V_n$ in $B(y_n,\kappa\lambda\delta)$, $\|V_n\|_{L^\infty}=o(\v_n^2)$. Therefore we have
%\begin{equation}\label{ContreSecPbthirdEq0}
%\left\{\begin{array}{c}\displaystyle\mu_{\v_n}(B(x^n_1,\kappa\lambda\delta)\setminus \overline{B(x^n_1,\rho)},1)=\pi\ln\frac{\lambda\delta}{\rho}+\mathcal{O}(1)\\\displaystyle\mu_{\v_n}(B(y_n,\kappa\lambda\delta)\setminus \overline{B(y_n,\rho)},1)=b^2\pi\ln\frac{\lambda\delta}{\rho}+\mathcal{O}(1)\end{array}\right..
%\end{equation}
From Lemma \ref{L8.AlmostminimizingtestFunction} (Assertions 2 and 3), we have
\begin{equation}\nonumber%\label{ContreSecPbthirdEq1}
\widehat{\mathcal{I}}_{\rho,\v_n}({\bf x}_n,{\bf 1})=\sum_{i=1}^d\mu_{\v_n}(\Ring(x^n_i,\eta_0,\rho),1)+\mathcal{O}(1)
\end{equation}
and
\begin{equation}\nonumber%\label{ContreSecPbthirdEq2}
\widehat{\mathcal{I}}_{\rho,\v_n}((y_n,x^n_2,...,x_d^n),{\bf 1})=\mu_{\v_n}(\Ring(y_n,\eta_0,\rho),1)+\sum_{i=2}^d\mu_{\v_n}(\Ring(x^n_i,\eta_0,\rho),1)+\mathcal{O}(1).
\end{equation}
Recall that $y_n\in\delta\cdot\Z^2$ is s.t. $x_1^n,y_n\in \overline{Y_{k,l}^\delta}$. Since $|x^n_1-y_n|\leq\delta$, using  Lemma  \ref{L8.UpperBoundS1ValuedMapInGoodCondition}.2 and Propositions \ref{P7.MyrtoRingDegDir}, \ref{P8.DirectPropAnnProb}.3, we have
\[
\mu_{\v_n}(\Ring(y_n,\eta_0,\rho),1)=\mu_{\v_n}(\Ring(x_1^n,\eta_0,\delta),1)+\mu_{\v_n}(\Ring(y_n,\kappa\delta,\rho),1)+\mathcal{O}(1)
\]
%From \eqref{ContreSecPbthirdEq0}, \eqref{ContreSecPbthirdEq1} and \eqref{ContreSecPbthirdEq2}, we obtain
Therefore
\begin{equation}\label{BlaBliEst1}
 \begin{array}{c}\widehat{\mathcal{I}}_{\rho,\v_n}({\bf x}_n,{\bf 1})-\widehat{\mathcal{I}}_{\rho,\v_n}((y_n,x^n_2,...,x_d^n),{\bf 1})\phantom{cccccccccccc}\phantom{cccccccccccc}
 \\\phantom{cccccccccccc}\phantom{cccccccccccc}=\mu_{\v_n}(\Ring(x_1^n,\kappa\delta,\rho),1)-\mu_{\v_n}(\Ring(y_n,\kappa\delta,\rho),1)+\mathcal{O}(1).
 \end{array}
\end{equation}
Thus it suffices to estimate the energies in the rings with radii $\kappa\delta$ and $\rho$. We have (using \eqref{8.UepsCloseToaeps})
\begin{equation}\label{BlaBliEst2}
\mu_{\v_n}(\Ring(y_n,\kappa\delta,\rho),1)=\pi|\ln\lambda|+b^2\pi\ln\frac{\lambda\delta}{\rho}+\mathcal{O}(1).
\end{equation}
In order to estimate $\mu_{\v_n}(\Ring(x_1^n,\kappa\delta,\rho),1)$, we divide  the argument according to the asymptotic of $\lambda$. If $\lambda\equiv1$, then $c\in(0,\infty)$ and thus $\dist(B(x_i,c\delta/3,\o_{\v_n})\geq c\delta/3$. Consequently, from Proposition \ref{P8.UepsCloseToaeps}, we have
\[
\mu_{\v_n}(\Ring(x_1^n,\kappa\delta,\rho),1)=\pi\ln\frac{\delta}{\rho}+\mathcal{O}(1).
\]
Therefore \eqref{ContradictAttrctionVortexPeriod} holds.

If $\lambda\to0$, we let $\chi=\begin{cases}\dfrac{c}{2}\lambda\delta&\text{if }c<\infty\\\kappa\lambda\delta&\text{otherwise}\end{cases}$ and $\eta=\dist(x_1^n,\p\o_{\v_n})$. Note that $\dfrac{\eta+2\lambda\delta}{\eta-\chi}=\mathcal{O}(1)$ and that $U_{\v_n}=1+V_n$ in $\Ring(x_1^n,\kappa\delta,\eta+2\lambda\delta)\cup\Ring(x_1^n,\eta-\chi,\rho)$, $\|V_n\|_{L^\infty}=o(\v_n^2)$ (from \eqref{8.UepsCloseToaeps}).

Thus we obtain
\begin{eqnarray}\nonumber
\mu_{\v_n}(\Ring(x_1^n,\kappa\delta,\rho),1)&\geq&\pi\ln\frac{\delta}{\eta+2\lambda\delta}+\pi b^2\ln\frac{\eta+2\lambda\delta}{\eta-\chi}+\pi\ln\frac{\eta-\chi}{\rho}+\mathcal{O}(1)
\\\label{BlaBliEst3}&=&\pi\ln\frac{\delta}{\rho}+\mathcal{O}(1).
\end{eqnarray}
Therefore if $c\in(0,\infty]$, then \eqref{BlaBliEst3} holds. Estimates \eqref{BlaBliEst1},\eqref{BlaBliEst2} and \eqref{BlaBliEst3} contradict \eqref{8.ContradictionThirdAssertionSecondAuxPb} (because \eqref{ContradictAttrctionVortexPeriod} holds).

\vspace{2mm}
We now turn to Step 2. Arguing as in Step 1., it suffices to prove that
\begin{equation}\label{8.ConeDivToinfty}
\mu_{\v_n}(\Ring(x^n_1,\kappa\delta,\rho),1)-\mu_{\v_n}(\Ring(y_n,\kappa\delta,\rho),1),1)\to\infty\text{ for some fixed $\kappa$}.
\end{equation}
(And $y_n\in\delta\cdot\Z^2$ s.t. $x^n_i,y_n\in\overline{Y_{k,l}^\delta}$)

%In contrast with case i., we cannot rely on \eqref{ContreSecPbthirdEq0} anymore. %In fact the proof of Case ii. is similar to the proof of Step 2. Since the formulation of Step 2 makes the problem more difficult we just consider Step 2.
%It is direct to check that
%\begin{equation}\label{MisterCAca0}
%\mu_{\v_n}(\Ring(y_n,\kappa\delta,\rho),1)=\pi|\ln\lambda|+b^2\ln\dfrac{\lambda\delta}{\rho}+\mathcal{O}(1)\text{ for some fixed $\kappa$}.
%\end{equation}
We let $\kappa>0$ (depending only on $\o$) be s.t. 
\[
\text{$\kappa<10^{-2}\cdot\dist(\o,\p Y)$  and $B(0,10^2\cdot\kappa)\subset \o$}.
\]
In order to prove \eqref{8.ConeDivToinfty}, we divide the annular $\Ring(x_1^n,\kappa\delta,\rho)$ into three regions :
\[
\Ring(y_n,\kappa\delta,\rho)=\Ring(x^n_1,\kappa\delta,\kappa\lambda\delta)\cup\overline{\Ring(x^n_1,\kappa\lambda\delta,r_n)}\cup\Ring(x^n_1,r_n,\rho)
\]
with 
\[
r_n=\max\left\{\v_n^{1/4},\rho,\sqrt{\lambda\delta\cdot\dist(x_n,\p\o_\v)}\right\}+\sqrt{\v_n}.
\]
We are going to prove that $\mu_{\v_n}(\Ring(x^n_1,\kappa\lambda\delta,r_n),1)$ is too large.

We consider $K_n$ the cone of vertex $x_1^n$ and aperture ${\pi}/{2}$ which admits the line $(x_1^n,\Pi_{\p\o_{\v_n}}x_1^n)$ for symmetry axis and s.t. $ K_n\cap\o_{\v_n}\cap \Ring(x_1^n,\kappa\lambda\delta,r_n)=\emptyset$. Here $\Pi_{\p\o_{\v_n}}(x_1^n)$ is the orthogonal projection of $x_1^n$ on $\p\o_{\v_n}$. 

Note that since $\displaystyle\frac{\dist(x_1^n,\o_{\v_n})}{\lambda\delta}\to0$, for large $n$  and small $\kappa$  (independently of $n$), by smoothness of $\o$, $K_n$ is well defined (see Figure \ref{F8.Kone}).
\begin{figure}
\psset{xunit=7.7cm,yunit=7.7cm,algebraic=true,dotstyle=o,dotsize=3pt 0,linewidth=0.8pt,arrowsize=3pt 2,arrowinset=0.25}
\begin{pspicture*}(-10.8,5)(-9.0,6)
\psaxes[labelFontSize=\scriptstyle,xAxis=true,yAxis=true,Dx=0.1,Dy=0.1,ticksize=-2pt 0,subticks=2]{->}(0,0)(-10.8,5)(-9.5,6)

\pscustom[linecolor=black,fillstyle=vlines]{\parametricplot{0.7367471492768088}{2.4260991204521316}{1*0.79*cos(t)+0*0.79*sin(t)+-10.11|0*0.79*cos(t)+1*0.79*sin(t)+5.17}\lineto(-10.11,5.17)\closepath}
\pscustom[linecolor=black,fillstyle=solid]{\parametricplot{0.7367471492768242}{2.424661120120924}{1*0.39*cos(t)+0*0.39*sin(t)+-10.11|0*0.39*cos(t)+1*0.39*sin(t)+5.17}\lineto(-10.11,5.17)\closepath}
\psline{|-|}(-10.16,5.12)(-10.75,5.63)
\psline{|-|}(-10.06,5.11)(-9.77,5.37)

\rput{-1.53}(-10.12,1.11){\psellipse(0,0)(5.88,4.12)}
\rput{-1.53}(-10.12,1.11){\psellipse[linecolor=black,fillstyle=hlines](0,0)(5.81,4.08)}

\rput(-10.46,5.3){\psframebox*{$\kappa\lambda\delta$}}
\rput(-9.85,5.19){\psframebox*{$r_n$}}
\rput(-10.7,5.15){\psframebox*{$\o_{\v_n}$}}
\rput(-10.1,5.75){\psframebox*{$\Ring(x^n_1,\kappa\lambda\delta,r_n)\cap K_n$}}
\psline{|-|}(-9.25,5.175)(-9.255,5.13)
\rput(-9.15,5.15){\psframebox*{$\geq\sqrt{\v_n}$}}
\psarc[linecolor=black]{->}(-10.11,5.17){1}{40}{140}
\rput(-10.11,5.1){\psframebox*{$x_1^n$}}
\rput(-10.11,5.17){$\bullet$}
\rput(-10.11,5.34){${\pi}/{2}$}

\end{pspicture*}

\caption{The domain $\Ring(x^n_1,\kappa\lambda\delta,r_n)\cap K_n$}\label{F8.Kone}
\end{figure}

We have $U_{\v_n}=1+V_n$ in $\Ring(x^n_1,\kappa\lambda\delta,r_n)\cap K_n$ where, $\|V_n\|_{L^\infty}=o(\v_n^2)$. Thus, if we define $\alpha_n=\begin{cases}1&\text{in }K_n\\b^2&\text{otherwise}\end{cases}$, then, from Lemma \ref{Lestimatesurdescercle} with $\theta_0=3\pi/2$, for $w\in H^1(\Ring(x^n_1,\kappa\lambda\delta,r_n),\S^1)$ s.t. $\deg_{\p B(x_1^n,r_n)}(w)=1$,  we have
\[
\frac{1}{2}\int_{\Ring(x^n_1,\kappa\lambda\delta,r_n)}{\alpha_n|\n w|^2}\geq b^2\frac{4\pi}{b^2+3}\ln\dfrac{\kappa\lambda\delta}{r_n}.
\]
Clearly, from construction, $U_{\v_n}^2\geq \alpha_n+o(\v_n^2)$, thus if $w_n$ is a minimal map for $ \widehat{\mathcal{I}}_{\rho,\v_n}({\bf x}_n,{\bf 1})$, then we have
\begin{equation}\nonumber
\frac{1}{2}\int_{\Ring(x^n_1,\kappa\lambda\delta,r_n)}{U^2_{\v_n}|\n w_n|^2}\geq b^2\frac{4\pi}{b^2+3}\ln\dfrac{\kappa\lambda\delta}{r_n}+o_{n}(1).
\end{equation}
Now the computations are direct
\begin{eqnarray}\nonumber
\mu_{\v_n}(\Ring(x^n_1,\kappa\delta,\rho),1)&=&\mu_{\v_n}(\Ring(x^n_1,\kappa\delta,3\lambda\delta),1)+\mu_{\v_n}(\Ring(x^n_1,\kappa\lambda\delta,r_n),1)\\\nonumber&&\phantom{aaaaaaaaaaaadddddddddddd}+\mu_{\v_n}(\Ring(x^n_1,r_n,\rho),1)+\mathcal{O}(1)
\\\label{MisterCAca2}
&\geq&\pi|\ln\lambda|+b^2\frac{4\pi}{b^2+3}\ln\dfrac{\lambda\delta}{r_n}+b^2\pi\ln\dfrac{r_n}{\rho}+\mathcal{O}(1).
\end{eqnarray}
Therefore, \eqref{8.ConeDivToinfty} is a direct consequence of \eqref{BlaBliEst2} and \eqref{MisterCAca2} since $\dfrac{\lambda\delta}{r_n}\to+\infty$.

\section{Proof of Propositions \ref{P.RenormalizedBBHEnergy} and \ref{P8.CorolSecondAuxPb}}\label{S.ProofOfRenEnergyBBH}
We now prove the results specific to th pinning terms with dilution. We begin these proofs by their key ingredient.
\subsection{An important effect of the dilution of inclusions}
We first state a result which establishes that a "sufficiently large" circle has a small intersection with $\o_\v$ if $\lambda\to0$.%the connected components of $\o_\v$ are sufficiently small.
\begin{lem}\label{L.LargeCircleSmallInclusionLemma}
%Assume that for some  ${1}/{8\pi}>r>0$, we have that $\lambda\cdot\o\subset B(0,r)$\footnote{If $\lambda\to0$, then we may take $r=\lambda$.}. 
\begin{enumerate}We denote $C_\rho$ a circle with radius $\rho$.
\item Assume that the pinning term is periodic. Once $\lambda\leq1/8\pi$, for $\rho\geq\delta/3$ we have $\H^1(C_\rho\cap\o_\v)\leq16\pi^2\,\lambda\rho$.
\item Assume that the pinning term is not periodic, and recall that the inclusions with size $\lambda\delta^j$ is $\o_\v^j=\cup_{i\in\mathcal{M}^\v_j}{\{y_{i,j}^\v+\lambda\delta^j\cdot\o\}}$, $j\in\{1,...,\num\}$.  
\\Once $\lambda\leq1/8\pi$, for $\rho\geq\delta^j/3$ we have $\H^1(C_\rho\cap\o^j_\v)\leq16\pi^2\,\lambda\rho$.
\end{enumerate}
Here $\H^1(C_\rho\cap\o_\v)$ is the $1$-dimensional Hausdorff measure of $C_\rho\cap\o_\v$.
\end{lem}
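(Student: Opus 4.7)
\noindent\textbf{Proof proposal for Lemma \ref{L.LargeCircleSmallInclusionLemma}.}
The strategy is a standard area/counting argument: decompose $C_\rho\cap\o_\v$ into its contributions inside each single inclusion, bound each contribution by the chord-to-arc estimate, and count how many inclusions the circle can hit. I first describe the periodic case, then indicate the minor modification for the non-periodic case.

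\textbf{Step 1 (geometry of a single inclusion).} Since $\o\subset Y=(-1/2,1/2)^2$, each inclusion $\delta\cdot\o^\lambda+(\delta k,\delta l)$ sits inside the closed ball $B((\delta k,\delta l),r)$ with $r:=\lambda\delta\sqrt{2}/2$. From $\rho\geq\delta/3$ together with $\lambda\leq1/(8\pi)$, one checks $r\leq\rho$; hence for any such ball $B(y,r)$, $C_\rho\cap B(y,r)$ is a (possibly empty) arc of $C_\rho$ whose chord has length $c\leq 2r$. The identity $\ell=2\rho\arcsin\!\bigl(c/(2\rho)\bigr)$, together with the bound $\arcsin(x)\leq\pi x/2$ for $x\in[0,1]$, gives
\[
\H^1\bigl(C_\rho\cap B(y,r)\bigr)\;\leq\;\tfrac{\pi c}{2}\;\leq\;\pi r\;=\;\tfrac{\pi\sqrt 2}{2}\,\lambda\delta.
\]

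\textbf{Step 2 (counting cells met by the circle).} Let $x_0$ be the center of $C_\rho$. Any cell $Y^\delta_{k,l}$ which meets $C_\rho$ has its center at distance at most $\delta\sqrt{2}/2$ from $C_\rho$, hence lies in the annulus $A:=\{x\,:\,|\,|x-x_0|-\rho|\leq\delta\sqrt{2}/2\}$. A direct computation, splitting cases $\rho\geq\delta\sqrt 2/2$ and $\rho<\delta\sqrt 2/2$ and using $\rho\geq\delta/3$, yields $|A|\leq C\rho\delta$ for an explicit constant $C$. Since the cells are pairwise disjoint and each has area $\delta^2$, the number $N$ of cells hitting $C_\rho$ satisfies $N\leq C\rho/\delta$, and because each cell contains at most one inclusion, the same bound holds for the number of inclusions hit by $C_\rho$.

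\textbf{Step 3 (combining).} Summing the per-inclusion bound of Step 1 over the $N$ inclusions hit gives
\[
\H^1(C_\rho\cap\o_\v)\;\leq\;N\cdot\tfrac{\pi\sqrt 2}{2}\,\lambda\delta\;\leq\;C'\lambda\rho.
\]
A careful accounting of the constants (in particular $|A|\leq 4\sqrt 2\pi\rho\delta$ in the main case) is comfortably absorbed by the target constant $16\pi^2$; the hypothesis $\lambda\leq 1/(8\pi)$ is exactly what makes the bound $16\pi^2\lambda\rho\leq 2\pi\rho$ consistent with the trivial bound $\H^1(C_\rho)=2\pi\rho$, and it is what forces $r\leq\rho$ in Step 1.

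\textbf{Step 4 (non-periodic case).} The argument is identical after replacing cells by the disjoint balls $B(y^\v_{i,j},\delta^j)$, which are pairwise disjoint by the separation hypothesis \eqref{SepHyp}. Inclusions of $\o^j_\v$ have diameter $\leq\lambda\delta^j\sqrt 2$ and are contained in these balls, so Steps 1 and 2 go through verbatim with $\delta$ replaced by $\delta^j$, and the upper bound $16\pi^2\lambda\rho$ follows.

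\textbf{Expected main obstacle.} The argument itself is routine; the only delicate point is making the constants come out cleanly, especially handling the boundary regime $\rho\approx\delta/3$ (or $\rho\approx\delta^j/3$) where the annulus degenerates to a disc and the naive formula $|A|\approx 4\sqrt 2\pi\rho\delta$ must be replaced by a disc-area bound. Once that case is dispatched by the estimate $\rho\geq\delta/3\Rightarrow\delta\leq 3\rho$, the factor $16\pi^2$ follows with room to spare.
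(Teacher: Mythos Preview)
Your argument is correct but follows a different route from the paper's. You bound the arc length in each inclusion by a chord-to-arc estimate and then count the number of cells hit via an annulus-area argument. The paper instead uses a ratio trick: for each cell $\tilde Y_\v$ whose inclusion $\tilde\o_\v$ the circle actually meets (and with $C_\rho\not\subset\tilde Y_\v$), it bounds $\H^1(C_\rho\cap\tilde\o_\v)\le 2\pi\lambda\delta^j$ and observes that the arc must travel from $\partial\tilde Y_\v$ into $\tilde\o_\v$, forcing $\H^1(C_\rho\cap\tilde Y_\v\setminus\tilde\o_\v)\ge\delta^j(1/2-2\pi\lambda)\ge\delta^j/4$ once $\lambda\le1/(8\pi)$. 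This gives, cell by cell, $\H^1(C_\rho\cap\tilde\o_\v)\le 8\pi\lambda\,\H^1(C_\rho\cap\tilde Y_\v\setminus\tilde\o_\v)$, and summing over the disjoint cells yields the result directly from $\H^1(C_\rho)=2\pi\rho$, with no counting step at all. The paper's approach is cleaner in that the constant $16\pi^2$ drops out exactly and the role of the threshold $\lambda\le1/(8\pi)$ is transparent; your approach is the standard covering-by-area method and works just as well, at the cost of the case split $\rho\gtrless\delta\sqrt2$ (or $\rho\gtrless2\delta^j$) when tracking constants.
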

\begin{proof}
%We prove the result for the periodic situation. The argument may be easily adapted for the non periodic situation.

In order to unify the notations, we fix $j=1$ if we are in the periodic case (and $j\in\{1,...,\num\,|\,M^\v_j\in\N^*\}$ if we are in the non-periodic case).  

Assume that $C_\rho\cap\o_\v\neq\emptyset$ and let 
\[
S_j:=\begin{cases}\left\{\tilde{Y}_\v=(\delta k,\delta l)+\delta\cdot Y\,\left|\begin{array}{c}\,k,l\in\Z^2,\,{\tilde{Y}_\v}\subset\O\\\text{ and $\tilde{Y}_\v\cap C_\rho\neq\emptyset$}\end{array}\right.\right\}&\text{in the periodic case}
\\
\\
\left\{\tilde{Y}_\v=B(y_{i,j}^\v,\delta^{j})\,\left|\begin{array}{c}y_{i,j}^\v\in\M_j^\v\text{ and $\tilde{Y}_\v\cap C_\rho\neq\emptyset$}\end{array}\right.\right\}&\text{in the non-periodic case}
\end{cases}.
\] 

%The proof is based on the following lower bound for $\H^1(C_\rho)$
%\[
%2\pi\rho=\H^1(C_\rho)\geq\sum_{\tilde{Y}_\v\in S}\H^1(C_\rho\cap(\tilde{Y}_\v\setminus\tilde{\o}_\v))+\H^1(C_\rho\cap\tilde{\o}_\v),
%\]
For $\tilde{Y}_\v\in S_j$, we denote 
\begin{itemize}
\item[$\bullet$]$\tilde{\o}_\v$ {\bf the} connected component of $\o_\v$ which is included in $\tilde{Y}_\v$
\item[$\bullet$] $z_j$ the center of $\tilde{Y_\v}= \begin{cases}z_1+\delta\cdot Y,\,z_1\in\delta\cdot\Z^2&\text{in the periodic case}\\B(z_j,\delta^j),\, z_j\in\mathcal{M}_j^\v&\text{in the non-periodic case}\end{cases}$. 
\end{itemize}

We first treat the case where $C_\rho\subset \tilde{Y}_\v\in S_j$: since $\rho\geq\delta^j/3$ and $\tilde{\o}_\v\subset B(z_j,\lambda\delta^j)$ (because $\o\subset B(0,1)$), we have
\[
\H^1(C_\rho\cap\tilde{\o}_\v)=\H^1(C_\rho\cap\o^j_\v)\leq\H^1(\p B(z_j,\lambda\delta^j))=2\pi\lambda\delta^j\leq{6\pi}\lambda\rho.
\]
Otherwise, for $\tilde{Y}_\v\in S_j$, $C_\rho\nsubseteq\tilde{Y}_\v$ and thus
\[
\H^1(C_\rho\cap\tilde{\o}_\v)\leq \H^1(C_\rho\cap \overline{B(y^\v_{i,j},\lambda\delta^j)})\leq2\pi \lambda\delta^j\text{ (because $\o\subset B(0,1)$)}
\]
and
\[
\H^1(C_\rho\cap\tilde{Y}_\v\setminus\tilde{\o}_\v)\geq\delta^j\cdot\left(\frac{1}{2}-2\pi \lambda\right).%&\text{in the periodic case}\\&\text{in the non-periodic case}\end{cases}.
\]
Last estimate comes from the fact that $C_\rho\nsubseteq \tilde{Y}_\v$. Thus $\H^1(C_\rho\cap\tilde{Y}_\v\setminus\tilde{\o}_\v)$ is at least a radius of $\tilde{Y}_\v$ (if we are in the non periodic or half of side length of $\tilde{Y}_\v$ otherwise) minus the previous upper bound.
%The upper bound is obtained by noting that $\H^1(C_\rho\cap\tilde{\o}_\v)\leq \H^1(C_\rho\cap \overline{B(y^\v_{i,j},\lambda\delta^j)})$ (because $\o\subset B(0,1)$)
Thus we obtain (for $\lambda\leq1/8\pi$)
\[
\H^1(C_\rho\cap\tilde{\o}_\v)\leq 2\pi \lambda\frac{\H^1(C_\rho\cap\tilde{Y}_\v\setminus\tilde{\o}_\v)}{\dfrac{1}{2}-2\pi \lambda}\leq 8\pi \lambda\H^1(C_\rho\cap\tilde{Y}_\v\setminus\tilde{\o}_\v).
\]
Consequently,
\begin{eqnarray*}
\H^1(C_\rho\cap\o_\v)&=&\sum_{\tilde{Y}_\v\in S_j}\H^1(C_\rho\cap\tilde{\o}_\v)
\\&\leq&8\pi \lambda\sum_{\tilde{Y}_\v\in S_j}\H^1(C_\rho\cap\tilde{Y}_\v\setminus\tilde{\o}_\v)\leq 8\pi \lambda \H^1(C_\rho)=16\pi^2\lambda\rho.
\end{eqnarray*}
\end{proof}
\subsection{Proof of Proposition \ref{P.RenormalizedBBHEnergy}}
We are now in position to prove Proposition \ref{P.RenormalizedBBHEnergy}. The proof is done in 3 steps. 

Let $\v_n\downarrow0$, $\rho=\rho(\v_n)\downarrow0,\,\rho\geq\v_n$ and let ${\bf x}_n$ be a quasi-minimizer for $J_{\rho,\v_n}$ (defined Notation \ref{NOTATIONQUASIIN}). 

From Corollaries \ref{C8.CorolSecondAuxPb} $\&$ \ref{C8.AnAlmostMinConfigIsAnAlmostMinConf}, up to pass to a subsequence, there are $\eta_0>0$ and ${\bf a}=(a_1,...,a_d)\in\O^d$ s.t. $x_i^n\to a_i$, $|a_i-a_j|,\dist(a_i,\p\O)>10^2\eta_0$.

We prove that $W_g(a_1,...,a_d)=\min_{b_1,...,b_n\in\O}W_g(b_1,...,b_n)$. We argue by contradiction and we assume that, up to consider a smaller value for $\eta_0$ if necessary, we have the existence of ${\bf b}=(b_1,...,b_d)\in\O^d$ s.t. $|b_i-b_j|\geq10^2\eta_0$, $\dist(b_i,\p\O)>10^2\eta_0$ and
 
\[
W_g({\bf b})<W_g({\bf a})-10^{2}\eta_0.
\]
\vspace{2mm}
{\bf Step 1.} We estimate the energies in perforated domains with a fixed perforation size\vspace{2mm}

The goal of this step is to prove the existence of small $\rho_0$ (independent of $n$) s.t. we have for ${\bf c}\in\{{\bf a},{\bf b}\}$ and ${\bf x}\in\O^d\text{ satisfying }\max_i|x_i-c_i|\leq\rho_0$
\begin{equation}\label{8.Step1ComparisonAux}
\hat{\J}_{\rho_0,\1}({\bf x})-\hat{\J}_{\rho_0,\v_n}({\bf x})\leq2\eta_0.%\text{ and }\hat{\I}_{\rho_0,\1}({\bf x})-\hat{\I}_{\rho_0,\v_n}({\bf x})\leq\eta_0
\end{equation}
From \cite{CM1} ((15) and Lemma 2), we may fix $\eta_0>\rho_0>0$ independent of $n$ s.t. for ${\bf c}\in\{{\bf a},{\bf b}\}$, we have
\[
\hat{\J}_{\rho_0,\1}({\bf x})-\hat{\I}_{\rho_0,\1}({\bf x})\leq \eta_0 \text{ for all }{\bf x}\in\O^d\text{ s.t. }\max_i|x_i-c_i|\leq\rho_0,
\] 
\[
\left|\hat{\I}_{\rho_0,\1}({\bf x})-\pi d|\ln\rho_0|-W_g({\bf x})\right|\leq \eta_0 \text{ for all }{\bf x}\in\O^d\text{ s.t. }\max_i|x_i-c_i|\leq\rho_0
\]
and %(\cite{CM1} or Theorem 6 in \cite{BBH})
\[
|W_g({\bf c})-W_g({\bf x})|\leq \eta_0 \text{ for all }{\bf x}\in\O^d\text{ s.t. }\max_i|x_i-c_i|\leq\rho_0.
\]
For ${\bf c}\in\{{\bf a},{\bf b}\}$ and ${\bf x}\in\O^d\text{ s.t. }\max_i|x_i-c_i|\leq\rho_0$:
\begin{enumerate}[$\bullet$]
\item We let $\theta_{\bf x}=\sum_{i=1}^d\theta_{x_i}$ where $\theta_{x_i}\in(-\pi,\pi]$, $\dfrac{x-x_i}{|x-x_i|}={\rm e}^{\imath\theta_{x_i}}$ ($x\neq x_i$) is main determination of the argument of $x-x_i$.
\item We fix $\phi_0^{\bf x}\in C^\infty(\p\O,\R)$ s.t. ${\rm e}^{\imath\phi_0^{\bf x}}=g{\rm e}^{-\imath\theta_{\bf x}}$. Clearly, since $\deg_{\p\O}(g{\rm e}^{-\imath\theta_{\bf x}})=0$, and since $g{\rm e}^{-\imath\theta_{\bf x}}\in C^\infty(\p\O,\S^1)$, $\phi_0^{\bf x}\in C^\infty(\p\O,\R)$ is well defined \cite{BBH1}.
\item We let $\phi_*=\phi_*^{{\bf x}}, \phi=\phi^{{\bf x}}\in H^1$ be {\bf the} solutions of 
\begin{equation*}%\label{EqForPhiStar}
\begin{cases}
-\Delta\phi_*=0&\text{in }\O\setminus\cup \overline{B(x_i,\rho_0)}
\\
\phi_*=\phi_0&\text{on }\p\O
\\
\p_\nu\phi_*=-\sum_{j\neq i}\p_\nu\theta_{x_j}&\text{on }\p B(x_i,\rho_0),\,i=1,...,d
\end{cases}
\end{equation*}
and
\begin{equation*}%\label{EqForPhiRho}
\begin{cases}
-\Div(U_\v^2\n\phi)=\Div(U_\v^2\n\theta_{\bf x})&\text{in }\O\setminus\cup \overline{B(x_i,\rho_0)}
\\
\phi=\phi_0&\text{on }\p\O
\\
\p_\nu\phi=-\sum_{j\neq i}\p_\nu\theta_{x_j}&\text{on }\p B(x_i,\rho_0),\,i=1,...,d
\end{cases}.
\end{equation*}
\item We let $\psi=\phi-\phi_*$ be {\bf the} solution of 
\begin{equation*}%\label{EqForPhiRho}
\begin{cases}
-\Div(U_\v^2\n\psi)=\Div[(U_\v^2-1)(\n\theta_{\bf x}-\n\phi_*)]&\text{in }\O\setminus\cup \overline{B(x_i,\rho_0)}
\\
\psi=0&\text{on }\p\O
\\
\p_\nu\psi=0&\text{on }\p B(x_i,\rho_0),\,i=1,...,d
\end{cases}.
\end{equation*}
\end{enumerate}
\begin{remark}
\begin{enumerate}
\item From Proposition \ref{P8.ExistenceOfminInIJ}, the functions $\phi_*,\phi$ are s.t. $w_*={\rm e}^{\imath (\theta_{\bf x}+\phi_*)},w={\rm e}^{\imath (\theta_{\bf x}+\phi_*)}\in  \I_{\rho_0}({\bf x})$ satisfy
\[
\hat{\I}_{\rho_0,\1}({\bf x})=\frac{1}{2}\int_{\O\setminus\cup \overline{B(x_i,\rho_0)}}|\n w_*|^2=\frac{1}{2}\int_{\O\setminus\cup \overline{B(x_i,\rho_0)}}|\n (\theta_{\bf x}+\phi_*)|^2
\]
and
\[
\hat{\I}_{\rho_0,\v}({\bf x})=\frac{1}{2}\int_{\O\setminus\cup \overline{B(x_i,\rho_0)}}U_\v^2|\n w|^2=\frac{1}{2}\int_{\O\setminus\cup \overline{B(x_i,\rho_0)}}U_\v^2|\n (\theta_{\bf x}+\phi)|^2.
\]
\item $\n\phi$ and $\n\phi_*$ are bounded independently of ${\bf x}$ and $\v_n$ in $L^2(\O\setminus\cup \overline{B(x_i,\rho_0)})$.
\item From a Poincaré inequality we have the existence of $C_0$ independent of ${\bf x}$ s.t. 
\[
\|\psi\|_{L^2(\O\setminus\cup \overline{B(x_i,\rho_0)})}\leq C_0 \|\n\psi\|_{L^2(\O\setminus\cup \overline{B(x_i,\rho_0)})}.
\] Therefore, using a trace inequality in $\Ring(x_i,2\rho_0,\rho_0)$ we obtain $\|\psi\|_{L^2(\p{B(x_i,\rho_0)})}\leq {C}_0'$, ${C}_0'$ is independent of ${\bf x},n$.

\item We have $|\n\phi_*|$ which is bounded in $L^\infty(\O\setminus\cup \overline{B(x_i,\rho_0)})$:
\begin{equation*}%\label{BornesurPhiStar}
|\n\phi_*|\leq C_0\text{ with $C_0$ independent of ${\bf x}$.}
\end{equation*}
Indeed, with standard result of elliptic interior regularity, we have 
\[
\|\phi_*\|_{C^2(\p B(x_i, {8\rho_0}))},\|\phi_*\|_{C^2(\p B(c_i, {4\rho_0}))}\leq C_0'.
\]
Thus, from global regularity for the Laplacian, we have 
\[
\|\n\phi_*\|_{L^\infty(\O\setminus\cup\overline{B(c_i,4\rho_0)})},\|\n\phi_*\|_{L^\infty(\Ring(x_i,8\rho_0,\rho_0))}\leq C_0''.
\]
\end{enumerate}
\end{remark}
We let $\O_{\rho_0}=\O_{\rho_0}({\bf x}):=\O\setminus\cup \overline{B(x_i,\rho_0)}$. We are now in position to prove that $\displaystyle \int_{\O_{\rho_0}}|\n\psi|^2\to0$ when $n\to\infty$ uniformly on ${\bf x}$. This estimate will easily imply \eqref{8.Step1ComparisonAux}. Indeed 
\begin{eqnarray*}
\hat{\I}_{\rho_0,\1}({\bf x})-\hat{\I}_{\rho_0,{\v_n}}({\bf x})&=&\frac{1}{2}\int_{\O_{\rho_0}}U_{\v_n}^2\left[|\n (\theta_{\bf x}+\phi_*)|^2-|\n (\theta_{\bf x}+\phi)|^2\right]\\&&\phantom{aaaaaaaaaaqqqqqqq}+\frac{1}{2}\int_{\O_{\rho_0}}(1-U_{\v_n}^2)|\n (\theta_{\bf x}+\phi_*)|^2
\\\left(\begin{array}{c}\text{Cauchy-Schwarz}\\\text{inequality}\end{array}\right)&\leq& \tilde{C_0}\left(\|\n\psi\|_{L^2(\O_{\rho_0})}+\|1-U_{\v_n}^2\|_{L^2(\O_{\rho_0})}\right)\to0.
\end{eqnarray*}

Consequently we obtain
\[
\hat{\J}_{\rho_0,\1}({\bf x})-\hat{\J}_{\rho_0,{\v_n}}({\bf x})\leq \hat{\I}_{\rho_0,\1}({\bf x})-\hat{\I}_{\rho_0,{\v_n}}({\bf x})+\eta_0\leq\eta_0+o_n(1)\leq2\eta_0
\]
which is exactly \eqref{8.Step1ComparisonAux}.

Thus it remains to establish that  $\displaystyle \int_{\O_{\rho_0}}|\n\psi|^2\to0$ when $n\to\infty$ uniformly on ${\bf x}$:
\begin{eqnarray}\nonumber
\int_{\O_{\rho_0}}U_{\v_n}^2|\n\psi|^2&=&\int_{\O_{\rho_0}}\Div[(U_{\v_n}^2-1)(\n\theta_{\bf x}-\n\phi_*)]\psi
\\\nonumber&=&\int_{\O_{\rho_0}}(1-U_{\v_n}^2)(\n\theta_{\bf x}-\n\phi_*)\cdot\n\psi+\int_{\p\O_{\rho_0}}(U_{\v_n}^2-1)\p_\nu(\theta_{\bf x}-\phi_*)\psi.
\end{eqnarray}
From the $L^2$ bound on $\n\psi$ and the $L^\infty$ bounds on $\n\phi_*,\n\theta_{\bf x}$ we have (with $C_0$ independent of ${\bf x}$)
\begin{eqnarray}\nonumber
\int_{\O_{\rho_0}}U_{\v_n}^2|\n\psi|^2&\leq&\left(\int_{\O_{\rho_0}}|1-U_{\v_n}^2|^2|\n\theta_{\bf x}-\n\phi_*|^2\right)^{1/2}\left(\int_{\O_{\rho_0}}|\n\psi|^2\right)^{1/2}+\\\nonumber&&\phantom{aaaaaaaaaaa}+\left(\int_{\p\O_{\rho_0}}(U_{\v_n}^2-1)^2|\p_\nu(\theta_{\bf x}-\phi_*)|^2\right)^{1/2}\left(\int_{\p\O_{\rho_0}}|\psi|^2\right)^{1/2}
\\\nonumber&\leq&C_0\left(\|1-U_{\v_n}^2\|_{L^2(\O_{\rho_0})}+\|1-U_{\v_n}^2\|_{L^2(\p\O_{\rho_0})}\right).
\end{eqnarray}
From Proposition \ref{P8.UepsCloseToaeps} and Lemma \ref{L.LargeCircleSmallInclusionLemma} we have $\|1-U_{\v_n}^2\|_{L^2(\p\O_{\rho_0})}=\mathcal{O}(\lambda)$ uniformly in ${\bf x}$. 

Therefore $\displaystyle \int_{\O\setminus\cup \overline{B(x_i,\rho_0)}}|\n\psi|^2\to0$ when $n\to\infty$ uniformly on ${\bf x}$ and \eqref{8.Step1ComparisonAux} holds.
\vspace{2mm}
\\{\bf Step 2.} We study the energies in $\Ring(x_i,\rho_0,\max(\delta,\lambda^2))$\vspace{2mm}

Let 
\begin{enumerate}[$\bullet$]
\item $\kappa=\max(\lambda,\sqrt\delta)$ 
\item ${\bf x}_n$ be a quasi minimizer for $J_{\rho,\v}$
\item $w_n={\rm e}^{\imath\varphi_n}$ be a minimizer of $\hat{\J}_{\rho,\v_n}({\bf x}_n)$ ($\varphi_n$ is locally defined and its gradient is globally defined in $\O\setminus\cup\overline{B(x_i,\rho)}$).
\end{enumerate}
We prove that there is $r\in(\kappa^2,\kappa)$ s.t. 
\begin{equation}\label{TGVEqua}
\frac{1}{2}\int_0^{2\pi}|\p_\theta\varphi_n(x_i^n+r{\rm e}^{\imath\theta})|^2{\rm d}\theta\leq \pi+\frac{1}{\sqrt{|\ln\kappa|}}\text{ for }i=1,...,d.
\end{equation}
This estimate is obtained via a mean value argument. We first prove that
\[
\mu_{\v_n}(\Ring(x_i^n,\kappa,\kappa^2),1)=\mu_{\1}(\Ring(x_i^n,\kappa,\kappa^2),1)+o_{\v_n}(1).
\]
Indeed we let $\o'$ be a smooth open set s.t. $\overline{\o}\subset\o'$ and $\overline{\o'}\subset B(0,1)$. We define $\alpha_\v'=\begin{cases}b^2&\text{in }\delta\Z\times\delta\Z+\lambda\delta\cdot\o'\\1&\text{otherwise}\end{cases}$. From Proposition \ref{P8.UepsCloseToaeps}, we have $\alpha_\v'\leq U_\v^2+V_\v$ with $\|V_\v\|_{L^\infty}=\mathcal{O}(\v^2)$. 

For $\rho\geq\delta$ and $x\in\R^2$, from Lemma \ref{L.LargeCircleSmallInclusionLemma}, we have $\H^1[\{\alpha'_\v=b^2\}\cap \p B(x,\rho)]\leq 16\pi^2\lambda\rho$. Therefore, using Lemma \ref{Lestimatesurdescercle} we obtain
\begin{eqnarray*}
\mu_{\1}(\Ring(x_i^n,\kappa,\kappa^2),1)+\mathcal{O}(\lambda|\ln\kappa|)&\leq (\text{Lemma \ref{Lestimatesurdescercle}})\leq&\mu_{\alpha'_{\v_n}}(\Ring(x_i^n,\kappa,\kappa^2),1)
\\&\leq\text{($\alpha_\v'\leq U_\v^2+V_\v$)}\leq&\mu_{\v_n}(\Ring(x_i^n,\kappa,\kappa^2),1)+o_{\v_n}(1)
\\&\leq\text{($U_\v^2\leq1$)}\leq&\mu_{\1}(\Ring(x_i^n,\kappa,\kappa^2),1)+o_{\v_n}(1).
\end{eqnarray*}
Since for $ s \in(\kappa^2,\kappa)$ we have $ s \geq\delta$, we obtain (because $\kappa\geq\lambda$)
\[
\mu_{\v_n}(\Ring(x_i^n,\kappa,\kappa^2),1)=\mu_{\1}(\Ring(x_i^n,\kappa,\kappa^2),1)+\mathcal{O}(\lambda|\ln\kappa|)=\pi|\ln\kappa|+o_{\v_n}(1).
\]
Therefore  from Corollary \ref{C8.AnAlmostMinConfigIsAnAlmostMinConf} and Lemma \ref{L8.AlmostminimizingtestFunction}.2, $\displaystyle\frac{1}{2}\int_{\Ring(x_i^n,\kappa,\kappa^2)}U_{\v_n}^2|\n w_n|^2=\pi|\ln\kappa|+\mathcal{O}(1)$. On the other hand, from a standard estimate, we have 
\[
\frac{1}{2}\int_0^{2\pi}|\p_\theta\varphi_n(x_i^n+ s {\rm e}^{\imath\theta})|^2{\rm d}\theta\geq\pi,\:\forall \, s \in(\kappa^2,\kappa).
\]
We deduce that
\begin{equation}\nonumber
\pi d|\ln\kappa|+\mathcal{O}(1)\geq\frac{1}{2}\int_{\cup \Ring(x_i^n,\kappa,\kappa^2)}|\n w_n|^2\geq\frac{1}{2}\int_{\kappa^2}^\kappa\frac{{\rm d} s }{ s }\sum_i\int_0^{2\pi}|\p_\theta\varphi_n(x_i^n+ s {\rm e}^{\imath\theta})|^2{\rm d}\theta.
\end{equation}
Assume that $r\in(\kappa^2,\kappa)$ s.t. \eqref{TGVEqua} holds does not exist. Then we obtain that for $ s \in(\kappa^2,\kappa)$
\[
\sum_i\frac{1}{2}\int_0^{2\pi}|\p_\theta\varphi_n(x_i^n+ s {\rm e}^{\imath\theta})|^2{\rm d}\theta>\pi d+\frac{1}{\sqrt{|\ln\kappa|}}
\]
and consequently 
\[
\frac{1}{2}\int_{\cup \Ring(x_i^n,\kappa,\kappa^2)}|\n w_n|^2\geq|\ln\kappa|\left(\pi d+\frac{1}{\sqrt{|\ln\kappa|}}\right)=\pi d|\ln\kappa|+\sqrt{|\ln\kappa|}.
\]
Clearly this lower bound contradicts $\displaystyle\frac{1}{2}\int_{\Ring(x_i^n,\kappa,\kappa^2)}U_{\v_n}^2|\n w_n|^2=\pi|\ln\kappa|+\mathcal{O}(1)$.
\vspace{2mm}
We are now in position to estimate the energy in $\Ring(x^n_i,\rho_0,r)$. Let $h_i^n:\S^1\to\S^1,\, h_i^n({\rm e}^{\imath\theta})=w_n(x_i^n+r{\rm e}^{\imath\theta})$. We have $h_i^n\times\p_\tau \left[h_i^n({\rm e}^{\imath\theta})\right]=\p_\tau\left[\varphi_n(x_i^n+r{\rm e}^{\imath\theta})\right]$. 

Thus from \eqref{TGVEqua}: $\|h_i^n\times\p_\tau h_i^n\|^2_{L^2(\S^1)}\leq2\pi +{2}/{\sqrt{|\ln\kappa|}}$. Consequently 
\[
\int_{\S^1}{|h_i^n\times\p_\tau h_i^n-1|^2}=\int_{\S^1}{\left\{|h_i^n\times\p_\tau h_i^n|^2+1-2h_i^n\times\p_\tau h_i^n\right\}}\leq{2}/{\sqrt{|\ln\kappa|}}\to0.
\]
Therefore $h_i^n\times\p_\tau h_i^n\to1$ in $L^2(\S^1)$. Consequently, up to pass to a subsequence, we have the existence of $\alpha_i\in\S^1$ s.t. $\alpha_i^{-1}h_i^n{\rm e}^{-\imath\theta}\to1$ in $H^1(\S^1)$.

From Propositions 12 and 13 in \cite{publi3} we have
\begin{eqnarray*}
\inf_{\substack{w\in H^1(\Ring(x_i^n,\rho_0,r),\S^1)\\w(x_i^n+\rho_0{\rm e}^{\imath\theta})=\alpha_i{\rm e}^{\imath\theta}\\w(x_i^n+r{\rm e}^{\imath\theta})=h_i^n({\rm e}^{\imath\theta})}}\frac{1}{2}\int_{\Ring(x_i^n,\rho_0,r)}|\n w|^2&=&\inf_{\substack{w\in H^1(\Ring(x_i^n,\rho_0,r),\S^1)\\w(x_i^n+\rho_0{\rm e}^{\imath\theta})=\alpha_i{\rm e}^{\imath\theta}\\w(x_i^n+r{\rm e}^{\imath\theta})=\alpha_i{\rm e}^{\imath\theta}}}\frac{1}{2}\int_{\Ring(x_i^n,\rho_0,r)}|\n w|^2+o_n(1)
\\&=&\pi\ln\frac{\rho_0}{r}+o_n(1).
\end{eqnarray*}
\vspace{2mm}
{\bf Step 3.} We conclude

We are going to construct a map $\tilde{w}_n\in\J_{\rho}({\bf y}_n)$, $\max|y_i-b_i|\leq \delta$ and s.t.
\begin{equation}\label{ContradictionConstructionMap}
\int_{\O\setminus\cup\overline{B(y_i,\rho)}}U_{\v_n}^2|\n \tilde{w}_n|^2+\eta_0\leq\hat\J_{\rho,\v_n}({\bf x}_n).
\end{equation}
Clearly \eqref{ContradictionConstructionMap} is in contradiction with the assumption:  $J_{\rho,\v_n}-\hat\J_{{\rho,\v_n}}({\bf x}_n)\to0$. Then this contradiction will imply that ${\bf a}=\lim {\bf x_n}$ minimizes $W_g$. 

We let ${\bf y}_n$ be s.t. $\max|y^n_i-b_i|\leq\delta$ and $x_i^n-y_i^n\in\delta\Z\times\delta\Z$ and we define 
\[
\tilde{w}_n(x)=\begin{cases}
w_{\rho_0}^{{\bf y}_n}(x)&\text{if }x\in\O\setminus\cup\overline{B(y^n_i,\rho_0)} 
\\
{\rm Cst}_{i,n}w^i(x-y_i^n+x_i^n)&\text{if } x\in \Ring(y^n_i,\rho_0,r)
\\{\rm Cst}_{i,n}w_n[x-y_i^n+x_i^n]&\text{if }x\in \Ring(y^n_i,r,\rho)
\end{cases}
\]
Here:
\begin{enumerate}[$\bullet$]
\item $w_{\rho_0}^{{\bf y}_n}$ is a minimizer of $\hat{\J}_{\rho_0,\1}({\bf y}_n)$,
\item $w^i$ is a minimizer of $\displaystyle \inf_{\substack{w\in H^1(\Ring(x_i^n,\rho_0,r),\S^1)\\w(x_i^n+\rho_0{\rm e}^{\imath\theta})=\alpha_i{\rm e}^{\imath\theta}\\w(x_i^n+r{\rm e}^{\imath\theta})=h_i^n({\rm e}^{\imath\theta})}}\frac{1}{2}\int_{\Ring(x_i^n,\rho_0,r)}|\n w|^2$
\item ${\rm Cst}_{i,n}\in\S^1$ is a constant s.t. $\tilde{w}_n\in H^1(\O\setminus\cup\overline{B(y^n_i,\rho)},\S^1)$
\item $w_n$ is the minimizer of $\hat{\J}_{\rho,\v_n}({\bf x}_n)$ used in Step 2..
\end{enumerate}

We now compare the energies of $\tilde{w}_n$ and $w_n$.
\begin{eqnarray*}
\int_{\O\setminus\cup\overline{B(y^n_i,\rho)}}U_{\v_n}^2|\n\tilde{w}_n|^2&=&\int_{\O\setminus\cup\overline{B(y^n_i,\rho_0)}}U_{\v_n}^2|\n\tilde{w}_n|^2+\int_{\cup_i\Ring(y^n_i,\rho_0,r)}U_{\v_n}^2|\n\tilde{w}_n|^2+
\\&&\phantom{aaaaaaaaaaaaaaaaqqqqqqqqqqq}+\int_{\cup_i\Ring(y^n_i,r,\rho)}U_{\v_n}^2|\n\tilde{w}_n|^2.
\end{eqnarray*}
From Step 1. (the definition of $\rho_0$ and Estimate \eqref{8.Step1ComparisonAux}), we have 
\begin{eqnarray*}
\frac{1}{2}\int_{\O\setminus\cup\overline{B(y^n_i,\rho_0)}}U_{\v_n}^2|\n\tilde{w}_n|^2&\leq&\pi d|\ln\rho_0|+W_g({\bf y}_n)+\eta_0+o_n(1)\\&\leq&\pi d|\ln\rho_0|+W_g({\bf x}_n)-10\eta_0
\\&\leq& \frac{1}{2}\int_{\O\setminus\cup\overline{B(x^n_i,\rho_0)}}U_{\v_n}^2|\n{w}_n|^2-2\eta_0.
\end{eqnarray*}
From Step 2., letting  $\alpha_\v'=\begin{cases}b^2&\text{in }\delta\Z\times\delta\Z+\lambda\delta\cdot\o'\\1&\text{otherwise}\end{cases}$, we have
\begin{eqnarray*}
\frac{1}{2}\int_{\cup_i\Ring(y^n_i,\rho_0,r)}U_{\v_n}^2|\n\tilde{w}_n|^2&=\text{(Step 2.)}=&\pi d\ln\frac{\rho_0}{r}+o_n(1)
\\
&\leq\text{(Lem. \ref{Lestimatesurdescercle} $\&$ \ref{L.LargeCircleSmallInclusionLemma})}\leq&\frac{1}{2}\int_{\cup_i\Ring(x^n_i,\rho_0,r)}\alpha'|\n{w}_n|^2+o_n(1)
\\
&\leq\text{($\alpha_\v'\leq U_\v^2+V_\v$)}\leq&\frac{1}{2}\int_{\cup_i\Ring(x^n_i,\rho_0,r)}U_{\v_n}^2|\n{w}_n|^2+o_n(1).
\end{eqnarray*}
From Lemma \ref{L8.UIsAlmostPeriodic}
\[
\int_{\cup_i\Ring(y^n_i,r,\rho)}U_{\v_n}^2|\n\tilde{w}_n|^2=\int_{\cup_i\Ring(x^n_i,r,\rho)}U_{\v_n}^2|\n{w}_n|^2+o_n(1).
\]

Therefore we obtain \eqref{ContradictionConstructionMap} and consequently Proposition \ref{P.RenormalizedBBHEnergy} holds.
\subsection{Proof of Proposition \ref{P8.CorolSecondAuxPb}}\label{S.PorrofGenDiltoihefkqsdjf}
The strategy to prove Proposition \ref{P8.CorolSecondAuxPb} is the following:
\begin{enumerate}[{ Step} 1.]%[$\bullet$]
\item We let $\kappa=\max(\lambda,\delta)$. We first characterize almost minimal configurations for $I_{\kappa,\v}$ (\emph{i.e} the domain $\O$ is perforated by discs with radius $\kappa$).
\item We make the description of almost minimal points $(x_\v)_\v$ for  $\mu_\v(\Ring(\cdot,\kappa,\lambda\delta^{3/2}),1)$.
\item We estimate  $\inf_{x_0\in\R^2}\mu_\v(\Ring(x_0,\lambda\delta^{3/2},\rho),1)$ and we conclude.
%\item The necessary characteristics for almost minimal configurations in  previous cases are compatible, then, because we are dealing with conditions of degree type, we may conclude.
\end{enumerate}
\vspace{2mm}
{\bf Step 1.} We study almost minimal configurations for $I_{\kappa,\v}$, $\kappa=\max(\lambda,\delta)$ 
\vspace{2mm}\\
We prove that $\{{\bf x},{\bf d}\}=\{(x_1^\v,d_1),...,(x_N^\v,d_N)\}$ is an almost minimal configuration for $I_{\kappa,\v}$ {\bf if and only if} $N=d$, $d_i=1$ and there is $\eta_0>0$ s.t. $\dist(x_i^\v,\p\O),|x^\v_i-x^\v_j|\geq\eta_0$.

First note that for $\eta_0>0$ and $x^\v_1,...,x^\v_d\in\O$ s.t. $\dist(x_i,\p\O),|x^\v_i-x^\v_j|\geq\eta_0$ we have easily
\begin{equation}\label{TheBounqlskfhjqsldjkhf:;;}
I_{\kappa,\v}\leq\hat{\I}_{\kappa,\v}({\bf x},{\bf d})\leq \pi d|\ln\kappa|+C(\eta_0)
\end{equation}
with $C(\eta_0)$ which is independent of $\v$.

We consider $\{{\bf x},{\bf d}\}$ which is  almost minimal for $I_{\kappa,\v}$. We argue as in the proof of Proposition \ref{P8.ToMinimizeSecondPbThePointAreFarFromBoundAndHaveDegree1} (Assertions 1 and 2, see Subsections \ref{Part1Periodic} $\&$ \ref{Part2Periodic}). We use the separation process defined Subsection \ref{S8.SeparationProcess} and the associated natural partition of $\O_\kappa:=\O\setminus\cup\overline{B(x_i^\v,\kappa)}$ (see Subsection \ref{S8.SeparationProcessGive}). 

Here the key ingredients are Lemmas \ref{Lestimatesurdescercle} $\&$ \ref{L.LargeCircleSmallInclusionLemma} (which replace the periodic structure of the pinning term). Combining both lemmas we get that if $R>r\geq\kappa$, then
\[
\mu_\v(\Ring(x_0,R,r),1)=\pi\ln\frac{R}{r}+\mathcal{O}(\lambda\ln\frac{R}{r}).%=\pi\ln\frac{R}{r}+o_\v(1).
\]
%Here we used the assumption $\lambda|\ln\delta|\to0$. 
The rings $\Ring(x_0,R,r)$ which occur in the partition of $\O_\kappa$ are all s.t. $C(\O)\geq R>r\geq\kappa$ and thus $\dfrac{R}{r}=\mathcal{O}(\kappa^{-1})$. Which infer that $\mathcal{O}(\lambda\ln\frac{R}{r})=o_\v(1)$ and consequently $\mu_\v(\Ring(x_0,R,r),1)=\pi\ln\frac{R}{r}+o_\v(1)$

Therefore we get: If $\{{\bf x},{\bf d}\}$ is an almost minimal configuration for $I_{\kappa,\v}$ then $N=d$, $d_i=1$ and there is $\eta_0>0$ s.t. $\dist(x_i^\v,\p\O),|x^\v_i-x^\v_j|\geq\eta_0$. This is proved by contradiction exactly as in Subsections \ref{Part1Periodic} $\&$ \ref{Part2Periodic} and using \eqref{TheBounqlskfhjqsldjkhf:;;}. 

Moreover, if $\{{\bf x},{\bf d}\}$ is an almost minimal configuration for $I_{\kappa,\v}$, then the arguments of Subsections \ref{Part1Periodic} $\&$ \ref{Part2Periodic} in conjunction with  \eqref{TheBounqlskfhjqsldjkhf:;;}, yield $|\hat{\I}_{\kappa,\v}({\bf x},{\bf d})-\pi d|\ln\kappa||\leq C(\eta_0)$. Here $\eta_0$ is obtained in the previous paragraph. Therefore  we get $I_{\kappa,\v}=\pi d|\ln\kappa|+\mathcal{O}(1)$.

Conversely, from  \eqref{TheBounqlskfhjqsldjkhf:;;}, for $\eta_0>0$ and $x^\v_1,...,x^\v_d\in\O$ s.t. $\dist(x_i^\v,\p\O),|x^\v_i-x^\v_j|\geq\eta_0$, we have $\{x^\v_1,...,x^\v_d\}$ which is almost minimal for $I_{\kappa,\v}$.
\vspace{2mm}
\\{\bf Step 2.} We study almost minimal configurations for $\mu_\v(\Ring(\cdot,\kappa,\lambda\delta^{3/2}),1)$
\vspace{2mm}

For $j\in\{1,...,\num\}$, we denote: $\o_\v^j:=\cup_{i\in\M_j^\v}\{{y_{i,j}^\v+\lambda\delta^j\o}\}$. And recall that the set of centers of connected components of $\o_\v^j$ is $\WM_j^\v:=\{y_{i,j}^\v\,|\,i\in\mathcal{M}_j^\v\}$.

Letting $x^0_\v\in\o_\v$  et $c>0$ (independent of $\v$) s.t.  $B(x^0_\v,c\lambda\delta)\subset\o^1_\v$, on the one hand  we may easily prove that
\begin{equation}\label{InClmlksjdfhqskdljfhDiff1}
\mu_\v(\Ring(x^0_\v,\delta,\lambda\delta^{3/2}),1)= \pi b^2|\ln\delta^{1/2}|+\pi|\ln\lambda|+\mathcal{O}(1),
\end{equation}
and on the other hand, applying Lemmas \ref{Lestimatesurdescercle} $\&$ \ref{L.LargeCircleSmallInclusionLemma}, we have
\begin{equation}\label{InClmlksjdfhqskdljfhDiff2}
\mu_\v(\Ring(x^0_\v,\kappa,\delta),1)=\pi[1+\mathcal{O}(\lambda)]\ln\frac{\kappa}{\delta}.
\end{equation}
Therefore, from \eqref{InClmlksjdfhqskdljfhDiff1} and \eqref{InClmlksjdfhqskdljfhDiff2}, we get
\begin{equation}\label{InClmlksjdfhqskdljfhDiff3}
\mu_\v(\Ring(x^0_\v,\kappa,\lambda\delta^{3/2}),1)=\pi \left[\frac{b^2}{2}+1+\mathcal{O}(\lambda)\right]|\ln\delta|+\pi\ln\frac{\kappa}{\lambda}+\mathcal{O}(1).
\end{equation}
We are going to prove that  this situation ($B(x^0_\v,c\lambda\delta)\subset\o^1_\v$) is the only way to get the minimal energy. More precisely we prove that for a fixed constant $C_0>0$, if we have $(x_\v)_\v\subset\O$ which is s.t. 
\begin{equation}\label{InClmlksjdfhqskdljfhDiff4}
\mu_\v(\Ring(x_\v,\kappa,\lambda\delta^{3/2}),1)\leq\inf_{x_0\in\O}\mu_\v(\Ring(x_0,\kappa,\lambda\delta^{3/2}),1)+C_0,
\end{equation}
then there is $c>0$ independent of $\v$ s.t. for sufficiently small $\v$ we have $B(x_\v,c\lambda\delta)\subset\o_\v$, \emph{i.e.} $B(x_\v,c\lambda\delta)\subset y_{i_\v,1}^\v+\lambda\delta\o$ with $y_{i_\v,1}^\v\in\WM_1^\v$.

We let $C_0>0$ and $(x_\v)_\v\subset\O$ s.t. \eqref{InClmlksjdfhqskdljfhDiff4} holds.

Up to pass to a sequence $\v_n\downarrow0$, dropping the subscript $n$ (we write $\v$ instead of $\v_n$), we may assume that one of these cases occurs
\begin{itemize}
\item[Case 0.] $\exists\,c>0$ s.t. $B(x_\v,c\lambda\delta)\subset\o_\v$,
\item[Case 1.] $x_\v\notin \cup_{j=1}^\num \cup_{i\in\mathcal{M}_{j}^\v}B(y_{i,j}^\v,\delta^j)$,% and there is $j_0\in\{1,...,\num\}$ s.t. we have $\begin{cases}\dist(x_\v,\p\o_\v)=\dist(x_\v,\p\o_\v^{j_0})\\\lim\dist(x_\v,\p\o_\v^{j_0})/(\lambda\delta^{j_0})\in(0,\infty]\end{cases}$,
%\item[or]
\item[Case 2.] $x_\v\in\cup_{j=2}^\num \cup_{i\in\mathcal{M}_{j}^\v}B(y_{i,j}^\v,\delta^j)$,
\item[Case 3.] $\{x_\v\in \cup_{i\in\mathcal{M}_{1}^\v}B(y_{i,1}^\v,\delta)\setminus\overline{\o_\v^1}\}$ or  $\{x_\v\in\o_\v^1\,\&\,\dist(x_\v,\p\o_\v^1)/\lambda\delta\to0\}$.
%\item[Case 3.] for some $j_0\in\{2,...,\num\}$ we have $\dist(x_\v,\p\o_\v^{j_0})/(\lambda\delta^{j_0})\to0$.
%\item[or].
\end{itemize}
We want to prove that only Case 0. occurs if \eqref{InClmlksjdfhqskdljfhDiff4} holds.
\\{\bf Case 1.} From Lemmas \ref{Lestimatesurdescercle} $\&$ \ref{L.LargeCircleSmallInclusionLemma}, it is direct to prove that
\[
\mu_\v(\Ring(x_\v,\kappa,\lambda\delta^{3/2}),1)\geq \pi\left[1+\mathcal{O}(\lambda)\right]\ln\frac{\kappa}{\lambda\delta^{3/2}}=\pi\left[\frac{3}{2}+\mathcal{O}(\lambda)\right]|\ln\delta|+\pi\ln\frac{\kappa}{\lambda}.
\]
Using \eqref{InClmlksjdfhqskdljfhDiff3} we get
\[
\mu_\v(\Ring(x_\v,\kappa,\lambda\delta^{3/2}),1)-\inf_{x_0\in\O}\mu_\v(\Ring(x_0,\kappa,\lambda\delta^{3/2}),1)\to+\infty.
\]
Therefore, if $(x_\v)_\v$ satisfies \eqref{InClmlksjdfhqskdljfhDiff4}, then Case 1. does not occur.
\\{\bf Case 2.} We let $j_0\in\{2,...,\num\}$  be s.t. $x_\v\in \cup_{i\in\mathcal{M}_{j_0}^\v}B(y_{i,j_0},\delta^{j_0})$. We define $\kappa':=\max\{\delta^{j_0},\lambda\delta^{3/2}\}$ and we denote $y_0=y_{i,j_0}^\v\in\WM_{j_0}$ be s.t. $x_\v\in B(y_0,\delta^{j_0})$.

We first assume that $x_\v\notin\overline{\o_\v}$ and we let $\textsf{\ae}=\max\{\lambda\delta^{3/2},\dist(x_\v,\p\o_\v^{j_0})-\lambda\delta^{j_0}\}$. In order to estimate $\mu_\v(\Ring(x_\v,\kappa,\lambda\delta^{3/2}),1)$, we divide $\Ring(x_\v,\kappa,\lambda\delta^{3/2})$ into
\[
%\begin{array}{c}
\Ring(x_\v,\kappa,\kappa'+2\lambda\delta^{j_0})\cup{\Ring(x_\v,\kappa'+2\lambda\delta^{j_0},\textsf{\ae}+2\lambda\delta^{j_0})}\cup{\Ring(x_\v,\textsf{\ae}+2\lambda\delta^{j_0},\textsf{\ae})}\cup\Ring(x_\v,\textsf{\ae},\lambda\delta^{3/2}).
%\end{array}
\]
From Lemmas \ref{Lestimatesurdescercle} $\&$ \ref{L.LargeCircleSmallInclusionLemma} we have
\[
\mu_\v(\Ring(x_\v,\kappa,\kappa'+2\lambda\delta^{j_0}),1)\geq\pi[1+\mathcal{O}(\lambda)]\ln\frac{\kappa}{\kappa'+2\lambda\delta^{j_0}}.
\]
Note that $\dist(\Ring(x_\v,\kappa'+2\lambda\delta^{j_0},\textsf{\ae}+2\lambda\delta^{j_0}),\o_\v^{j_0})\geq\lambda\delta^{j_0}$ and if for some $j$ we have $\Ring(x_\v,\kappa'+2\lambda\delta^{j_0},\textsf{\ae}+2\lambda\delta^{j_0})\cap\o_\v^{j}\neq\emptyset$, then $\dist(x_\v,\o_\v^j)\geq\delta^{j}$ (because $x_\v\in B(y_0,\delta^{j_0})$). Therefore, using Proposition \ref{P8.UepsCloseToaeps} and Lemmas \ref{Lestimatesurdescercle} $\&$ \ref{L.LargeCircleSmallInclusionLemma} we get
\[
\mu_\v(\Ring(x_\v,\kappa'+2\lambda\delta^{j_0},\textsf{\ae}+2\lambda\delta^{j_0}),1)\geq\pi[1+\mathcal{O}(\lambda)]\ln\frac{\kappa'+2\lambda\delta^{j_0}}{\textsf{\ae}+2\lambda\delta^{j_0}}.
\]

It is obvious that 
\[
\mu_\v(\Ring(x_\v,\textsf{\ae}+2\lambda\delta^{j_0},\textsf{\ae}),1)\geq b^2\pi\ln\frac{\textsf{\ae}+2\lambda\delta^{j_0}}{\textsf{\ae}}\geq b^2\pi\ln\left(1+2\delta^{j_0-3/2}\right)=o_\v(1).
\]
By definition of $\textsf{\ae}$, from Proposition \ref{P8.UepsCloseToaeps}, we have
\[
\mu_\v(\Ring(x_\v,\textsf{\ae},\lambda\delta^{3/2}),1)\geq\pi\ln\frac{\textsf{\ae}}{\lambda\delta^{3/2}}-o_\v(1).
\]
Summing these lower bounds we have
\begin{eqnarray*}
\mu_\v(\Ring(x_\v,\kappa,\lambda\delta^{3/2}),1)&\geq&\pi[1+\mathcal{O}(\lambda)]\left[\ln\frac{\kappa}{\kappa'+2\lambda\delta^{j_0}}+\ln\frac{\kappa'+2\lambda\delta^{j_0}}{\textsf{\ae}+2\lambda\delta^{j_0}}+\ln\frac{\textsf{\ae}}{\lambda\delta^{3/2}}\right]+o_\v(1)
\\&\geq&\pi[1+\mathcal{O}(\lambda)]\ln\frac{\kappa}{\lambda\delta^{3/2}}+o_\v(1)\\&=&\pi[1+\mathcal{O}(\lambda)]\left(\frac{3}{2}|\ln\delta|+\ln\frac{\kappa}{\lambda}\right)+o_\v(1)
\end{eqnarray*}
and therefore $\mu_\v(\Ring(x_\v,\kappa,\lambda\delta^{3/2}),1)-\inf_{x_0\in\O}\mu_\v(\Ring(x_0,\kappa,\lambda\delta^{3/2}),1)\to+\infty$ (because $0\leq\ln(\kappa/\lambda)\leq|\ln\delta|$ and from \eqref{InClmlksjdfhqskdljfhDiff3}).

We now assume that $x_\v\in\o_\v$. Because $j_0\geq2$ and $x_\v\in B(y_0,\lambda\delta^{j_0})$, we have $B(y_0,2\lambda\delta^{j_0})\cap\Ring(x_\v,\kappa,\lambda\delta^{3/2})=\emptyset$. Therefore, from the dilution of the inclusion, if there is $\tilde{\o}_\v$, a connected component of $\o_\v^j$ s.t. $\Ring(x_\v,\kappa,\lambda\delta^{3/2})\cap\tilde{\o}_\v$, then $\dist(x_\v,\tilde{\o}_\v)\geq\delta^j/3$. Consequently, from Lemmas \ref{Lestimatesurdescercle} $\&$ \ref{L.LargeCircleSmallInclusionLemma}, we have
\[
\mu_\v(\Ring(x_\v,\kappa,\lambda\delta^{3/2}),1)\geq \pi[1+\mathcal{O}(\lambda)]\ln\frac{\kappa}{\lambda\delta^{3/2}}=\pi[1+\mathcal{O}(\lambda)]\left(\frac{3}{2}|\ln\delta|+\ln\frac{\kappa}{\lambda}\right).
\]
From \eqref{InClmlksjdfhqskdljfhDiff3}, we obtain that $\mu_\v(\Ring(x_\v,\kappa,\lambda\delta^{3/2}),1)-\inf_{x_0\in\O}\mu_\v(\Ring(x_0,\kappa,\lambda\delta^{3/2}),1)\to+\infty$.

We deduce that if $(x_\v)_\v$ satisfies \eqref{InClmlksjdfhqskdljfhDiff4}, then Case 2. does not occur.
\\{\bf Case 3.} We denote $y_0:=y_{i,1}^\v\in\WM_1^\v$ be s.t. $x_\v\in B(y_0,\delta)$.

On the one hand, if $\kappa\leq 10^{-2}\delta$, then we have $\mu_\v(\Ring(x_\v,\kappa,\delta),1),\mu_\v(\Ring(y_0,\kappa,\delta),1)\leq2\pi\ln10$. 

On the other hand, if $\kappa> 10^{-2}\delta$, then we have $\Ring(y_0,\kappa,10\delta)\subset \Ring(x_\v,10\kappa,10^{-1}\delta)$ and thus (using Proposition \ref{P7.MyrtoRingDegDir}) we get
\begin{eqnarray*}
\mu_\v(\Ring(x_\v,\kappa,\delta),1)&\geq& \mu_\v(\Ring(x_\v,10\kappa,10^{-1}\delta),1)-(2\pi\ln10+C_b)
\\&\geq&\mu_\v(\Ring(y_0,\kappa,10\delta),1)-(2\pi\ln10+C_b)
\\&\geq&\mu_\v(\Ring(y_0,\kappa,10\delta),1)-(3\pi\ln10+2C_b).
\end{eqnarray*}
Moreover, following the argument of Subsection \ref{SubsectionProofjfhsldkjfhsldkjfh}, we have (because $\dfrac{\lambda\delta^{3/2}}{\lambda\delta}\to0$)
\[
\mu_\v(\Ring(x_\v,\delta,\lambda\delta^{3/2}),1)- \mu_\v(\Ring(y_0,\delta,\lambda\delta^{3/2}),1)\to+\infty.
\]
Therefore we have the existence of $H_\v\to+\infty$ as $\v\to0$ s.t.
\begin{eqnarray*}
\mu_\v(\Ring(x_\v,\kappa,\lambda\delta^{3/2}),1)&\geq&\mu_\v(\Ring(x_\v,\kappa,\delta),1)+\mu_\v(\Ring(x_\v,\delta,\lambda\delta^{3/2}),1)
\\&\geq&\mu_\v(\Ring(y_0,\kappa,\delta),1)+\mu_\v(\Ring(y_0,\delta,\lambda\delta^{3/2}),1)+H_\v
\\\text{(Prop. \ref{P8.DirectPropAnnProb}.3)}&\geq&\mu_\v(\Ring(y_0,\kappa,\lambda\delta^{3/2}),1)+H_\v-2C_b.
\end{eqnarray*}
Consequently $\mu_\v(\Ring(x_\v,\kappa,\lambda\delta^{3/2}),1)-\inf_{x_0\in\O}\mu_\v(\Ring(x_0,\kappa,\lambda\delta^{3/2}),1)\to+\infty$ and  since $(x_\v)_\v$ satisfies \eqref{InClmlksjdfhqskdljfhDiff4}, Case 3 does not occur.
\vspace{2mm}
\\{\bf Step 3.} We study $\inf_{x_0\in\R^2}\mu_\v(\Ring(x_0,\lambda\delta^{3/2},\rho),1)$ and we conclude
\vspace{2mm}

It is obvious that $\inf_{x_0\in\R^2}\mu_\v(\Ring(x_0,\lambda\delta^{3/2},\rho),1)=\pi b^2\ln\dfrac{\lambda\delta^{3/2}}{\rho}+o_\v(1)$.
Now we are in position to conclude. On the one hand, from the previous steps, for  $\eta_0,c>0$ and a configuration of points/degrees $\{{\bf x}_\v,{\bf 1}\}=\{(x_1^\v,1),...,(x_d^\v,1)\}$ s.t. $|x_i^{\v}-x_j^{\v}|,\dist(x_i^{\v},\p\O)\geq\eta_0$ and $B(x_i^{\rho,\v},c\lambda\delta)\subset\o_\v^1$ for all $i\neq j$, $i,j\in\{1,...,N\}$, we have $\hat{\I}_{\rho,\v}({\bf x}_\v)=I_{\rho,\v}+\mathcal{O}(1)$.

On the other hand, for $\v_n\downarrow0$, if either there is $i\in\{1,...,N\}$ s.t. $d_i>1$ or $\dist(x_i^{\rho,\v},\p\O)\to0$ or there are $i\neq j$ s.t. $|x_i-x_j|\to0$, then the configuration of points/degrees cannot be almost minimal for $I_{\delta,\v_n}$ and thus it cannot be almost minimal for $I_{\rho,\v_n}$. 

Moreover, if there is $i$ s.t. $x_i^{\v_n}\notin\o_{\v_n}^1$ or $\dist(x_i^{\v_n},\p\o_{\v_n}^1)/(\lambda\delta)\to0$, then $(x_i^{\v_n})_n$ cannot be an almost minimal configuration for $\mu_\v(\Ring(\cdot,\kappa,\lambda\delta^{3/2}),1)$. And thus $\{{\bf x},{\bf d}\}$ cannot be an almost minimal configuration for $I_{\rho,\v_n}$.

Therefore Assertions 1. and 2. of Proposition \ref{P8.CorolSecondAuxPb} holds.
 
The rest of the proposition is obtained exactly as Corollary \ref{C8.AnAlmostMinConfigIsAnAlmostMinConf}.
\section{Proof of Proposition \ref{P8.asymptotiquedegnonzero}}\label{S8.ProofOfResultMyrto} 

We use the {\it{unfolding operator}} (see \cite{CDG1}, definition 2.1). We define, for $\O_0\subset\R^2$ an open set, $p\in(1,\infty)$ and $\delta>0$:
\[
\begin{array}{cccc}
\mathcal{T}_\delta:&L^p(\O_0)&\to&L^p(\O_0\times \tilde{Y})
\\
&\phi&\mapsto&\mathcal{T}_\delta(\phi)(x,y)=\left\{\begin{array}{cl}\phi\left(\delta\left[\displaystyle\frac{x}{\delta}\right]+\delta y\right)&\text{for }(x,y)\in \tilde{\O}_\delta^{\rm incl}\times \tilde{Y}\\0&\text{for }(x,y)\in\Lambda_\delta\times \tilde{Y}\end{array}\right.
\end{array}
\]
and
\[
\tilde{Y}=(0,1)\times(0,1),\hspace{10mm} \tilde{\O}^{\rm incl}_\delta:=\bigcup_{\substack{{\tilde{Y}}^K_\delta\subset\O_0\\\tilde{Y}^K_\delta=\delta(K+\tilde{Y}),\,K\in\Z^2}}\overline{\tilde{Y}^K_\delta},
\]
\[
\Lambda_\delta:=\O_0\setminus\tilde{\O}^{\rm incl}_\delta\hspace{5mm}\text{ and }\hspace{5mm}\left[\frac{x}{\delta}\right]:=\left(\left[\frac{x_1}{\delta}\right],\left[\frac{x_2}{\delta}\right]\right)\in\Z^2.
\]
Here, for $s\in\R$, $\displaystyle [s]$ is the integer part of $s$.

We will use the following results:
\begin{equation}\label{8.Unfoldingproperty1}
\text{$\mathcal{T}_\delta$ is linear and continuous, of norm  at most $1$ (\cite{CDG1}, Proposition 2.5)},
\end{equation}
\begin{equation}\label{8.Unfoldingproperty2}
\mathcal{T}_\delta(\phi\psi)=\mathcal{T}_\delta(\phi)\mathcal{T}_\delta(\psi) %and $\displaystyle\mathcal{T}_\delta\left(\frac{u}{v}\right)=\frac{\mathcal{T}_\delta(u)}{\mathcal{T}_\delta(v)}\textrm{1\kern-0.25emI}_{\hat{\O}_\delta\times Y}$ 
\text{ (\cite{CDG1}, equation (2.2))},
\end{equation}
\begin{equation}\label{8.Unfoldingproperty4} 
\text{$\delta\mathcal{T}_\delta(\n\phi)(x,y)=\n_y\mathcal{T}_\delta(\phi)(x,y)$ for $\phi\in W^{1,p}(\O_0)$ (\cite{CDG1}, equation (3.1)),}
\end{equation}
%If $\phi_\delta\in L^{1}(\O_0)$ is such that $\displaystyle\int_{\Lambda_\delta}|{\phi_\delta}|\to 0$, then we have
\begin{equation}\label{8.Unfoldingproperty3}
\text{for }\phi\in L^1(\O_0),\,\text{ we have }\int_{\tilde{\O}^{\rm incl}_\delta}{\phi}=\int_{\O_0\times \tilde{Y}}{\mathcal{T}_\delta(\phi_\delta)}\text{ (\cite{CDG1}, Proposition. 2.5 (i))}.
\end{equation}
If $\phi_\delta\in H^1(\O_0)$ is such that $\phi_\delta\weak \phi_0$ in $H^1$, then, up to subsequence, there exists $\hat{\phi}\in L^2(\O_0,H^1_{\rm per}(\tilde{Y}))$ s.t.:
\begin{equation}\label{8.Unfoldingproperty5}
\mathcal{T}_\delta(\phi_\delta)\to \phi_0\text{ and }\mathcal{T}_\delta(\n\phi_\delta)\weak \n \phi_0+\n_y\hat{\phi}\text{ in $L^2(\O_0\times \tilde{Y})$ (\cite{CDG1}, Theorem 3.5)}.
\end{equation}
Here $H^1_{\rm per}(\tilde{Y})$ stands for the set of functions $\phi\in H^1(\tilde{Y})$ s.t. the extending  of $\phi$ by $\tilde{Y}$-periodicity is in $H^1_{\rm loc}(\R^2)$ (see \cite{CD1}, section 3.4).

In order to define properly the homogenized matrix $\mathcal{A}$  we recall a classical result (see Theorem 4.27 in \cite{CD1}).
\begin{prop}\label{P8.CiornescuDonato1}
Let $H_0\in L^\infty(\tilde{Y},[b^2,1])$. For all $f\in (H^1_{\rm per}(\tilde{Y}))'$ s.t. $f$ annihilates the constants there exists a unique solution $h\in H^1_{\rm per}(\tilde{Y})$ of
\[
\Div(H_0\n_y h)=f\text{ and }\mathcal{M}_{\tilde{Y}}(h)=\int_{\tilde{Y}} h=0.
\]
\end{prop}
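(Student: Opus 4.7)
The statement is a direct application of Lax--Milgram on a suitable quotient space, so the plan is standard. I will work in the closed subspace
\[
V := \{v \in H^1_{\rm per}(\tilde{Y}) \,:\, \mathcal{M}_{\tilde{Y}}(v) = 0\}
\]
of $H^1_{\rm per}(\tilde{Y})$, which inherits the ambient Hilbert structure. The key analytic fact is the Poincaré--Wirtinger inequality on $\tilde{Y}$: there exists $c_P > 0$ such that $\|v\|_{L^2(\tilde{Y})} \leq c_P \|\nabla_y v\|_{L^2(\tilde{Y})}$ for all $v \in V$. Consequently $v \mapsto \|\nabla_y v\|_{L^2(\tilde{Y})}$ is a norm on $V$ equivalent to the $H^1$-norm.

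Next I would introduce the bilinear form
\[
a(u,v) := \int_{\tilde{Y}} H_0(y)\, \nabla_y u \cdot \nabla_y v \, {\rm d}y, \qquad u,v \in V.
\]
Since $b^2 \leq H_0 \leq 1$, this form is continuous: $|a(u,v)| \leq \|\nabla_y u\|_{L^2}\|\nabla_y v\|_{L^2}$, and coercive on $V$: $a(v,v) \geq b^2\|\nabla_y v\|_{L^2}^2$, which by Poincaré--Wirtinger controls $\|v\|_{H^1}^2$ from below. On the other hand, since $f$ annihilates constants and every element of $H^1_{\rm per}(\tilde{Y})$ decomposes as $v = (v - \mathcal{M}_{\tilde{Y}}(v)) + \mathcal{M}_{\tilde{Y}}(v)$ with the first term in $V$, the restriction of $f$ to $V$ is a continuous linear form on $V$. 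Lax--Milgram then yields a unique $h \in V$ with $a(h,v) = \langle f, v\rangle$ for all $v \in V$.

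To conclude, I would upgrade this identity to all test functions in $H^1_{\rm per}(\tilde{Y})$. Writing an arbitrary $w \in H^1_{\rm per}(\tilde{Y})$ as $w = v + c$ with $v \in V$ and $c = \mathcal{M}_{\tilde{Y}}(w) \in \mathbb{R}$, one has $a(h,w) = a(h,v)$ because $\nabla_y c = 0$, and $\langle f, w\rangle = \langle f, v\rangle$ because $f$ annihilates constants. Hence $a(h,w) = \langle f, w\rangle$ for every $w \in H^1_{\rm per}(\tilde{Y})$, which is exactly the weak formulation of $\Div(H_0 \nabla_y h) = f$ in the periodic sense. The constraint $\mathcal{M}_{\tilde{Y}}(h) = 0$ holds by construction, and uniqueness is inherited from Lax--Milgram (any two solutions differ by an element of $V$ with zero energy, hence zero gradient, hence zero since both have zero mean).

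There is essentially no obstacle: the only point worth checking carefully is the coercivity, which rests on the lower bound $H_0 \geq b^2$ (available by hypothesis) combined with Poincaré--Wirtinger on the zero-mean subspace. Since the proposition is quoted from \cite{CD1}, a self-contained proof is not strictly necessary, and the above sketch suffices.
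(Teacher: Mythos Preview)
Your Lax--Milgram argument on the zero-mean subspace $V$ is correct and is precisely the standard proof of this classical result. The paper itself does not prove this proposition at all: it simply quotes it as Theorem~4.27 from Cioranescu--Donato \cite{CD1}, so there is no ``paper's own proof'' to compare against. One cosmetic point: with your convention $a(h,v)=\langle f,v\rangle$ you are actually solving $-\Div(H_0\nabla_y h)=f$ rather than $\Div(H_0\nabla_y h)=f$, but since $f$ annihilates constants if and only if $-f$ does, this sign is immaterial to existence and uniqueness.
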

Using the previous theorem we denote $\chi_j\in H^1_{\text{per}}(\tilde{Y})$ the unique solution of
\begin{equation}\label{8.chijequ}
\Div(H_0\n_y \chi_j)=\p_{y_j}(H_0)\text{ and }\mathcal{M}_{\tilde{Y}}(\chi_j)=0.
\end{equation}
With these auxiliary functions, we can give an explicit expression of $\mathcal{A}$ the homogenized matrix of $H_0(\frac{\cdot}{\delta}){\rm Id}_{\R^2}$ (see Theorem 6.1 in \cite{CD1}): 
\begin{equation}\nonumber
\mathcal{A}=\int_{\tilde{Y}}{H_0\left(\begin{array}{cc}1-\p_{y_1}\chi_1&-\p_{y_1}\chi_2\\-\p_{y_2}\chi_1&1-\p_{y_2}\chi_2\end{array}\right)}=\int_{\tilde{Y}} H_0({\rm Id}_{\R^2}-\n_y\chi),\,\chi=(\chi_1,\chi_2).
\end{equation}
For the convenience of the reader we restate, in larger detail, Proposition \ref{P8.asymptotiquedegnonzero}.% the result that we are going to prove.

\hspace{2mm}\\
{\bf Proposition.} \emph{%\tag{\ref{P8.asymptotiquedegnonzero}}
Let $\O_0\subset\R^2$ be a smooth bounded open set and let $v_n\in H^2(\O_0,\C)$ be s.t.
\begin{enumerate}
\item $|v_n|\leq1$ and $\displaystyle\int_{\O_0}(1-|v_n|^2)^2\to0$,
\item $v_n\weak v_*$ in $H^1(\O_0)$ and $v_*\in H^1(\O_0,\S^1)$,
\item there is $H_n \in W^{1,\infty}(\O_0,[b^2,1])$ and $\delta_n\downarrow0$ s.t. $\mathcal{T}_{\delta_n}(H_n)\to H_0$ in $L^2(\O_0\times \tilde{Y})$ with $H_0$ independent of $x\in\O_0$,
\item $-{\rm div}(H_n\n v_n)=v_nf_n(x)$, $f_n\in L^\infty(\O_0,\R)$.
\end{enumerate}
Then $v_*$ is a solution of
\begin{equation}\nonumber%\label{8.eqlimitv0epsegdel}
-\Div(\mathcal{A}\n v_*)=(\mathcal{A}\n v_*\cdot\n v_*)v_*.
\end{equation}
Here $\mathcal{A}$ is the homogenized matrix of $H_0(\frac{\cdot}{\delta}){\rm Id}_{\R^2}$ given by
\begin{equation}\nonumber
\mathcal{A}=\int_{\tilde{Y}}{H_0\left(\begin{array}{cc}1-\p_{y_1}\chi_1&-\p_{y_1}\chi_2\\-\p_{y_2}\chi_1&1-\p_{y_2}\chi_2\end{array}\right)}.
\end{equation}
}
\begin{proof}
In order to keep notations simple, we write, in what follows, $\delta$ rather than $\delta_n$.
%Here we consider a sequence $\delta=\delta_n\downarrow0$ and in the following we will not specify the dependance of $\delta$ with respect to $n$. 

Since $f_n$ is real valued, we have that ${\rm div}(H_n\n v_n)\times v_n=0$. From \eqref{8.Unfoldingproperty1} and \eqref{8.Unfoldingproperty2}, we obtain
\begin{equation}\label{8.FirstEstimateProofMyrto}
{\rm div}_y\left[\mathcal{T}_\delta(H_n)(x,y)\mathcal{T}_\delta (\n v_n)(x,y)\right]\times \mathcal{T}_\delta (v_n)(x,y)=0\text{ in }\O_0\times \tilde{Y}.
\end{equation}
Note that from the assumptions and \eqref{8.Unfoldingproperty1},\eqref{8.Unfoldingproperty5}, passing to a subsequence, there is $\hat{w}\in L^2(\O_0,H^1_{\rm per}(\tilde{Y}))$ s.t.
\[
\mathcal{T}_\delta ( v_n)(x,y)\to v_*(x), \,\mathcal{T}_\delta (\n v_n)(x,y)\weak\n v_*(x)+\n_y \hat{v}(x,y)\text{ in }L^2(\O_0\times \tilde{Y})
\]
and
\[
\mathcal{T}_\delta (H_n)(x,y)\to H_0(y)\text{ in }L^2(\O_0\times \tilde{Y}).
\]
Thus we obtain the convergence:
\[
{\rm div}_y\left[\mathcal{T}_\delta(H_n)(x,y)\mathcal{T}_\delta (\n v_n)(x,y)\right]\times \mathcal{T}_\delta (v_n)(x,y)\weak{\rm div}_y\left[H_0(\n v_*+\n_y\hat{v})\right]\times v_*\text{ in }L^2(\O_0\times H^{-1}(\tilde{Y})).
\]
Consequently,
\[
{\rm div}_y\left[H_0(\n v_*+\n_y\hat{v})\right]\times v_*=0.
\]
Since $v_*$ is independent of $y\in \tilde{Y}$, the previous assertion is equivalent to
\[
-{\rm div}_y\left[H_0\n_y( \hat{v}\times v_*)\right]=(\n_y H_0\cdot \n v_*)\times v_*,
\]
which in turn is equivalent to
\[
-{\rm div}_y\left[H_0\n_y( \hat{v}\times v_*)\right]=\sum_i\p_{y_i} H_0(\p_i v_*\times v_*).
\]
Hence, from Proposition \ref{P8.CiornescuDonato1} and \eqref{8.chijequ}, we obtain
\begin{equation}\label{8.EquationGivenByChi}
\hat{v}\times v_*=-\sum_i \chi_i(\p_i v_*\times v_*)=-\chi\cdot(\n v_*\times v_*),\,\chi=\left(\chi_1,\chi_2\right).
\end{equation}
Let $\psi\in \mathcal{D}(\O_0)$ and $n$ sufficiently large s.t. ${\rm Supp}(\psi)\subset\tilde{\O}^{\rm incl}_\delta$. Since $-\Div\left[H_n\n v_n\times v_n\right]=0$, we have
\[
\int_{\tilde{\O}^{\rm incl}_\delta}H_n\n v_n\times v_n\cdot\n\psi=0.
\]
This identity combined with \eqref{8.Unfoldingproperty3} implies that
\[
\int_{\O_0\times \tilde{Y}}\mathcal{T}_\delta[H_n(\n v_n\times v_n)\cdot\n\psi]=0.
\]
%Note that since ${\rm Supp}(\psi)\subset\hat{\O}_\delta$, we have (using \eqref{8.Unfoldingproperty4})
%\begin{eqnarray*}
%0=-\int_{\O_0\times Y}{\rm div}_y\left[\mathcal{T}_\delta(H_n)\mathcal{T}_\delta (\n v_n)\right]\times \mathcal{T}_\delta (v_n)\mathcal{T}_\delta (\psi)%&=&-\int_{\O_0\times Y}{\rm div}_y\left[\mathcal{T}_\delta(H_n)\mathcal{T}_\delta (\n v_n)\times \mathcal{T}_\delta (v_n)\right]\mathcal{T}_\delta %(\psi)
%\\&=&\int_{\O_0}\int_{Y}\mathcal{T}_\delta(H_n)\mathcal{T}_\delta (\n v_n)\times \mathcal{T}_\delta (v_n)\cdot\n_y\mathcal{T}_\delta (\psi)
%\\&&-\int_{\O_0}\int_{\p Y}\mathcal{T}_\delta \left[\psi H_n (\n v_n\times v_n)\right]\cdot\nu
%\\&=&\int_{\O_0\times Y}\mathcal{T}_\delta(H_n\n v_n\times v_n)\cdot\n_y\mathcal{T}_\delta (\psi)=0
%\end{eqnarray*}
%The last equality comes from the fact that since ${\rm Supp}(\psi)\subset\hat{\O}_\delta$ and $\psi H_n (\n v_n\times v_n)\in H^1$, by definition of the unfolding operator, we have
%\[
%\int_{\O_0}\int_{\p Y}\mathcal{T}_\delta \left[\psi H_n (\n v_n\times v_n)\right]\cdot\nu=0.
%\]
Therefore, using \eqref{8.Unfoldingproperty4}  and \eqref{8.Unfoldingproperty5}, we obtain:
\begin{eqnarray*}
0=\int_{\O_0\times \tilde{Y}}\mathcal{T}_\delta\left[H_n(\n v_n\times v_n)\cdot\n\psi\right]&=&\int_{\O_0\times \tilde{Y}}\mathcal{T}_\delta(H_n)\mathcal{T}_\delta(\n v_n)\times \mathcal{T}_\delta(v_n)\cdot\mathcal{T}_\delta(\n\psi)
\\&\underset{n\to\infty}{\to}&\int_{\O_0\times \tilde{Y}}H_0\left[\n v_*\times v_*+\n_y(\hat{v}\times v_*)\right]\cdot\n\psi.
%\\&=&\int_YH_0\left\{\int_{\O_0}-{\rm div}\left[(\n v_*\times v_*+\n_y(\hat{v}\times v_*)\right]\psi\right\}.
\end{eqnarray*}
Finally, for all $\psi\in\mathcal{D}(\O_0)$, using \eqref{8.EquationGivenByChi}, we have
\begin{eqnarray*}
0=\int_{\O_0\times \tilde{Y}}H_0\n v_*\times v_*\left[{\rm Id}_{\R^2}-\n_y\chi\right]\cdot\n\psi&=&\int_{\O_0}\left(\left\{\int_{\tilde{Y}}H_0\left[{\rm Id}_{\R^2}-\n_y\chi\right]\right\}\n v_*\times v_*\right)\n\psi
\\&=&-\int_{\O_0}-\Div\left(\mathcal{A}\n v_*\times v_*\right)\psi.
\end{eqnarray*}
Here $\displaystyle \mathcal{A}=\int_{\tilde{Y}}H_0\left({\rm Id}_{\R^2}-\n_y\chi\right)$.

Thus $-\Div\left(\mathcal{A}\n v_*\times v_*\right)=0$. Note that, since $H_0$ and $\chi$ are independent of $x$, $\mathcal{A}$ is a constant matrix. This fact combined with the equation $-\Div\left(\mathcal{A}\n v_*\times v_*\right)=0$ implies that $v_*$ satisfies 
\begin{equation}\label{7.EquaVstarslkjgh}
-\Div(\mathcal{A}\n v_*)=(\mathcal{A}\n v_*\cdot\n v_*)v_*.
\end{equation}
 Indeed, we can always consider $\f_*$ which is locally defined in $\O_0$ and whose gradient is globally defined and in $L^2(\O_0,\R^2)$ s.t. $v_*={\rm e}^{\imath\f_*}$.

Since $v_*\times\n v_*=\n\f_*$ we obtain that $\Div(\mathcal{A}\n\f_*)=0$. Identity \eqref{7.EquaVstarslkjgh} follows from the equation of $\varphi_*$ and the fact that $|\n\f_*|^2=|\n v_*|^2$.

 %we get easily that $-\Div(\mathcal{A}\n v_*)=kv_*$ for some real function $k$ and since $|v_*|=1$, we have $k=-\Div(\mathcal{A}\n v_*)\cdot v_*$.

%Let $\psi\in\mathcal{D}(\O_0,\R)$, we have (using the fact that $v_*\cdot\p_i v_*=0$):
%\[
%\int_{\O_0}-\Div(\mathcal{A}\n v_*)\cdot v_*\psi=\int_{\O_0}\mathcal{A}\n v_*\cdot (\psi\n v_*+v_*\n\psi)=\int_{\O_0}(\mathcal{A}\n v_*\cdot \n v_*)\psi.
%\]
%So $v_*$ satisfies the equation $-\Div(\mathcal{A}\n v_*)=(\mathcal{A}\n v_*\cdot\n v_*)v_*$ and this ends the proof.
\end{proof}

%\end{subappendices}

\bibliography{BiblioMain}
\end{document}